\def\smallint{\begingroup\textstyle \int\endgroup}
\newtheorem*{rep@theorem}{\rep@title}
\newcommand{\newreptheorem}[2]{
\newenvironment{rep#1}[1]{
 \def\rep@title{#2 \ref{##1}}
 \begin{rep@theorem}}
 {\end{rep@theorem}}}
\newtheorem{theorem}{Theorem}[section]
\newtheorem{proposition}[theorem]{Proposition}
\newtheorem{definition}[theorem]{Definition}
\newtheorem{example}[theorem]{Example}
\newtheorem{conjecture}[theorem]{Conjecture}
\newtheorem{remark}[theorem]{Remark}
\newtheorem{corollary}[theorem]{Corollary}
\newtheorem{lemma}[theorem]{Lemma}
\newtheorem{warning}[theorem]{Warning}
\newcommand{\oast}{\circledast}
\newcommand\blfootnote[1]{
  \begingroup
  \renewcommand\thefootnote{}\footnote{#1}
  \addtocounter{footnote}{-1}
  \endgroup
}
\begin{document}
\title{Enriched $\infty$-categories I: enriched presheaves}
%II: enriched functors (monoidal structure on Cat^V and functor PSh^V(-))
%III: Yoneda lemma (including enrichment on module categories)
%IV: enriched 2-categories
\author{John D. Berman}
\maketitle

\begin{abstract}\blfootnote{This work was supported by a National Science Foundation Postdoctoral Fellowship under grant 1803089.}
This is the first of a series of papers on enriched $\infty$-categories, seeking to reduce enriched higher category theory to the higher algebra of presentable $\infty$-categories, which is better understood and can be approached via universal properties.

In this paper, we introduce enriched presheaves on an enriched $\infty$-category. We prove analogues of most familiar properties of presheaves. For example, we compute limits and colimits of presheaves, prove that all presheaves are colimits of representable presheaves, and prove a version of the Yoneda lemma.
\end{abstract}

\tableofcontents

\addtocontents{toc}{\protect\setcounter{tocdepth}{1}}
\section{Introduction}\label{S1}
\noindent There has been explosive progress in higher category theory in the last decade, largely made possible by Lurie's books Higher Topos Theory \cite{HTT} and Higher Algebra \cite{HA}. Although the foundations of the subject are combinatorial and notoriously technical, a toolbox of techniques involving presentable $\infty$-categories allows us to reduce many problems to universal properties which better agree with intuition from ordinary category theory.

If $\mathcal{C}$ is an $\infty$-category, the $\infty$-category of presheaves $\text{PSh}(\mathcal{C})=\text{Fun}(\mathcal{C}^\text{op},\text{Top})$ is presentable, in the sense that it admits all limits and colimits, and by the Yoneda lemma, $\mathcal{C}$ is the full subcategory of $\text{PSh}(\mathcal{C})$ spanned by representable presheaves.

Moreover, $\text{PSh}(\mathcal{C})$ is \emph{freely generated} by representables under colimits. For example, there are equivalences of $\infty$-categories $$\text{Fun}(\mathcal{C},\mathcal{D})\cong\text{Fun}^L_\star(\text{PSh}(\mathcal{C}),\text{PSh}(\mathcal{D})),$$ where $\text{Fun}^L_\star$ denotes colimit-preserving functors which send representables to representables. In other words, the $\infty$-category Cat of $\infty$-categories embeds as a full subcategory of $\text{Pr}^L_\star$ (presentable $\infty$-categories with some distinguished objects) via the assignment $\mathcal{C}\mapsto\text{PSh}(\mathcal{C})$.

In this way, any question about $\infty$-categories can be reduced to a question about presentable $\infty$-categories. There are many reasons one may wish to make this reduction. For example, presentable $\infty$-categories can often by modeled by model categories.

Another example comes from the higher algebra of presentable $\infty$-categories (introduced in \cite{HA} Section 4.8.1; also see \cite{GGN}). If $\mathcal{C}$ and $\mathcal{D}$ are presentable $\infty$-categories, there is a presentable $\infty$-category $\mathcal{C}\otimes\mathcal{D}$ satisfying the universal property: A colimit-preserving functor $\mathcal{C}\otimes\mathcal{D}\to\mathcal{E}$ classifies a functor $\mathcal{C}\times\mathcal{D}\to\mathcal{E}$ which preserves colimits in either variable independently.

In this way, $\text{Pr}^L$ is a symmetric monoidal $\infty$-category. To give some $\mathcal{C}\in\text{Pr}^L$ an algebra structure with respect to this external tensor product $\otimes$ is precisely to endow $\mathcal{C}$ with a closed monoidal operation. Hence, the higher algebra of $\text{Pr}^L$ is a powerful tool for constructing monoidal operations; for example, Lurie uses it to construct a symmetric monoidal smash product of spectra, which was considered a difficult problem in homotopy theory, even after it was first solved by Elmendorf-Kriz-Mandell-May \cite{EKMM}. \\

\noindent Our goal in this series of papers is study enriched higher category theory by reducing to the theory of presentable $\infty$-categories. Hopefully, this will allow enriched $\infty$-categories to be used more effectively in subjects that rely on them, such as secondary or iterated algebraic K-theory (see \cite{HSS}) or algebraic K-theory of analytic rings (see \cite{Scholze}).

Enriched higher category theory was developed by Gepner-Haugseng \cite{GH}, and a form of the enriched Yoneda lemma has been proven by Hinich \cite{Hinich}, but the theory so far is highly technical and has been difficult to use. Nonetheless, it is conjectured by experts that:

\begin{conjecture}\label{Conj}
If $\mathcal{V}$ is a presentable, closed monoidal $\infty$-category, the $\infty$-category $\text{Cat}^\mathcal{V}$ of $\mathcal{V}$-enriched categories embeds as a full subcategory of $\text{RMod}_\mathcal{V}(\text{Pr}^L)_\star$, the $\infty$-category of presentable right $\mathcal{V}$-modules with some distinguished objects.
\end{conjecture}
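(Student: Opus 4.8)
The plan is to exhibit an explicit functor $\Phi\colon\text{Cat}^{\mathcal{V}}\to\text{RMod}_{\mathcal{V}}(\text{Pr}^L)_\star$ refining the unenriched assignment $\mathcal{C}\mapsto\text{PSh}(\mathcal{C})$, and then to verify that it is fully faithful by means of an enriched free-cocompletion universal property. On objects $\Phi$ should send a $\mathcal{V}$-enriched category $\mathcal{C}$ to its enriched presheaf category $\text{PSh}^{\mathcal{V}}(\mathcal{C})$, equipped with the representables as distinguished objects. First I would check that $\Phi$ lands in the asserted target: $\text{PSh}^{\mathcal{V}}(\mathcal{C})$ is presentable because limits and colimits of enriched presheaves are computed pointwise (as established in this paper), and it carries a canonical presentable right $\mathcal{V}$-module structure given by the pointwise tensoring $(F,v)\mapsto F\otimes v$. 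This action preserves colimits separately in each variable, hence defines a map in $\text{Pr}^L$, and the enriched Yoneda embedding supplies the distinguished representable objects marking $\Phi(\mathcal{C})$ in $\text{RMod}_{\mathcal{V}}(\text{Pr}^L)_\star$.

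The heart of the argument is an enriched universal property. For any presentable right $\mathcal{V}$-module $\mathcal{M}$, regarded as a $\mathcal{V}$-enriched category via the self-enrichment supplied by its module structure (using that $\mathcal{V}$ is closed and $\mathcal{M}$ presentable, so the action admits the requisite adjoints), restriction along the Yoneda embedding should induce an equivalence
$$\text{Fun}^{L,\mathcal{V}}\bigl(\text{PSh}^{\mathcal{V}}(\mathcal{C}),\mathcal{M}\bigr)\;\simeq\;\text{Fun}^{\mathcal{V}}(\mathcal{C},\mathcal{M}),$$
where the left side denotes $\mathcal{V}$-linear, colimit-preserving functors. I would prove this in two steps. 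Essential surjectivity of the restriction functor is the existence of an enriched left Kan extension along Yoneda, forced by the density result of this paper (every enriched presheaf is a colimit of representables): the extension is fixed on representables and propagated along colimits. Full faithfulness of the restriction functor then follows from the enriched Yoneda lemma, which identifies the value of a $\mathcal{V}$-linear functor on a representable with an evaluation, thereby pinning the functor down up to contractible choice once its restriction to $\mathcal{C}$ is specified.

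Granting this universal property, I would deduce the conjecture by specializing to $\mathcal{M}=\text{PSh}^{\mathcal{V}}(\mathcal{D})$. The mapping space in $\text{RMod}_{\mathcal{V}}(\text{Pr}^L)_\star$ consists of those $\mathcal{V}$-linear colimit-preserving functors carrying representables to representables, i.e.\ preserving the distinguished objects; under the equivalence above these correspond exactly to the enriched functors $\mathcal{C}\to\text{PSh}^{\mathcal{V}}(\mathcal{D})$ whose values are representable. Since the enriched Yoneda embedding $\mathcal{D}\hookrightarrow\text{PSh}^{\mathcal{V}}(\mathcal{D})$ is fully faithful, this full subcategory is canonically $\text{Fun}^{\mathcal{V}}(\mathcal{C},\mathcal{D})$, whose maximal subgroupoid is $\text{Map}_{\text{Cat}^{\mathcal{V}}}(\mathcal{C},\mathcal{D})$. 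Thus $\Phi$ induces equivalences on all mapping spaces and is fully faithful, which is precisely the asserted embedding as a full subcategory.

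The hard part will be the universal property, and specifically its upgrade from an object-level bijection to an equivalence of $\infty$-categories natural in $\mathcal{C}$ and $\mathcal{D}$. Three points demand care: (i) making precise the self-enrichment of a presentable $\mathcal{V}$-module and confirming it is compatible with the action used to form the $\mathcal{V}$-linear functor category; (ii) verifying that left Kan extension along the enriched Yoneda embedding is itself $\mathcal{V}$-linear, which is where the interaction between the pointwise tensoring and the enriched colimits must be controlled coherently rather than one object at a time; and (iii) matching the $\star$-structure with preservation of representables naturally in both variables, so that the comparison assembles into a functor and not merely a family of equivalences of mapping spaces. I expect (ii) to be the genuine obstacle, since it is exactly the coherence the unenriched theory obtains for free from the cartesian structure of $\text{Top}$, but which must be constructed by hand, or extracted from the higher algebra of $\text{Pr}^L$, in the enriched setting.
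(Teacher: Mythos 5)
This statement is Conjecture \ref{Conj}: the paper does not prove it, and explicitly defers the needed ingredients to later installments of the series, so there is no proof of record to compare against. What the paper does establish are the building blocks your outline leans on: presentability of $\text{PSh}^\mathcal{V}(\mathcal{C})$, the right $\mathcal{V}$-module structure via pointwise tensoring, generation under colimits by the $\text{rep}_X\otimes A$, and the universal property $\text{Fun}^L_{\text{RMod}_\mathcal{V}}(\text{PSh}^\mathcal{V}(\mathcal{C}),\mathcal{N})\cong\text{coPSh}^\mathcal{V}(\mathcal{C};\mathcal{N})$. Your overall strategy is consistent with these and is plausibly the intended route, but as a proof it has gaps that are not merely technical.

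First, the functor $\Phi$ is not constructed. Everything in this paper is done for a fixed set $S$ of objects: the functoriality obtained is $\text{PSh}^\mathcal{V}(-):\text{Cat}^\mathcal{V}_S\to\text{RMod}_\mathcal{V}(\text{Pr}^L)$, where morphisms are enriched functors acting as the identity on objects. Extending this to all of $\text{Cat}^\mathcal{V}$ --- i.e.\ to enriched functors that change the object set, with $\text{Cat}^\mathcal{V}$ built as an oplax colimit of the $\text{Cat}^\mathcal{V}_S$ over $\text{Set}^\text{op}$ --- is precisely the step the paper says requires different techniques and postpones; your proposal treats it as given. Second, the ``heart of the argument'' is stated as an equivalence with $\text{Fun}^\mathcal{V}(\mathcal{C},\mathcal{M})$, where $\mathcal{M}$ is a presentable right $\mathcal{V}$-module regarded as self-enriched. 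The form of the universal property actually available is in terms of $\text{coPSh}^\mathcal{V}(\mathcal{C};\mathcal{M})$, and the paper explicitly flags the comparison between copresheaves valued in a $\mathcal{V}$-module and enriched functors to the corresponding $\mathcal{V}$-enriched category as itself only conjectural. So your step (i) is not a point ``demanding care'' but an unproven input on which the whole argument rests. Third, the final identification --- that representable-valued copresheaves $\mathcal{C}\to\text{PSh}^\mathcal{V}(\mathcal{D})$ form a full subcategory equivalent to $\text{Fun}^\mathcal{V}(\mathcal{C},\mathcal{D})$, naturally in both variables, with maximal subgroupoid $\text{Map}_{\text{Cat}^\mathcal{V}}(\mathcal{C},\mathcal{D})$ --- needs a full-faithfulness statement for the enriched Yoneda embedding at the level of enriched categories, which is strictly stronger than the weak mapping-space form $\text{Map}(\text{rep}_X\otimes A,\mathcal{F})\cong\text{Map}(A,\mathcal{F}(X))$ proved here, and it needs the equivalences to be induced by $\Phi$ rather than constructed ad hoc for each pair $(\mathcal{C},\mathcal{D})$. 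You correctly identify coherence as the genuine obstacle, but the list of things to be constructed before the argument closes is longer than your proposal acknowledges.
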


\noindent In other words, if $\mathcal{C}$ is a $\mathcal{V}$-enriched category, there should be an $\infty$-category $\text{PSh}^\mathcal{V}(\mathcal{C})$ of $\mathcal{V}$-enriched presheaves. Moreover, $\text{PSh}^\mathcal{V}(\mathcal{C})$ should be not only presentable but also right tensored over $\mathcal{V}$ via a functor $$\otimes:\text{PSh}^\mathcal{V}(\mathcal{C})\times\mathcal{V}\to\text{PSh}^\mathcal{V}(\mathcal{C}),$$ given roughly by the formula $(\mathcal{F}\otimes A)(X)=\mathcal{F}(X)\otimes A$.

The conjecture asserts that $\text{PSh}^\mathcal{V}(\mathcal{C})$ contains all of the information of the enriched category $\mathcal{C}$. This would completely reduce enriched higher category theory to the higher algebra of presentable $\infty$-categories, which has been well-studied over the past decade.

This is a series of papers aimed towards proving the conjecture. In this first paper, we introduce and study the $\infty$-categories $\text{PSh}^\mathcal{V}(\mathcal{C})$ of enriched presheaves. We think of such an enriched presheaf informally as a contravariant $\mathcal{V}$-enriched functor $\mathcal{C}^\text{op}\to\mathcal{V}$. We prove:

\begin{theorem}\label{PShPrL}
If $\mathcal{V}$ is presentable and closed monoidal, and $\mathcal{C}$ is a $\mathcal{V}$-enriched category, then $\text{PSh}^\mathcal{V}(\mathcal{C})$ is a presentable $\infty$-category.
\end{theorem}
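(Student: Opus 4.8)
\section*{Proof proposal}

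The plan is to realize $\text{PSh}^\mathcal{V}(\mathcal{C})$ as a category of modules over an associative algebra in a presentably monoidal $\infty$-category built from $\mathcal{V}$, and then to invoke the standard fact that module categories over algebras in presentable monoidal $\infty$-categories are again presentable (\cite{HA} Corollary 4.2.3.7). Write $S$ for the (small) space of objects of $\mathcal{C}$. The motivating special case is $S=\ast$: a one-object $\mathcal{V}$-enriched category is exactly an associative algebra $A=\mathcal{C}(\ast,\ast)\in\text{Alg}(\mathcal{V})$, and a $\mathcal{V}$-presheaf on it ought to be precisely a right $A$-module, so $\text{PSh}^\mathcal{V}(\mathcal{C})\simeq\text{RMod}_A(\mathcal{V})$, which is presentable because $\mathcal{V}$ is presentable and closed monoidal. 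The general case proceeds by the same mechanism, with $\mathcal{V}$ replaced by the matrix category $\mathcal{V}^{S\times S}:=\text{Fun}(S\times S,\mathcal{V})$.

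First I would equip $\mathcal{V}^{S\times S}$ with its matrix (Day-convolution) monoidal structure, under which the tensor product is given by the coend-type formula
\[
(M\otimes N)(x,z)=\int^{y\in S}M(x,y)\otimes N(y,z),
\]
and observe that an enriched category structure on $\mathcal{C}$ is the same data as an associative algebra $A_\mathcal{C}\in\text{Alg}(\mathcal{V}^{S\times S})$ whose values $A_\mathcal{C}(x,y)=\mathcal{C}(x,y)$ recover the mapping objects, with the multiplication encoding composition and the unit encoding identities. Simultaneously, $\mathcal{V}^S=\text{Fun}(S,\mathcal{V})$ is left-tensored over $\mathcal{V}^{S\times S}$ by the analogous formula, and I would identify
\[
\text{PSh}^\mathcal{V}(\mathcal{C})\simeq\text{RMod}_{A_\mathcal{C}}(\mathcal{V}^S),
\]
the tensoring over $\mathcal{V}$ being inherited objectwise, matching the informal formula $(\mathcal{F}\otimes A)(X)=\mathcal{F}(X)\otimes A$.

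With this identification in hand, presentability is a sequence of closure arguments. Since $S$ is a small space and $\mathcal{V}$ is presentable, the functor categories $\mathcal{V}^S$ and $\mathcal{V}^{S\times S}$ are presentable, as limits of copies of $\mathcal{V}$ in $\text{Pr}^L$ (\cite{HTT} Section 5.5.3). The essential verification is that the matrix monoidal structure is compatible with colimits, i.e. that $M\otimes(-)$ and $(-)\otimes N$ preserve colimits: this reduces to the fact that colimits in functor categories are computed pointwise, together with the hypothesis that $\mathcal{V}$ is \emph{closed} monoidal, so that $\otimes$ on $\mathcal{V}$ preserves colimits in each variable, and that the coend over $S$ is itself a colimit. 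Granting that $\mathcal{V}^{S\times S}$ is a presentable monoidal $\infty$-category and $\mathcal{V}^S$ is a presentable module over it, the cited module-presentability theorem yields that $\text{RMod}_{A_\mathcal{C}}(\mathcal{V}^S)$ is presentable, completing the proof.

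I expect the main obstacle to be the passage from sets to \emph{spaces} of objects: when $S$ is a genuine space, the matrix multiplication is a homotopy coend rather than a finite sum, so one must argue at the level of $\infty$-operads that (i) the Day-type monoidal structure on $\mathcal{V}^{S\times S}$ exists and is compatible with colimits, and (ii) enriched categories with object-space $S$ coincide with associative algebras therein, in a way compatible with the chosen definition of $\text{PSh}^\mathcal{V}$. If instead $\text{PSh}^\mathcal{V}(\mathcal{C})$ is defined directly as a full subcategory of a functor category cut out by the functoriality conditions, an alternative route is available: show this subcategory is closed under all small limits and colimits and is accessible, whence it is presentable by the intrinsic characterization of presentability (\cite{HTT} Theorem 5.5.1.1), the colimit-closure again resting on $\mathcal{V}$ being closed monoidal.
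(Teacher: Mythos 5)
Your strategy is genuinely different from the paper's and is, in principle, a viable one: reduce the many-object case to the one-object case by passing to the matrix algebra $A_\mathcal{C}\in\text{Alg}(\mathcal{V}^{S\times S})$ and then quote HA 4.2.3.7. But as written there is a genuine gap, and it sits exactly where you write ``I would identify'': the equivalence $\text{PSh}^\mathcal{V}(\mathcal{C})\simeq\text{LMod}_{A_\mathcal{C}}(\mathcal{V}^S)$ is the entire content of the theorem, not a formality. In this paper $\text{PSh}^\mathcal{V}(\mathcal{C})$ is \emph{defined} as the fiber of $\text{Alg}_{\text{LM}_S/\text{Assoc}}(\mathcal{V})\to\text{Alg}_{\text{Assoc}_S/\text{Assoc}}(\mathcal{V})$ over $\mathcal{C}$, so to run your reduction you must (i) construct the matrix monoidal structure on $\mathcal{V}^{S\times S}$ and its action on $\mathcal{V}^S$ as an honest LM-monoidal $\infty$-category, and (ii) produce an equivalence $\text{Alg}_{\text{LM}_S/\text{Assoc}}(\mathcal{V})\simeq\text{LMod}(\mathcal{V}^{S\times S};\mathcal{V}^S)$ lying over the analogous equivalence for $\text{Assoc}_S$. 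Neither step is in the references you cite, and (ii) is an operadic comparison of roughly the same order of difficulty as the one the paper actually carries out; note also that you cannot shortcut it via monadicity of evaluation, since the paper's monadicity statement is deduced \emph{from} presentability (via the adjoint functor theorem) and would be circular here. Two smaller points: in this paper $S$ is a set, not a space, so the worry about homotopy coends is moot (the coend is a coproduct over $S$); and the handedness is off --- with structure maps $\mathcal{C}(X,Y)\otimes\mathcal{F}(Y)\to\mathcal{F}(X)$ a presheaf is a \emph{left} $A_\mathcal{C}$-module, matching the paper's one-object example. Neither of these affects presentability.

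For comparison, the paper never forms a matrix algebra. It builds a strong approximation $\text{LCut}_S:\Delta_{/S}^\text{op}\times\Delta^1\to\text{LM}_S$ generalizing Lurie's $\Delta^\text{op}\times\Delta^1\to\text{LM}$, uses it to identify $\text{PSh}^\mathcal{V}(\mathcal{C};\mathcal{M})$ with an $\infty$-category of marked sections of an explicit locally cocartesian fibration over $\Delta_{/S}^\text{op}$ (Proposition \ref{PropPShModel}), and then applies HTT 5.4.7.11 relative to $\text{Pr}^L\subseteq\widehat{\text{Cat}}$ to obtain presentability together with the computation of limits and colimits via $\text{ev}_X$ (Theorem \ref{PShPrL2}); this is essentially your fallback ``intrinsic characterization'' route made precise, the section-category description being what makes accessibility and (co)limit closure checkable. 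If you want to pursue the matrix-algebra route, expect the missing comparison (ii) to be proved by exactly this kind of $\mathbb{A}_\infty$/approximation argument, so it complements rather than bypasses the paper's Section \ref{S4}; its payoff would be that the right $\mathcal{V}$-action and the tensor-product results of Section \ref{S5} would then follow from Lurie's module machinery rather than requiring separate constructions.
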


\begin{theorem}\label{PropRMPSh}
If $\mathcal{F}\in\text{PSh}^\mathcal{V}(\mathcal{C})$ and $A\in\mathcal{V}$, let $\mathcal{F}\otimes A$ denote the presheaf roughly given by $(\mathcal{F}\otimes A)(X)=\mathcal{F}(X)\otimes A$. This $\otimes$ makes $\text{PSh}^\mathcal{V}(\mathcal{C})$ a presentable right $\mathcal{V}$-module.
\end{theorem}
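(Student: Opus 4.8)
The plan is to enhance the presentation of $\text{PSh}^\mathcal{V}(\mathcal{C})$ used to prove Theorem \ref{PShPrL} from a diagram in $\text{Pr}^L$ to a diagram in the $\infty$-category $\text{RMod}_\mathcal{V}(\text{Pr}^L)$ of presentable right $\mathcal{V}$-modules. The basic mechanism is that $\mathcal{V}$, being presentable and closed monoidal, is an associative algebra object of the symmetric monoidal $\infty$-category $(\text{Pr}^L,\otimes)$ described in the introduction (\cite{HA} 4.8.1), and that the forgetful functor $U\colon\text{RMod}_\mathcal{V}(\text{Pr}^L)\to\text{Pr}^L$ is conservative and creates limits. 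Consequently, to produce a right $\mathcal{V}$-module whose underlying presentable $\infty$-category is $\text{PSh}^\mathcal{V}(\mathcal{C})$, it suffices to lift the limit diagram computing $\text{PSh}^\mathcal{V}(\mathcal{C})$ in $\text{Pr}^L$ to a diagram of right $\mathcal{V}$-modules and $\mathcal{V}$-linear maps.

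First I would identify the module structure on the building blocks. The terms of the defining diagram are functor categories of the form $\text{Fun}(S,\mathcal{V})\simeq\prod_S\mathcal{V}$ for spaces $S$ of objects; each is a limit in $\text{Pr}^L$ of copies of $\mathcal{V}$ regarded as a right module over itself, hence carries the pointwise right action $(A_s)_{s}\otimes B=(A_s\otimes B)_{s}$. The transition maps of the diagram are assembled from restriction along maps of object-spaces and from tensoring with the mapping objects $\mathcal{C}(X,Y)\in\mathcal{V}$; since $\mathcal{V}$ is closed monoidal, tensoring preserves colimits in each variable, and these operations are computed pointwise, so each transition map is colimit-preserving and $\mathcal{V}$-linear. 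The content of this step is to promote the entire diagram---not merely its objects and edges individually---to a functor landing in $\text{RMod}_\mathcal{V}(\text{Pr}^L)$, supplying all the higher $\mathcal{V}$-linearity coherences at once; I would do this by exhibiting the building blocks and structure maps as the images of canonical, manifestly module-enhanced constructions (products and restriction functors of the self-module $\mathcal{V}$), rather than checking linearity by hand.

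Having lifted the diagram, I would invoke that $U$ creates limits: the limit of the lifted diagram is a right $\mathcal{V}$-module whose underlying object is the limit of the original diagram, namely $\text{PSh}^\mathcal{V}(\mathcal{C})$, which is presentable by Theorem \ref{PShPrL} and therefore lies in $\text{RMod}_\mathcal{V}(\text{Pr}^L)$. Finally, to confirm that the resulting action agrees with the stated formula, I would note that for each object $X$ of $\mathcal{C}$ the evaluation functor $\text{ev}_X\colon\text{PSh}^\mathcal{V}(\mathcal{C})\to\mathcal{V}$ is the corresponding projection out of the limit and is thus $\mathcal{V}$-linear; hence $(\mathcal{F}\otimes A)(X)=\text{ev}_X(\mathcal{F}\otimes A)\simeq\text{ev}_X(\mathcal{F})\otimes A=\mathcal{F}(X)\otimes A$.

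The main obstacle is precisely the coherent lift in the second step: individually the objects are products of $\mathcal{V}$ and the maps are visibly $\mathcal{V}$-linear, but assembling them into a single diagram valued in $\text{RMod}_\mathcal{V}(\text{Pr}^L)$ requires the higher homotopies witnessing compatibility of the action with all composites. I expect this to follow formally once the defining diagram is recognized as obtained by applying module-valued functoriality (base change for the self-module $\mathcal{V}$ along the object-space data of $\mathcal{C}$); if instead $\text{PSh}^\mathcal{V}(\mathcal{C})$ was constructed directly as a category of modules over an algebra built from $\mathcal{C}$, the right $\mathcal{V}$-action is structural and the theorem is essentially immediate, with only the identification of the action with the pointwise formula remaining.
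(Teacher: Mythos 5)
There is a genuine gap, and it sits exactly where you flag the ``main obstacle.'' Your strategy---lift a limit presentation of $\text{PSh}^\mathcal{V}(\mathcal{C})$ along the limit-creating forgetful functor $\text{RMod}_\mathcal{V}(\text{Pr}^L)\to\text{Pr}^L$---founders on its premise: the paper does not exhibit $\text{PSh}^\mathcal{V}(\mathcal{C})$ as a conical limit in $\text{Pr}^L$ of terms $\prod_S\mathcal{V}$, and no such presentation is available. The proof of Theorem \ref{PShPrL} realizes $\text{PSh}^\mathcal{V}(\mathcal{C};\mathcal{M})$ as the $\infty$-category of \emph{marked sections} of a (locally cocartesian) fibration over $\Delta^{\text{op}}_{/S}$ (Proposition \ref{PropPShModel}, Theorem \ref{PShPrL2} via HTT 5.4.7.11); this is a partially lax limit, not a diagram of products of $\mathcal{V}$ with $\mathcal{V}$-linear transition maps. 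Already for $|S|=1$ the object in question is $\text{LMod}_A(\mathcal{V})$, an Eilenberg--Moore-type construction rather than a conical limit of copies of $\mathcal{V}$; the ``higher homotopies witnessing compatibility'' that you hope will ``follow formally'' are precisely the content that has to be built, and deferring them leaves the action unconstructed. Your closing alternative (realize $\text{PSh}^\mathcal{V}(\mathcal{C})$ as modules over an algebra built from $\mathcal{C}$ and inherit the action structurally) is closer to viable---the paper does prove $\text{ev}:\text{PSh}^\mathcal{V}(\mathcal{C};\mathcal{M})\to\mathcal{M}^{\times S}$ is monadic---but identifying that monad coherently with tensoring by an algebra in a way that carries a commuting right $\mathcal{V}$-action is again the same coherence problem in different clothing.

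The paper solves this by working at the level of $\infty$-operads: it introduces the pairing $\left\langle -,-\right\rangle:\text{LM}_S\times\text{RM}\to\text{Assoc}_{S_{+}\amalg\{1\}}$ (Section \ref{S52}), pulls $\smallint\mathcal{V}$ back to $\smallint^\prime\mathcal{V}\to\text{LM}\times\text{RM}$, and shows via a categorical-patterns argument (Lemma \ref{LemBad}, following HA 4.3.2.5) that $\widebar{\text{PSh}}^\mathcal{V}_S=\text{Fun}^\dag_{/\text{LM}}(\text{LM}_S^\mathsection,\smallint^\prime\mathcal{V}^{!})\to\text{RM}$ is a cocartesian fibration of $\infty$-operads; the fibers are identified as $\text{PSh}^\mathcal{V}_S$ and $\mathcal{V}$ using Corollary \ref{CorTriv}, and the action on the fiber $\text{PSh}^\mathcal{V}(\mathcal{C})$ is obtained by pullback along $\mathcal{C}_\ast$ (Definition \ref{DefRMPSh}, Theorem \ref{PropRMPSh2}). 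Only after that does the argument proceed as you do: Proposition \ref{PropEv1} shows $\text{ev}_X$ is a right $\mathcal{V}$-module functor, giving $(\mathcal{F}\otimes A)(X)\simeq\mathcal{F}(X)\otimes A$, and Theorem \ref{PropPrRight} deduces presentability of the action from Theorem \ref{PShPrL2} exactly as in your last step. So the final third of your proposal matches the paper; the missing piece is the operadic (or equivalent) construction that makes the action exist in the first place.
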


\noindent In order to understand a presentable $\infty$-category, we generally ask two questions: How can we compute limits and colimits? What is a set of objects which generates the $\infty$-category under colimits? We completely answer both these questions:

\begin{corollary}\label{CorPFib1}
The limit and colimit of $F:I\to\text{PSh}^\mathcal{V}(\mathcal{C})$ are given by $(\text{lim}_{\text{PSh}^\mathcal{V}(\mathcal{C})}F)(X)=\text{lim}_\mathcal{V}(FX)$ and $(\text{colim}_{\text{PSh}^\mathcal{V}(\mathcal{C})}F)(X)=\text{colim}_\mathcal{V}(FX)$.
\end{corollary}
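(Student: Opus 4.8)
By Theorem~\ref{PShPrL} the $\infty$-category $\text{PSh}^\mathcal{V}(\mathcal{C})$ is presentable, so the limit and colimit of $F\colon I\to\text{PSh}^\mathcal{V}(\mathcal{C})$ exist and the only content of the statement is the objectwise formula. My plan is to reduce both formulas to a single assertion: for each object $X$ of $\mathcal{C}$, the evaluation functor $\text{ev}_X\colon\text{PSh}^\mathcal{V}(\mathcal{C})\to\mathcal{V}$, $\mathcal{F}\mapsto\mathcal{F}(X)$, preserves limits and colimits. Granting this, the computation is immediate, since $(\text{lim}\,F)(X)=\text{ev}_X(\text{lim}\,F)=\text{lim}_\mathcal{V}(\text{ev}_X\circ F)=\text{lim}_\mathcal{V}(FX)$, and identically for colimits.

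To prove the preservation properties I would factor each evaluation as $\text{ev}_X=\pi_X\circ U$, where $U\colon\text{PSh}^\mathcal{V}(\mathcal{C})\to\prod_X\mathcal{V}$ is the ``underlying objects'' functor $\mathcal{F}\mapsto(\mathcal{F}(X))_X$ into the product indexed by the objects $X$ of $\mathcal{C}$, and $\pi_X$ is the projection onto the factor indexed by $X$. Since limits and colimits in a product are computed factorwise, the projections $\pi_X$ preserve everything, so it suffices to show that $U$ \emph{creates} limits and colimits. The limit case is formal: a limit of the underlying families carries a canonical action of the hom-objects of $\mathcal{C}$, assembled from the projections of the limit, and this exhibits it as the limit in $\text{PSh}^\mathcal{V}(\mathcal{C})$; equivalently, $U$ plays the role of the forgetful functor from modules over the $\mathcal{V}$-enriched category $\mathcal{C}$, and such forgetful functors always create limits. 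It is also worth recording that $U$ is conservative, so that the objectwise data $(\mathcal{F}(X))_X$ detects equivalences of presheaves, though only the preservation statement is needed for the formula.

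The colimit case is the main obstacle, and is exactly where the hypotheses on $\mathcal{V}$ are used. Here I must show that the objectwise colimit $X\mapsto\text{colim}_\mathcal{V}(FX)$ again carries a presheaf structure and computes the colimit in $\text{PSh}^\mathcal{V}(\mathcal{C})$. The presheaf structure amounts to an action of the hom-objects, which on the value at $Y$ is controlled by the functor $-\otimes\text{Hom}_\mathcal{C}(X,Y)$ on $\mathcal{V}$; because $\mathcal{V}$ is presentable and closed monoidal, each such functor is a left adjoint and hence preserves colimits, so the action passes to the objectwise colimit and $U$ creates colimits as well. Concretely this is the assertion that the constructions underlying Theorems~\ref{PShPrL} and~\ref{PropRMPSh} are compatible with objectwise colimits; unlike the limit case this compatibility is not formal and relies precisely on the cocontinuity of the tensor product of $\mathcal{V}$. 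Once $U$ is shown to create colimits, the factorization $\text{ev}_X=\pi_X\circ U$ yields the colimit formula and completes the proof.
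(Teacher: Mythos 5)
Your overall strategy coincides with the paper's: the corollary is deduced from the statement that each $\text{ev}_X$ preserves limits and colimits and that the family $(\text{ev}_X)_{X\in S}$ jointly detects them (Theorem \ref{PShPrL2}(2) and Corollary \ref{CorPFib1b}), and you correctly locate where the hypotheses enter --- the limit case needs nothing from $\mathcal{V}$, while the colimit case uses that $\mathcal{C}(X,Y)\otimes -$ preserves colimits because $\mathcal{V}$ is closed monoidal. The reduction $(\lim F)(X)=\lim_\mathcal{V}(FX)$ given these facts is exactly as you say.

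The gap is in your justification of the central claim that $U$ creates limits and colimits. Your argument --- ``a limit of the underlying families carries a canonical action of the hom-objects, assembled from the projections of the limit,'' and dually that ``the action passes to the objectwise colimit'' --- is a $1$-categorical construction: it builds the module structure on the (co)limit by hand. In the $\infty$-categorical setting one cannot assemble the infinite tower of coherence data this way, and $\text{PSh}^\mathcal{V}(\mathcal{C})$ is defined as a \emph{fiber} of $\theta:\text{Alg}_{\text{LM}_S}(\mathcal{V})\to\text{Alg}_{\text{Assoc}_S}(\mathcal{V})$, a description the paper explicitly flags as too rigid to compute with directly. This is precisely why Section \ref{S4} exists: the paper first re-models presheaves as marked sections of a locally cocartesian fibration over $\Delta_{/S}^{\text{op}}$ (Proposition \ref{PropPShModel}), and then the creation statements are extracted fiberwise from HA 4.2.3.1 via Lemma \ref{LemCartFib} (limits) and from HTT 5.4.7.11 applied to the subcategory $\text{Pr}^L\subseteq\widehat{\text{Cat}}$ (presentability and colimits). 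A monadicity argument could in principle replace this, but note that in the paper the monadicity of $\text{ev}$ is itself \emph{deduced} from Corollary \ref{CorPFib1b} (conservativity is the case $K=\ast$, and the existence of the left adjoint uses presentability), so invoking it here without an independent proof would be circular. Your proposal therefore has the right architecture but omits the technical device that makes the key step provable.
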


\begin{corollary}\label{CorFreeCo2}
As a presentable right $\mathcal{V}$-module, $\text{PSh}^\mathcal{V}(\mathcal{C})$ is generated by the representable presheaves. That is, every presheaf is a colimit of presheaves of the form $\text{rep}_X\otimes A$, where $X\in\mathcal{C}$ and $A\in\mathcal{V}$.
\end{corollary}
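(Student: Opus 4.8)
The plan is to realize the objects $\text{rep}_X\otimes A$ as the values of left adjoints to the evaluation functors, and then deduce generation from a joint-conservativity argument. First, for each $X\in\mathcal{C}$ I would consider the evaluation functor $\text{ev}_X:\text{PSh}^\mathcal{V}(\mathcal{C})\to\mathcal{V}$, $\mathcal{F}\mapsto\mathcal{F}(X)$. By Corollary \ref{CorPFib1} the limits and colimits of presheaves are computed pointwise, so $\text{ev}_X$ preserves all limits and colimits; moreover the pointwise formula $(\mathcal{F}\otimes A)(X)=\mathcal{F}(X)\otimes A$ of Theorem \ref{PropRMPSh} exhibits $\text{ev}_X$ as a right $\mathcal{V}$-module functor. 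Since $\text{PSh}^\mathcal{V}(\mathcal{C})$ and $\mathcal{V}$ are presentable (Theorem \ref{PShPrL}) and $\text{ev}_X$ preserves colimits, the adjoint functor theorem supplies a left adjoint $L_X:\mathcal{V}\to\text{PSh}^\mathcal{V}(\mathcal{C})$.

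Second, I would identify this left adjoint. Because $\text{ev}_X$ is a colimit-preserving $\mathcal{V}$-module functor between presentable right $\mathcal{V}$-modules, its left adjoint inherits a canonical $\mathcal{V}$-module structure; hence $L_X(A)\cong L_X(\mathbbm{1}\otimes A)\cong L_X(\mathbbm{1})\otimes A$ for all $A\in\mathcal{V}$. Writing $\text{rep}_X:=L_X(\mathbbm{1})$, which agrees with the representable presheaf at $X$, this gives $L_X(A)\cong\text{rep}_X\otimes A$. Thus the objects of the form $\text{rep}_X\otimes A$ are exactly the values of the left adjoints to evaluation.

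Third, I would run the generation argument. Let $\mathcal{D}\subseteq\text{PSh}^\mathcal{V}(\mathcal{C})$ be the smallest full subcategory closed under colimits and containing all $\text{rep}_X\otimes A$. It is generated under colimits by a set, hence presentable, and the inclusion $i:\mathcal{D}\hookrightarrow\text{PSh}^\mathcal{V}(\mathcal{C})$ preserves colimits, so it admits a right adjoint $R$ and is fully faithful. To see that $i$ is essentially surjective it suffices to show the counit $iR\mathcal{F}\to\mathcal{F}$ is an equivalence for every $\mathcal{F}$. Applying $\text{ev}_X$ and using the adjunction $L_X\dashv\text{ev}_X$ together with $\text{rep}_X\otimes A=L_X(A)\in\mathcal{D}$, one checks that for every $A\in\mathcal{V}$ the map
$$\text{Map}_\mathcal{V}(A,\text{ev}_X(iR\mathcal{F}))\longrightarrow\text{Map}_\mathcal{V}(A,\text{ev}_X\mathcal{F})$$
induced by the counit is an equivalence, since both sides compute $\text{Map}(L_XA,-)$ and $i$ is fully faithful. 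By the Yoneda lemma in $\mathcal{V}$, the image $\text{ev}_X(iR\mathcal{F}\to\mathcal{F})$ is an equivalence for every $X$. Finally, the evaluation functors $\{\text{ev}_X\}_{X\in\mathcal{C}}$ are \emph{jointly conservative}---a morphism of presheaves is an equivalence as soon as it becomes one after every evaluation, which is immediate from the pointwise description underlying Corollary \ref{CorPFib1}. Joint conservativity then forces the counit to be an equivalence, so $\mathcal{D}=\text{PSh}^\mathcal{V}(\mathcal{C})$ and every presheaf is a colimit of objects $\text{rep}_X\otimes A$.

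The main obstacle I expect is the second step: rigorously upgrading $L_X$ to a $\mathcal{V}$-module functor and identifying $L_X(\mathbbm{1})$ with the representable $\text{rep}_X$. This is where the module structure of Theorem \ref{PropRMPSh} must be combined with the adjoint functor theorem with care---in particular invoking that, within $\text{RMod}_\mathcal{V}(\text{Pr}^L)$, adjoints of module functors are again module functors. The joint conservativity in the last step, while intuitively clear, likewise depends on the precise construction of $\text{PSh}^\mathcal{V}(\mathcal{C})$ and should be recorded explicitly.
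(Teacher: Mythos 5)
Your proof is correct, and it takes a genuinely different route from the paper in its final step. The paper (Corollary \ref{CorFreeCo}, upgraded to Corollary \ref{CorFreeCo2} via Proposition \ref{PropEv1}) packages all the evaluations into a single functor $\text{ev}:\text{PSh}^\mathcal{V}(\mathcal{C})\to\mathcal{V}^{\times S}$, proves it is monadic (it preserves limits and colimits, hence has a left adjoint, and is conservative by Corollary \ref{CorPFib1b} with $K=\ast$), and then quotes the bar resolution $E\otimes_E M$ to exhibit every presheaf as an explicit geometric realization of free presheaves $\coprod_{X}\text{rep}_X\otimes A_X$. You instead form the smallest colimit-closed full subcategory $\mathcal{D}$ containing the $\text{rep}_X\otimes A$, use presentability to produce a right adjoint to the inclusion, and show the counit is an equivalence using the adjunction $L_X\dashv\text{ev}_X$ together with joint conservativity of the $\text{ev}_X$. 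Both arguments rest on the same three inputs (evaluation preserves limits and colimits, its left adjoint is $\text{rep}_X\otimes-$, and the evaluations are jointly conservative); the paper's version additionally produces an explicit simplicial resolution of each presheaf, while yours is more elementary and avoids monads entirely. (Minor point: to keep $\mathcal{D}$ generated by a set, let $A$ range over a set of colimit generators of $\mathcal{V}$; since $\text{rep}_X\otimes-$ preserves colimits, nothing is lost.)

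One caution on your second step, which you rightly flag as the obstacle: ``adjoints of module functors are module functors'' is not a formal fact. The left adjoint of a strong right $\mathcal{V}$-module functor is a priori only oplax, and upgrading the structure map relating $L_X(1)\otimes A$ and $L_X(A)$ to an equivalence is exactly where the content lies. The paper discharges this by first identifying the left adjoint of $\text{ev}_X$ with the \emph{free} presheaf construction $M\mapsto\text{rep}_X\otimes M$, with $(\text{rep}_X\otimes M)(Y)\cong\mathcal{C}(Y,X)\otimes M$ (Theorem \ref{ThmFree}, Corollary \ref{CorLAdj}), and then checking that the \emph{tensor} presheaf has the same evaluations, hence agrees with the free one (Proposition \ref{PropEv1}(2)); the module structure on the adjoint (Corollary \ref{CorEv1}) is deduced from this rather than the other way around. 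If you route through the module structure on $L_X$ directly, you will end up verifying the same pointwise formula, so citing Corollary \ref{CorLAdj} and Proposition \ref{PropEv1} is the cleaner way to close that step.
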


\noindent It is illustrative to compare to the ordinary theory of $\infty$-categories, which are categories enriched in the $\infty$-category Top of spaces. In this case, we have $\text{PSh}(\mathcal{C})=\text{Fun}(\mathcal{C}^\text{op},\text{Top})$. However, it is also known that if $\mathcal{M}$ is any presentable $\infty$-category, $\text{Fun}(\mathcal{C}^\text{op},\mathcal{M})\cong\text{PSh}(\mathcal{C})\otimes\mathcal{M}$.

We will prove an enriched version of this equivalence by constructing an $\infty$-category $\text{PSh}^\mathcal{V}(\mathcal{C};\mathcal{M})$ of presheaves \emph{with values in $\mathcal{M}$}. This construction makes sense provided that $\mathcal{M}$ is a left $\mathcal{V}$-module.

\begin{theorem}\label{ThmS5}
If $\mathcal{M}$ is a presentable left $\mathcal{V}$-module, then $$\text{PSh}^\mathcal{V}(\mathcal{C};\mathcal{M})\cong\text{PSh}^\mathcal{V}(\mathcal{C})\otimes_\mathcal{V}\mathcal{M}.$$
\end{theorem}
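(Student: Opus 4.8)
The plan is to build a colimit-preserving comparison functor $\Phi\colon\text{PSh}^\mathcal{V}(\mathcal{C})\otimes_\mathcal{V}\mathcal{M}\to\text{PSh}^\mathcal{V}(\mathcal{C};\mathcal{M})$ and then prove it is an equivalence. The functor arises from the external product $\boxtimes\colon\text{PSh}^\mathcal{V}(\mathcal{C})\times\mathcal{M}\to\text{PSh}^\mathcal{V}(\mathcal{C};\mathcal{M})$ given informally by $(\mathcal{F}\boxtimes M)(X)=\mathcal{F}(X)\otimes M$, using the left $\mathcal{V}$-action on $\mathcal{M}$. First I would check that $\boxtimes$ preserves colimits in each variable separately, which is immediate from the pointwise formula for colimits of presheaves (Corollary \ref{CorPFib1}) together with the fact that $-\otimes M\colon\mathcal{V}\to\mathcal{M}$ and $\mathcal{F}(X)\otimes-\colon\mathcal{M}\to\mathcal{M}$ each preserve colimits. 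Next I would check that $\boxtimes$ is $\mathcal{V}$-balanced, i.e. $(\mathcal{F}\otimes A)\boxtimes M\cong\mathcal{F}\boxtimes(A\otimes M)$, which on objects is just the associativity constraint $(\mathcal{F}(X)\otimes A)\otimes M\cong\mathcal{F}(X)\otimes(A\otimes M)$ relating the right $\mathcal{V}$-action on $\text{PSh}^\mathcal{V}(\mathcal{C})$ (Theorem \ref{PropRMPSh}) to the left $\mathcal{V}$-action on $\mathcal{M}$. By the universal property of the relative tensor product $\otimes_\mathcal{V}$ in $\text{Pr}^L$, such a balanced, separately colimit-preserving functor factors uniquely through a colimit-preserving functor out of $\text{PSh}^\mathcal{V}(\mathcal{C})\otimes_\mathcal{V}\mathcal{M}$, yielding $\Phi$.

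Essential surjectivity should be the easy half. Writing $\iota\colon\text{PSh}^\mathcal{V}(\mathcal{C})\times\mathcal{M}\to\text{PSh}^\mathcal{V}(\mathcal{C})\otimes_\mathcal{V}\mathcal{M}$ for the universal balanced functor, the objects $\iota(\mathcal{F},M)$ generate the target under colimits; since $\iota$ is balanced and bilinear and every $\mathcal{F}$ is a colimit of presheaves $\text{rep}_X\otimes A$ by Corollary \ref{CorFreeCo2}, these reduce to the objects $\iota(\text{rep}_X,M)$. Because $\Phi$ preserves colimits and carries $\iota(\text{rep}_X,M)$ to the representable $\mathcal{M}$-valued presheaf $\text{rep}_X\boxtimes M$, and because these generate $\text{PSh}^\mathcal{V}(\mathcal{C};\mathcal{M})$ under colimits (the $\mathcal{M}$-valued analogue of Corollary \ref{CorFreeCo2}, which I would prove alongside the construction of $\text{PSh}^\mathcal{V}(\mathcal{C};\mathcal{M})$), the essential image of $\Phi$ is a colimit-closed subcategory containing a generating family, hence all of the target.

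The real work lies in full faithfulness, and this is where I expect the main obstacle. The difficulty is that mapping spaces out of the relative tensor product are not computed pointwise, so I cannot directly read off $\text{Map}(\iota(\text{rep}_X,M),-)$ the way Corollary \ref{CorPFib1} lets me compute in $\text{PSh}^\mathcal{V}(\mathcal{C};\mathcal{M})$. The plan is to resolve the source by its two-sided bar construction, $\text{PSh}^\mathcal{V}(\mathcal{C})\otimes_\mathcal{V}\mathcal{M}\simeq\lvert\,[n]\mapsto\text{PSh}^\mathcal{V}(\mathcal{C})\otimes\mathcal{V}^{\otimes n}\otimes\mathcal{M}\,\rvert$, and to reduce to two inputs. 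The first is the \emph{free-coefficient case}: for $\mathcal{X}\in\text{Pr}^L$ with free left $\mathcal{V}$-module $\mathcal{V}\otimes\mathcal{X}$, one has $\text{PSh}^\mathcal{V}(\mathcal{C};\mathcal{V}\otimes\mathcal{X})\cong\text{PSh}^\mathcal{V}(\mathcal{C})\otimes\mathcal{X}$, which should follow directly from the universal property of $\text{PSh}^\mathcal{V}(\mathcal{C})$ just as $\text{Fun}(\mathcal{C}^{\text{op}},\mathcal{X})\cong\text{PSh}(\mathcal{C})\otimes\mathcal{X}$ does in the unenriched setting. The second is that $\text{PSh}^\mathcal{V}(\mathcal{C};-)$, regarded as a functor of the coefficient module, preserves the geometric realization computing $\mathcal{M}\simeq\lvert\,[n]\mapsto\mathcal{V}\otimes\mathcal{V}^{\otimes n}\otimes\mathcal{M}\,\rvert$; combining the two identifies $\text{PSh}^\mathcal{V}(\mathcal{C};\mathcal{M})$ with the realization of $[n]\mapsto\text{PSh}^\mathcal{V}(\mathcal{C})\otimes\mathcal{V}^{\otimes n}\otimes\mathcal{M}$, i.e. with $\text{PSh}^\mathcal{V}(\mathcal{C})\otimes_\mathcal{V}\mathcal{M}$, compatibly with $\Phi$. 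Establishing this preservation of realizations — and checking it is $\Phi$ that implements the resulting equivalence levelwise — is the step I expect to require the most care, since it demands controlling how the presheaf construction interacts with colimits of modules rather than merely colimits within a fixed presheaf category.
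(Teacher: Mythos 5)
Your construction of $\Phi$ and your essential-surjectivity argument match the paper in substance: the paper builds the same comparison functor $\Psi$ (via the explicit bar-construction model of $\otimes_\mathcal{V}$ from Proposition \ref{PropLR}, which is how it supplies the full coherence data that your appeal to ``the universal property for balanced functors'' glosses over), and it isolates exactly the two properties you use, namely $\Psi(\mathcal{F}\otimes M)(X)\cong\mathcal{F}(X)\otimes M$ and freeness of $\Psi(\text{rep}_X\otimes M)$ (Lemma \ref{LemComp}). Where you diverge is the hard half. The paper does not attempt essential surjectivity and full faithfulness separately; it observes that both sides carry monadic functors to $\mathcal{M}^S$ given by evaluation at objects --- conservativity and preservation of limits and colimits for $\text{PSh}^\mathcal{V}(\mathcal{C};\mathcal{M})\to\mathcal{M}^S$ come from Corollary \ref{CorPFib1b}, and monadicity of $\text{PSh}^\mathcal{V}(\mathcal{C})\otimes_\mathcal{V}\mathcal{M}\to\mathcal{M}^S$ is Lemma \ref{LemS5Prep}, which rests on the identification $\text{LMod}_A(\mathcal{Y})\otimes_\mathcal{V}\mathcal{M}\cong\text{LMod}_A(\mathcal{Y}\otimes_\mathcal{V}\mathcal{M})$ of HA 4.8.4.6 --- and then invokes Barr--Beck, reducing everything to a comparison of monads on free objects, i.e.\ to Lemma \ref{LemComp}(2).

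The gap in your plan is concentrated in the two inputs you defer. First, the free-coefficient case as you justify it is circular in the logical order of this paper: the universal property of $\text{PSh}^\mathcal{V}(\mathcal{C})$ (Theorem \ref{ThmS6}) is proved \emph{after}, and \emph{using}, the present theorem, so you cannot cite it here; you would need an independent proof of $\text{PSh}^\mathcal{V}(\mathcal{C};\mathcal{V}\otimes\mathcal{X})\cong\text{PSh}^\mathcal{V}(\mathcal{C})\otimes\mathcal{X}$, and the natural one goes through the monadic description $\text{PSh}^\mathcal{V}(\mathcal{C};\mathcal{M})\cong\text{LMod}_E(\mathcal{M}^S)$. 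Second, the claim that $\text{PSh}^\mathcal{V}(\mathcal{C};-)$ carries the bar resolution of $\mathcal{M}$ to a colimit diagram in $\text{Pr}^L$ is not an innocuous technical check: it is essentially equivalent to the theorem, since $\text{PSh}^\mathcal{V}(\mathcal{C};-)$ is defined as a category of sections/algebras and there is no a priori reason it should commute with colimits of the coefficient module taken in $\text{Pr}^L$. The standard way to establish exactly this kind of commutation is the Barr--Beck comparison over $\mathcal{M}^S$ together with HA 4.8.4.6 --- which is to say, your route, once the deferred steps are actually carried out, collapses into the paper's argument rather than offering an alternative to it. If you want to complete your proof, I would recommend replacing the ``preservation of realizations'' step by the monadicity of both evaluation functors and a comparison of monads on the free presheaves $\text{rep}_X\otimes M$.
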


\noindent We have already announced that $\text{PSh}^\mathcal{V}(\mathcal{C})$ is generated by representable presheaves. In ordinary category theory, something much stronger is true: $\text{PSh}(\mathcal{C})$ is \emph{freely} generated by representable presheaves, in the sense that there are equivalences of $\infty$-categories $\text{Fun}^L(\text{PSh}(\mathcal{C}),\mathcal{D})\cong\text{Fun}(\mathcal{C},\mathcal{D})$.

Our last main theorem will be a generalization of this statement to enriched category theory. In order to make the statement, we introduce another variant on the enriched presheaf construction.

That is, we introduce an $\infty$-category $\text{coPSh}^\mathcal{V}(\mathcal{C})$ of $\mathcal{V}$-enriched \emph{copresheaves}, which are roughly covariant functors $\mathcal{C}\to\mathcal{V}$.

\begin{theorem}\label{ThmS6}
If $\mathcal{V}$ is presentable and closed monoidal, $\text{PSh}^\mathcal{V}(\mathcal{C})$ is dualizable as a right $\mathcal{V}$-module in $\text{Pr}^L$, and its dual is $\text{coPSh}^\mathcal{V}(\mathcal{C})$.
\end{theorem}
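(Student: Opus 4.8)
The plan is to establish the duality by exhibiting explicit evaluation and coevaluation maps in the symmetric monoidal $\infty$-category $\text{RMod}_\mathcal{V}(\text{Pr}^L)$ and verifying the triangle identities. Recall that an object $\mathcal{P}$ is dualizable with dual $\mathcal{Q}$ if there are morphisms $\text{coev}:\mathcal{V}\to\mathcal{P}\otimes_\mathcal{V}\mathcal{Q}$ and $\text{ev}:\mathcal{Q}\otimes_\mathcal{V}\mathcal{P}\to\mathcal{V}$ (the unit being $\mathcal{V}$ itself) satisfying the usual zig-zag identities. Here I take $\mathcal{P}=\text{PSh}^\mathcal{V}(\mathcal{C})$ and $\mathcal{Q}=\text{coPSh}^\mathcal{V}(\mathcal{C})$. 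The first step is to identify the evaluation map. Morally, $\text{ev}$ sends a pair consisting of a copresheaf $\mathcal{G}\colon\mathcal{C}\to\mathcal{V}$ and a presheaf $\mathcal{F}\colon\mathcal{C}^\text{op}\to\mathcal{V}$ to the coend $\int^{X\in\mathcal{C}}\mathcal{F}(X)\otimes\mathcal{G}(X)$, which lives in $\mathcal{V}$; by the universal property of the relative tensor product over $\mathcal{V}$ (Theorem \ref{ThmS5} and the construction of $\otimes_\mathcal{V}$), this assembles into a colimit-preserving $\mathcal{V}$-linear functor out of $\text{coPSh}^\mathcal{V}(\mathcal{C})\otimes_\mathcal{V}\text{PSh}^\mathcal{V}(\mathcal{C})$.

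The second step is to produce the coevaluation. The key input is Corollary \ref{CorFreeCo2}: since $\text{PSh}^\mathcal{V}(\mathcal{C})$ is generated as a right $\mathcal{V}$-module by the representables $\text{rep}_X$, a $\mathcal{V}$-linear colimit-preserving functor out of it is determined by where it sends the $\text{rep}_X$. Dually, I would use the analogous generation statement for copresheaves by corepresentables $\text{corep}_X$. The coevaluation $\mathcal{V}\to\text{PSh}^\mathcal{V}(\mathcal{C})\otimes_\mathcal{V}\text{coPSh}^\mathcal{V}(\mathcal{C})$, being $\mathcal{V}$-linear and colimit-preserving from the free rank-one module, is determined by the image of the unit; I would send the monoidal unit $\mathbbm{1}_\mathcal{V}$ to the object $\bigoplus$-style colimit assembled from $\text{rep}_X\otimes\text{corep}_X$ over $X\in\mathcal{C}$, made precise as the image of the identity bimodule under the coend pairing. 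Concretely, this is the object representing the diagonal, built from the Yoneda data; the enriched Yoneda lemma (the values $\text{rep}_X(Y)=\text{corep}_X(Y)=\underline{\text{Map}}_\mathcal{C}(Y,X)$) is what makes the two sides match up.

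The third step is to verify the triangle identities $(\text{ev}\otimes\text{id})\circ(\text{id}\otimes\text{coev})\simeq\text{id}$ and its mirror. Because all functors in sight are $\mathcal{V}$-linear and colimit-preserving, and because both modules are generated under colimits and the $\mathcal{V}$-action by the (co)representables (Corollary \ref{CorFreeCo2}), it suffices to check each identity after evaluating on a single representable $\text{rep}_X$ (resp. corepresentable). On such generators the composite unwinds to a coend of the form $\int^{Y}\text{rep}_X(Y)\otimes\text{corep}_Y$, and the identity reduces to the co-Yoneda / density formula $\int^{Y}\underline{\text{Map}}_\mathcal{C}(X,Y)\otimes\text{corep}_Y\simeq\text{corep}_X$, which is precisely the statement that every (co)presheaf is canonically the coend of corepresentables weighted by its own values — an enriched coYoneda lemma. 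This reduction from "equality of functors" to "equality on generators" is exactly what makes the verification tractable rather than requiring a direct manipulation of the full relative tensor product.

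The main obstacle I anticipate is making the coevaluation map genuinely well-defined and natural, rather than merely specifying it on objects. Writing down $\text{ev}$ as a coend is relatively painless, but $\text{coev}$ goes \emph{into} a relative tensor product, whose objects are not simply described; one cannot just name an object and must instead produce the map via a universal property or via the identity of an appropriate bimodule. The cleanest route is likely to observe that the coend pairing $\text{ev}$ is a \emph{perfect} pairing — i.e., that it induces an equivalence $\text{coPSh}^\mathcal{V}(\mathcal{C})\simeq\text{Fun}^L_\mathcal{V}(\text{PSh}^\mathcal{V}(\mathcal{C}),\mathcal{V})$ onto the $\mathcal{V}$-linear dual — and then invoke the general fact that a presentable $\mathcal{V}$-module whose $\mathcal{V}$-linear dual is again presentable and recovers the pairing is dualizable, with coevaluation obtained formally. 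Verifying that this internal-hom description of the dual agrees with $\text{coPSh}^\mathcal{V}(\mathcal{C})$, using the generation by (co)representables and the enriched Yoneda lemma, is the technical heart of the argument.
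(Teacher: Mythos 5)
Your overall strategy --- coevaluation given by the ``diagonal'' Yoneda object, evaluation by a coend, verification reduced to generators via Corollary \ref{CorFreeCo2} --- is in the right spirit, and it matches the paper's in outline. But the step you lean on to avoid constructing the coevaluation is a genuine gap: it is \emph{not} a general fact that a presentable right $\mathcal{V}$-module whose $\mathcal{V}$-linear dual $\text{Fun}^L_{\text{RMod}_\mathcal{V}}(\mathcal{P},\mathcal{V})$ is presentable and ``recovers the pairing'' is dualizable. Already for $\mathcal{V}=\text{Top}$, every presentable $\infty$-category $\mathcal{P}$ has a presentable linear dual $\text{Fun}^L(\mathcal{P},\text{Top})$, yet not every presentable $\infty$-category is dualizable in $\text{Pr}^L$. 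Dualizability requires that the comparison map $\mathcal{Q}\otimes_\mathcal{V}\mathcal{N}\to\text{Fun}^L_{\text{RMod}_\mathcal{V}}(\mathcal{P},\mathcal{N})$ be an equivalence for \emph{all} coefficients $\mathcal{N}$, not just $\mathcal{N}=\mathcal{V}$; checking perfection of the pairing with coefficients in $\mathcal{V}$ alone does not produce a coevaluation. Since you correctly identified the coevaluation as the main obstacle and then routed around it with a false principle, the proof as proposed does not close.

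What the paper does instead is exactly the with-coefficients version of your idea. It constructs the Yoneda object honestly as a copresheaf $\mathfrak{Y}\in\text{coPSh}^\mathcal{V}(\mathcal{C};\text{PSh}^\mathcal{V}(\mathcal{C}))$, using the pairing $\left\langle -,-\right\rangle:\text{LM}_S\times\text{RM}_S\to\text{Assoc}_{S\amalg S}$ composed with the codiagonal; Theorem \ref{ThmS5} then identifies $\text{coPSh}^\mathcal{V}(\mathcal{C};\text{PSh}^\mathcal{V}(\mathcal{C}))\cong\text{PSh}^\mathcal{V}(\mathcal{C})\otimes_\mathcal{V}\text{coPSh}^\mathcal{V}(\mathcal{C})$, which is what lets one regard $\mathfrak{Y}$ as a point of the relative tensor product, i.e.\ as a coevaluation. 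The substantive input is Theorem \ref{ThmS6b}: $\mathfrak{Y}_\ast:\text{Fun}^L_{\text{RMod}_\mathcal{V}}(\text{PSh}^\mathcal{V}(\mathcal{C}),\mathcal{N})\to\text{coPSh}^\mathcal{V}(\mathcal{C};\mathcal{N})$ is an equivalence for every presentable right $\mathcal{V}$-module $\mathcal{N}$, proved by Barr--Beck over $\mathcal{N}^S$ (both sides are monadic and the monads agree on generators, where the computation is your co-Yoneda formula $-\otimes\mathcal{C}(Y,X)$). Combining this with Theorem \ref{ThmS5} applied to copresheaves and the one-sided duality criterion HA 4.6.2.18, the triangle identities never need to be checked and the evaluation map never needs to be written down. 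If you want to salvage your route, replace the false ``reflexivity implies dualizability'' step with: (i) an actual construction of $\mathfrak{Y}$ as an object of the tensor product, and (ii) the universal property with arbitrary coefficients $\mathcal{N}$, which is where the monadicity argument is unavoidable.
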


\noindent Combining with Theorem \ref{ThmS5}, this implies that $$\text{Fun}^L_{\text{RMod}_\mathcal{V}}(\text{PSh}^\mathcal{V}(\mathcal{C}),\mathcal{D})\cong\text{coPSh}^\mathcal{V}(\mathcal{C};\mathcal{D}),$$ provided $\mathcal{D}$ is a presentable right $\mathcal{V}$-module.

\begin{remark}
If $\mathcal{C}$ is a $\mathcal{V}$-enriched category, there are two natural candidates for the $\infty$-category of \emph{$\mathcal{V}$-enriched functors} $\mathcal{C}\to\mathcal{D}$. On one hand, $\mathcal{V}$-enriched categories form an $(\infty,2)$-category $\text{Cat}^\mathcal{V}$, so therefore we have an $\infty$-category $\text{Fun}^\mathcal{V}(\mathcal{C},\mathcal{D})$ of $\mathcal{V}$-enriched functors whenever $\mathcal{D}$ is a $\mathcal{V}$-enriched category.

On the other hand, we will define an $\infty$-category of $\mathcal{V}$-enriched copresheaves $\text{coPSh}^\mathcal{V}(\mathcal{C};\mathcal{D})$ any time $\mathcal{D}$ is an $\infty$-category right tensored over $\mathcal{V}$ (a right $\mathcal{V}$-module).

It is important to recognize that, although these two constructions convey essentially the same information, one is defined for $\mathcal{V}$-enriched categories $\mathcal{D}$ and the other for right $\mathcal{V}$-module $\infty$-categories $\mathcal{D}$. If $\mathcal{V}$ is presentable and closed monoidal, Gepner-Haugseng have shown that presentable right $\mathcal{V}$-modules $\mathcal{D}$ are canonically $\mathcal{V}$-enriched (\cite{GH} 7.4.13), and in this case we conjecture the two constructions are equivalent.
\end{remark}

\subsection{Methods}\label{S11}
\noindent Our results rest on Gepner-Haugseng's operadic model for enriched $\infty$-categories \cite{GH}. If $\mathcal{V}$ is a monoidal $\infty$-category, we expect a $\mathcal{V}$-enriched category to consist of a set $S$ of objects, along with:
\begin{itemize}
\item objects $\mathcal{C}(X,Y)\in\mathcal{V}$ for each $X,Y\in S$;
\item composition morphisms $\mathcal{C}(X,Y)\otimes\mathcal{C}(Y,Z)\to\mathcal{C}(X,Z)$;
\item identity morphisms $1\to\mathcal{C}(X,X)$;
\item and coherences relating these.
\end{itemize}
\noindent This data is encoded in an $\infty$-operad $\text{Assoc}_S$ (which depends on the set $S$), so that a $\mathcal{V}$-enriched category can be identified with an $\text{Assoc}_S$-algebra in $\mathcal{V}$. We define $\text{Assoc}_S$ in Definition \ref{DefAss} and Remark \ref{RmkAss}; note that the definition is fairly concrete, as $\text{Assoc}_S$ is the nerve of a 1-category.

A $\mathcal{V}$-enriched presheaf on $\mathcal{C}$ should consist of the following in addition to the above data:
\begin{itemize}
\item objects $\mathcal{F}(X)\in\mathcal{V}$ for each $X\in S$;
\item morphisms $\mathcal{C}(X,Y)\otimes\mathcal{F}(Y)\to\mathcal{F}(X)$;
\item and coherences relating these.
\end{itemize}
\noindent This data is also encoded in an $\infty$-operad, which we call $\text{LM}_S$ (Definition \ref{DefLMS}). That is, an $\text{LM}_S$-algebra in $\mathcal{V}$ should be regarded as a pair $(\mathcal{C};\mathcal{F})$, where $\mathcal{C}$ is a $\mathcal{V}$-enriched category, and $\mathcal{F}$ is a $\mathcal{V}$-enriched presheaf on $\mathcal{C}$.

We write $\text{Cat}^\mathcal{V}_S=\text{Alg}_{\text{Assoc}_S}(\mathcal{V})$ and $\text{PSh}^\mathcal{V}_S=\text{Alg}_{\text{LM}_S}(\mathcal{V})$. There is an inclusion $\text{Assoc}_S\subseteq\text{LM}_S$, inducing a forgetful functor $\theta:\text{PSh}^\mathcal{V}_S\to\text{Cat}^\mathcal{V}_S$. If $\mathcal{C}$ is a $\mathcal{V}$-enriched category, we may define $\text{PSh}^\mathcal{V}(\mathcal{C})$ to be the fiber $\theta^{-1}(\mathcal{C})$.

\begin{example}
We will always keep in mind the example $|S|=1$. In this case, $\text{Assoc}_S$ is the usual associative operad Assoc, and $\text{LM}_S$ is the left module operad LM. In other words, an enriched category $\mathcal{C}$ with one object $X$ can be identified with the endomorphism algebra $\mathcal{C}(X,X)$. A presheaf on this enriched category can be identified with a left module over $\mathcal{C}(X,X)$.

In Higher Algebra Chapter 4 \cite{HA}, Lurie studies $\infty$-categories of left modules, which are $\infty$-categories of presheaves on enriched categories with one object. This paper should be viewed as a generalization of that chapter to the case $|S|>1$.
\end{example}

\noindent Defining $\text{PSh}^\mathcal{V}(\mathcal{C})$ as the fiber $\theta^{-1}(\mathcal{C})$ agrees with our intuition, but it is not easy to prove anything about these fibers.

In order to prove our main results, we will need a second model for enriched category theory, which is more explicit although less formally well-behaved. To do this, we construct $\infty$-preoperads $\Delta_{/S}^\text{op}$ and $\Delta_{/S}^\text{op}\times\Delta^1$, where $\Delta^1$ is the diagram category $0\to 1$, and $\Delta_{/S}$ is the category of finite, nonempty, totally ordered sets equipped with a function to $S$. That is, $\Delta_{/S}=\Delta\times_\text{Set}\text{Set}_{/S}$.

\begin{remark}
To describe $\Delta_{/S}^\text{op}$ and $\Delta_{/S}^\text{op}\times\Delta^1$ as $\infty$-preoperads, we also need to equip them with maps to the commutative operad. See Sections \ref{S41}, respectively \ref{S42}.
\end{remark}

\begin{definition}
An $\mathbb{A}_\infty$-$\mathcal{V}$-enriched category (with set $S$ of objects) is a $\Delta_{/S}^\text{op}$-algebra in $\mathcal{V}$. An $\mathbb{A}_\infty$-$\mathcal{V}$-enriched presheaf is a $\Delta_{/S}^\text{op}\times\Delta^1$-algebra in $\mathcal{V}$.
\end{definition}

\noindent We will prove that this $\mathbb{A}_\infty$-model is equivalent to the Gepner-Haugseng model (Corollaries \ref{CorModelEq1} and \ref{CorModelEq2}) by utilizing Lurie's theory of approximations to $\infty$-operads (which is reviewed in Section \ref{S24}).

Because $\Delta_{/S}^\text{op}$ and $\Delta_{/S}^\text{op}\times\Delta^1$ are $\infty$-preoperads rather than $\infty$-operads, the $\mathbb{A}_\infty$-model is not as formally well-behaved as the operadic model. However, it has two benefits.

The first benefit is that $\Delta_{/S}^\text{op}$ is easier to construct than $\text{Assoc}_S$. The second, more significant, benefit is the very close relationship between $\Delta_{/S}^\text{op}$ and $\Delta_{/S}^\text{op}\times\Delta^1$.

That is, an $\mathbb{A}_\infty$-enriched presheaf can be regarded as a map $\mathcal{F}_0\to\mathcal{F}_1$ of functors $\Delta_{/S}^\text{op}\to\smallint\mathcal{V}$, where $\smallint\mathcal{V}$ is the $\infty$-operad associated to $\mathcal{V}$ (see Section \ref{S15} notation (4)). It turns out that $\mathcal{F}_1$ is just the underlying $\mathbb{A}_\infty$-enriched category. Therefore, an $\mathbb{A}_\infty$-enriched presheaf on $\mathcal{C}$ can be regarded as a single functor $\Delta_{/S}^\text{op}\to\smallint\mathcal{V}$ satisfying properties.

This provides us with a new model for $\text{PSh}^\mathcal{V}(\mathcal{C})$, which is more technical but also more concrete than its definition as a fiber of $\theta:\text{PSh}^\mathcal{V}_S\to\text{Cat}^\mathcal{V}_S$. This is Proposition \ref{PropPShModel}.

We will use this new model to conclude Theorem \ref{PShPrL} and Corollary \ref{CorPFib1}. We will also prove (Corollary \ref{Cor1S4}) that $\text{PSh}^\mathcal{V}(\mathcal{C})$ is functorial in $\mathcal{C}$, in the sense of a functor $$\text{PSh}^\mathcal{V}(-):\text{Cat}^\mathcal{V}_S\to\text{Pr}^L.$$

\begin{example}
To get a feeling for the $\mathbb{A}_\infty$-model, suppose that $\mathcal{V}$ is cartesian monoidal (monoidal under its product). Then $\mathcal{V}$-enriched categories $\mathcal{C}$ (with set $S$ of objects) can be identified with functors $\mathcal{C}_{\mathbb{A}_\infty}:\Delta_{/S}^\text{op}\to\mathcal{V}$, in that there is a full subcategory inclusion $\text{Cat}^\mathcal{V}_S\subseteq\text{Fun}(\Delta_{/S}^\text{op},\mathcal{V})$. For $X,Y\in S$, let $[X,Y]\in\Delta_{/S}^\text{op}$ denote the function $[1]\to S$ which sends $0\mapsto X$ and $1\mapsto Y$. Then $\mathcal{C}_{\mathbb{A}_\infty}[X,Y]$ is the mapping object $\mathcal{C}(X,Y)\in\mathcal{V}$.

When $|S|=1$, then the enriched category $\mathcal{C}$ carries the same information as a $\mathcal{V}$-algebra $A=\mathcal{C}(X,X)$, and $\mathcal{C}_{\mathbb{A}_\infty}$ is a simplicial object of $\mathcal{V}$ -- the simplicial object given by the Milnor construction (\cite{HA} 4.1.2.4): $$\xymatrix{
\mathcal{C}_{\mathbb{A}_\infty}= &\cdots A^{\times 2} \ar@<2ex>[r] \ar@<0ex>[r] \ar@<-2ex>[r] &  A \ar@<1ex>[r] \ar@<-1ex>[r] \ar@<1ex>[l] \ar@<-1ex>[l] & 1. \ar@<0ex>[l]
}$$ Given such a $\mathcal{C}_{\mathbb{A}_\infty}:\Delta^\text{op}_{/S}\to\mathcal{V}$, a presheaf on $\mathcal{C}$ can be identified with a second functor $\mathcal{F}_{\mathbb{A}_\infty}:\Delta^\text{op}_{/S}\to\mathcal{V}$ along with a natural transformation $\mathcal{F}_{\mathbb{A}_\infty}\to\mathcal{C}_{\mathbb{A}_\infty}$. We think of this natural transformation as a functor $\Delta^\text{op}_{/S}\times\Delta^1\to\mathcal{V}$.

In the case $|S|=1$, such a presheaf is a left $X$-module $M$, and it is completely specified by a map of simplicial objects: $$\xymatrix{
\quad\mathcal{F}_{\mathbb{A}_\infty}=\ar[d] &\cdots X^{\times 2}\times M \ar[d]\ar@<2ex>[r] \ar@<0ex>[r] \ar@<-2ex>[r] &  X\times M \ar[d]\ar@<1ex>[r] \ar@<-1ex>[r] \ar@<1ex>[l] \ar@<-1ex>[l] &  M \ar[d]\ar@<0ex>[l] \\
\quad\mathcal{C}_{\mathbb{A}_\infty}= &\cdots\quad\,X^{\times 2}\quad\,\, \ar@<2ex>[r] \ar@<0ex>[r] \ar@<-2ex>[r] &  \quad X\quad \ar@<1ex>[r] \ar@<-1ex>[r] \ar@<1ex>[l] \ar@<-1ex>[l] & 1. \ar@<0ex>[l]
}$$ The downward maps are just projection away from $M$; all the information about the algebra and module structures is carried in the horizontal maps.

In general, $\mathbb{A}_\infty$-enriched categories and presheaves can be thought of as Milnor constructions like this, but indexed by the thickened simplex category $\Delta_{/S}^\text{op}$ instead of $\Delta^\text{op}$. We could represent $\mathcal{C}_{\mathbb{A}_\infty}$ pictorially as a simplicial object of $\mathcal{V}$ where the $n$-simplices are given by an $S^2\times\cdots\times S^2$-hypercube of entries $$\xymatrix{
\cdots{\left(\begin{smallmatrix}\mathcal{C}(X_1,X_1)\otimes\mathcal{C}(X_1,X_1) &\cdots &\mathcal{C}(X_1,X_1)\otimes\mathcal{C}(X_n,X_n) \\ \vdots &&\vdots \\ \mathcal{C}(X_n,X_n)\otimes\mathcal{C}(X_1,X_1) &\cdots &\mathcal{C}(X_n,X_n)\otimes\mathcal{C}(X_n,X_n)\end{smallmatrix}\right)} \ar@<2ex>[r] \ar@<0ex>[r] \ar@<-2ex>[r] &  {\left(\begin{smallmatrix}\mathcal{C}(X_1,X_1) \\ \vdots \\ \mathcal{C}(X_n,X_n)\end{smallmatrix}\right)} \ar@<1ex>[r] \ar@<-1ex>[r] \ar@<1ex>[l] \ar@<-1ex>[l] & 1. \ar@<0ex>[l]
}$$ Obviously such a diagram is cumbersome; it is included as a mnemonic, but it won't appear in this paper again.
\end{example}

\noindent Our discussion so far concerns the presheaves on a $\mathcal{V}$-enriched category $\mathcal{C}$ with values in $\mathcal{V}$ (roughly, $\mathcal{V}$-enriched functors $\mathcal{C}^\text{op}\to\mathcal{V}$). There are two variants on this construction.

First, an \emph{enriched copresheaf} on $\mathcal{C}$ is roughly a $\mathcal{V}$-enriched functor $\mathcal{C}\to\mathcal{V}$. The theory of enriched copresheaves is completely parallel to the theory of enriched presheaves; we introduce an $\infty$-operad $\text{RM}_S$, and a copresheaf is an $\text{RM}_S$-algebra.

Let $\mathcal{V}^\text{rev}$ denote the $\infty$-category $\mathcal{V}$ with its reverse monoidal operation described by $X\otimes^\text{rev}Y=Y\otimes X$. Then $\mathcal{C}^\text{op}$ is naturally a $\mathcal{V}^\text{rev}$-enriched category (Example \ref{ExOp}) and $$\text{coPSh}^\mathcal{V}(\mathcal{C})\cong\text{PSh}^{\mathcal{V}^\text{rev}}(\mathcal{C}^\text{op})$$ (Remark \ref{RmkOp}). Therefore, everything we prove about presheaves is also true about copresheaves.

\begin{example}
If $\mathcal{V}$ is symmetric monoidal, then $\mathcal{V}^\text{rev}\cong\mathcal{V}$ as monoidal $\infty$-categories, so $\text{coPSh}^\mathcal{V}(\mathcal{C})\cong\text{PSh}^\mathcal{V}(\mathcal{C}^\text{op})$. This is not true in general.
\end{example}

\noindent Second, a \emph{$\mathcal{V}$-enriched copresheaf} on $\mathcal{C}$ \emph{with values in $\mathcal{M}$} is roughly a $\mathcal{V}$-enriched functor $\mathcal{C}^\text{op}\to\mathcal{M}$. We write $\text{PSh}^\mathcal{V}(\mathcal{C};\mathcal{M})$ for the $\infty$-category thereof.

It turns out that this construction makes sense whenever $\mathcal{M}$ is left tensored over $\mathcal{V}$ (a left $\mathcal{V}$-module in Cat). In this case, we may regard the pair $(\mathcal{V};\mathcal{M})$ as an LM-algebra in Cat, or an LM-monoidal $\infty$-category, and we define a presheaf with values in $\mathcal{M}$ to be an $\text{LM}_S$-algebra in this LM-monoidal $\infty$-category.

In Sections \ref{S5} and \ref{S6}, we take advantage of these constructions to study the interplay of presheaves (left actions) with copresheaves (right actions). The key is a pairing $$\left\langle -,-\right\rangle:\text{LM}_S\times\text{RM}_T\to\text{Assoc}_{S\amalg T}$$ introduced in Section \ref{S52}. Careful analysis of this pairing allows us to:
\begin{itemize}
\item construct the right $\mathcal{V}$-action on $\text{PSh}^\mathcal{V}(\mathcal{C})$ in Section \ref{S52} (Theorem \ref{PropRMPSh}) and prove (Theorem \ref{ThmS5}) $$\text{PSh}^\mathcal{V}(\mathcal{C};\mathcal{M})\cong\text{PSh}^\mathcal{V}(\mathcal{C})\otimes_\mathcal{V}\mathcal{M};$$
\item construct the Yoneda embedding $\mathfrak{Y}\in\text{coPSh}^\mathcal{V}(\mathcal{C};\text{PSh}^\mathcal{V}(\mathcal{C}))$ in Section \ref{S6} and prove that it induces a duality between presheaves and copresheaves (Theorem \ref{ThmS6}).
\end{itemize}

\begin{remark}
Since a copresheaf is like an enriched functor, $\mathfrak{Y}$ can be regarded informally as an enriched functor $\mathcal{C}\to\text{PSh}^\mathcal{V}(\mathcal{C})$. In this sense, $\mathfrak{Y}$ really is the Yoneda embedding.

On the other hand, by Theorems \ref{ThmS5} and \ref{ThmS6} together, $$\text{coPSh}^\mathcal{V}(\mathcal{C};\text{PSh}^\mathcal{V}(\mathcal{C}))\cong\text{PSh}^\mathcal{V}(\mathcal{C})\otimes_\mathcal{V}\text{coPSh}^\mathcal{V}(\mathcal{C}).$$ In this sense, we may regard $\mathfrak{Y}$ as an element of $\text{PSh}^\mathcal{V}(\mathcal{C})\otimes_\mathcal{V}\text{coPSh}^\mathcal{V}(\mathcal{C})$. It is precisely the element which exhibits the duality between $\text{PSh}^\mathcal{V}(\mathcal{C})$ and $\text{coPSh}^\mathcal{V}(\mathcal{C})$.
\end{remark}

\subsection{What not to expect from this paper}\label{S13}
\noindent We study the $\infty$-category $\text{Cat}^\mathcal{V}_S$ whose
\begin{itemize}
\item objects are $\mathcal{V}$-enriched categories $\mathcal{C}$ with set $S$ of objects;
\item morphisms are $\mathcal{V}$-enriched functors $F:\mathcal{C}\to\mathcal{D}$ which act as the identity on the set $S$ of objects, $F(X)=X$.
\end{itemize}
\noindent This construction is functorial in the set $S$, $\text{Cat}^\mathcal{V}_{-}:\text{Set}^\text{op}\to\text{Cat}$, and it is possible to construct an $\infty$-category $\text{Cat}^\mathcal{V}$ of \emph{all} $\mathcal{V}$-enriched categories by taking a sort of oplax colimit over $\text{Cat}^\mathcal{V}_{-}$. This is due to Gepner-Haugseng \cite{GH}, although the idea is older (of first studying enriched categories with a fixed set of objects and then extrapolating). See \cite{Berman0} for the formulation as an oplax colimit.

In this paper, we make a systematic study of $\text{PSh}^\mathcal{V}(\mathcal{C})$ for any fixed $\mathcal{V}$-enriched category $\mathcal{C}$, and we will describe the functoriality as $\mathcal{C}$ varies within $\text{Cat}^\mathcal{V}_S$ (see Section \ref{S43}). However, we will not review the construction of $\text{Cat}^\mathcal{V}$, and we will not make any comparison of $\text{PSh}^\mathcal{V}(\mathcal{C})$ and $\text{PSh}^\mathcal{V}(\mathcal{D})$ when $\mathcal{C}$ and $\mathcal{D}$ have different sets of objects.

The reader should expect such results in a future paper of this series. They do not appear here because they use substantially different techniques (centered around oplax colimits) than the results of this paper (which center around $\mathbb{A}_\infty$-algebras, the pairing $\left\langle -,-\right\rangle:\text{LM}_S\times\text{RM}_T\to\text{Assoc}_{S\amalg T}$, and the Barr-Beck Theorem). Nonetheless, it is instructive to keep some of the expected results in mind. These include Conjecture \ref{Conj} as well as:
\begin{enumerate}
\item We expect a functor $\text{PSh}^\mathcal{V}(-):\text{Cat}^\mathcal{V}\to\text{RMod}_\mathcal{V}(\text{Pr}^L)$; that is, for any $\mathcal{V}$-enriched functor $F:\mathcal{C}\to\mathcal{D}$, we expect to have an adjunction $F_\ast:\text{PSh}^\mathcal{V}(\mathcal{C})\leftrightarrows\text{PSh}^\mathcal{V}(\mathcal{D}):F^\ast$ with $F_\ast$ a $\mathcal{V}$-module functor;
\item If $\mathcal{V}$ is symmetric monoidal, we expect the functor $\text{PSh}^\mathcal{V}(-)$ to be symmetric monoidal; that is, we expect$$\text{PSh}^\mathcal{V}(\mathcal{C}\otimes\mathcal{D})\cong\text{PSh}^\mathcal{V}(\mathcal{C})\otimes_\mathcal{V}\text{PSh}^\mathcal{V}(\mathcal{D});$$
\item If $\text{Fun}^\mathcal{V}$ denotes the $\infty$-category of $\mathcal{V}$-enriched functors, we expect statements of the form $$\text{PSh}^\mathcal{V}(\mathcal{C};\mathcal{M})\cong\text{Fun}^\mathcal{V}(\mathcal{C}^\text{op},\mathcal{M}),$$ $$\text{coPSh}^\mathcal{V}(\mathcal{C};\mathcal{M})\cong\text{Fun}^\mathcal{V}(\mathcal{C},\mathcal{M}).$$
\end{enumerate}

\begin{remark}
Gepner-Haugseng construct $\text{Cat}^\mathcal{V}$ in a slightly different, but equivalent way. That is, they construct an $\infty$-operad $\text{Assoc}_S$ for each space (or $\infty$-groupoid) $S$, interpreting an $\text{Assoc}_S$-algebra as an enriched category with space $S$ of objects. When $S$ is discrete, this recovers our $\text{Assoc}_S$.

Then $\text{Cat}^\mathcal{V}_{-}$ is even functorial $\text{Top}^\text{op}\to\text{Cat}$, and Gepner-Haugseng define $\text{Cat}^\mathcal{V}$ to be the oplax colimit over $\text{Top}^\text{op}$.

However, regardless of whether we take the oplax colimit over $\text{Set}^\text{op}$ or $\text{Top}^\text{op}$, we obtain equivalent $\infty$-categories $\text{Cat}^\mathcal{V}$ by \cite{GH} Theorem 5.3.17.\footnote{Thanks to Aaron Mazel-Gee for pointing out this distinction.}

To summarize, we might use the term `enriched category' for an enriched category with a set of objects, or `enriched $\infty$-category' for an enriched category with a space of objects. Then there is no difference between the theories of $\mathcal{V}$-enriched categories and $\mathcal{V}$-enriched $\infty$-categories.

The reason we choose $S$ to vary over sets, not spaces, is very simple: If $S$ is a set, $\text{Assoc}_S$ is (the nerve of) an ordinary category, which makes the entire theory pleasantly concrete.
\end{remark}

\subsection{Summary}\label{S14}
\noindent We begin in Section \ref{S2} with some review of marked $\infty$-categories and $\infty$-operads. (Our notation for $\infty$-operads differs in some respects from Lurie's, so this section should not be skipped.)

In Section \ref{S3}, we review enriched categories and introduce enriched presheaves. This section should be fairly accessible. We end this section by introducing the representable presheaves $\text{rep}_X$. We prove a key technical result about them (Theorem \ref{ThmFree}), which is best understood through the following corollary:

\begin{repcorollary}{CorYon}
If $\mathcal{F}\in\text{PSh}^\mathcal{V}(\mathcal{C};\mathcal{M})$, $\text{Map}(\text{rep}_X\otimes M,\mathcal{F})\cong\text{Map}(M,\mathcal{F}(X))$.
\end{repcorollary}

\noindent If we take $\mathcal{M}=\mathcal{V}$, this is a kind of enriched Yoneda lemma, in the sense that $\text{Map}(\text{rep}_X\otimes -,\mathcal{F})$, which is a priori a presheaf on $\mathcal{V}$, is just the representable presheaf described by $\mathcal{F}(X)$.

The last three sections are more technical. Section \ref{S4} introduces the new $\mathbb{A}_\infty$-model for enriched higher category theory, which we prove is equivalent to Gepner-Haugseng's operadic model. We prove Theorem \ref{PShPrL} and Corollary \ref{CorPFib1}.

In Section \ref{S5}, we construct the right $\mathcal{V}$-module structure on $\text{PSh}^\mathcal{V}(\mathcal{C})$ and prove Theorem \ref{PropRMPSh}, Corollary \ref{CorFreeCo2}, and Theorem \ref{ThmS5}. The hard part is the construction of the $\mathcal{V}$-action; the equivalence is an application of the Barr-Beck Theorem.

In Section \ref{S6}, we construct the Yoneda embedding as a copresheaf $\mathfrak{Y}$ in $\text{coPSh}^\mathcal{V}(\mathcal{C};\text{PSh}^\mathcal{V}(\mathcal{C}))$, and we prove Theorem \ref{ThmS6}. By this point, we will have done most of the work already, so this section is short.

The casual reader who wishes to skim the paper without understanding the proofs is recommended to read Sections \ref{S21}-\ref{S23}, \ref{S3}, \ref{S43}, and the section introductions. The more serious reader should follow the same prescription before returning to Section \ref{S24} and reading from there to fill in the gaps.

\subsection{Acknowledgments}\label{S15}
\noindent The author thanks the National Science Foundation for his time as a postdoctoral fellow, as well as his postdoctoral mentor Andrew Blumberg, who has taken an active interest in this project. He also thanks Rok Gregoric and Rune Haugseng for helpful conversations.

Enriched categories with one object are just associative algebras, and presheaves over them are just left modules. Chapter 4 of Lurie's book Higher Algebra contains all of our results already in the case of enriched categories with one object, and most of the proofs generalize directly. Therefore, the author also thanks Jacob Lurie for many proof techniques, and for doing most of the hard work already.

\subsection{Notation}\label{S16}
\noindent Because of the heavy reliance on Higher Algebra, we consistently cite it as HA rather than \cite{HA}, and cite Higher Topos Theory as HTT rather than \cite{HTT}. We will follow Lurie's notation in most cases, with the following exceptions:
\begin{enumerate}
\item We will write Cat (not $\text{Cat}_\infty$) for the $\infty$-category of $\infty$-categories, similarly dropping $\infty$ notationally anywhere it won't introduce confusion.
\item If $F:\mathcal{C}\to\text{Cat}$ is a functor, we use $\smallint F\to\mathcal{C}$ for the associated cocartesian fibration.
\item We use notation like $\mathcal{O}$ rather than Lurie's $\mathcal{O}^\otimes$ for $\infty$-operads.
\item We regard a monoidal $\infty$-category as a functor $\mathcal{C}:\text{Assoc}\to\text{Cat}$, where Assoc is the associative operad. We write the associated $\infty$-operad $\smallint\mathcal{C}$ rather than Lurie's $\mathcal{C}^\otimes$, because it is the associated cocartesian fibration $\smallint\mathcal{C}\to\text{Assoc}$.
\end{enumerate}

\noindent In other respects, we follow Lurie's notation. For example:
\begin{itemize}
\item We distinguish between Cat, the large $\infty$-category of small $\infty$-categories, and $\widehat{\text{Cat}}$, the very large $\infty$-category of large $\infty$-categories.
\item We denote by $\text{Pr}^L$ the subcategory of $\widehat{\text{Cat}}$ spanned by the presentable $\infty$-categories, along with functors that preserve small colimits (equivalently, admit right adjoints).

$\text{Pr}^L$ has a symmetric monoidal operation $\otimes$, and an algebra in $\text{Pr}^L$ carries the same data as a presentable, closed monoidal $\infty$-category.
\end{itemize}

\addtocontents{toc}{\protect\setcounter{tocdepth}{2}}
\section{Preliminaries}\label{S2}
\noindent In this section we review some preliminaries, beginning with marked $\infty$-categories in Section \ref{S21}. This material follows \cite{Berman0}.

The rest of the section is on $\infty$-operads, following HA (Higher Algebra \cite{HA}). Section \ref{S23} reviews the foundations of $\infty$-operads and $\infty$-preoperads. Then Section \ref{S24} is on operadic approximation, and \ref{S25} is on the $\mathbb{A}_\infty$-model for algebras and modules, an extended example of operadic approximation.

These last two sections are used crucially in Section \ref{S4} to describe limits and colimits in $\infty$-categories of presheaves. However, the casual reader would do better to read Section \ref{S3} first, and only then return to \ref{S24} and \ref{S25}.

\subsection{Marked categories}\label{S21}
\begin{definition}
A \emph{marked $\infty$-category} is an $\infty$-category $\mathcal{C}$ along with a specified collection of morphisms, such that:
\begin{itemize}
\item all equivalences are marked;
\item given two equivalent morphisms $f\cong g$, $f$ is marked if and only if $g$ is;
\item any composite of marked morphisms is marked.
\end{itemize}
\noindent If $\mathcal{C},\mathcal{D}$ are marked $\infty$-categories, a functor $F:\mathcal{C}\rightarrow\mathcal{D}$ is \emph{marked} if it sends marked morphisms to marked morphisms.
\end{definition}

\noindent There is an ($\infty$,2)-category of marked $\infty$-categories and marked functors, which we denote $\text{Cat}^\dag$.

\begin{remark}
There are many ways to construct $\text{Cat}^\dag$:
\begin{itemize}
\item Let $\text{Cat}_1^\dag$ denote the 2-category of marked 1-categories, marked functors, and (all) natural equivalences. A marked $\infty$-category can be specified by a marking on the homotopy category, so $\text{Cat}^\dag=\text{Cat}\times_{\text{Cat}_1}\text{Cat}_1^\dag$. See \cite{Berman0} 2.2 or \cite{HA} 4.1.7.1.
\item $\text{Cat}^\dag$ is equivalent to the full subcategory of $\text{Fun}(\Delta^1,\text{Cat})$ spanned by those functors $\mathcal{C}^\text{mk}\to\mathcal{C}$ which exhibit $\mathcal{C}^\text{mk}$ as a subcategory of marked morphisms. See \cite{BarK} 1.14.
\item $\text{Cat}^\dag$ can be described in terms of a model structure on marked simplicial sets.
\end{itemize}
\end{remark}

\noindent We will often be interested in multiple markings on the same $\infty$-category, so we will use notation like $\mathcal{C}^\dag,\mathcal{C}^\sharp,\mathcal{C}^\natural,\ldots$ to denote markings on $\mathcal{C}$.

The following reference table provides examples of marked $\infty$-categories and establishes notation that we will use throughout the paper.

\begin{tabular}{|c|c|c|}\hline
\textbf{Notation} &\textbf{Marked morphisms} &\textbf{Comments} \\ \hline
$\mathcal{C}^\sharp$ &all morphisms &the \emph{sharp} marking \\ \hline
$\mathcal{C}^\flat$ &equivalences &the \emph{flat} marking \\ \hline
$\mathcal{C}^\natural$ &$p$-(co)cartesian morphisms &for a given (co)cartesian \\
&&fibration $p:\mathcal{C}\to\mathcal{D}$ \\ \hline
$\mathcal{C}^\mathsection$ &inert morphisms &for $\infty$-operads and similar; \\
&&see Definition \ref{DefInert} \\ \hline
$\mathcal{C}^\ddagger$ &totally inert morphisms &technical (Warning \ref{TotInert}) \\ \hline
$\mathcal{C}^{!}$ &left inert morphisms &technical (Definition \ref{DefLeftInert}) \\ \hline
\end{tabular}

\begin{example}
If $p:\mathcal{C}\to\mathcal{D}$ is a cocartesian (respectively cartesian) fibration and $\mathcal{D}^\dag$ is marked, there is an \emph{induced marking} $\mathcal{C}^\dag$, in which $f$ is marked if and only if $f$ is $p$-cocartesian (respectively $p$-cartesian) and $p(f)$ is marked.

If $\mathcal{D}^\flat$ has the flat marking, the induced marking on $\mathcal{C}$ is the flat marking $\mathcal{C}^\flat$. If $\mathcal{D}^\sharp$ has the sharp marking, the induced marking on $\mathcal{C}$ is the \emph{natural} marking $\mathcal{C}^\natural$: the marked morphisms are precisely the $p$-cocartesian morphisms.
\end{example}

\noindent If $\mathcal{C}^\dagger,\mathcal{D}^\dagger\in\text{Cat}^\dagger$, we will write $\text{Fun}^\dagger(\mathcal{C}^\dagger,\mathcal{D}^\dagger)$ for the full subcategory of $\text{Fun}(\mathcal{C},\mathcal{D})$ spanned by marked functors\footnote{In principle, $\text{Fun}^\dag(\mathcal{C}^\dag,\mathcal{D}^\dag)$ is itself naturally a marked category; marked morphisms are natural transformations that send each object of $\mathcal{C}$ to a marked morphism of $\mathcal{D}$. However, we will never use this marking.}.

If $\mathcal{C}^\dag\in\text{Cat}^\dag$, there is a universal functor $\mathcal{C}\to|\mathcal{C}^\dag|$ which sends each marked morphism to an equivalence \cite{Berman0}. Roughly, $|\mathcal{C}^\dag|$ is obtained from $\mathcal{C}$ by localizing (adjoining formal inverses) at all the marked morphisms.

\begin{example}
If $\mathcal{C}$ is any $\infty$-category, then $|\mathcal{C}^\flat|\cong\mathcal{C}$, and $|\mathcal{C}^\sharp|$ is the \emph{geometric realization}, or the $\infty$-groupoid built by formally inverting all morphisms.
\end{example}

\noindent The $\infty$-category $\text{Cat}^\dag$ admits all small limits and colimits (\cite{Berman0} 2.5). The limit (respectively colimit) of a diagram of marked $\infty$-categories $F:I\to\text{Cat}^\dag$ is the limit (colimit) of the underlying diagram of $\infty$-categories, marked via:
\begin{itemize}
\item A morphism $\phi$ of $\text{colim}(F)$ is marked if there exists $i\in I$ such that $F(i)\to\text{colim}(F)$ sends some marked morphism of $F(i)$ to $\phi$;
\item A morphism $\phi$ of $\text{lim}(F)$ is marked if for all $i\in I$, $\text{lim}(F)\to F(i)$ sends $\phi$ to a marked morphism of $F(i)$.
\end{itemize}

\begin{example}
If $\mathcal{A}^\dag\to\mathcal{C}^\dag$ and $\mathcal{B}^\dag\to\mathcal{C}^\dag$ are marked functors, then the pullback $\mathcal{A}\times_\mathcal{C}\mathcal{B}$ inherits a natural marking: a morphism is marked if and only if the projections to $\mathcal{A}$ and $\mathcal{B}$ are each marked. With this marking, $\mathcal{A}\times_\mathcal{C}\mathcal{B}$ is the pullback in $\text{Cat}^\dag$.
\end{example}

\subsection{Operads}\label{S23}%There may be some concern over whether this is equivalent to Lurie's construction of Op. It is, because the definition of POp is clearly the same, and Op is a full subcategory of POp.
\noindent If $S$ is a set, we denote by $S_{+}=S\amalg\{\ast\}$ the pointed set obtained by adjoining a new basepoint. For each integer $n$, we also denote 
\begin{itemize}
\item $\left\langle n\right\rangle^\circ=\{1,\ldots,n\}$, an object of the category Fin of finite sets;
\item $\left\langle n\right\rangle=\left\langle n\right\rangle^\circ_{+}$, an object of the category $\text{Fin}_\ast$ of finite pointed sets;
\item $[n]=\{0<1<\cdots<n\}$, an object of the simplex category $\Delta$ of finite, nonempty, totally ordered sets.
\end{itemize}

\begin{definition}
We say that a function of pointed sets $f:S_{+}\to T_{+}$ is \emph{inert} if $|f^{-1}(t)|=1$ for all $t\in T$. Notice there is no condition on $f^{-1}(\ast)$.

We denote by $\text{Comm}^\mathsection$ the category of finite pointed sets, marked by inert morphisms\footnote{We use $\mathsection$ by analogy to $\natural$, because the inert marking on an $\infty$-operad is typically closely related to the natural marking on a cocartesian fibration.}.
\end{definition}

\begin{definition}[HA 2.1.4.2]\label{DefInert}
An \emph{$\infty$-preoperad} is an $\infty$-category equipped with a functor to $\text{Comm}$.

If $p:\mathcal{O}\to\text{Comm}$ is an $\infty$-preoperad, we say a morphism of $\mathcal{O}$ is \emph{inert} if it is $p$-cocartesian and its image in Comm is inert. We will always regard $\infty$-preoperads as marked by their inert morphisms, writing $\mathcal{O}^\mathsection\in\text{Cat}^\dag$.

A morphism of $\infty$-preoperads is a functor over Comm which sends inert morphisms to inert morphisms. That is, the $\infty$-category POp is the full subcategory of $\text{Cat}^\dag_{/\text{Comm}^\mathsection}$ spanned by those $\infty$-categories over Comm which are marked by their inert morphisms.
\end{definition}

\begin{warning}
This definition is slightly stronger than Lurie's. He defines an $\infty$-preoperad to be any marked $\infty$-category over $\text{Comm}^\mathsection$, while we require that it carries the canonical inert marking.
\end{warning}

\noindent Before we continue, note that there are inert morphisms $\rho^i:\left\langle n\right\rangle\to\left\langle 1\right\rangle$ for each $1\leq i\leq n$, defined by $\rho^i(j)=1$ if $i=j$ and $\ast$ otherwise.

If $p:\mathcal{O}\to\text{Comm}$ is an $\infty$-preoperad, we write $\mathcal{O}_n$ for the fiber over $\left\langle n\right\rangle$, which is a pullback $\mathcal{O}\times_\text{Comm}\{\left\langle n\right\rangle\}$.

\begin{definition}[HA 2.1.1.10,14]\label{DefOp}
An $\infty$-preoperad $p:\mathcal{O}\to\text{Comm}$ is an \emph{$\infty$-operad} if:
\begin{enumerate}
\item For every $X\in\mathcal{O}$ and inert morphism $f:p(X)\to Y$ in Comm, there is an inert morphism (equivalently, a $p$-cocartesian morphism) $X\to\bar{Y}$ in $\mathcal{O}$ lifting $f$; this implies that an inert morphism $\rho:\left\langle n\right\rangle\to\left\langle m\right\rangle$ induces a functor $\rho_\ast:\mathcal{O}_n\to\mathcal{O}_m$.
\item The functor $\mathcal{O}_n\to\mathcal{O}_1^{\times n}$ induced by the inerts $\rho^i:\left\langle n\right\rangle\to\left\langle 1\right\rangle$ is an equivalence of $\infty$-categories.
\item For every $X,Y\in\mathcal{O}$ and $f:p(X)\to p(Y)$, let $\text{Map}_\mathcal{O}^f(X,Y)$ be the union of connected components of $\text{Map}_\mathcal{O}(X,Y)$ lying over $f$. Choose an inert $Y\to Y_i$ lying over each inert $\rho^i:p(Y)\to\left\langle 1\right\rangle$. Then $$\text{Map}_\mathcal{O}^f(X,Y)\to\prod_i\text{Map}^{\rho^i f}_\mathcal{O}(X,Y_i)$$ is an equivalence.
\end{enumerate}
The $\infty$-operads form a full subcategory $\text{Op}\subseteq\text{POp}$.
\end{definition}

\noindent The terminal object of Op is $\text{Comm}^\mathsection$ itself, which we regard as the commutative $\infty$-operad (hence the notation Comm).

We also have the equally important associative $\infty$-operad Assoc: An object of Assoc is a finite pointed set. A morphism is a basepoint-preserving function $f:S_{+}\to T_{+}$, equipped with the data of total orderings on $f^{-1}(t)$ for each $t\in T$. (Note there is no extra data on $f^{-1}(\ast)$.)

\begin{definition}[HA 2.1.2.13]
If $\mathcal{O}$ is an $\infty$-operad, an \emph{$\mathcal{O}$-monoidal $\infty$-category} is a functor $\mathcal{V}:\mathcal{O}\to\text{Cat}$ such that $\smallint\mathcal{V}^\mathsection\xrightarrow{p}\mathcal{O}^\mathsection\to\text{Comm}^\mathsection$ exhibits the cocartesian fibration $\smallint\mathcal{V}^\mathsection$ as an $\infty$-operad.

In this case, we call $p$ a \emph{cocartesian fibration of $\infty$-operads}.

A \emph{lax $\mathcal{O}$-monoidal functor} $\mathcal{V}\to\mathcal{V}^\prime$ is a map of $\infty$-operads $\smallint\mathcal{V}\to\smallint\mathcal{V}^\prime$ over $\mathcal{O}$, and an \emph{$\mathcal{O}$-monoidal functor} is a lax $\mathcal{O}$-monoidal functor which also sends $p$-cocartesian morphisms to $p^\prime$-cocartesian morphisms.
\end{definition}

\begin{definition}
Suppose that $\mathcal{O}^\mathsection$ is an $\infty$-operad, $\mathcal{V}$ is an $\mathcal{O}$-monoidal $\infty$-category, and $\mathcal{O}^{\prime\mathsection}$ is an $\infty$-preoperad over $\mathcal{O}^\mathsection$. An \emph{$\mathcal{O}^\prime$-algebra in $\mathcal{V}$} is a map of $\infty$-preoperads over $\mathcal{O}^\mathsection$: $$\xymatrix{
\mathcal{O}^{\prime\mathsection}\ar[rd]\ar@{-->}[rr] &&\smallint\mathcal{V}^\mathsection\ar[ld] \\
&\mathcal{O}^\mathsection. &
}$$ There is an $\infty$-category of algebras $\text{Alg}_{\mathcal{O}^\prime/\mathcal{O}}(\mathcal{V})=\text{Fun}^\dag_{/\mathcal{O}^\mathsection}(\mathcal{O}^{\prime\mathsection},\smallint\mathcal{V}^\mathsection)$.
\end{definition}

\noindent We record a small lemma for later use:

\begin{lemma}\label{LemSpcf}
For any $\infty$-operad $\mathcal{O}$, $|\mathcal{O}^\mathsection|$ is contractible.
\end{lemma}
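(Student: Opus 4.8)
The plan is to exhibit a terminal object of $\mathcal{O}$ into which \emph{every} morphism is inert, and then to invoke the general principle that localizing an $\infty$-category at a class of morphisms containing all maps into a terminal object produces a contractible localization. In effect this is the $\infty$-categorical, marked refinement of the classical fact that a category with a terminal object has contractible nerve.

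First I would record the structural input coming from $\langle 0\rangle$. The object $\langle 0\rangle$ is a zero object of Comm, and for every $n$ the unique morphism $\langle n\rangle\to\langle 0\rangle$ is inert, since the condition $|f^{-1}(t)|=1$ is vacuous over $\langle 0\rangle^\circ=\emptyset$. Axiom (2) of Definition \ref{DefOp} applied with $n=0$ gives $\mathcal{O}_0\cong\mathcal{O}_1^{\times 0}\cong *$, so $\mathcal{O}_0$ has an essentially unique object $\mathbf{0}$. I claim $\mathbf{0}$ is terminal in $\mathcal{O}$: for any $X\in\mathcal{O}_n$ the base map $\langle n\rangle\to\langle 0\rangle$ is the only available $f$, and axiom (3) identifies $\text{Map}^f_\mathcal{O}(X,\mathbf{0})$ with an empty product of mapping spaces (indexed by $\langle 0\rangle^\circ=\emptyset$), hence with a point; summing over the unique $f$ yields $\text{Map}_\mathcal{O}(X,\mathbf{0})\simeq *$. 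Next I would check that this essentially unique morphism $X\to\mathbf{0}$ is inert: axiom (1) supplies an inert ($p$-cocartesian) lift of $\langle n\rangle\to\langle 0\rangle$ landing in $\mathcal{O}_0$, i.e.\ a morphism $X\to\mathbf{0}$, and since $\mathbf{0}$ is terminal this inert morphism \emph{is} the unique map $X\to\mathbf{0}$. Thus every morphism into $\mathbf{0}$ is marked in $\mathcal{O}^\mathsection$.

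Finally I would conclude using the universal property of the localization $L:\mathcal{O}\to|\mathcal{O}^\mathsection|$. Because $\mathbf{0}$ is terminal, the constant functor $c_\mathbf{0}:\mathcal{O}\to\mathcal{O}$ is right adjoint to $\mathcal{O}\to *$, and the unit $\eta:\text{id}_\mathcal{O}\Rightarrow c_\mathbf{0}$ has component at $X$ the map $X\to\mathbf{0}$, which is marked. Hence $L\eta$ is a pointwise equivalence, so $L\simeq\text{const}_{L\mathbf{0}}$ and $L$ factors up to equivalence through the point. To upgrade this to $|\mathcal{O}^\mathsection|\simeq *$, I would show that for every $\infty$-category $\mathcal{D}$ the constant-diagram functor $\mathcal{D}\to\text{Fun}'(\mathcal{O},\mathcal{D})$ is an equivalence, where $\text{Fun}'(\mathcal{O},\mathcal{D})\subseteq\text{Fun}(\mathcal{O},\mathcal{D})$ is the full subcategory of functors inverting every inert morphism. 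It is essentially surjective because any such $F$ satisfies $F\simeq\text{const}_{F\mathbf{0}}$ via $F\eta$, and it is fully faithful because the classifying space $|\mathcal{O}^\sharp|$ is contractible (a consequence of $\mathcal{O}$ having a terminal object), so that natural transformations between constant functors reduce to morphism spaces in $\mathcal{D}$. Combined with the universal property $\text{Fun}(|\mathcal{O}^\mathsection|,\mathcal{D})\simeq\text{Fun}'(\mathcal{O},\mathcal{D})$ and the Yoneda lemma in $\widehat{\text{Cat}}$, this forces the functor $*\to|\mathcal{O}^\mathsection|$ selecting $L\mathbf{0}$ to be an equivalence.

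The structural part (terminal object, inertness of the maps into it) is quick; the main obstacle is the last paragraph. Passing from ``$L$ is naturally constant'' to genuine contractibility of the localization requires handling the universal property together with the computation of morphism spaces between constant functors, where one must crucially invoke that an $\infty$-category with a terminal object has contractible classifying space — it is precisely the markedness of the maps into $\mathbf{0}$ that prevents the weaker localization $|\mathcal{O}^\mathsection|$ from retaining any noninvertible data.
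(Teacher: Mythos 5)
Your proof is correct and follows essentially the same route as the paper's: both hinge on the observation that the object over $\langle 0\rangle$ is terminal with every morphism into it inert, so that inverting the inert morphisms forces inverting (equivalently, constancy on) everything, and both then conclude from the contractibility of the classifying space of an $\infty$-category with a terminal object. The paper phrases the last step slightly more directly, by noting $\text{Fun}^\dag(\mathcal{O}^\mathsection,\mathcal{C}^\flat)=\text{Fun}^\dag(\mathcal{O}^\sharp,\mathcal{C}^\flat)\cong\text{Fun}(|\mathcal{O}^\sharp|,\mathcal{C})\cong\mathcal{C}$, but this is the same argument as your essential-surjectivity-plus-full-faithfulness step.
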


\begin{proof}
The claim is equivalent to: The functor $\text{Fun}^\dag(\mathcal{O}^\mathsection,\mathcal{C}^\flat)\to\mathcal{C}$, given by evaluation at the terminal object $\emptyset$, is an equivalence.

Suppose $F:\mathcal{O}\to\mathcal{C}$ is a functor which sends inert morphisms to equivalences. For any morphism $f:X\to Y$ in $\mathcal{O}$, $F$ sends the inert morphisms $Y\to\emptyset$ and $X\xrightarrow{f}Y\to\emptyset$ to equivalences, so $F(f)$ is an equivalence. Therefore, $\text{Fun}^\dag(\mathcal{O}^\mathsection,\mathcal{C}^\flat)=\text{Fun}^\dag(\mathcal{O}^\sharp,\mathcal{C}^\flat)\cong\text{Fun}(|\mathcal{O}|,\mathcal{C})$. Since $\mathcal{O}$ has a terminal object, the geometric realization is contractible, and this completes the proof.
\end{proof}

\noindent We will also be crucially interested in the left module $\infty$-operad LM (HA 4.2.1). It has the defining property that an LM-algebra in a monoidal $\infty$-category $\mathcal{V}$ is a pair $(A,M)$, where $A$ is an algebra in $\mathcal{V}$, and $M$ is a left $A$-module. We will give a formal definition in Section 4, as a special case of a more general construction $\text{LM}_S$.

\begin{example}\label{ExPair}
An LM-monoidal $\infty$-category may be regarded as a pair $(\mathcal{V};\mathcal{M})$, where $\mathcal{V}$ is a monoidal $\infty$-category and $\mathcal{M}$ is an $\infty$-category left tensored over $\mathcal{V}$.
\end{example}

\noindent Suppose $(\mathcal{V};\mathcal{M})$ is such an LM-monoidal $\infty$-category. An LM-algebra in $(\mathcal{V};\mathcal{M})$ consists of a pair $(A,M)$, where $A$ is an algebra in $\mathcal{V}$, and $M$ is a left $A$-module in $\mathcal{M}$. We write $$\text{LMod}(\mathcal{V};\mathcal{M})=\text{Alg}_{\text{LM}/\text{LM}}(\mathcal{V};\mathcal{M}).$$ There is a canonical forgetful functor $\theta:\text{LMod}(\mathcal{V};\mathcal{M})\to\text{Alg}(\mathcal{V})$, given by $\theta(A,M)=A$. Therefore, we may define:

\begin{definition}
If $A\in\text{Alg}(\mathcal{V})$, the $\infty$-category of left $A$-modules in $\mathcal{M}$ is $$\text{LMod}_A(\mathcal{M})=\text{LMod}(\mathcal{V};\mathcal{M})\times_{\text{Alg}(\mathcal{V})}\{A\}.$$
\end{definition}

\noindent Our goal is to generalize many of the nice properties satisfied by $\text{LMod}_A(\mathcal{M})$ to $\infty$-categories of presheaves. However, there is a major obstacle: The definition of $\text{LMod}_A(\mathcal{M})$ is fairly unnatural. For example, if $\mathcal{V}$ and $\mathcal{M}$ are both presentable satisfying mild conditions, then $\text{LMod}_A(\mathcal{M})$ is also presentable, but this is not at all clear from the definition.

Lurie solves this problem in \cite{HA} Chapter 4 by giving an alternative $\mathbb{A}_\infty$-model for algebras and left modules, which is equivalent to the operadic model discussed above. Then he identifies $\text{LMod}_A(M)$ with an $\infty$-category of marked functors.

The $\mathbb{A}_\infty$-model relies heavily on the theory of operadic approximations, which we review first.

\subsection{Operadic approximation}\label{S24}
\noindent Suppose $\mathcal{O}$ is an $\infty$-operad. We will often want to construct a simpler $\infty$-preoperad $I$ which has the same algebras as $\mathcal{O}$; that is, such that there is a functor $f:I\to\mathcal{O}$ which induces an equivalence $\text{Alg}_\mathcal{O}(\mathcal{V})\to\text{Alg}_I(\mathcal{V})$ for each monoidal $\infty$-category $\mathcal{V}$. We do this using Lurie's theory of approximations to $\infty$-operads (HA 2.3.3).

\begin{definition}
A morphism $f:S_{+}\to T_{+}$ in Comm is called \emph{active} if $f^{-1}(\ast)=\{\ast\}$ (HA 2.1.2.1-3). If $p:\mathcal{O}\to\text{Comm}$ is an $\infty$-operad, a morphism in $\mathcal{O}$ is called \emph{active} if its image in Comm is active.
\end{definition}

\begin{definition}[HA 2.3.3.6]\label{DefApprox}
Suppose $f:I\to\mathcal{O}$ is a map of $\infty$-preoperads, and $\mathcal{O}$ is an $\infty$-operad. We say $f$ is an \emph{approximation to $\mathcal{O}$} if:
\begin{enumerate}
\item For every $X\in I$ and inert morphism $\phi:p(X)\to\left\langle 1\right\rangle$ in Comm, there is an inert morphism $X\to Y$ in $I$ lifting $\phi$.
\item For every $Y\in I$ and active morphism $\phi:X\to f(Y)$ in $\mathcal{O}$, there is an $f$-cartesian morphism $\bar{X}\to Y$ lifting $\phi$.
\end{enumerate}
We say $f$ is a \emph{strong approximation} to $\mathcal{O}$ if it is an approximation to $\mathcal{O}$ and $f_1:I_1\to\mathcal{O}_1$ is an equivalence between the fibers over $\left\langle 1\right\rangle\in\text{Comm}$.
\end{definition}

\begin{theorem}[HA 2.3.3.23]\label{ThmApprox}
Let $\mathcal{O}$ be an $\infty$-operad, $\mathcal{V}$ a monoidal $\infty$-category, and $f:I\to\mathcal{O}$ a strong approximation to $\mathcal{O}$. Then the map $f^\ast:\text{Alg}_\mathcal{O}(\mathcal{V})\to\text{Alg}_I(\mathcal{V})$ induced by composition with $f$ is an equivalence of $\infty$-categories.
\end{theorem}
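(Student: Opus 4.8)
The plan is to exploit the active--inert factorization system on $\infty$-operads, which splits the data of an algebra into two independent pieces: its behavior on inert morphisms (encoding the underlying colored object via Segal conditions) and its behavior on active morphisms (encoding the multilinear operations). I would show that $f^\ast$ preserves each piece separately, with condition (1) of Definition \ref{DefApprox} controlling the inert part, condition (2) controlling the active part, and the strong hypothesis $f_1:I_1\xrightarrow{\sim}\mathcal{O}_1$ identifying the colors. Assembling the two identifications via the factorization system then recovers the full comparison.

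First I would treat the inert/Segal structure. For an $\infty$-operad the value of an algebra on an $\langle n\rangle$-object is determined by its values over $\langle 1\rangle$, via the equivalence $\mathcal{O}_n\xrightarrow{\sim}\mathcal{O}_1^{\times n}$ of Definition \ref{DefOp}(2). Condition (1) of Definition \ref{DefApprox} guarantees that every inert $\rho^i:p(X)\to\langle 1\rangle$ admits an inert lift in $I$, so an $I$-algebra satisfies the same Segal conditions and its underlying data is again an object of a power of $I_1$. Because $f$ is a \emph{strong} approximation, $f_1:I_1\to\mathcal{O}_1$ is an equivalence, so the underlying colored objects of $\mathcal{O}$-algebras and $I$-algebras are matched by $f^\ast$.

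Next I would treat the active structure, which is the heart of the matter. Once the inert data is pinned down, an algebra is determined by how it extends over active morphisms, i.e.\ by the induced maps on spaces of multilinear operations $\text{Map}^{\text{act}}_\mathcal{O}(X_1\oplus\cdots\oplus X_n,Y)$. Condition (2) of Definition \ref{DefApprox} supplies, for each active $\phi:X\to f(Y)$ in $\mathcal{O}$, an $f$-cartesian lift $\bar X\to Y$ in $I$. The content I must extract is that $f$ induces an equivalence between the active over-categories $I\times_\mathcal{O}\mathcal{O}^{\text{act}}_{/f(Y)}$ and $\mathcal{O}^{\text{act}}_{/f(Y)}$: the $f$-cartesian lifts exhibit the former as (essentially) a pullback of the latter and make the comparison functor cofinal, so that the spaces of operations agree under $f^\ast$.

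The main obstacle is precisely this last step: condition (2) a priori only provides cartesian lifts objectwise, and I must promote this to a statement about the full simplicial structure of an algebra, not just its operations on individual tuples. I would do so by showing that restriction along $f$ of the active over-categories is an equivalence (equivalently, that $f$ is cofinal on active over-categories), so that the space of extensions of the fixed inert data over active morphisms is left unchanged by $f^\ast$; this is where a right-fibration/cofinality argument built from the $f$-cartesian lifts does the real work. Combining the inert identification of the second paragraph with this active identification, and reconstructing each functor $\mathcal{O}\to\smallint\mathcal{V}$ (resp.\ $I\to\smallint\mathcal{V}$) from its two factorization pieces, shows that $f^\ast$ is both essentially surjective and fully faithful, hence an equivalence.
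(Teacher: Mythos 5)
The first thing to say is that the paper does not prove this statement at all: it is imported verbatim from Higher Algebra as HA 2.3.3.23 and used as a black box, so there is no internal proof to compare yours against. Judged on its own terms, your outline has the right shape --- the strong hypothesis matches the colors, condition (1) controls the inert data, and condition (2) controls the operations --- but it contains a genuine gap and one unjustified assertion.

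The unjustified assertion is in your inert step: $I$ is only an $\infty$-preoperad, so while Definition \ref{DefApprox}(1) supplies inert lifts of the maps $\rho^i:p(X)\to\left\langle 1\right\rangle$, nothing forces $I_n\to I_1^{\times n}$ to be an equivalence, and the claim that ``an $I$-algebra satisfies the same Segal conditions'' is not literally available. (The correct substitute is that an $I$-algebra sends the supplied inert lifts to cocartesian edges; but then the decomposition of an algebra into ``data on $I_1$'' plus ``active data'' requires first building an inert--active factorization system on $I$ out of conditions (1) and (2), which you do not address.) The more serious problem is that the step you yourself flag as ``where the real work happens'' is the entire content of the theorem, and it is not carried out. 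The $f$-cartesian lifts do give that $I^{\text{act}}_{/Y}\to\mathcal{O}^{\text{act}}_{/f(Y)}$ is an equivalence (the fibers acquire terminal objects), and this identifies spaces of individual multilinear operations; but essential surjectivity of $f^\ast$ requires coherently extending a functor $I\to\smallint\mathcal{V}$ to all of $\mathcal{O}$, and full faithfulness requires comparing mapping spaces of algebras, not of operations. ``Reconstructing each functor from its two factorization pieces'' is a slogan rather than an argument; in Lurie's proof this assembly is the hard part, done via an auxiliary construction and a chain of trivial-fibration arguments. As written your proposal is a plausible roadmap, not a proof; since the paper itself defers to HA here, the practical course is to do the same.
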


\noindent We are motivated by two examples of operadic approximations.

\begin{example}[HA 4.1.2.11]\label{ExCut}
There is a strong approximation to Assoc, $\text{Cut}:\Delta^\text{op}\to\text{Assoc}$. A morphism $f^\ast:[n]\to[m]$ in $\Delta^\text{op}$ is inert if and only if the associated morphism $f_\ast:[m]\to[n]$ in $\Delta$ embeds $[m]$ as a convex subset $\{i<i+1<\cdots<i+m\}\subseteq[n]$.
\end{example}

\begin{example}[HA 4.2.2.8]\label{ExCut2}
There is a strong approximation to LM, \\$\text{LCut}:\Delta^\text{op}\times\Delta^1\to\text{LM}$, which fits into a commutative square $$\xymatrix{
\Delta^\text{op}\times\{1\}\ar[r]^-{\text{Cut}}\ar[d] &\text{Assoc}\ar[d] \\
\Delta^\text{op}\times\Delta^1\ar[r]_-{\text{LCut}} &\text{LM}.
}$$ A morphism $([n]\xrightarrow{f^\ast}[m],i\to j)$ in $\Delta^\text{op}\times\Delta^1$ is inert if and only if $f^\ast$ is inert in $\Delta^\text{op}$ and either:
\begin{itemize}
\item $f_\ast(m)=n$;
\item or $j=1$.
\end{itemize}
\end{example}

\noindent We will delay explicit constructions of Cut and LCut until Section 5, as special cases of more general constructions $\text{Cut}_S$ and $\text{LCut}_S$.

\begin{warning}\label{TotInert}
From the inert marking on $\Delta^\text{op}\times\Delta^1$, notice that the two embeddings $\Delta^\text{op}\subseteq\Delta^\text{op}\times\Delta^1$ induce two different markings on $\Delta^\text{op}$:
\begin{itemize}
\item The embedding $\{1\}\subseteq\Delta^1$ induces the \emph{inert} marking $\Delta^{\text{op}\mathsection}$;
\item The embedding $\{0\}\subseteq\Delta^1$ induces the \emph{totally inert} marking $\Delta^{\text{op}\ddagger}$, where a morphism $f^\ast:[n]\to[m]$ of $\Delta^\text{op}$ is \emph{totally inert} if it is inert and $f_\ast(m)=n$.
\end{itemize}
\end{warning}

\subsection{The $\mathbb{A}_\infty$-model for higher algebra}\label{S25}
\noindent We will end this section by reviewing the $\mathbb{A}_\infty$-model for algebras and left modules.

For a monoidal $\infty$-category $\mathcal{V}$, the composite $\Delta^\text{op}\xrightarrow{\text{Cut}}\text{Assoc}\xrightarrow{\mathcal{V}}\text{Cat}$ is the simplicial $\infty$-category $\mathcal{BV}$ given by the Milnor construction (HA 4.1.2.4): $$\xymatrix{
\cdots\mathcal{V}^{\times 2} \ar@<2ex>[r] \ar@<0ex>[r] \ar@<-2ex>[r] &  \mathcal{V} \ar@<1ex>[r] \ar@<-1ex>[r] \ar@<1ex>[l] \ar@<-1ex>[l] &  \ast. \ar@<0ex>[l]
}$$

\begin{definition}
An \emph{$\mathbb{A}_\infty$-algebra} of $\mathcal{V}$ is a section of $\smallint\mathcal{BV}\xrightarrow{p}\Delta^\text{op}$ which sends inert morphisms to $p$-cocartesian morphisms, and we write $$\mathbb{A}_\infty\text{Alg}(\mathcal{V})=\text{Fun}^\dag_{/\Delta^{\text{op}\mathsection}}(\Delta^{\text{op}\mathsection},\smallint\mathcal{BV}^\mathsection).$$
\end{definition}

\noindent By Example \ref{ExCut} and Theorem \ref{ThmApprox}, composition with $\text{Cut}:\Delta^\text{op}\to\text{Assoc}$ induces an equivalence $\text{Alg}(\mathcal{V})\to\mathbb{A}_\infty\text{Alg}(\mathcal{V})$.

\begin{remark}\label{RmkHA413}
The preceding material follows HA 4.1.3, where Lurie refers to $\smallint\mathcal{BV}\to\Delta^\text{op}$ as the \emph{planar $\infty$-operad} (which he writes $\mathcal{V}^\oast\to\Delta^\text{op}$) associated to the $\infty$-operad $\smallint\mathcal{V}\to\text{Assoc}$ (which he writes $\mathcal{V}^\otimes\to\text{Assoc}$).
\end{remark}

\noindent Now suppose we also have a left $\mathcal{V}$-module $\infty$-category $\mathcal{M}$; that is, the pair $(\mathcal{V};\mathcal{M})$ is an LM-monoidal $\infty$-category as in Example \ref{ExPair}. Denote by $\mathcal{B}(\mathcal{V};\mathcal{M})$ the composite $\Delta^\text{op}\times\Delta^1\xrightarrow{\text{LCut}}\text{LM}\xrightarrow{(\mathcal{V};\mathcal{M})}\text{Cat}$. For concreteness, $\mathcal{B}(\mathcal{V};\mathcal{M})$ may be regarded as the morphism of simplicial $\infty$-categories: $$\xymatrix{
\mathcal{BV}\ltimes\mathcal{M}=\ar[d]_q &\cdots\mathcal{V}^{\times 2}\times\mathcal{M} \ar[d]\ar@<2ex>[r] \ar@<0ex>[r] \ar@<-2ex>[r] &  \mathcal{V}\times\mathcal{M} \ar[d]\ar@<1ex>[r] \ar@<-1ex>[r] \ar@<1ex>[l] \ar@<-1ex>[l] &  \mathcal{M} \ar[d]\ar@<0ex>[l] \\
\mathcal{BV}= &\cdots\quad\,\mathcal{V}^{\times 2}\quad\,\, \ar@<2ex>[r] \ar@<0ex>[r] \ar@<-2ex>[r] &  \quad\mathcal{V}\quad \ar@<1ex>[r] \ar@<-1ex>[r] \ar@<1ex>[l] \ar@<-1ex>[l] &  \ast. \ar@<0ex>[l]
}$$ The downward maps are simply projection away from $\mathcal{M}$; the module structure is recorded by the horizontal maps in the top row. The bottom simplicial $\infty$-category is just $\mathcal{BV}$ (which is in particular independent of $\mathcal{M}$) because the square of Example \ref{ExCut2} commutes.

\begin{definition}
A \emph{left $\mathbb{A}_\infty$-module} is a section of $\smallint\mathcal{B}(\mathcal{V};\mathcal{M})\xrightarrow{p}\Delta^\text{op}\times\Delta^1$ which sends inert morphisms to $p$-cocartesian morphisms, and we write $$\mathbb{A}_\infty\text{LMod}(\mathcal{V};\mathcal{M})=\text{Fun}^\dag_{/\Delta^\text{op}\times\Delta^{1\mathsection}}(\Delta^\text{op}\times\Delta^{1\mathsection},\smallint\mathcal{B}(\mathcal{V};\mathcal{M})^\mathsection).$$
\end{definition}

\noindent By Example \ref{ExCut2}, composition with $\text{LCut}:\Delta^\text{op}\times\Delta^1\to\text{LM}$ induces an equivalence $\text{LMod}(\mathcal{V};\mathcal{M})\to\mathbb{A}_\infty\text{LMod}(\mathcal{V};\mathcal{M})$.

If $A\in\mathbb{A}_\infty\text{Alg}(\mathcal{V})$, then we may define $\mathbb{A}_\infty\text{LMod}_A(\mathcal{M})$ just as in we did in the operadic model:

\begin{definition}
If $(\mathcal{V};\mathcal{M})$ is an LM-monoidal $\infty$-category and $A\in\mathbb{A}_\infty\text{Alg}(\mathcal{V})$, then $\mathbb{A}_\infty\text{LMod}_A(\mathcal{M})=\mathbb{A}_\infty\text{LMod}(\mathcal{V};\mathcal{M})\times_{\mathbb{A}_\infty\text{Alg}(\mathcal{V})}\{A\}$.
\end{definition}

\noindent The benefit of the $\mathbb{A}_\infty$-model is that we can also give a much more explicit model for $\mathbb{A}_\infty\text{LMod}_A(\mathcal{M})$ than this last definition. In order to do so, we need a bit more notation.

Applying the Grothendieck construction to the map of simplicial $\infty$-categories $\smallint\mathcal{BV}\ltimes\mathcal{M}\to\smallint\mathcal{BV}$, we have a functor $q$ over $\Delta^\text{op}$, which sends $p_0$-cocartesian morphisms to $p_1$-cocartesian morphisms: $$\xymatrix{
\smallint\mathcal{BV}\ltimes\mathcal{M}\ar[rr]^-q\ar[rd]_-{p_0} &&\smallint\mathcal{BV}\ar[ld]^-{p_1} \\
&\Delta^\text{op} &
}$$

\begin{remark}\label{RmkPrec}
This discussion follows HA 4.2.2, where Lurie uses the notation $\mathcal{M}^\oast=\smallint\mathcal{BV}\ltimes\mathcal{M}$. Therefore, he writes $\mathcal{M}^\oast\to\mathcal{V}^\oast$ where we write $\smallint\mathcal{BV}\ltimes\mathcal{M}\to\smallint\mathcal{BV}$.
\end{remark}

\begin{remark}\label{RmkLCF}
Our discussion so far is model-independent, but everything is fairly concrete if we work with quasicategories. Suppose we are given a cocartesian fibration of quasicategories $\smallint(\mathcal{V};\mathcal{M})\to\text{LM}$ which realizes $\mathcal{M}$ as a left $\mathcal{V}$-module. Pullback along the two inclusions $\Delta^\text{op}\subseteq\Delta^\text{op}\times\Delta^1\xrightarrow{\text{LCut}}\text{LM}$ yields quasicategory models for $\smallint\mathcal{BV}\ltimes\mathcal{M}$ and $\smallint\mathcal{BV}$.

In this case, the functor $q:\smallint\mathcal{BV}\ltimes\mathcal{M}\to\smallint\mathcal{BV}$ is a categorical fibration (HA 4.2.2.19) and a locally cocartesian fibration (HA 4.2.2.20).
\end{remark}

\begin{remark}\label{RmkTotInert}
We will be interested in the following markings (with notation as in the triangle above):
\begin{itemize}
\item The marking $\smallint\mathcal{BV}^\mathsection$ by \emph{inert} morphisms, or morphisms which are $p_1$-cocartesian and lie over inert morphisms in $\Delta^\text{op}$;
\item The marking $\smallint\mathcal{BV}\ltimes\mathcal{M}^{\natural !}$ by locally $q$-cocartesian morphisms (we use the notation $\natural!$ to emphasize this is \emph{not} the expected marking by $p_0$-cocartesian morphisms);
\item The marking $\smallint\mathcal{BV}\ltimes\mathcal{M}^\ddagger$ by \emph{totally inert} morphisms, or morphisms which are $p_0$-cocartesian and lie over totally inert morphisms in $\Delta^\text{op}$ (see Warning \ref{TotInert}).
\end{itemize}
\end{remark}

\noindent We end with three important propositions describing left module $\infty$-categories.

\begin{proposition}[HA 4.2.2.19]\label{PropA1}
If $\mathcal{V}$ is a monoidal $\infty$-category, $\mathcal{M}$ is a left $\mathcal{V}$-module $\infty$-category, and $A$ is an $\mathbb{A}_\infty$-algebra of $\mathcal{V}$ (that is, a marked section of $\smallint\mathcal{BV}^\mathsection\to\Delta^{\text{op}\mathsection}$), then $$\mathbb{A}_\infty\text{LMod}_A(\mathcal{M})\cong\text{Fun}^\dag_{/\smallint\mathcal{BV}^\mathsection}(\Delta^{\text{op}\ddagger},\smallint\mathcal{BV}\ltimes\mathcal{M}^\ddagger).$$
\end{proposition}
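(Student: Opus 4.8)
The goal is to identify the fiber $\mathbb{A}_\infty\text{LMod}_A(\mathcal{M})$ with the $\infty$-category of sections $\Delta^{\text{op}\ddagger}\to\smallint\mathcal{BV}\ltimes\mathcal{M}$ that send totally inert morphisms to the distinguished (totally inert) morphisms, working over the base $\smallint\mathcal{BV}^\mathsection$. The plan is to unwind both sides as full subcategories of an appropriate functor category and match up their defining conditions. First I would recall the definitions: an element of $\mathbb{A}_\infty\text{LMod}(\mathcal{V};\mathcal{M})$ is a marked section $\Delta^\text{op}\times\Delta^{1\mathsection}\to\smallint\mathcal{B}(\mathcal{V};\mathcal{M})^\mathsection$, and restricting to the fiber over $A$ means fixing the composite $\Delta^\text{op}\times\{1\}\to\smallint\mathcal{BV}$ to be the section $A$, since the bottom row of the square in Example \ref{ExCut2} computes the underlying $\mathbb{A}_\infty$-algebra. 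So a left module structure on a fixed $A$ is the data of the top row $\Delta^\text{op}\times\{0\}\to\smallint\mathcal{BV}\ltimes\mathcal{M}$ together with the comparison maps encoded by the $\Delta^1$-direction.

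The crux is to translate "section of $\smallint\mathcal{B}(\mathcal{V};\mathcal{M})\to\Delta^\text{op}\times\Delta^1$ over the fixed $A$, preserving inert morphisms" into "section of $q:\smallint\mathcal{BV}\ltimes\mathcal{M}\to\smallint\mathcal{BV}$ along $A$, preserving totally inert morphisms." The key structural input is Remark \ref{RmkLCF}: pulling back a cocartesian fibration $\smallint(\mathcal{V};\mathcal{M})\to\text{LM}$ along the two inclusions realizes $q$ as a locally cocartesian fibration (HA 4.2.2.20), and a categorical fibration (HA 4.2.2.19). Concretely, a section over $\Delta^\text{op}\times\Delta^1$ valued in $\smallint\mathcal{B}(\mathcal{V};\mathcal{M})$ is the same as a natural transformation from the $\Delta^\text{op}\times\{0\}$-section to the $\Delta^\text{op}\times\{1\}$-section, i.e. a lift of $A$ to $\smallint\mathcal{BV}\ltimes\mathcal{M}$ up to the data carried along the interval direction. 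I would make this precise by using the locally cocartesian pushforward along $q$: the $\Delta^1$-component of each inert morphism in $\Delta^\text{op}\times\Delta^1$ is exactly what the marking $\natural!$ on $\smallint\mathcal{BV}\ltimes\mathcal{M}^{\natural !}$ records, and the $\{0\}$-inclusion recovers precisely the totally inert marking $\Delta^{\text{op}\ddagger}$ of Warning \ref{TotInert}.

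The main obstacle I anticipate is bookkeeping the two markings correctly: the inert morphisms of $\Delta^\text{op}\times\Delta^1$ split (by Example \ref{ExCut2}) into those with $f_\ast(m)=n$ and those with $j=1$, and I must verify that a section preserves \emph{all} of them if and only if its restriction along $\{0\}$ preserves totally inert morphisms relative to $q$. The "$j=1$" inerts are automatically handled because the $\Delta^\text{op}\times\{1\}$-restriction is the fixed inert-preserving $A$; the "$f_\ast(m)=n$" inerts are exactly the totally inert morphisms of $\Delta^\text{op}$, so preservation of these on the $\{0\}$-fiber is the condition $\Delta^{\text{op}\ddagger}\to\smallint\mathcal{BV}\ltimes\mathcal{M}^\ddagger$. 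Assembling these, together with the fact that $q$ sends $p_0$-cocartesian morphisms to $p_1$-cocartesian morphisms (so that the section genuinely lies over $A$ in $\smallint\mathcal{BV}^\mathsection$), yields the claimed equivalence. The remaining verification—that this correspondence is an equivalence of $\infty$-categories and not merely a bijection on objects—follows since both sides are cut out as full subcategories of the same functor $\infty$-category by pointwise (marking) conditions, so the identification is automatically fully faithful and essentially surjective; this is the same pattern Lurie uses in HA 4.2.2.19, which I would cite directly.
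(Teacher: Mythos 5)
Your overall strategy is the one the paper uses: it proves the general $\Delta_{/S}^\text{op}$ version (Proposition \ref{PropPShModel}) by unwinding a marked lift over $\Delta_{/S}^\text{op}\times\Delta^1$ into a marked functor on the $\{0\}$-fiber together with a compatibility with the fixed algebra on the $\{1\}$-fiber, and your split of the inert morphisms of $\Delta^\text{op}\times\Delta^1$ into the two cases of Example \ref{ExCut2} is the right bookkeeping. There is, however, a genuine gap in how you derive the ``$/\smallint\mathcal{BV}^\mathsection$'' constraint on the right-hand side. The inert morphisms with $j=1$ split further: those with $i=1$ live entirely in the $\{1\}$-fiber and are indeed handled for free by the fixed inert-preserving section $A$; but the \emph{crossing} morphisms with $i=0$, $j=1$ (in particular the ones $(Y,0)\to(Y,1)$ lying over the identity of $Y$) are not ``automatically handled.'' Preservation of these is precisely the nontrivial condition, and it is what forces the $\{0\}$-restriction $\mathcal{F}_0$ to lie over $A$ via $q$. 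The ingredient you are missing is the characterization (the third bullet in the paper's proof of Proposition \ref{PropPShModel}): a morphism $X_0\to X_1$ of $\smallint\mathcal{B}(\mathcal{V};\mathcal{M})$ lying over $(Y,0)\to(Y,1)$ is inert if and only if the induced map $q(X_0)\to X_1$ is an equivalence. Requiring the canonical natural transformation $\mathcal{F}_0\to A$ to be pointwise inert in this sense is exactly the statement that $A$ factors as $q\mathcal{F}_0$, i.e.\ that $\mathcal{F}_0$ is a marked functor over $\smallint\mathcal{BV}^\mathsection$. Your substitute justifications do not do this work: the marking $\natural !$ by locally $q$-cocartesian morphisms is the one relevant to Proposition \ref{PropALMod} ($\mathcal{V}$-linear functors) and plays no role here, and the fact that $q$ sends $p_0$-cocartesian morphisms to $p_1$-cocartesian morphisms is a property of $q$ alone and says nothing about whether a given section lies over $A$.

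Once that is repaired, the rest of your argument — the $f_\ast(m)=n$ inerts on the $\{0\}$-fiber giving exactly the totally inert marking $\ddagger$, and the closing observation that both sides are cut out of the same functor $\infty$-category by pointwise marking conditions — matches the paper's proof, which in any case defers the remaining verification to HA 4.2.2.19.
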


\begin{proof}
See Proposition \ref{PropPShModel} for the proof of a more general result (or just see HA 4.2.2.19 for this one).
\end{proof}

\begin{proposition}[HA 4.8.4.12]\label{PropALMod}
If $\mathcal{V}$ is a monoidal $\infty$-category, $\mathcal{M},\mathcal{N}$ are left $\mathcal{V}$-module $\infty$-categories, and $\text{Fun}_{\text{LMod}_\mathcal{V}}(\mathcal{M},\mathcal{N})$ is the $\infty$-category of $\mathcal{V}$-linear functors (HA 4.6.2.7), then $$\text{Fun}_{\text{LMod}_\mathcal{V}}(\mathcal{M},\mathcal{N})\cong\text{Fun}^\dag_{/\smallint\mathcal{BV}^\sharp}(\smallint\mathcal{BV}\ltimes\mathcal{M}^{\natural !},\smallint\mathcal{BV}\ltimes\mathcal{N}^{\natural !}).$$
\end{proposition}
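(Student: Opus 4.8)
The plan is to transport the operadic description of $\mathcal{V}$-linear functors to the planar ($\mathbb{A}_\infty$) model by means of the strong approximation $\text{LCut}$ (Example \ref{ExCut2}), in the same spirit that Proposition \ref{PropA1} transports a single left module. First I would unwind the right-hand side. By Remark \ref{RmkLCF}, the map $q:\smallint\mathcal{BV}\ltimes\mathcal{M}\to\smallint\mathcal{BV}$ is a locally cocartesian fibration whose fiber over every object of $\smallint\mathcal{BV}$ is $\mathcal{M}$, and whose locally $q$-cocartesian (that is, $\natural!$-marked) edges encode the $\mathcal{V}$-action $\mathcal{V}\times\mathcal{M}\to\mathcal{M}$ and its iterates (Remark \ref{RmkTotInert}). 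A functor $F$ over $\smallint\mathcal{BV}^\sharp$ preserving the $\natural!$-markings therefore restricts on each fiber to a functor $\mathcal{M}\to\mathcal{N}$ and intertwines the action functors classified by $q_\mathcal{M}$ and $q_\mathcal{N}$; after straightening, this is exactly a natural transformation between the two functors $\smallint\mathcal{BV}\to\text{Cat}$ that classify $q_\mathcal{M}$ and $q_\mathcal{N}$.

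Next I would match this with the left-hand side. Unwinding HA 4.6.2.7, a $\mathcal{V}$-linear functor is an underlying functor $\mathcal{M}\to\mathcal{N}$ equipped with coherent equivalences $F(A\otimes m)\simeq A\otimes F(m)$ compatible with the associativity and unit constraints of the $\mathcal{V}$-action. These compatibilities are indexed precisely by the simplices of $\Delta^\text{op}$ together with their $\mathcal{V}$-labels -- equivalently, by the composable chains of morphisms of $\smallint\mathcal{BV}$ -- so they comprise the same coherence data produced on the right-hand side.

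To make the comparison precise rather than heuristic, I would reduce to the operadic model instead of straightening the locally cocartesian fibration $q$ by hand. The planar module $\smallint\mathcal{BV}\ltimes\mathcal{M}$ is the pullback of the operadic module $\smallint(\mathcal{V};\mathcal{M})$ along the restriction of $\text{LCut}$ to $\Delta^\text{op}\times\{0\}$, and operadically a $\mathcal{V}$-linear functor is a map of $\infty$-operads $\smallint(\mathcal{V};\mathcal{M})\to\smallint(\mathcal{V};\mathcal{N})$ over $\text{LM}$ that is the identity on $\mathcal{V}$ and preserves cocartesian edges. Because $\text{LCut}$ is a strong approximation to $\text{LM}$ carrying the (totally) inert markings to the operadic ones, pullback along it should induce the asserted equivalence, exactly as Theorem \ref{ThmApprox} does at the level of algebras.

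The main obstacle is that Theorem \ref{ThmApprox} is formulated only for $\infty$-categories of algebras, i.e.\ of sections, whereas here I need the analogous statement for the $\infty$-category of functors between two distinct modules, and with the nonstandard $\natural!$-marking by locally cocartesian -- rather than $p_0$-cocartesian -- morphisms. The delicate bookkeeping is that the action edges are only \emph{locally} $q$-cocartesian, reflecting that iterated actions are mediated by the monoidal structure of $\mathcal{V}$; I must check that a $\natural!$-marked $F$ on the planar side corresponds to a genuinely action-preserving $F$ operadically, and that no cocartesian edges are created or destroyed under $\text{LCut}$. A clean way to avoid proving a bespoke relative approximation theorem is to encode a $\mathcal{V}$-linear functor $\mathcal{M}\to\mathcal{N}$ as a single left $\mathcal{V}$-module $\mathcal{K}\to\Delta^1$ that is cocartesian over $\Delta^1$ with fibers $\mathcal{M}$ and $\mathcal{N}$, apply Proposition \ref{PropA1} to this one module, and then cut out the functor category by restricting to the locus of edges cocartesian over $\Delta^1$.
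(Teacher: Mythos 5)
The paper does not prove this proposition at all: it is imported verbatim from Higher Algebra (the bracketed ``HA 4.8.4.12'' is the proof), and Lurie's argument there runs through the categorical-pattern machinery that identifies left $\mathcal{V}$-module $\infty$-categories with fibrant objects of a model structure on marked simplicial sets over $\smallint\mathcal{BV}^\sharp$, from which the identification of mapping objects is read off. So there is no in-paper argument to match your sketch against; the question is only whether your sketch would stand on its own.

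As written it would not, for two concrete reasons. First, your ``clean way'' to avoid a bespoke relative approximation theorem invokes Proposition \ref{PropA1} applied to a $\mathcal{V}$-module $\mathcal{K}\to\Delta^1$, but Proposition \ref{PropA1} computes $\mathbb{A}_\infty\text{LMod}_A(\mathcal{K})$ --- left modules over an \emph{algebra} $A$ inside $\mathcal{K}$, i.e.\ sections of $\smallint\mathcal{BV}\ltimes\mathcal{K}\to\smallint\mathcal{BV}$ over a fixed $\mathbb{A}_\infty$-algebra --- and says nothing about $\mathcal{V}$-linear functors between two module categories. Restricting those sections to ``edges cocartesian over $\Delta^1$'' does not produce $\text{Fun}_{\text{LMod}_\mathcal{V}}(\mathcal{M},\mathcal{N})$; the proposition you would need for the $\Delta^1$-family step is essentially Proposition \ref{PropALMod} itself (or HA 4.8.4.9--4.8.4.11), so the reduction is circular. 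Second, your first paragraph asserts that a marked functor over $\smallint\mathcal{BV}^\sharp$ straightens to a natural transformation between ``the two functors $\smallint\mathcal{BV}\to\text{Cat}$ that classify $q_\mathcal{M}$ and $q_\mathcal{N}$.'' But by Remark \ref{RmkLCF} these $q$'s are only \emph{locally} cocartesian fibrations, so they classify lax functors, not functors, and there is no off-the-shelf straightening of the form you use. Making the heuristic of your first two paragraphs precise --- that $\natural!$-marked functors over $\smallint\mathcal{BV}^\sharp$ encode exactly the coherence data of a $\mathcal{V}$-linear functor in the sense of HA 4.6.2.7 --- is the entire content of Lurie's proof, and your sketch defers it rather than supplying it.
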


\begin{corollary}\label{CorALMod}
$\mathbb{A}_\infty\text{LMod}_A(\text{Cat})$ is equivalent as an $(\infty,2)$-category to the full subcategory of $\text{Cat}^\dag_{/\smallint\mathcal{BV}^\sharp}$ spanned by $\smallint\mathcal{BV}\ltimes\mathcal{M}^{\natural !}$ as $\mathcal{M}$ varies over left $\mathcal{V}$-modules.
\end{corollary}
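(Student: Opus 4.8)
The plan is to realize the claimed equivalence as a single functor of $(\infty,2)$-categories $\Phi:\mathbb{A}_\infty\text{LMod}_A(\text{Cat})\to\text{Cat}^\dag_{/\smallint\mathcal{BV}^\sharp}$ which is essentially surjective onto the named full subcategory and an equivalence on each mapping $\infty$-category. The latter is exactly the content of Proposition \ref{PropALMod}, so the corollary should be a repackaging of that proposition together with object-level bookkeeping.

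First I would construct $\Phi$. On objects it sends a left $\mathcal{V}$-module $\mathcal{M}$ to the locally cocartesian fibration $\smallint\mathcal{BV}\ltimes\mathcal{M}\to\smallint\mathcal{BV}$ equipped with its locally cocartesian ($\natural!$) marking, viewed as an object of $\text{Cat}^\dag_{/\smallint\mathcal{BV}^\sharp}$ (Remark \ref{RmkLCF}). The essential point is that this is a functorial construction — it is the relative Grothendieck construction, i.e. unstraightening over $\Delta^\text{op}$ — so it carries $\mathcal{V}$-linear functors to marked functors over $\smallint\mathcal{BV}^\sharp$ and $\mathcal{V}$-linear natural transformations to marked natural transformations, and hence organizes into a map of $(\infty,2)$-categories rather than merely of underlying $\infty$-categories.

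I would then compare the two mapping $\infty$-categories. The $(\infty,2)$-enhancement of $\mathbb{A}_\infty\text{LMod}_A(\text{Cat})$ is arranged so that its mapping object between $\mathcal{M}$ and $\mathcal{N}$ is $\text{Fun}_{\text{LMod}_\mathcal{V}}(\mathcal{M},\mathcal{N})$, while the target has mapping object $\text{Fun}^\dag_{/\smallint\mathcal{BV}^\sharp}(\smallint\mathcal{BV}\ltimes\mathcal{M}^{\natural !},\smallint\mathcal{BV}\ltimes\mathcal{N}^{\natural !})$. Proposition \ref{PropALMod} supplies an equivalence between these, and since it is exactly the map induced by $\Phi$, the functor $\Phi$ is an equivalence on every mapping $\infty$-category. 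Essential surjectivity is then automatic, the target being by definition the full sub-$(\infty,2)$-category of $\text{Cat}^\dag_{/\smallint\mathcal{BV}^\sharp}$ spanned by the objects $\smallint\mathcal{BV}\ltimes\mathcal{M}^{\natural !}$; a functor of $(\infty,2)$-categories that is essentially surjective and a local equivalence is an equivalence, which yields the corollary.

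The main obstacle is the functoriality asserted in the first step: one must check that $\mathcal{M}\mapsto\smallint\mathcal{BV}\ltimes\mathcal{M}^{\natural !}$ is coherently $(\infty,2)$-functorial, and that the pointwise equivalence of Proposition \ref{PropALMod} is natural enough to be realized by a single $\Phi$ — in particular that it respects composition of $\mathcal{V}$-linear functors. I would dispatch this by producing $\Phi$ through the unstraightening formalism, so that functoriality and the naturality of Proposition \ref{PropALMod} are built in, reducing the remaining content to the pointwise statement already established.
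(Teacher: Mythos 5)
Your proposal is mathematically reasonable, but it is worth knowing that the paper does not actually prove this corollary at all: immediately after the statement, the author concedes that $\text{LMod}_A(\text{Cat})$ has not been defined as an $(\infty,2)$-category, asserts that Proposition \ref{PropALMod} would yield the equivalence ``given any sensible definition,'' and then offers the corollary itself as a possible \emph{definition} of that $(\infty,2)$-category. Your argument, by contrast, tries to construct an honest equivalence $\Phi$, and in doing so it presupposes exactly the missing ingredient: you take as given an $(\infty,2)$-enhancement of $\mathbb{A}_\infty\text{LMod}_A(\text{Cat})$ whose mapping objects are $\text{Fun}_{\text{LMod}_\mathcal{V}}(\mathcal{M},\mathcal{N})$ and whose composition is compatible with the one on $\text{Cat}^\dag_{/\smallint\mathcal{BV}^\sharp}$ under unstraightening. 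That is precisely the ``sensible definition'' the paper declines to supply, so your proof is not so much wrong as it is conditional on the same unproved input the paper flags. You do correctly isolate this as the main obstacle, and your plan to discharge it via the relative Grothendieck construction over $\Delta^\text{op}$ (so that $(\infty,2)$-functoriality and the naturality of Proposition \ref{PropALMod} come for free) is the right way to make the corollary a theorem rather than a definition; but as written, the coherence of the pointwise equivalences of Proposition \ref{PropALMod} with composition is asserted, not established. If you adopt the paper's stance --- take the right-hand side as the definition of the left-hand side --- your entire argument collapses to a tautology plus Proposition \ref{PropALMod}, which is what the paper intends. One further small point: the ``$A$'' in $\text{LMod}_A(\text{Cat})$ is almost certainly a typo for $\mathcal{V}$ (the objects range over left $\mathcal{V}$-module $\infty$-categories), and your proposal implicitly reads it that way, which is the correct reading.
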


\noindent Actually, we haven't defined $\text{LMod}_A(\text{Cat})$ as an $(\infty,2)$-category, but Proposition \ref{PropALMod} implies that there is an equivalence of $\infty$-categories, and that it would promote to an equivalence of $(\infty,2)$-categories given any sensible definition of $\text{LMod}_A(\text{Cat})$ as an $(\infty,2)$-category. On the other hand, we might simply take Corollary \ref{CorALMod} as our definition of $\text{LMod}_A(\text{Cat})$ as an $(\infty,2)$-category.

For the next proposition, note that we can define right modules in parallel with left modules, using an $\infty$-operad RM in place of LM. We still have a strong approximation $\Delta^\text{op}\times\Delta^1\to\text{RM}$, and an RM-monoidal $\infty$-category is a pair $(\mathcal{V};\mathcal{N})$ exhibiting $\mathcal{N}$ as a right $\mathcal{V}$-module $\infty$-category. In this case, the associated functor $\mathcal{B}(\mathcal{V};\mathcal{N}):\Delta^\text{op}\times\Delta^1\to\text{Cat}$ may be identified with a morphism of simplicial $\infty$-categories $$\xymatrix{
\mathcal{N}\rtimes\mathcal{BV}=\ar[d]_q &\cdots\mathcal{N}\times\mathcal{V}^{\times 2} \ar[d]\ar@<2ex>[r] \ar@<0ex>[r] \ar@<-2ex>[r] & \mathcal{N}\times\mathcal{V} \ar[d]\ar@<1ex>[r] \ar@<-1ex>[r] \ar@<1ex>[l] \ar@<-1ex>[l] &  \mathcal{N} \ar[d]\ar@<0ex>[l] \\
\mathcal{BV}= &\cdots\quad\,\mathcal{V}^{\times 2}\quad\,\, \ar@<2ex>[r] \ar@<0ex>[r] \ar@<-2ex>[r] &  \quad\mathcal{V}\quad \ar@<1ex>[r] \ar@<-1ex>[r] \ar@<1ex>[l] \ar@<-1ex>[l] &  \ast. \ar@<0ex>[l]
}$$

\begin{proposition}[HA 4.8.4.3]\label{PropLR}
If $\mathcal{V}$ is a monoidal $\infty$-category, $\mathcal{N},\mathcal{M}$ are right (respectively left) $\mathcal{V}$-module $\infty$-categories, and $\mathcal{N}\otimes_\mathcal{V}\mathcal{M}$ is the tensor product of $\mathcal{V}$-modules (HA 4.4), let $r$ be the canonical cocartesian fibration $(\smallint\mathcal{N}\rtimes\mathcal{BV})\times_{\smallint\mathcal{BV}}(\smallint\mathcal{BV}\ltimes\mathcal{M})\to\Delta^\text{op}$. Then $$\mathcal{N}\otimes_\mathcal{V}\mathcal{M}\cong|(\smallint\mathcal{N}\rtimes\mathcal{BV})\times_{\smallint\mathcal{BV}}(\smallint\mathcal{BV}\ltimes\mathcal{M})^\natural|,$$ where the marking $\natural$ is by $r$-cocartesian edges.
\end{proposition}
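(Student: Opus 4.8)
The plan is to recognize the marked $\infty$-category being localized as the total space of a cocartesian fibration over $\Delta^\text{op}$ classifying the two-sided bar construction, and then to invoke the principle that localizing a cocartesian fibration at its cocartesian edges computes the colimit of the classifying diagram. First I would verify that the fiber product $\mathcal{B}=(\smallint\mathcal{N}\rtimes\mathcal{BV})\times_{\smallint\mathcal{BV}}(\smallint\mathcal{BV}\ltimes\mathcal{M})$ is itself a cocartesian fibration over $\Delta^\text{op}$ via $r$: each of the two maps down to $\smallint\mathcal{BV}$ is a map of cocartesian fibrations over $\Delta^\text{op}$ preserving cocartesian edges, so the pullback is again cocartesian over $\Delta^\text{op}$ with cocartesian edges computed componentwise. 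Computing fibers, the fiber of $\mathcal{B}$ over $[n]$ is $(\mathcal{N}\times\mathcal{V}^{\times n})\times_{\mathcal{V}^{\times n}}(\mathcal{V}^{\times n}\times\mathcal{M})\cong\mathcal{N}\times\mathcal{V}^{\times n}\times\mathcal{M}$, so $r$ classifies exactly the simplicial object $B_\bullet:\Delta^\text{op}\to\text{Cat}$ with $B_n=\mathcal{N}\times\mathcal{V}^{\times n}\times\mathcal{M}$, whose faces and degeneracies are assembled from the right action of $\mathcal{V}$ on $\mathcal{N}$, the left action on $\mathcal{M}$, the monoidal product, and the unit -- that is, the two-sided bar construction $\text{Bar}(\mathcal{N},\mathcal{V},\mathcal{M})_\bullet$.

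Second, I would apply the general fact that if $p:\mathcal{E}\to\mathcal{C}$ is a cocartesian fibration classified by $F:\mathcal{C}\to\text{Cat}$, then the localization $|\mathcal{E}^\natural|$ at the $p$-cocartesian edges is equivalent to $\text{colim}_\mathcal{C}F$: localizing the total space at its cocartesian edges identifies an object of one fiber with the image of its cocartesian lift in the next, which is precisely the identification imposed when forming the colimit in $\text{Cat}$. Applied to $r$, this yields $|\mathcal{B}^\natural|\cong\text{colim}_{\Delta^\text{op}}B_\bullet=|B_\bullet|$, the geometric realization of the two-sided bar construction.

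Third, it remains to identify $|B_\bullet|$ with the relative tensor product $\mathcal{N}\otimes_\mathcal{V}\mathcal{M}$, and I would do this by comparing universal properties from HA 4.4: a functor out of $\mathcal{N}\otimes_\mathcal{V}\mathcal{M}$ is a $\mathcal{V}$-balanced functor out of $\mathcal{N}\times\mathcal{M}$, while a functor out of $|B_\bullet|$ is a cocone under $B_\bullet$, i.e. a compatible family of functors out of each $\mathcal{N}\times\mathcal{V}^{\times n}\times\mathcal{M}$. The heart of the matter -- and the step I expect to be the main obstacle -- is showing that these two packages of data coincide: the coherence isomorphisms witnessing that a functor is balanced are exactly the data recorded by the higher simplices of a cocone, with $B_n$ encoding the $n$-fold balancing coherence. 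This is a bar-resolution comparison, and the cleanest route is to exploit that Lurie's construction of $\mathcal{N}\otimes_\mathcal{V}\mathcal{M}$ in HA 4.4 is itself engineered as a geometric realization of this same bar construction, phrased operadically; the content then reduces to matching that operadic bar construction with the marked-fibration realization produced in the first two steps, a translation that the $\mathbb{A}_\infty$-model of Section \ref{S25} is precisely designed to make transparent.
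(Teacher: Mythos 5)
The paper does not prove this proposition: the bracketed citation is the entirety of its justification, as Proposition \ref{PropLR} is imported verbatim from Higher Algebra (HA 4.8.4.3) and used as a black box (see Remark \ref{PropLRRmk}). So there is no in-paper argument to compare yours against, and I will judge the sketch on its own terms.

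Your first two steps are correct and essentially formal. The map $q:\smallint\mathcal{BV}\ltimes\mathcal{M}\to\smallint\mathcal{BV}$ preserves cocartesian edges over $\Delta^\text{op}$ and is a categorical fibration (Remark \ref{RmkLCF}), so the strict fiber product is the right pullback, is again a cocartesian fibration over $\Delta^\text{op}$ with cocartesian edges detected componentwise, and classifies $[n]\mapsto\mathcal{N}\times\mathcal{V}^{\times n}\times\mathcal{M}$; and localizing the total space of a cocartesian fibration at its cocartesian edges does compute the colimit of the classifying functor. The weight of the argument sits entirely in your third step, and there I have two cautions. First, HA 4.4 does not define $\mathcal{N}\otimes_\mathcal{V}\mathcal{M}$ by a universal property of balanced functors; it defines it directly as the geometric realization (an operadic colimit) of the two-sided bar construction (HA 4.4.2.8), so the ``comparison of universal properties'' framing is a detour --- what is actually required is to identify the simplicial object classified by $r$, \emph{including all face and degeneracy maps}, with Lurie's operadic $\mathrm{Bar}_{\mathcal{V}}(\mathcal{N},\mathcal{M})_\bullet$. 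Matching the fibers is immediate; matching the structure maps (that the inner faces are the monoidal product, the outer faces are the two actions, and the degeneracies insert units, coherently) is the genuine content, and it is precisely what the machinery around HA 4.8.4.1--4.8.4.2 is built to deliver. You correctly flag this identification as the main obstacle but do not carry it out, so as written the proposal is an accurate outline of the standard proof rather than a complete one.
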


\begin{remark}\label{PropLRRmk}
According to Proposition \ref{PropLR}, a functor $F:\mathcal{N}\otimes_\mathcal{V}\mathcal{M}\to\mathcal{Z}$ is classified by a functor $\bar{F}:(\smallint\mathcal{N}\rtimes\mathcal{BV}\times_{\smallint\mathcal{BV}}(\smallint\mathcal{BV}\ltimes\mathcal{M})\to\mathcal{Z}$. HA 4.8.4.3 actually asserts more.

Note that the fiber of $(\smallint\mathcal{N}\rtimes\mathcal{BV}\times_{\smallint\mathcal{BV}}(\smallint\mathcal{BV}\ltimes\mathcal{M})\to\Delta^\text{op}$ over $[n]$ is $\mathcal{N}\times\mathcal{V}^{\times n}\times\mathcal{M}$. If the restriction of $\bar{F}$ to $\mathcal{N}\times\mathcal{V}^{\times n}\times\mathcal{M}\to\mathcal{Z}$ preserves small colimits for each $n$, then $F$ specializes to a colimit-preserving functor $\mathcal{N}\otimes^L_\mathcal{V}\mathcal{M}\to\mathcal{Z}$, where $\otimes^L_\mathcal{V}$ is a relative tensor product taken in $\text{Pr}^L$.
\end{remark}

\section{The operadic model for enriched categories}\label{S3}
\noindent Fix a set $S$. Gepner-Haugseng \cite{GH} construct an $\infty$-operad $\text{Assoc}_S$ (they call it $\mathcal{O}_S$) with the universal property: If $\mathcal{V}$ is a monoidal $\infty$-category, an $\text{Assoc}_S$-algebra in $\mathcal{V}$ is a $\mathcal{V}$-enriched category with set $S$ of objects. We write $$\text{Cat}^\mathcal{V}_S=\text{Alg}_{\text{Assoc}_S/\text{Assoc}}(\mathcal{V}).$$ A $\mathcal{V}$-enriched presheaf on $\mathcal{C}\in\text{Cat}^\mathcal{V}_S$ is then something like a contravariant, $\mathcal{V}$-enriched functor from $\mathcal{C}$ to $\mathcal{V}$. However, this is not a suitable definition at this point, because of two obstacles:
\begin{itemize}
\item $\mathcal{V}$ does not belong to the same $\infty$-category $\text{Cat}^\mathcal{V}_S$, as it doesn't have set $S$ of objects;
\item $\mathcal{V}$ is not itself $\mathcal{V}$-enriched unless it is closed monoidal; even then, it is not obvious how to construct the self-enrichment on $\mathcal{V}$.
\end{itemize}
\noindent The first obstacle is not serious; Gepner-Haugseng construct an $\infty$-category $\text{Cat}^\mathcal{V}$ of \emph{all} $\mathcal{V}$-enriched categories. However, we will delay this construction until a future paper. In any case, it is not easy to understand enriched functor $\infty$-categories in $\text{Cat}^\mathcal{V}$, so we prefer to avoid this approach.

The second obstacle is more serious. While it is possible to construct the self-enrichment on $\mathcal{V}$ (see \cite{GH} Section 7), the construction is rather obscure and not easy to use.

Instead, we will construct an $\infty$-operad $\text{LM}_S$ with a natural embedding $\text{Assoc}_S\subseteq\text{LM}_S$. If $\mathcal{C}$ is a $\mathcal{V}$-enriched category, which is an $\text{Assoc}_S$-algebra in $\mathcal{V}$, then an enriched presheaf on $\mathcal{C}$ is a lift to an $\text{LM}_S$-algebra. In other words, if we write $$\text{PSh}^\mathcal{V}_S=\text{Alg}_{\text{LM}_S/\text{Assoc}}(\mathcal{V}),$$ we can regard an object of $\text{PSh}^\mathcal{V}_S$ as a pair $(\mathcal{C},\mathcal{F})$, where $\mathcal{C}\in\text{Cat}^\mathcal{V}_S$ and $\mathcal{F}$ is an enriched presheaf on $\mathcal{C}$. The inclusion $\text{Assoc}_S\subseteq\text{LM}_S$ induces a forgetful functor $\theta:\text{PSh}^\mathcal{V}_S\to\text{Cat}^\mathcal{V}_S$, and we define $$\text{PSh}^\mathcal{V}(\mathcal{C})=\theta^{-1}(\mathcal{C}),$$ the $\infty$-category of enriched presheaves on $\mathcal{C}$.

In fact, we always work in a more general situation. If $\mathcal{M}$ is an $\infty$-category left tensored over $\mathcal{V}$, the pair $(\mathcal{V},\mathcal{M})$ is itself an LM-algebra in Cat, or an LM-monoidal $\infty$-category. We will define $\text{PSh}^{\mathcal{V};\mathcal{M}}_S=\text{Alg}_{\text{LM}_S/\text{LM}}(\mathcal{V};\mathcal{M})$, which we interpret as the $\infty$-category of pairs $(\mathcal{C},\mathcal{F})$, where $\mathcal{C}$ is a $\mathcal{V}$-enriched category, and $\mathcal{F}$ is an enriched presheaf \emph{with values in $\mathcal{M}$}. This should be thought of informally as an enriched, contravariant functor from $\mathcal{C}$ to $\mathcal{M}$.

In Section \ref{S31}, we review the construction of enriched categories, due to Gepner-Haugseng \cite{GH}. Then in Section \ref{S32}, we introduce enriched presheaves. Finally, in Section \ref{S33}, we introduce two closely related constructions:

If $X\in\mathcal{C}$, a presheaf on $\mathcal{C}$ can be evaluated at $X$, and evaluation at $X$ is functorial $\text{ev}_X:\text{PSh}^\mathcal{V}(\mathcal{C};\mathcal{M})\to\mathcal{M}$. Also for each $X$, there is a representable presheaf $\text{rep}_X\in\text{PSh}^\mathcal{V}(\mathcal{C})$ given informally by $\text{rep}_X(Y)=\mathcal{C}(Y,X)$. In our main result of this section (Theorem \ref{ThmFree}), we prove that $\text{rep}_X$ is a free presheaf. In other words:

\begin{repcorollary}{CorLAdj}
The functor $\text{ev}_X:\text{PSh}^\mathcal{V}(\mathcal{C};\mathcal{M})\to\mathcal{M}$ has a left adjoint described by $\text{rep}_X\otimes -:\mathcal{M}\to\text{PSh}^\mathcal{V}(\mathcal{C};\mathcal{M})$. The presheaf $\text{rep}_X\otimes M$ is given informally by $(\text{rep}_X\otimes M)(Y)=\mathcal{C}(Y,X)\otimes M$.
\end{repcorollary}

\noindent This can be regarded as a form of the enriched Yoneda lemma:

\begin{repcorollary}{CorYon}
If $\mathcal{F}\in\text{PSh}^\mathcal{V}(\mathcal{C})$ and $A\in\mathcal{V}$, then $$\text{Map}(\text{rep}_X\otimes A,\mathcal{F})\cong\text{Map}(A,\mathcal{F}(X)).$$
\end{repcorollary}

\subsection{Enriched categories}\label{S31}
\begin{definition}\label{DefAss}
Fix a set $S$. An object of $\text{Assoc}_S$ is a finite pointed set $E_{+}$ along with two functions $s,t:E\to S$. A morphism is a basepoint-preserving function $f:E_{+}\to E^\prime_{+}$ along with a total ordering of $f^{-1}(e)$ for each $e\in E^\prime$, such that:
\begin{enumerate}
\item If $f^{-1}(e)$ is empty, then $s(e)=t(e)$;
\item If $f^{-1}(e)=\{e_0<e_1<\ldots<e_n\}$ nonempty, then $s(e_0)=s(e)$, $t(e_n)=t(e)$, and $t(e_i)=s(e_{i+1})$ for each $0\leq i<n$.
\end{enumerate}
We say $f$ is \emph{inert} if $|f^{-1}(e)|=1$ for all $e\in E^\prime$, which makes $\text{Assoc}_S^\mathsection$ marked.
\end{definition}

\begin{remark}\label{RmkAss}
We make sense of the definition as so: We regard an object of $\text{Assoc}_S$ as a \emph{directed graph} $\Gamma$ with set $S$ of vertices and set $E$ of edges. Each edge $e\in E$ has source vertex $s(e)$ and target vertex $t(e)$.

A morphism $f:\Gamma\to\Gamma^\prime$ is a way of transforming $\Gamma$ into $\Gamma^\prime$ by means of the following three operations:
\begin{enumerate}
\item deleting some edges -- these are the edges for which $f(e)=\ast$;
\item adding some loops (edges from a vertex to itself) -- this corresponds to condition (1) above;
\item deleting edges $e_0,\ldots,e_n$ which form a path from $s=s(e_0)$ to $t=t(e_n)$, and replacing them by a single edge from $s$ to $t$ -- this corresponds to condition (2) above.
\end{enumerate}
\noindent This description should also make clear how to compose morphisms.

An inert morphism is one that involves only the operation (1), and an active morphism is one that involves only the operations (2)-(3).
\end{remark}

\begin{remark}
In \cite{BermanTHH}, we call the active morphisms in $\text{Assoc}_S$ \emph{bypass operations}, and we study the symmetric monoidal envelope of $\text{Assoc}_S$, which we call $\text{Bypass}_S$.
\end{remark}

\begin{remark}\label{RmkAss3}
There is an evident forgetful functor $\text{Assoc}_S\to\text{Assoc}$. It sends inert morphisms to inert morphisms, and it is an isomorphism when $|S|=1$ (in this case the functions $s,t:E\to S$ carry no information, and the two conditions of Definition \ref{DefAss} are satisfied for free).
\end{remark}

\noindent Suppose there are two graphs $\Gamma,\Gamma^\prime\in\text{Assoc}_S$. Then there is a new graph $\Gamma\otimes\Gamma^\prime$ given by disjoint union of the sets of edges in $\Gamma$ and $\Gamma^\prime$. This $\otimes$ endows $\text{Assoc}_S$ with a monoidal structure.

The unit of $\otimes$ is $\emptyset$, the graph with no edges. Also, given $X,Y\in S$, let $(X,Y)\in\text{Assoc}_S$ denote the graph with a single edge from $X$ to $Y$.

Every object of $\text{Assoc}_S$ can be written uniquely (up to permutation) in the form $(X_1,Y_1)\otimes\cdots\otimes(X_n,Y_n)$. There are also morphisms corresponding to the three operations of Remark \ref{RmkAss}:
\begin{enumerate}
\item $(X,Y)\to\emptyset$ for each $X,Y\in S$;
\item $\emptyset\to(X,X)$ for each $X\in S$;
\item $(X,Y)\otimes(Y,Z)\to(X,Z)$ for each $X,Y,Z\in S$.
\end{enumerate}
Morphisms of type (1) are inert. Morphisms of type (2) and (3) will describe identity morphisms and composition in enriched categories.

\begin{proposition}\label{PropAssOp}
The composite $p:\text{Assoc}_S^\mathsection\to\text{Assoc}^\mathsection\to\text{Comm}^\mathsection$ is an $\infty$-operad, and $\text{Assoc}_S^\mathsection\to\text{Assoc}^\mathsection$ is a map of $\infty$-operads.
\end{proposition}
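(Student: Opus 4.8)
The plan is to verify directly the three conditions of Definition \ref{DefOp} for the composite $p:\text{Assoc}_S^\mathsection\to\text{Comm}^\mathsection$, taking full advantage of the fact (noted above) that $\text{Assoc}_S$ is the nerve of a $1$-category: its mapping spaces are discrete, so all three axioms become elementary combinatorial statements about the graph picture of Remark \ref{RmkAss}. First I would identify the fibers. An object over $\langle n\rangle$ is an $n$-edge graph, i.e. a tuple $((X_1,Y_1),\ldots,(X_n,Y_n))\in(S\times S)^n$, and the only morphism over $\mathrm{id}_{\langle n\rangle}$ is the identity (a morphism with underlying map $\mathrm{id}$ has singleton fibers, and conditions (1)--(2) of Definition \ref{DefAss} then force its source and target graphs to coincide). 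Hence $(\text{Assoc}_S)_n$ is the discrete category $(S\times S)^n$, and in particular $(\text{Assoc}_S)_1=S\times S$. This makes axiom (2) immediate: the Segal map $(\text{Assoc}_S)_n\to(\text{Assoc}_S)_1^{\times n}$ induced by the inerts $\rho^i$ sends a graph to the list of its edges, which is the identity of $(S\times S)^n$.

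For axiom (1), given $X=(E_+,s,t)$ and an inert $f:E_+\to D_+$ in Comm, I would take the lift $\bar f:X\to\bar Y$ that deletes the edges outside $f^{-1}(D)$ and relabels the rest by $D$; the source/target functions on $\bar Y$ are forced to be $s_D(d)=s(f^{-1}(d))$ and $t_D(d)=t(f^{-1}(d))$, and this is inert in the sense of Definition \ref{DefAss}. The one point requiring care is $p$-cocartesianness: given any $h:X\to Z$ together with a factorization of its underlying map through $f$ in Comm, the factorization forces $h$ to send every deleted edge to the basepoint, so the unique lift $\bar Y\to Z$ is obtained by restricting the fiber-orderings and source/target data of $h$ to $f^{-1}(D)$, with conditions (1)--(2) of Definition \ref{DefAss} for the lift inherited from those for $h$. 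This also shows that the inert morphisms of Definition \ref{DefAss} coincide with those of Definition \ref{DefInert} ($p$-cocartesian lying over an inert of Comm), since any morphism with all fibers singletons is exactly of this edge-deleting form.

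Axiom (3) then reduces to the observation that conditions (1)--(2) of Definition \ref{DefAss} are \emph{local to each target edge}: a morphism $X\to Y$ over $f:p(X)\to p(Y)$ is precisely the data, for each edge $e'$ of $Y$, of a total ordering on $f^{-1}(e')$ subject to constraints involving only $s,t$ on that single fiber. Composing with the chosen inerts $Y\to Y_i$ restricts this data to the $i$-th fiber, so the comparison $\text{Map}^f(X,Y)\to\prod_i\text{Map}^{\rho^i f}(X,Y_i)$ is the canonical bijection expressing a product of fiberwise data, which is what the axiom demands. With all three axioms in hand, the second assertion is immediate: $u:\text{Assoc}_S\to\text{Assoc}$ is a functor over Comm which carries the inert morphisms of Definition \ref{DefAss} (all fibers singletons) to inert morphisms of Assoc, so it is a map of $\infty$-preoperads, and since both sides are now $\infty$-operads it is a map of $\infty$-operads.

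The hard part will be nothing conceptual but rather the bookkeeping in axiom (1): confirming that the factorization in Comm genuinely forces the deleted edges to the basepoint, and that the source/target conditions transport correctly to the lift. The fiber-locality used in axiom (3) is the other place to be explicit. Both are routine once the graph description is set up, and I expect no serious obstacle, since passing from Assoc to $\text{Assoc}_S$ only superimposes fiber-local source/target constraints on an $\infty$-operad that is already understood.
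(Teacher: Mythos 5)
Your proposal is correct and follows essentially the same route as the paper: both arguments identify the fibers as the discrete sets $(S\times S)^n$ to get the Segal condition, verify $p$-cocartesianness of the edge-deleting inert lifts by restricting a given morphism along the induced subgraph inclusion, reduce axiom (3) to the fiber-locality of the ordering data and source/target constraints, and deduce the second assertion from the fact that inertness in both $\text{Assoc}_S$ and $\text{Assoc}$ is detected in $\text{Comm}$. Your write-up is, if anything, slightly more explicit than the paper's on the cocartesianness bookkeeping.
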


\begin{proof}
First we check that the inert morphisms in $\text{Assoc}_S$ are $p$-cocartesian, so that the inert marking we have described is the correct inert marking for an $\infty$-operad. Let $f:\Gamma_0\to\Gamma_1$ be the inert morphism in question, so that $f^{-1}$ induces an inclusion of graphs $\Gamma_1\subseteq\Gamma_0$ (in the sense of an inclusion of edge sets, not a morphism in $\text{Assoc}_S$). Assume we have a map $h:\Gamma_0\to\Gamma^\prime$, and a commutative triangle $$\xymatrix{
p(\Gamma_0)\ar[d]_f\ar[r]^h &p(\Gamma^\prime). \\
p(\Gamma_1) \ar@{-->}[ru]_u&
}$$ Then the restriction of $h$ along $\Gamma_1\subseteq\Gamma_0$ describes a map $\bar{u}:\Gamma_1\to\Gamma^\prime$ (the only possible lift of $u$ to $\text{Assoc}_S$), precisely because the triangle commutes. This is what it means for $f$ to be $p$-cocartesian. Hence $\text{Assoc}_S^\mathsection\to\text{Comm}^\mathsection$ is an $\infty$-preoperad.

Now we need to check conditions (1)-(3) of Definition \ref{DefOp}.

(1) Given $\Gamma\in\text{Assoc}_S$ and an inert morphism $f:p(\Gamma)\to T_{+}$, $f^{-1}$ exhibits $T$ as a subset of the set of edges of $\Gamma$. Let $\Gamma^\prime$ be the subgraph spanned by those edges. Then $f$ lifts to an inert morphism $\bar{f}:\Gamma\to\Gamma^\prime$.

(2) If $\mathcal{O}=\text{Assoc}_S$, each $\mathcal{O}_n$ is just the (discrete) set $(S^2)^n$, where an element $(X,Y)\in S^2$ is interpreted as an edge from $X$ to $Y$. It follows that $\mathcal{O}_n\to\mathcal{O}_1^{\times n}$ is an equivalence.

(3) A morphism $\Gamma\to\Gamma^\prime$ in $\text{Assoc}_S$ is determined only by the underlying morphism $f:E_{+}\to E^\prime_{+}$ and data and conditions on each fiber $f^{-1}(e)$.

Therefore $\text{Assoc}_S^\mathsection$ is an $\infty$-operad. The forgetful functor $\text{Assoc}_S\to\text{Assoc}$ is compatible with the forgetful functors to Comm. It sends inerts to inerts, because in each case a morphism is inert if and only if it lies over an inert morphism in Comm, so it is a map of $\infty$-operads.
\end{proof}

\begin{definition}
If $\mathcal{V}$ is a monoidal $\infty$-category and $S$ a set, a \emph{$\mathcal{V}$-enriched category with set $S$ of objects} is an $\text{Assoc}_S$-algebra in $\mathcal{V}$, and we write $$\text{Cat}^\mathcal{V}_S=\text{Alg}_{\text{Assoc}_S/\text{Assoc}}(\mathcal{V}).$$
\end{definition}

\begin{example}
Suppose $|S|=1$. Then $\text{Assoc}_S\cong\text{Assoc}$ as in Remark \ref{RmkAss3}. Therefore, $\mathcal{V}$-enriched categories with one object can be identified with associative algebras in $\mathcal{V}$.
\end{example}

\noindent Unpacking the definition, a $\mathcal{V}$-enriched category with set $S$ of objects is a map of $\infty$-operads $\mathcal{C}:\text{Assoc}_S^\mathsection\to\smallint\mathcal{V}^\mathsection$. Such a functor $\mathcal{C}$ comes with the following data, plus coherences:
\begin{itemize}
\item `hom' objects $\mathcal{C}(X,Y)\in\smallint\mathcal{V}_1=\mathcal{V}$;
\item `identity' morphisms $1\to\mathcal{C}(X,X)$;
\item `composition' morphisms $\mathcal{C}(X,Y)\otimes\mathcal{C}(Y,Z)\to\mathcal{C}(X,Z)$.
\end{itemize}

\noindent This is exactly the classical structure of an enriched category.

A morphism $\mathcal{C}\to\mathcal{D}$ in $\text{Cat}^\mathcal{V}_S$ consists of maps $\mathcal{C}(X,Y)\to\mathcal{D}(X,Y)$. In other words, it is an \emph{enriched functor} $F:\mathcal{C}\to\mathcal{D}$ which acts as the identity on objects: $F(X)=X$ for all $X\in S$.

\begin{example}\label{ExTriv1}
Let $0$ denote the contractible $\infty$-category $\ast$ with its unique monoidal structure. The corresponding functor $\text{Assoc}\to\text{Cat}$ is the constant functor with value $\ast$, so the associated cocartesian fibration is the identity $\text{Assoc}\to\text{Assoc}$. Therefore, $\text{Cat}^0_S=\text{Fun}^\dag_{/\text{Assoc}}(\text{Assoc}_S,\text{Assoc})\cong\ast$ is contractible for each $S$.

In summary, there is a unique (up to equivalence) $0$-enriched category $1_S$ with set $S$ of objects. We may regard it as having $1_S(X,Y)=1$ for each $X,Y\in S$, where $1\in\mathcal{V}$ is the monoidal unit.
\end{example}

\begin{example}\label{ExTriv1b}
For any monoidal $\infty$-category $\mathcal{V}$, there is a unique monoidal functor $0\to\mathcal{V}$. By pushing forward the $\text{Assoc}_S$-algebra $1_S$ of the last example, we obtain a $\mathcal{V}$-enriched category $1_S$. It has the property $1_S(X,Y)=1$ for all $X,Y\in S$. This is the \emph{trivial $\mathcal{V}$-enriched category with set $S$ of objects}.
\end{example}

\begin{remark}
If $\mathcal{V}$ is presentable and closed monoidal, then $\text{Cat}^\mathcal{V}_S=\text{Alg}_{\text{Assoc}_S}(\mathcal{V})$ is also presentable by HA 3.2.3.5.
\end{remark}

\subsection{Enriched presheaves}\label{S32}
\begin{definition}\label{DefLMS}
Suppose $S_{+}$ is a pointed set and $\Gamma\in\text{Assoc}_{S_{+}}$. Call $\Gamma$ \emph{left-modular} if $s(e)\neq\ast$ for all edges $e$ in $\Gamma$.

If $S$ is a set, $\text{LM}_S$ is the full subcategory of $\text{Assoc}_{S_{+}}$ spanned by left modular graphs.

A morphism of $\text{LM}_S$ is \emph{inert} if it is inert in $\text{Assoc}_{S_{+}}$.
\end{definition}

\noindent In other words, we may think of the objects of $\text{LM}_S$ as graphs on vertex-set $S_{+}$, such that no edges have source $\ast$.

Hence there are two kinds of edges of $\text{LM}_S$: those of the form $(X,Y)$, and those of the form $(Z,\ast)$. The edges that do not involve $\ast$ span the full subcategory $\text{Assoc}_S$, so that there are inclusions of marked categories $$\text{Assoc}_S^\mathsection\subseteq\text{LM}_S^\mathsection\subseteq\text{Assoc}_{S_{+}}^\mathsection,$$ and $\text{LM}_S$ inherits from $\text{Assoc}_{S_{+}}$ the monoidal operation $\otimes$.

\begin{example}
If $|S|=1$, then $\text{LM}_S\cong\text{LM}$, the left module operad.
\end{example}

\noindent For any function $S\to T$, the induced functor $\text{Assoc}_{S_{+}}\to\text{Assoc}_{T_{+}}$ restricts to $\text{LM}_S\to\text{LM}_T$. In particular, there are canonical functors $\text{LM}_S\to\text{LM}$ for each $S$ which forget the labelings of the vertices of a graph (except for the distinguished vertex $\ast$).

\begin{proposition}
The composite $\text{LM}_S^\mathsection\to\text{LM}^\mathsection\to\text{Comm}^\mathsection$ is an $\infty$-operad.
\end{proposition}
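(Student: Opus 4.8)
The plan is to deduce this from Proposition \ref{PropAssOp}, applied to the pointed set $S_+$: the composite $\text{Assoc}_{S_+}^\mathsection\to\text{Comm}^\mathsection$ is already known to be an $\infty$-operad, and by Definition \ref{DefLMS} the category $\text{LM}_S$ is the full subcategory of $\text{Assoc}_{S_+}$ spanned by the left-modular graphs. So the entire content is that this full subcategory inherits the operadic structure, and the whole argument will hinge on one observation: \emph{$\text{LM}_S$ is closed under inert morphisms in $\text{Assoc}_{S_+}$}. Indeed, an inert morphism is an edge-deletion (operation (1) of Remark \ref{RmkAss}), and deleting edges from a graph having no edge of source $\ast$ again produces such a graph. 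Hence the inert lift of an object of $\text{LM}_S$, computed in $\text{Assoc}_{S_+}$ as the subgraph on the chosen edges, lands back in $\text{LM}_S$.

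From this closure property, the preoperad condition and axiom (1) of Definition \ref{DefOp} are immediate. First I would note that, since $\text{LM}_S\subseteq\text{Assoc}_{S_+}$ is full and the $p$-cocartesian lift of an inert morphism out of a left-modular graph stays in $\text{LM}_S$, the inert morphisms of $\text{LM}_S$ are $p$-cocartesian; thus $\text{LM}_S^\mathsection\to\text{Comm}^\mathsection$ is a preoperad. Axiom (1) is the same observation phrased for lifting: given $\Gamma\in\text{LM}_S$ and an inert $f:p(\Gamma)\to T_+$, the subgraph of $\Gamma$ on the edges selected by $f^{-1}$ is left-modular and provides the required inert lift.

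For axiom (2) I would compute the fibers directly. A left-modular graph over $\langle n\rangle$ has $n$ edges, each with source in $S$ and target in $S_+$, so $(\text{LM}_S)_n$ is the discrete set $(S\times S_+)^n$ and the Segal map $(\text{LM}_S)_n\to(\text{LM}_S)_1^{\times n}$ is a bijection. (When $|S|=1$ this recovers the two colors of $\text{LM}$, namely the edges $(X,X)$ and $(X,\ast)$.) Axiom (3) I would inherit from $\text{Assoc}_{S_+}$: because $\text{LM}_S$ is a full subcategory, the mapping spaces $\text{Map}^f$ coincide with those in $\text{Assoc}_{S_+}$, and by the closure property the chosen inert targets $Y_i$ again lie in $\text{LM}_S$, so the product decomposition of axiom (3) holds in $\text{LM}_S$ precisely because it holds in $\text{Assoc}_{S_+}$.

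The only genuinely delicate point, and the step I expect to require the most care, is the closure of $\text{LM}_S$ under inert morphisms, since this is exactly where the left-modularity hypothesis is used and must be checked to mean what it should. I would also flag, to avoid confusion, that $\text{LM}_S$ is \emph{not} closed under all active morphisms—adding a loop at the basepoint (operation (2)) can produce an edge of source $\ast$—but this is harmless, because the $\infty$-operad axioms only demand inert lifts, and fullness already guarantees the presence of every active morphism between left-modular graphs, including the identities, compositions, and the module action $(X,Y)\otimes(Y,\ast)\to(X,\ast)$.
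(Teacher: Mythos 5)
Your proof is correct, but it is organized differently from the paper's. The paper simply reruns the direct verification of Definition \ref{DefOp} for $\text{LM}_S$, exactly as in the proof of Proposition \ref{PropAssOp} (check that inerts are $p$-cocartesian, construct the subgraph lift for axiom (1), compute the discrete fibers for axiom (2), and observe that morphisms are determined fiberwise for axiom (3)). You instead invoke Proposition \ref{PropAssOp} for the pointed set $S_{+}$ and transfer the operad structure along the full inclusion $\text{LM}_S\subseteq\text{Assoc}_{S_{+}}$, with the single substantive input being that left-modular graphs are closed under edge deletion, hence under inert morphisms. That closure claim is right (an inert morphism identifies the target with a subgraph of the source, and a subgraph of a graph with no edge of source $\ast$ again has no edge of source $\ast$), your fiber computation $(\text{LM}_S)_n\cong(S\times S_{+})^n$ is right, and your deduction of the $p$-cocartesian and mapping-space conditions from fullness is sound, since the cocartesian universal property and the decomposition of axiom (3) only quantify over objects that remain in the subcategory. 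What your route buys is a cleaner factorization of the argument through a reusable principle (a full subcategory of an $\infty$-operad closed under inerts and satisfying the Segal condition is again an $\infty$-operad), at the cost of having to flag, as you correctly do, that closure under \emph{active} morphisms fails and is not needed; the paper's route avoids stating any such principle but repeats the same computations verbatim.
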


\noindent The proof is exactly like Proposition \ref{PropAssOp}.

Notice that the $\infty$-operads $\text{Assoc}_S$ and $\text{LM}_S$ have the property: A morphism is inert if and only if it lies over an inert morphism in Comm. Therefore, all of the functors in the following diagram are $\infty$-operad maps (where $S\to T$ is a function): $$\xymatrix{
\text{Assoc}_S\ar@{^{(}->}[r]\ar[d] &\text{LM}_S\ar@{^{(}->}[r]\ar[d] &\text{Assoc}_{S_{+}}\ar[d] \\
\text{Assoc}_T\ar@{^{(}->}[r] &\text{LM}_T\ar@{^{(}->}[r] &\text{Assoc}_{T_{+}}.
}$$

\begin{definition}
Suppose $(\mathcal{V};\mathcal{M})$ is an LM-monoidal $\infty$-category; that is, $\mathcal{V}$ is a monoidal $\infty$-category and $\mathcal{M}$ is a left $\mathcal{V}$-module $\infty$-category. A \emph{$\mathcal{V}$-enriched presheaf} with underlying set $S$ and values in $\mathcal{M}$ is an $\text{LM}_S$-algebra in $(\mathcal{V};\mathcal{M})$, and we write $$\text{PSh}^{\mathcal{V};\mathcal{M}}_S=\text{Alg}_{\text{LM}_S/\text{LM}}(\mathcal{V};\mathcal{M}).$$
\end{definition}

\noindent In order to compare to $\text{Cat}^\mathcal{V}$, we will need a small lemma:

\begin{lemma}\label{LemCatPair}
With notation as before, $\text{Cat}^\mathcal{V}\cong\text{Alg}_{\text{Assoc}_S/\text{LM}}(\mathcal{V};\mathcal{M})$.
\end{lemma}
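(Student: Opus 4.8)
The plan is to present both sides as $\infty$-categories of inert-preserving functors out of $\text{Assoc}_S^\mathsection$ and to identify them by reducing the target $\smallint(\mathcal{V};\mathcal{M})^\mathsection$ over $\text{LM}$ to $\smallint\mathcal{V}^\mathsection$ over $\text{Assoc}$. Here I read the claim as $\text{Cat}^\mathcal{V}_S=\text{Alg}_{\text{Assoc}_S/\text{Assoc}}(\mathcal{V})$, the object defined in the notation so far. The starting observation is that the structure map $\text{Assoc}_S\to\text{LM}$ factors as $\text{Assoc}_S\to\text{Assoc}\hookrightarrow\text{LM}$: by construction $\text{Assoc}_S$ consists of graphs on the vertex set $S$, none of whose edges meets the adjoined point $\ast$, so under $\text{LM}_S\to\text{LM}=\text{LM}_{\{\ast\}}$ its image lands in the full subcategory $\text{Assoc}\subseteq\text{LM}$ spanned by graphs avoiding $\ast$, i.e.\ the algebra color. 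I will use that $\text{Assoc}\hookrightarrow\text{LM}$ is a full subcategory inclusion, as recorded in Section \ref{S32}.

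The key input is the pullback square of $\infty$-operads
$$\smallint\mathcal{V}^\mathsection\cong\smallint(\mathcal{V};\mathcal{M})^\mathsection\times_{\text{LM}^\mathsection}\text{Assoc}^\mathsection.$$
This is the assertion that restricting the classifying functor $(\mathcal{V};\mathcal{M}):\text{LM}\to\text{Cat}$ along $\text{Assoc}\hookrightarrow\text{LM}$ returns the monoidal $\infty$-category $\mathcal{V}:\text{Assoc}\to\text{Cat}$. Since the Grothendieck construction carries restriction of a classifying functor to base change of the associated cocartesian fibration, the underlying $\infty$-categories agree, and I will check the inert markings agree using the description of pullbacks in $\text{Cat}^\dag$ from Section \ref{S21}: a morphism of the pullback is inert if and only if its projection to $\smallint(\mathcal{V};\mathcal{M})$ is inert and its projection to $\text{Assoc}$ is inert. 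Because cocartesian morphisms base-change to cocartesian morphisms, and a morphism of $\text{LM}$ between objects of $\text{Assoc}$ is inert if and only if it is inert in $\text{Assoc}$ (both meaning: lying over an inert of $\text{Comm}$), this recovers precisely the inert marking of $\smallint\mathcal{V}^\mathsection$.

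With the square in hand I finish by the universal property of the pullback. Since the composite $\text{Assoc}_S\to\smallint(\mathcal{V};\mathcal{M})\to\text{LM}$ is forced to equal $\text{Assoc}_S\to\text{Assoc}\hookrightarrow\text{LM}$, and $\text{Assoc}\subseteq\text{LM}$ is full, every functor $\text{Assoc}_S\to\smallint(\mathcal{V};\mathcal{M})$ over $\text{LM}$ automatically factors through the full subcategory $\smallint\mathcal{V}=\smallint(\mathcal{V};\mathcal{M})\times_{\text{LM}}\text{Assoc}$. Concretely, applying $\text{Fun}(\text{Assoc}_S,-)$, which preserves the pullback, and taking the fiber over the fixed structure map gives
$$\text{Fun}_{/\text{Assoc}}(\text{Assoc}_S,\smallint\mathcal{V})\cong\text{Fun}_{/\text{LM}}(\text{Assoc}_S,\smallint(\mathcal{V};\mathcal{M})).$$
Passing to the marked subcategories of inert-preserving functors on both sides — which correspond because the two inert markings match under the pullback and the source marking $\text{Assoc}_S^\mathsection$ is the same whether viewed over $\text{Assoc}$ or over $\text{LM}$ — yields $\text{Cat}^\mathcal{V}_S\cong\text{Alg}_{\text{Assoc}_S/\text{LM}}(\mathcal{V};\mathcal{M})$. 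I expect the only genuine work to be the key input, namely verifying that the restriction of the $\text{LM}$-module structure to the algebra color reproduces $\mathcal{V}$ together with its inert marking; everything after that is formal manipulation of pullbacks of marked functor categories.
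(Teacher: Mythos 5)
Your proposal is correct and follows essentially the same route as the paper: the pullback square $\smallint\mathcal{V}\cong\smallint(\mathcal{V};\mathcal{M})\times_{\text{LM}}\text{Assoc}$ compatible with inert markings, combined with the factorization of $\text{Assoc}_S\to\text{LM}$ through $\text{Assoc}$, so that operad maps over $\text{LM}$ automatically factor through $\smallint\mathcal{V}$. The paper states this in two sentences; your write-up just spells out the same steps in more detail (and correctly reads the statement as being about $\text{Cat}^\mathcal{V}_S$).
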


\noindent We will henceforth abuse notation by defining $\text{Cat}^\mathcal{V}=\text{Alg}_{\text{Assoc}_S/\text{LM}}(\mathcal{V};\mathcal{M})$ any time we are working with a pair $(\mathcal{V};\mathcal{M})$.

\begin{proof}
There is a pullback $$\xymatrix{
\smallint\mathcal{V}\ar[r]\ar[d] &\smallint(\mathcal{V};\mathcal{M})\ar[d] \\
\text{Assoc}\ar@{^{(}->}[r] &\text{LM}.
}$$ which is compatible with inert morphisms, in the sense that a morphism in $\smallint\mathcal{V}$ is inert if and only if its images in $\smallint(\mathcal{V};\mathcal{M})$ and Assoc are inert. Therefore, an $\infty$-operad map $\text{Assoc}_S\to\smallint\mathcal{V}$ over Assoc records the same data as an $\infty$-operad map $\text{Assoc}_S\to\smallint(\mathcal{V};\mathcal{M})$ over LM.
\end{proof}

\noindent Now let's unpack the definition. By the lemma, the inclusion $\text{Assoc}_S\subseteq\text{LM}_S$ induces a forgetful functor $$\theta:\text{PSh}^{\mathcal{V};\mathcal{M}}_S\to\text{Cat}^\mathcal{V}_S.$$ In total, the data of an enriched presheaf consists of the restriction to $\text{Assoc}_S$, which is an enriched category $\mathcal{C}$, with the following data and coherences:
\begin{itemize}
\item objects $\mathcal{F}(X,\ast)\in\mathcal{M}$ for each $X\in S$;
\item morphisms $\mathcal{C}(X,Y)\otimes\mathcal{F}(Y,\ast)\to\mathcal{F}(X,\ast)$ for each $X,Y\in S$.
\end{itemize}

\noindent We generally write just $\mathcal{F}(X)$ instead of $\mathcal{F}(X,\ast)$. We read (2) as a method for turning hypothetical `morphisms' $X\to Y$ in $\mathcal{C}$ into maps $\mathcal{F}(Y)\to\mathcal{F}(X)$ in $\mathcal{M}$. This data is something like a `functor' $\mathcal{C}^\text{op}\to\mathcal{M}$, and it is for this reason we call an $\text{LM}_S$-algebra a \emph{presheaf}.

\begin{example}[Presheaves with values in $\mathcal{V}$]
Suppose $\mathcal{V}$ is a monoidal $\infty$-category. LM is an $\infty$-operad over Assoc via $\text{LM}\subseteq\text{Assoc}_{\left\langle 1\right\rangle}\to\text{Assoc}$. The composite $\text{LM}\to\text{Assoc}\xrightarrow{\mathcal{V}}\text{Cat}$ describes an LM-monoidal $\infty$-category $(\mathcal{V};\mathcal{V})$; that is, $\mathcal{V}$ acts on itself by tensoring on the left. We write $$\text{PSh}^\mathcal{V}_S=\text{PSh}^{\mathcal{V};\mathcal{V}}_S,$$ understanding by convention that we are taking presheaves with values in $\mathcal{V}$.

By construction, $\smallint(\mathcal{V};\mathcal{V})\cong\smallint\mathcal{V}\times_\text{Assoc}\text{LM}$, and this pullback is compatible with inert morphisms. Just as in the proof of the last lemma, we conclude $$\text{PSh}^\mathcal{V}_S\cong\text{Alg}_{\text{LM}_S/\text{Assoc}}(\mathcal{V}).$$
\end{example}

\noindent We have constructed the $\infty$-category $\text{PSh}^{\mathcal{V};\mathcal{M}}_S$ of pairs $(\mathcal{C};\mathcal{F})$ where $\mathcal{C}$ is an enriched category and $\mathcal{F}$ is a presheaf. However, we will typically be more interested in fixing $\mathcal{C}$ and studying the $\infty$-category $\text{PSh}^\mathcal{V}(\mathcal{C};\mathcal{M})$ of presheaves on $\mathcal{C}$:

\begin{definition}
If $\mathcal{C}$ is a $\mathcal{V}$-enriched category with set $S$ of objects, then the $\infty$-category of enriched presheaves on $\mathcal{C}$ with values in $\mathcal{M}$ is $$\text{PSh}^\mathcal{V}(\mathcal{C};\mathcal{M})=\text{PSh}^{\mathcal{V};\mathcal{M}}_S\times_{\text{Cat}^\mathcal{V}_S}\{\mathcal{C}\}.$$ When $\mathcal{M}=\mathcal{V}$ with its canonical left action on itself, we will also write $$\text{PSh}^\mathcal{V}(\mathcal{C})=\text{PSh}^\mathcal{V}(\mathcal{C};\mathcal{V}).$$
\end{definition}

\noindent Notice that $\text{PSh}^\mathcal{V}(\mathcal{C};\mathcal{M})$ depends on the set $S$ although it is not specified notationally. (In fact, $S$ is part of the data of $\mathcal{C}$, so in this sense it is implicit in the notation.)

\begin{example}
If the enriched category $\mathcal{C}$ has a single object, let $A$ be the associated algebra (the endomorphism algebra of the single object). In this case, $\text{PSh}^\mathcal{V}(\mathcal{C};\mathcal{M})=\text{LMod}_A(\mathcal{M})$ by definition (HA 4.2.1.13)
\end{example}

\begin{example}\label{ExTriv2}
We will prove later:
\begin{itemize}
\item (Proposition \ref{PropTriv}) If $1_S$ is the trivial $\mathcal{V}$-enriched category with set $S$ of objects (Example \ref{ExTriv1b}) then $\text{PSh}^\mathcal{V}(1_S;\mathcal{M})\cong\mathcal{M}$;
\item (Corollary \ref{CorTriv}) If $0$ is the trivial monoidal $\infty$-category, $\text{PSh}^{0;\mathcal{M}}_S\cong\mathcal{M}$.
\end{itemize}
\end{example}

\begin{remark}\label{RmkFiber}
All of our definitions so far are model-independent, but before we move on, we should say a word about the quasicategory model. Suppose we are given a cocartesian fibration $\smallint(\mathcal{V};\mathcal{M})\to\text{LM}$ of quasicategories exhibiting $\mathcal{M}$ as a left $\mathcal{V}$-module $\infty$-category. Then the constructions $$\text{Cat}^\mathcal{V}_S=\text{Fun}^\dag_{/\text{LM}^\mathsection}(\text{Assoc}_S^\mathsection,\smallint(\mathcal{V};\mathcal{M})^\mathsection)$$ $$\text{PSh}^{\mathcal{V};\mathcal{M}}_S=\text{Fun}^\dag_{/\text{LM}^\mathsection}(\text{LM}_S^\mathsection,\smallint(\mathcal{V};\mathcal{M})^\mathsection)$$ model $\text{Cat}^\mathcal{V}_S$ and $\text{PSh}^{\mathcal{V};\mathcal{M}}_S$ concretely as quasicategories.

Since the inclusion $\text{Assoc}_S\to\text{LM}_S$ is a categorical fibration, the forgetful functor $\theta:\text{PSh}^{\mathcal{V};\mathcal{M}}_S\to\text{Cat}^\mathcal{V}_S$ is also a categorical fibration, so $\text{PSh}^\mathcal{V}(\mathcal{C};\mathcal{M})$ may be modeled as the literal fiber $\theta^{-1}(\mathcal{C})$, or the quasicategory of lifts $$\xymatrix{
\text{Assoc}_S^\mathsection\ar[d]\ar[r]^-{\mathcal{C}} &\smallint\mathcal{V}^\mathsection\ar[d] \\
\text{LM}_S^\mathsection\ar@{-->}[r] &\smallint(\mathcal{V};\mathcal{M})^\mathsection.
}$$
\end{remark}

\subsection{Representable presheaves}\label{S33}
\noindent Suppose $\mathcal{F}$ is a $\mathcal{V}$-enriched presheaf with values in $\mathcal{M}$. That is, $\mathcal{F}$ is a map of $\infty$-operads $\mathcal{F}:\text{LM}_S\to\smallint(\mathcal{V};\mathcal{M})$ for some set $S$. If $X\in S$, then $\mathcal{F}(X,\ast)$ is an object of $\smallint(\mathcal{V};\mathcal{M})$ lying over $(0,\ast)\in\text{LM}_{\{0\}}$. Since the fiber over $(0,\ast)$ is just $\mathcal{M}$, $\mathcal{F}(X,\ast)$ is an object of $\mathcal{M}$.

\begin{definition}
As above, evaluation at $(X,\ast)\in\text{LM}_S$ induces $$\text{ev}_X:\text{PSh}^{\mathcal{V};\mathcal{M}}_S\to\mathcal{M}.$$ We call this functor \emph{evaluation at $X$} and also write $\mathcal{F}(X)=\text{ev}_X(\mathcal{F})$.

We may also restrict to the fibers $\text{PSh}^\mathcal{V}(\mathcal{C};\mathcal{M})\subseteq\text{PSh}^{\mathcal{V};\mathcal{M}}_S$, yielding functors $\text{ev}_X:\text{PSh}^\mathcal{V}(\mathcal{C};\mathcal{M})\to\mathcal{M}$ for each enriched category $\mathcal{C}$.
\end{definition}

\noindent We will now construct the representable presheaves. For any $X\in S$, there is a function $\pi^X:S_{+}\to S$ given by $\pi^X(\ast)=X$ and $\pi^X(Y)=Y$. This induces an operad map $\pi^X_\ast:\text{LM}_S\subseteq\text{Assoc}_{S+}\to\text{Assoc}_S$, and composition with $\pi^X_\ast$ induces $\text{rep}_X:\text{Cat}^\mathcal{V}_S\to\text{PSh}^\mathcal{V}_S$.

The inclusion $\text{Assoc}_S\to\text{LM}_S$ is a section of $\pi^X_\ast$, so $\text{rep}_X(\mathcal{C})$ is a presheaf on $\mathcal{C}$.

\begin{definition}
If $\mathcal{C}$ is a $\mathcal{V}$-enriched category with set $S$ of objects and $X\in S$, then $\text{rep}_X(\mathcal{C})$ is the \emph{representable presheaf} at $X$. When $\mathcal{C}$ is clear from context, we will just write $\text{rep}_X$.
\end{definition}

\noindent Notice that $\text{rep}_X(Y)=\mathcal{C}(Y,X)$ by construction. The representable presheaves are \emph{free}, in the following sense.

\begin{definition}\label{DefFree}
Consider $(\mathcal{C},\mathcal{F})\in\text{PSh}^{\mathcal{V};\mathcal{M}}_S$, so that $\mathcal{C}$ is an enriched category and $\mathcal{F}$ a presheaf on $\mathcal{C}$. If $X\in S$ and $M\in\mathcal{M}$, we say that a morphism $\lambda:M\to\mathcal{F}(X)$ exhibits $\mathcal{F}$ as \emph{freely generated by $M$ at $X$} if for all $Y\in S$, the map $\mathcal{C}(Y,X)\otimes M\to\mathcal{C}(Y,X)\otimes\mathcal{F}(X)\to\mathcal{F}(Y)$ is an equivalence.
\end{definition}

\begin{example}
Let $1$ be the unit of the monoidal structure on $\mathcal{V}$. Then $\text{rep}_X\in\text{PSh}^\mathcal{V}(\mathcal{C};\mathcal{V})$ is freely generated by $1$ at $X$.
\end{example}

\begin{theorem}\label{ThmFree}
For any $\mathcal{C}\in\text{Cat}^\mathcal{V}_S$, $X\in\mathcal{C}$, and $M\in\mathcal{M}$, there exists a presheaf $\mathcal{F}\in\text{PSh}^\mathcal{V}(\mathcal{C};\mathcal{M})$ which is freely generated by $M$ at $X$. Moreover, for any $(\mathcal{D},\mathcal{G})\in\text{PSh}^{\mathcal{V};\mathcal{M}}_S$, composition with $\lambda$ induces an equivalence $$\text{Map}_{\text{PSh}^{\mathcal{V};\mathcal{M}}_S}((\mathcal{C},\mathcal{F}),(\mathcal{D},\mathcal{G}))\to\text{Map}_{\text{Cat}^\mathcal{V}_S}(\mathcal{C},\mathcal{D})\times\text{Map}_\mathcal{M}(M,\mathcal{G}(X)).$$
\end{theorem}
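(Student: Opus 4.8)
The plan is to first build the candidate presheaf together with its unit map, verify freeness directly from the unit axiom, and then deduce the mapping-space equivalence by reducing it to a fiberwise freeness statement that generalizes Lurie's theory of free modules (HA 4.2.4).

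For the construction I would take $\mathcal{F}=\text{rep}_X\otimes M$, the $\mathcal{M}$-valued presheaf with $\mathcal{F}(Y)=\mathcal{C}(Y,X)\otimes M$, whose structure maps $\mathcal{C}(Y',Y)\otimes\mathcal{F}(Y)\to\mathcal{F}(Y')$ are the composition maps $\mathcal{C}(Y',Y)\otimes\mathcal{C}(Y,X)\to\mathcal{C}(Y',X)$ of $\mathcal{C}$ tensored with $M$. Rigorously, $\text{rep}_X(\mathcal{C})$ is the composite $\mathcal{C}\circ\pi^X_\ast$, and tensoring with $M$ uses the left $\mathcal{V}$-action on $\mathcal{M}$; the unit is $\lambda\colon M\cong 1\otimes M\xrightarrow{u\otimes M}\mathcal{C}(X,X)\otimes M=\mathcal{F}(X)$, where $u\colon 1\to\mathcal{C}(X,X)$ is the identity of $\mathcal{C}$ at $X$. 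That $\mathcal{F}$ is freely generated by $M$ at $X$ in the sense of Definition \ref{DefFree} is then immediate: the relevant composite $\mathcal{C}(Y,X)\otimes M\xrightarrow{\text{id}\otimes\lambda}\mathcal{C}(Y,X)\otimes\mathcal{C}(X,X)\otimes M\to\mathcal{C}(Y,X)\otimes M$ is the right unit constraint of $\mathcal{C}$ tensored with $M$, hence an equivalence.

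The ``moreover'' statement asserts that $(\text{id}_\mathcal{C},\lambda)$ exhibits $(\mathcal{C},\mathcal{F})$ as the value at $(\mathcal{C},M)$ of a partial left adjoint to $\Theta=(\theta,\text{ev}_X)\colon\text{PSh}^{\mathcal{V};\mathcal{M}}_S\to\text{Cat}^\mathcal{V}_S\times\mathcal{M}$, since the target mapping space is exactly $\text{Map}_{\text{Cat}^\mathcal{V}_S\times\mathcal{M}}\big((\mathcal{C},M),\Theta(\mathcal{D},\mathcal{G})\big)$. To verify this I would use that $\theta$ is a cartesian fibration, with cartesian lifts given by restriction of presheaves along enriched functors (restriction of scalars). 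The standard formula for mapping spaces in the total space of a cartesian fibration then fibers both sides over $\text{Map}_{\text{Cat}^\mathcal{V}_S}(\mathcal{C},\mathcal{D})$: the fiber of the left-hand side over $f\colon\mathcal{C}\to\mathcal{D}$ is $\text{Map}_{\text{PSh}^\mathcal{V}(\mathcal{C};\mathcal{M})}(\mathcal{F},f^\ast\mathcal{G})$, and since $f$ is the identity on the object set, $(f^\ast\mathcal{G})(X)=\mathcal{G}(X)$, so the fiber of the right-hand side is $\text{Map}_\mathcal{M}(M,(f^\ast\mathcal{G})(X))$. It therefore suffices to prove the fiberwise statement: for every $\mathcal{H}\in\text{PSh}^\mathcal{V}(\mathcal{C};\mathcal{M})$, the map induced by $\lambda$, namely $\text{Map}_{\text{PSh}^\mathcal{V}(\mathcal{C};\mathcal{M})}(\mathcal{F},\mathcal{H})\to\text{Map}_\mathcal{M}(M,\mathcal{H}(X))$, is an equivalence.

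This fiberwise claim is that $\text{rep}_X\otimes-$ is left adjoint to $\text{ev}_X$, which is precisely the free-module universal property of HA 4.2.4 transported from $\text{LM}$ to $\text{LM}_S$. I would prove it by constructing the left adjoint to $\text{ev}_X$ as an operadic left Kan extension along the inclusion of the single module color (the object at which $\text{ev}_X$ evaluates) into $\text{LM}_S$ relative to $\mathcal{C}$, exactly as Lurie constructs free modules, and then checking that the operadic colimit computing it degenerates: because the generating color carries no active operations beyond those factoring through $X$, the bar-type colimit collapses to $\mathcal{C}(Y,X)\otimes M$ at each $Y$, recovering $\mathcal{F}$. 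I expect this degeneration --- the operadic verification that an $\text{LM}_S$-algebra map out of the free-at-$X$ presheaf is determined, coherently and without higher obstructions, by a single map $M\to\mathcal{H}(X)$ --- to be the main obstacle, since this is where the full combinatorics of $\text{LM}_S$ (rather than the one-object case $\text{LM}$) must be controlled. One simplification relative to HA is that no colimit hypotheses on $\mathcal{V}$ or $\mathcal{M}$ are needed: the free presheaf, unlike a general induced presheaf, requires only the identity and composition already present in $\mathcal{C}$.
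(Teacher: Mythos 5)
Your proposal is correct in substance and rests on the same technical engine as the paper's proof: free algebras via operadic left Kan extension (HA 3.1.3), together with the crucial observation that the relevant operadic colimits collapse because the active overcategory of each single-edge graph has a terminal object (for $\Gamma=(A,\ast)$ it is $(A,X)\otimes(X,\ast)$, which is why the answer is $\mathcal{C}(A,X)\otimes M$ and why no colimit hypotheses on $\mathcal{V}$ or $\mathcal{M}$ are needed). The difference is in how the global ``moreover'' statement is reached. The paper applies HA 3.1.3.1--3.1.3.3 directly to the inclusion $\text{LM}_{S,X}\subseteq\text{LM}_S$, where $\text{LM}_{S,X}\cong\text{Assoc}_S\boxtimes\text{Triv}$ is the suboperad of graphs whose edges into $\ast$ all originate at $X$; this produces the free $\text{LM}_S$-algebra on the pair $(\mathcal{C},M)$ and yields the mapping-space equivalence over varying $(\mathcal{D},\mathcal{G})$ in one step, the lemma there checking that the algebra part is unchanged ($\lambda_0$ is an equivalence). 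You instead prove the fiberwise statement (in effect Corollary \ref{CorYon}) first and then globalize using that $\theta$ is a cartesian fibration whose cartesian edges are detected by the evaluations $\text{ev}_Y$. That reduction is valid and not circular, but it imports Proposition \ref{PropCartFib}, which the paper only establishes in Section \ref{S4} via the $\mathbb{A}_\infty$-model; the paper's choice of the larger suboperad avoids that dependency, and your route buys instead a cleaner separation of the Yoneda-type adjunction from the functoriality in $\mathcal{C}$. One caveat: your opening ``explicit'' construction of $\mathcal{F}=\text{rep}_X\otimes M$ by tensoring the $\mathcal{V}$-valued presheaf $\text{rep}_X$ with $M\in\mathcal{M}$ is not by itself a construction of an $\text{LM}_S$-algebra in $(\mathcal{V};\mathcal{M})$ with all its coherences (the external tensor functor $\Psi$ is only built in Section \ref{S5}, under presentability hypotheses), so the rigorous construction is exactly the operadic Kan extension you describe afterwards, and the freeness verification should be read as a property of that Kan extension rather than of an independently defined object.
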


\noindent This is the main result of this section, but it is not the theorem itself that interests us so much as its corollaries. We record those corollaries before proving the theorem.

\begin{corollary}
If $\mathcal{C}\in\text{Cat}^\mathcal{V}_S$, $X\in S$, and $M\in\mathcal{M}$, then there exists a presheaf in $\text{PSh}^\mathcal{V}(\mathcal{C};\mathcal{M})$ which is freely generated by $M$ at $X$, and it is essentially unique in the following sense:

If $\mathcal{F}^0,\mathcal{F}^1$ are two such presheaves, there is an equivalence $\mathcal{F}^0\to\mathcal{F}^1$ which is compatible with the maps $\lambda^i:M\to\mathcal{F}^i(X)$ and is unique up to homotopy.
\end{corollary}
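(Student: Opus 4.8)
The plan is to derive everything from the universal property asserted in Theorem \ref{ThmFree}, treating a freely generated presheaf as a universal object that is therefore unique up to contractible choice. Existence is immediate, since Theorem \ref{ThmFree} already produces a presheaf $\mathcal{F}$ on $\mathcal{C}$ together with a map $\lambda:M\to\mathcal{F}(X)$ exhibiting it as freely generated by $M$ at $X$. So all the work lies in the uniqueness clause.

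First I would fix two presheaves $\mathcal{F}^0,\mathcal{F}^1\in\text{PSh}^\mathcal{V}(\mathcal{C};\mathcal{M})$, each freely generated by $M$ at $X$ via $\lambda^i:M\to\mathcal{F}^i(X)$, and apply the equivalence of Theorem \ref{ThmFree} to $\mathcal{F}^0$ with target $(\mathcal{D},\mathcal{G})=(\mathcal{C},\mathcal{F}^1)$. The crucial observation is that the first factor of that equivalence is exactly the map induced by $\theta$ on mapping spaces: a morphism $(\mathcal{C},\mathcal{F}^0)\to(\mathcal{C},\mathcal{F}^1)$ is sent to its underlying enriched functor $\mathcal{C}\to\mathcal{C}$. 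Since $\theta$ is a categorical fibration and $\text{PSh}^\mathcal{V}(\mathcal{C};\mathcal{M})=\theta^{-1}(\mathcal{C})$, taking the fiber over $\text{id}_\mathcal{C}\in\text{Map}_{\text{Cat}^\mathcal{V}_S}(\mathcal{C},\mathcal{C})$ computes $\text{Map}_{\text{PSh}^\mathcal{V}(\mathcal{C};\mathcal{M})}(\mathcal{F}^0,\mathcal{F}^1)$. On the right-hand side this fiber is simply the second factor, yielding an equivalence $\text{Map}_{\text{PSh}^\mathcal{V}(\mathcal{C};\mathcal{M})}(\mathcal{F}^0,\mathcal{F}^1)\simeq\text{Map}_\mathcal{M}(M,\mathcal{F}^1(X))$, under which a morphism $g$ corresponds to $\text{ev}_X(g)\circ\lambda^0$.

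Next I would identify ``compatible with the $\lambda^i$'' with a single point in this mapping space. Compatibility means precisely $\text{ev}_X(g)\circ\lambda^0\simeq\lambda^1$, so the space of morphisms $g:\mathcal{F}^0\to\mathcal{F}^1$ compatible with the $\lambda^i$ is the fiber of the equivalence above over $\lambda^1\in\text{Map}_\mathcal{M}(M,\mathcal{F}^1(X))$, hence contractible. This simultaneously yields the existence of $g$ and its uniqueness up to homotopy. To promote $g$ to an equivalence, I would run the symmetric argument to obtain $h:\mathcal{F}^1\to\mathcal{F}^0$ compatible with the $\lambda^i$, and then note that both $h\circ g$ and $\text{id}_{\mathcal{F}^0}$ lie in the space of self-maps of $\mathcal{F}^0$ compatible with $\lambda^0$: indeed $\text{ev}_X(h\circ g)\circ\lambda^0\simeq\text{ev}_X(h)\circ\lambda^1\simeq\lambda^0$. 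By the same fiber computation that space is contractible, so $h\circ g\simeq\text{id}_{\mathcal{F}^0}$, and symmetrically $g\circ h\simeq\text{id}_{\mathcal{F}^1}$; thus $g$ is an equivalence.

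I expect no serious obstacle, since this is the standard argument that universal objects are unique. The only point requiring genuine care is the fiber computation in the second step: one must verify that the product-decomposition equivalence of Theorem \ref{ThmFree} is honestly compatible with the $\theta$-projection, so that restricting to $\theta^{-1}(\text{id}_\mathcal{C})$ on the left corresponds to fixing the first coordinate on the right. Once that compatibility is recorded, the rest is formal manipulation of mapping spaces and the contractibility of fibers over a chosen point.
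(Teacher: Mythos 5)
Your argument is correct and is essentially the intended one: the paper states this corollary without proof as an immediate consequence of Theorem \ref{ThmFree}, and your fiber-over-$\text{id}_\mathcal{C}$ computation together with the standard two-sided uniqueness argument fills in exactly the expected details. The only point worth making explicit is that applying the universal property to an \emph{arbitrary} presheaf freely generated by $M$ at $X$ (rather than only the one constructed in the theorem) is licensed by the Lemma preceding the proof of Theorem \ref{ThmFree}, which identifies the condition of Definition \ref{DefFree} with the free-algebra condition of HA 3.1.3.1, so both $\mathcal{F}^0$ and $\mathcal{F}^1$ enjoy the stated mapping-space equivalence.
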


\noindent We refer to the presheaf freely generated by $M$ at $X$ as $\text{rep}_X\otimes M$, notation justified by the formula $(\text{rep}_X\otimes M)(Y)\cong\mathcal{C}(Y,X)\otimes M\cong\text{rep}_X(Y)\otimes M$.

\begin{corollary}[Enriched Yoneda lemma, weak form]\label{CorYon}
If $(\mathcal{C},\mathcal{F})\in\text{PSh}^{\mathcal{V};\mathcal{M}}_S$, $X\in S$, and $M\in\mathcal{M}$, then $\lambda:M\to(\text{rep}_X\otimes M)(X)$ induces an equivalence $$\text{Map}_{\text{PSh}^\mathcal{V}(\mathcal{C};\mathcal{M})}(\text{rep}_X\otimes M,\mathcal{F})\to\text{Map}_\mathcal{M}(M,\mathcal{F}(X)).$$
\end{corollary}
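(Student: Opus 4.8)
The plan is to deduce this from Theorem \ref{ThmFree} by passing to the fiber of the forgetful functor $\theta:\text{PSh}^{\mathcal{V};\mathcal{M}}_S\to\text{Cat}^\mathcal{V}_S$ over $\mathcal{C}$. Recall that by definition $\text{PSh}^\mathcal{V}(\mathcal{C};\mathcal{M})=\theta^{-1}(\mathcal{C})=\text{PSh}^{\mathcal{V};\mathcal{M}}_S\times_{\text{Cat}^\mathcal{V}_S}\{\mathcal{C}\}$, and that by Remark \ref{RmkFiber} $\theta$ is a categorical fibration, so this strict fiber is the correct $\infty$-categorical one. The first step is to rewrite the left-hand mapping space as a mapping space computed in this fiber. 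Since a mapping space in a pullback of $\infty$-categories is the corresponding pullback of mapping spaces, for the two presheaves $\text{rep}_X\otimes M$ and $\mathcal{F}$ on $\mathcal{C}$ we obtain
$$\text{Map}_{\text{PSh}^\mathcal{V}(\mathcal{C};\mathcal{M})}(\text{rep}_X\otimes M,\mathcal{F})\cong\text{Map}_{\text{PSh}^{\mathcal{V};\mathcal{M}}_S}\big((\mathcal{C},\text{rep}_X\otimes M),(\mathcal{C},\mathcal{F})\big)\times_{\text{Map}_{\text{Cat}^\mathcal{V}_S}(\mathcal{C},\mathcal{C})}\{\text{id}_\mathcal{C}\},$$
that is, the fiber over $\text{id}_\mathcal{C}$ of the map on mapping spaces induced by $\theta$.

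The second step is to apply Theorem \ref{ThmFree} with $\mathcal{D}=\mathcal{C}$ and $\mathcal{G}=\mathcal{F}$. Because $\text{rep}_X\otimes M$ is by construction the presheaf freely generated by $M$ at $X$, the theorem identifies the total mapping space appearing above with the product
$$\text{Map}_{\text{Cat}^\mathcal{V}_S}(\mathcal{C},\mathcal{C})\times\text{Map}_\mathcal{M}(M,\mathcal{F}(X)).$$
Substituting this into the pullback of the first step will leave only the second factor.

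The crucial point, and the step I expect to require the most care, is that under this identification the map induced by $\theta$ corresponds precisely to projection onto the first factor. This is really the content of the equivalence of Theorem \ref{ThmFree}: its first component records the underlying morphism of enriched categories of a morphism of presheaves, which is exactly the data $\theta$ extracts, while its second component is evaluation followed by precomposition with $\lambda$, i.e. $\text{ev}_X(-)\circ\lambda$. Granting this compatibility, the conclusion is purely formal: the fiber of a first projection $A\times B\to A$ over any point of $A$, and in particular over $\text{id}_\mathcal{C}$, is canonically $B=\text{Map}_\mathcal{M}(M,\mathcal{F}(X))$. Tracing the second component through this chain of equivalences then shows that the resulting map is precisely the one induced by precomposition with $\lambda$ as asserted in the statement, completing the argument.
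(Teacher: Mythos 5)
Your proposal is correct and is exactly the derivation the paper intends: the paper states Corollary \ref{CorYon} as an immediate consequence of Theorem \ref{ThmFree} without writing out the argument, and your passage to the fiber of $\theta$ over $\text{id}_\mathcal{C}$, together with the observation that the first component of the equivalence in Theorem \ref{ThmFree} is the map induced by $\theta$, is the standard way to fill in that gap.
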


\begin{example}
If $\mathcal{M}=\mathcal{V}$ and $M=1$ is the monoidal unit, then the last corollary asserts $\text{Map}(\text{rep}_X,\mathcal{F})\cong\text{Map}(1,\mathcal{F}(X))$.
\end{example}

\begin{corollary}\label{CorLAdj}
The functor $\text{ev}_X:\text{PSh}^\mathcal{V}(\mathcal{C};\mathcal{M})\to\mathcal{M}$ has a left adjoint described by $\text{rep}_X\otimes -:\mathcal{M}\to\text{PSh}^\mathcal{V}(\mathcal{C};\mathcal{M})$.
\end{corollary}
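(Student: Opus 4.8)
The plan is to deduce this corollary formally from the weak Yoneda lemma (Corollary \ref{CorYon}) via the standard criterion for the existence of an adjoint. Recall that a functor $G:\mathcal{D}\to\mathcal{M}$ admits a left adjoint precisely when, for every object $M\in\mathcal{M}$, the comma $\infty$-category $\mathcal{D}\times_\mathcal{M}\mathcal{M}_{M/}$ has an initial object; equivalently, when the copresheaf $\mathcal{F}\mapsto\text{Map}_\mathcal{M}(M,G(\mathcal{F}))$ is corepresentable (HTT 5.2.4.2). When this holds, the corepresenting objects assemble \emph{automatically} into a functor $\mathcal{M}\to\mathcal{D}$ which is left adjoint to $G$, and the corepresenting maps furnish the unit. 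So no independent verification of functoriality is required of us.

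First I would apply this criterion with $G=\text{ev}_X$ and $\mathcal{D}=\text{PSh}^\mathcal{V}(\mathcal{C};\mathcal{M})$, so that the relevant copresheaf is $\mathcal{F}\mapsto\text{Map}_\mathcal{M}(M,\mathcal{F}(X))$. For each fixed $M\in\mathcal{M}$, Corollary \ref{CorYon} provides, through the map $\lambda:M\to(\text{rep}_X\otimes M)(X)$, a natural equivalence
$$\text{Map}_{\text{PSh}^\mathcal{V}(\mathcal{C};\mathcal{M})}(\text{rep}_X\otimes M,\mathcal{F})\xrightarrow{\ \sim\ }\text{Map}_\mathcal{M}(M,\mathcal{F}(X)).$$
This is exactly the assertion that the copresheaf in question is corepresented by $\text{rep}_X\otimes M$, with corepresenting map $\lambda$. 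Invoking HTT 5.2.4.2 then yields a left adjoint to $\text{ev}_X$ whose value on $M$ is the corepresenting object $\text{rep}_X\otimes M$, as claimed.

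The only point needing attention is that corepresentability is a property of the whole copresheaf $\text{Map}(\text{rep}_X\otimes M,-)$, i.e. the equivalence above must be natural in the variable $\mathcal{F}$, not merely an abstract equivalence of mapping spaces for each $\mathcal{F}$. But this naturality is already built in: the equivalence of Corollary \ref{CorYon} is defined by composition with the single fixed morphism $\lambda$, and hence is natural in $\mathcal{F}$ by construction. Thus the genuinely substantive work has all been carried out in Theorem \ref{ThmFree} and Corollary \ref{CorYon}; what remains here is a purely formal invocation of the adjoint functor criterion, and in particular the functoriality of $M\mapsto\text{rep}_X\otimes M$ emerges as an output of that criterion rather than something we must establish by hand.
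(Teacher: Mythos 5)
Your proposal is correct and matches the paper's (implicit) reasoning: the paper states Corollary \ref{CorLAdj} without separate proof, treating it as the formal consequence of Theorem \ref{ThmFree} via Corollary \ref{CorYon}, which is exactly the deduction you spell out. Your observation that functoriality of $M\mapsto\text{rep}_X\otimes M$ comes for free from the corepresentability criterion, with naturality in $\mathcal{F}$ supplied by composition with the fixed map $\lambda$, is the right way to make the omitted step precise.
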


\noindent We can use this last corollary to prove the claims of Example \ref{ExTriv2}:

\begin{proposition}\label{PropTriv}
Let $1_S$ be the trivial $\mathcal{V}$-enriched category of Example \ref{ExTriv1b}. For any $X\in S$, the functor $\text{ev}_X:\text{PSh}^\mathcal{V}(\ast_X;\mathcal{M})\to\mathcal{M}$ is an equivalence.
\end{proposition}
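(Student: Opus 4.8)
The plan is to exploit Corollary~\ref{CorLAdj}, which supplies a left adjoint $\text{rep}_X\otimes-\colon\mathcal{M}\to\text{PSh}^\mathcal{V}(1_S;\mathcal{M})$ to $\text{ev}_X$, and then to show that this adjunction is in fact an \emph{adjoint equivalence}, by verifying that its unit and counit are both equivalences. Once both are equivalences, $\text{rep}_X\otimes-$ is fully faithful and essentially surjective, hence an equivalence, and its right adjoint $\text{ev}_X$ is an inverse equivalence. So the entire proof reduces to two checks, one easy and one containing all the content.

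The unit is the easy half. By Theorem~\ref{ThmFree}, the unit at $M\in\mathcal{M}$ is the canonical map $\lambda\colon M\to\text{ev}_X(\text{rep}_X\otimes M)=(\text{rep}_X\otimes M)(X)$ exhibiting $\text{rep}_X\otimes M$ as freely generated by $M$ at $X$. Applying Definition~\ref{DefFree} with $Y=X$ and using that $1_S(X,X)=1$ is the monoidal unit (Example~\ref{ExTriv1b}), the map $1_S(X,X)\otimes M\to(\text{rep}_X\otimes M)(X)$ is an equivalence; under the unit isomorphism $1\otimes M\cong M$ this map is precisely $\lambda$. Hence the unit is an equivalence and $\text{rep}_X\otimes-$ is fully faithful.

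The counit is the main obstacle, and it amounts to showing that \emph{every} presheaf $\mathcal{F}\in\text{PSh}^\mathcal{V}(1_S;\mathcal{M})$ is freely generated by $\mathcal{F}(X)$ at $X$; equivalently, that the counit $\text{rep}_X\otimes\mathcal{F}(X)\to\mathcal{F}$ is an equivalence. Since the functors $\{\text{ev}_Y\}_{Y\in S}$ are jointly conservative on $\text{PSh}^\mathcal{V}(1_S;\mathcal{M})$ (a morphism of $\text{LM}_S$-algebras over the fixed enriched category $1_S$ is determined up to equivalence by its values on the objects $(Y,\ast)$), it suffices to evaluate the counit at each $Y\in S$. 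There it becomes the structure map $\beta_{Y\leftarrow X}\colon\mathcal{F}(X)\cong 1_S(Y,X)\otimes\mathcal{F}(X)\to\mathcal{F}(Y)$ coming from the $\text{LM}_S$-action, and I must show each such $\beta_{Y\leftarrow X}$ is an equivalence. For this I would pass to the homotopy category and read off the module coherences: the associativity constraint for the triple $(Y,Z)\otimes(Z,W)\otimes(W,\ast)$ gives $\beta_{Y\leftarrow Z}\circ\beta_{Z\leftarrow W}\simeq\beta_{Y\leftarrow W}$ (composition in $1_S$ being the unit isomorphism $1\otimes 1\cong 1$), while the unit constraint gives $\beta_{Y\leftarrow Y}\simeq\text{id}$. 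Specializing to $(Y,Z,W)=(Y,X,Y)$ and to $(X,Y,X)$ then exhibits $\beta_{Y\leftarrow X}$ and $\beta_{X\leftarrow Y}$ as mutually inverse, so every $\beta$ is an equivalence.

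Combining the two halves, the adjunction $\text{rep}_X\otimes-\dashv\text{ev}_X$ is an adjoint equivalence, so $\text{ev}_X$ is an equivalence of $\infty$-categories. The only delicate point is extracting the coherence relations among the $\beta_{Y\leftarrow X}$ from the $\text{LM}_S$-algebra structure; as a sanity check, in the case $|S|=1$ this reduces to the familiar statement that $\text{LMod}_1(\mathcal{M})\cong\mathcal{M}$ with $\text{ev}_X$ the forgetful equivalence.
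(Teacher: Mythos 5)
Your proposal is correct and follows essentially the same route as the paper: both arguments reduce to showing that every presheaf on $1_S$ is freely generated by its value at $X$, by using the unit constraint to see $c_{X,X}\simeq\mathrm{id}$ and the associativity square to exhibit $c_{X,Y}$ and $c_{Y,X}$ as mutually inverse, and then concluding that the adjunction $\text{rep}_X\otimes-\dashv\text{ev}_X$ from Corollary \ref{CorLAdj} is an adjoint equivalence. The only cosmetic difference is that where you invoke joint conservativity of the $\text{ev}_Y$ to upgrade the objectwise statement to an equivalence of presheaves, the paper instead appeals to the essential uniqueness of free presheaves (the corollary to Theorem \ref{ThmFree}); both are valid.
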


\begin{proof}
Suppose that $\mathcal{F}$ is a presheaf on $\ast_X$. For each $X,Y\in S$, we have structure maps $c_{X,Y}:\mathcal{F}(Y)=1_S(X,Y)\otimes\mathcal{F}(Y)\to\mathcal{F}(X)$. We claim $c_{X,Y}$ is an equivalence.

Since $\mathcal{F}(X)\xrightarrow{\text{id}}1_S(X,X)\otimes\mathcal{F}(X)\xrightarrow{c_{X,X}}\mathcal{F}(X)$ is the identity, $c_{X,X}$ is the identity on $\mathcal{F}(X)$. Consider the commutative square $$\xymatrix{
1_S(X,Y)\otimes1_S(Y,X)\otimes\mathcal{F}(X)\ar[r]^-{c_{Y,X}}\ar@{=}[d] &1_S(X,Y)\otimes\mathcal{F}(Y)\ar[d]^-{c_{X,Y}} \\
1_S(X,X)\otimes\mathcal{F}(X)\ar[r]_-{c_{X,X}} &\mathcal{F}(X),
}$$ it follows that $c_{X,Y}$ and $c_{Y,X}$ are inverse to each other, hence are equivalences.

In other words, the map $1_S(Y,X)\otimes\mathcal{F}(X)\to\mathcal{F}(Y)$ is an equivalence for all $Y$, which means that the identity $\mathcal{F}(X)\to\mathcal{F}(X)$ exhibits $\mathcal{F}$ as freely generated by $\mathcal{F}(X)$ at $X$, so $\mathcal{F}\cong\text{rep}_X\otimes\mathcal{F}(X)$. In other words, the unit of the adjunction $\text{ev}_X:\text{PSh}^\mathcal{V}(\ast_X;\mathcal{M})\leftrightarrows\mathcal{M}:\text{rep}_X\otimes -$ is an equivalence. On the other hand, the counit $(\text{rep}_X\otimes M)(X)\to M$ is also an equivalence because $\text{rep}_X(X)\cong 1$.

Hence $\text{ev}_X$ and $\text{rep}_X\otimes -$ is a pair of inverse functors, and $\text{ev}_X$ is an equivalence.
\end{proof}

\begin{corollary}\label{CorTriv}
Let $0$ be the trivial monoidal $\infty$-category as in Example \ref{ExTriv1}. Then any $\infty$-category $\mathcal{M}$ has a (unique) trivial action of $0$. By Example \ref{ExTriv1}, $\text{Cat}^0_S$ is contractible. By Proposition \ref{PropTriv}, $\text{ev}_X:\text{PSh}^0(1_S;\mathcal{M})\to\mathcal{M}$ is an equivalence. Therefore $\theta:\text{PSh}^{0;\mathcal{M}}_S\to\text{Cat}^0_S$ is equivalent to the functor $\mathcal{M}\to\ast$; that is, we have an equivalence $$\text{ev}_X:\text{PSh}^{0;\mathcal{M}}_S\to\mathcal{M}.$$
\end{corollary}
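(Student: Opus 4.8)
The plan is to assemble the three ingredients already named in the statement --- the contractibility of $\text{Cat}^0_S$, the equivalence of Proposition \ref{PropTriv}, and the fibration property of $\theta$ --- the only genuine content being a categorical remark about fibrations over a base equivalent to the point.

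First I would pin down the left $0$-action on $\mathcal{M}$. The underlying $\infty$-category of $0$ is the terminal $\infty$-category $\ast$, so an LM-monoidal structure $(0;\mathcal{M})$ amounts to a left action functor $0\times\mathcal{M}\to\mathcal{M}$ together with coherences; since $0\simeq\ast$, this action functor is forced to be the canonical equivalence $\ast\times\mathcal{M}\simeq\mathcal{M}\xrightarrow{\mathrm{id}}\mathcal{M}$, and every higher coherence datum is then uniquely determined. Hence each $\infty$-category $\mathcal{M}$ carries an essentially unique trivial $0$-action, so $(0;\mathcal{M})$ is a well-defined LM-monoidal $\infty$-category and the forgetful functor $\theta\colon\text{PSh}^{0;\mathcal{M}}_S\to\text{Cat}^0_S$ makes sense.

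The key step is to identify $\text{PSh}^{0;\mathcal{M}}_S$ with the fiber $\theta^{-1}(1_S)=\text{PSh}^0(1_S;\mathcal{M})$. By Example \ref{ExTriv1} the base $\text{Cat}^0_S\cong\ast$ is equivalent to the terminal $\infty$-category, so the inclusion $\{1_S\}\hookrightarrow\text{Cat}^0_S$ of the object is an equivalence. By Remark \ref{RmkFiber}, $\theta$ is a categorical fibration, whence the strict fiber $\theta^{-1}(1_S)$ computes the homotopy pullback of $\theta$ along $\{1_S\}\hookrightarrow\text{Cat}^0_S$. Because base change along an equivalence is again an equivalence, the canonical map $\text{PSh}^0(1_S;\mathcal{M})=\theta^{-1}(1_S)\to\text{PSh}^{0;\mathcal{M}}_S$ is an equivalence; equivalently, $\theta$ is equivalent to the projection $\mathcal{M}\to\ast$.

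It then remains to invoke Proposition \ref{PropTriv} with $\mathcal{V}=0$, which asserts that $\text{ev}_X\colon\text{PSh}^0(1_S;\mathcal{M})\to\mathcal{M}$ is an equivalence. Since $\text{ev}_X$ on $\text{PSh}^{0;\mathcal{M}}_S$ restricts to $\text{ev}_X$ on the fiber, composing with the equivalence of the previous paragraph yields that $\text{ev}_X\colon\text{PSh}^{0;\mathcal{M}}_S\to\mathcal{M}$ is an equivalence. The one point deserving care --- the main obstacle, such as it is --- is precisely the fiber-equals-total-space step: it relies on $\text{Cat}^0_S$ being equivalent to the point as an $\infty$-category (not merely on its classifying space being contractible) together with $\theta$ being an isofibration, so that the strict fiber is homotopically correct. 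Once these two structural facts are in hand, the conclusion is a direct citation of Proposition \ref{PropTriv}.
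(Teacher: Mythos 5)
Your proof is correct and follows essentially the same route as the paper, which likewise deduces the corollary by combining the contractibility of $\text{Cat}^0_S$ (Example \ref{ExTriv1}) with Proposition \ref{PropTriv}, thereby identifying $\text{PSh}^{0;\mathcal{M}}_S$ with the fiber $\text{PSh}^0(1_S;\mathcal{M})$ over the unique object. Your only addition is to make explicit the step the paper leaves implicit: that this identification is homotopically valid because $\theta$ is a categorical fibration (Remark \ref{RmkFiber}) over a base equivalent to the point as an $\infty$-category, so the strict fiber computes the homotopy pullback and the inclusion of the fiber into the total space is an equivalence.
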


\noindent The rest of this section constitutes a proof of Theorem \ref{ThmFree}, following HA 4.2.4.2. The proof is a straightforward application of HA 3.1.3 (free algebras over $\infty$-operads), but in order to apply it, we will need to introduce some notation. None of the notation will reappear in this paper. Define $\text{LM}_{S,X}$ to be the subcategory of $\text{LM}_S$ spanned by:
\begin{itemize}
\item graphs $\Gamma\in\text{LM}_S$ such that every edge which terminates at $\ast$ originates at $X$;
\item morphisms $f:\Gamma\to\Gamma^\prime$ such that for any edge $e\in\Gamma$, $f(e)$ terminates at $\ast$ if and only if $e$ terminates at $\ast$.
\end{itemize}
\noindent Let Triv denote the trivial $\infty$-operad of HA 2.1.1.20 (the subcategory of Comm spanned by all objects and inert morphisms), and $\boxtimes$ the coproduct of $\infty$-operads. These satisfy the universal properties $\text{Alg}_\text{Triv}(\mathcal{V})\cong\mathcal{V}$ and $\text{Alg}_{\mathcal{O}\boxtimes\mathcal{O}^\prime}(\mathcal{V})\cong\text{Alg}_\mathcal{O}(\mathcal{V})\times\text{Alg}_{\mathcal{O}^\prime}(\mathcal{V})$ by HA 2.1.3.5, respectively 2.2.3.6.

Consider the full subcategories of $\text{LM}_{S,X}$ spanned by edges of the form $(X,\ast)$, respectively edges which do not terminate at $\ast$. These subcategories are isomorphic to $\text{Triv}$, respectively $\text{Assoc}_S$, and the induced map of $\infty$-operads $\text{Assoc}_S\boxtimes\text{Triv}\to\text{LM}_{S,X}$ is an isomorphism by construction of $\boxtimes$ (HA 2.2.3.3). Therefore, the inclusion $\text{LM}_{S,X}\subseteq\text{LM}_S$ induces a functor $$\xi:\text{PSh}^{\mathcal{V};\mathcal{M}}_S=\text{Alg}_{\text{LM}_S/\text{LM}}(\mathcal{V};\mathcal{M})\to\text{Alg}_{\text{LM}_{S,X}/\text{LM}}(\mathcal{V};\mathcal{M})\cong\text{Cat}^\mathcal{V}_S\times\mathcal{M}$$ defined by $\xi(\mathcal{C},\mathcal{F})=(\mathcal{C},\mathcal{F}(X))$. Now we can formulate:

\begin{lemma}
Given algebras $(\mathcal{C},M)\in\text{Cat}^\mathcal{V}_S\times\mathcal{M}\cong\text{Alg}_{\text{LM}_{S,X}/\text{LM}}(\mathcal{V};\mathcal{M})$ and $(\bar{\mathcal{C}},\mathcal{F})\in\text{PSh}^{\mathcal{V};\mathcal{M}}_S=\text{Alg}_{\text{LM}_S/\text{LM}}(\mathcal{V};\mathcal{M})$, as well as a morphism \\$\lambda:(\mathcal{C},M)\to(\bar{\mathcal{C}},\mathcal{F}(X))=\xi(\bar{\mathcal{C}},\mathcal{F})$, the following are equivalent:
\begin{enumerate}
\item $\lambda$ exhibits $(\bar{\mathcal{C}},\mathcal{F})$ as the free $\text{LM}_S$-algebra generated by $(\mathcal{C},M)$ in the sense of HA 3.1.3.1;
\item $\lambda_0:\mathcal{C}\to\bar{\mathcal{C}}$ is an equivalence and $\lambda_1:M\to\mathcal{F}(X)$ exhibits $\mathcal{F}$ as freely generated by $M$ at $X$ in the sense of Definition \ref{DefFree}.
\end{enumerate}
\end{lemma}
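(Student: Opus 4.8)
The plan is to prove both implications at once by invoking Lurie's criterion for free algebras over $\infty$-operads (HA 3.1.3), which characterizes when a map $\lambda$ exhibits a free algebra by a \emph{pointwise} operadic colimit condition at each color of the target operad $\text{LM}_S$. Concretely, HA 3.1.3 identifies the free $\text{LM}_S$-algebra generated by $(\mathcal{C},M)$ at a color $c$ with the operadic colimit of the restriction of $(\mathcal{C},M)$ to the over-category $(\text{LM}_{S,X})^{\mathrm{act}}_{/c}$ of active morphisms $\Gamma\to c$ with $\Gamma\in\text{LM}_{S,X}$, and it says $\lambda$ exhibits $(\bar{\mathcal{C}},\mathcal{F})$ as free precisely when, for every $c$, the map induced by $\lambda$ exhibits the value of $(\bar{\mathcal{C}},\mathcal{F})$ at $c$ as this operadic colimit. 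This is an ``if and only if'' that does not presuppose the global existence of free algebras (important, since Theorem \ref{ThmFree} assumes no presentability on $\mathcal{V}$), so it applies to the given $\lambda$ directly. Because an $\text{LM}_S$-algebra is determined by its values at single-edge objects by Definition \ref{DefOp}(2), it suffices to run the check at the two families of colors of $\text{LM}_S$: the edges $(X',Y')$ with $X',Y'\in S$, and the edges $(Z,\ast)$ with $Z\in S$.

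First I would analyze the over-category at a color $(X',Y')$. By Remark \ref{RmkAss} an active morphism into a color is a composite of loop-additions and path-contractions, so an object of $(\text{LM}_{S,X})^{\mathrm{act}}_{/(X',Y')}$ is a directed path $X'=Z_0\to\cdots\to Z_k=Y'$ in $\text{Assoc}_S$ (no edge may terminate at $\ast$), with morphisms given by coarsening consecutive edges. This category has a terminal object, namely the single edge $(X',Y')$, whose value under $(\mathcal{C},M)$ is $\mathcal{C}(X',Y')$; hence the operadic colimit is $\mathcal{C}(X',Y')$, and the criterion at this color says exactly that $\lambda_0\colon\mathcal{C}(X',Y')\to\bar{\mathcal{C}}(X',Y')$ is an equivalence, i.e. that $\lambda_0$ is an equivalence.

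The essential point is the color $(Z,\ast)$. Here the defining constraint of $\text{LM}_{S,X}$ --- that every edge terminating at $\ast$ must originate at $X$, and morphisms preserve which edges terminate at $\ast$ --- forces an object of $(\text{LM}_{S,X})^{\mathrm{act}}_{/(Z,\ast)}$ to be a path $Z=Z_0\to\cdots\to Z_{k-1}=X\to\ast$, that is, a composite $(Z_0,Z_1)\otimes\cdots\otimes(Z_{k-2},X)\otimes(X,\ast)$ with value $\mathcal{C}(Z_0,Z_1)\otimes\cdots\otimes\mathcal{C}(Z_{k-2},X)\otimes M$. Morphisms are again coarsenings, but the $\ast$-edge may be merged with a preceding edge only when that edge already ends at $X$; consequently the coarsest path is $(Z,X)\otimes(X,\ast)$ (or $(X,\ast)$ when $Z=X$), which is therefore terminal. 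Thus the operadic colimit at $(Z,\ast)$ is its value $\mathcal{C}(Z,X)\otimes M$, and the criterion reads: the map $\mathcal{C}(Z,X)\otimes M\to\mathcal{F}(Z)$ induced by $\lambda$ is an equivalence for every $Z$. This is precisely the assertion that $\lambda_1\colon M\to\mathcal{F}(X)$ exhibits $\mathcal{F}$ as freely generated by $M$ at $X$ in the sense of Definition \ref{DefFree}. Assembling the two colors yields $(1)\Leftrightarrow(2)$.

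The main obstacle is the combinatorial identification of $(\text{LM}_{S,X})^{\mathrm{act}}_{/(Z,\ast)}$ and the verification that it has the claimed terminal object: this is where both the object and the morphism conditions defining $\text{LM}_{S,X}$ are used, and it is what makes the operadic colimit collapse to the single tensor $\mathcal{C}(Z,X)\otimes M$ rather than a genuine bar-type colimit. A secondary point needing care is the reduction from operadic colimits to ordinary ones: since each relevant over-category has a terminal object mapped to $c$ by a single active morphism, one checks using the elementary properties of operadic colimits (HA 3.1.1) that the operadic colimit is simply the value at that terminal object. This argument is the exact generalization of Lurie's free left-module computation (HA 4.2.4.2) from the case $|S|=1$, where $\text{LM}_{S,X}=\text{LM}$, to arbitrary $S$.
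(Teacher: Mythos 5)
Your proof is correct and follows essentially the same route as the paper: both invoke the pointwise operadic-colimit criterion of HA 3.1.3.1 at the single-edge colors of $\text{LM}_S$, identify the terminal objects of the active slice categories ($(A,B)$ itself at a color $(A,B)$, and $(A,X)\otimes(X,\ast)$ at a color $(A,\ast)$), and match the resulting equivalence conditions with Definition \ref{DefFree}. One small slip: your parenthetical claiming the terminal object over $(Z,\ast)$ is the single edge $(X,\ast)$ when $Z=X$ is not right --- the morphism condition defining $\text{LM}_{S,X}$ forbids sending the edge $(X,X)$ to an edge terminating at $\ast$, so $(X,X)\otimes(X,\ast)$ is still terminal --- but the conclusion you actually record ($\mathcal{C}(Z,X)\otimes M\to\mathcal{F}(Z)$ an equivalence for all $Z$, including $Z=X$) is the correct one, so nothing downstream is affected.
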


\begin{proof}
We recall HA Definition 3.1.3.1 for reference (the case $\mathcal{O}=\text{LM}$, $\mathcal{A}=\text{LM}_{S,X}$, and $\mathcal{B}=\text{LM}_S$).

If $\Gamma\in\text{LM}_S$, define $\mathcal{A}^\text{act}_{/\Gamma}=\text{LM}_{S,X}\times_{\text{LM}_S}(\text{LM}_S^\text{act})_{/\Gamma}$, where $\text{LM}_S^\text{act}$ is the subcategory of $\text{LM}_S$ spanned by all objects and active morphisms.

Then (1) asserts that the induced map $\alpha_\Gamma:(\mathcal{A}^\text{act}_{/\Gamma})^\triangleright\to\smallint(\mathcal{V};\mathcal{M})$ is an operadic colimit diagram for all graphs $\Gamma$ with a single edge. To unpack this, we split into two cases:
\begin{enumerate}[(a)]
\item If $\Gamma=(A,B)$ for any $A,B\in S$, then $\Gamma\in\mathcal{A}^\text{act}_{/\Gamma}$ is terminal. Therefore, $\alpha_\Gamma$ is an operadic colimit diagram if and only if $\lambda:\mathcal{C}(A,B)\to\bar{\mathcal{C}}(A,B)$ is an equivalence.
\item If $\Gamma=(A,\ast)$ for any $A\in S$, then $(A,X)\otimes(X,\ast)\in\mathcal{A}^\text{act}_{/\Gamma}$ is terminal, so $\alpha_\Gamma$ is an operadic colimit diagram if and only if $\lambda:\mathcal{C}(A,X)\otimes M\to\mathcal{F}(X)$ is an equivalence.
\end{enumerate}
\noindent Hence, the statements (1) and (2) unpack to the same conditions.
\end{proof}

\begin{proof}[Proof of Theorem \ref{ThmFree}]
In light of the lemma, HA 3.1.3.3 asserts that free presheaves exist (noting, as in the proof of the lemma, that $\mathcal{A}^\text{act}_{/\Gamma}$ has a terminal object for each $\Gamma\in\text{LM}_S$ with a single edge). The equivalence in the theorem statement is a restatement of HA 3.1.3.2.
\end{proof}

\section{The $\mathbb{A}_\infty$-model for enriched categories}\label{S4}
\noindent Recall that a functor $p:\mathcal{A}\to\mathcal{B}$ is called a \emph{presentable fibration} if either of the following equivalent conditions hold (HTT 5.5.3.3):
\begin{itemize}
\item $p$ is cartesian and the functor $\mathcal{B}^\text{op}\to\widehat{\text{Cat}}$ factors through $\text{Pr}^R\subseteq\widehat{\text{Cat}}$;
\item $p$ is cocartesian and the functor $\mathcal{B}\to\widehat{\text{Cat}}$ factors through $\text{Pr}^L\subseteq\widehat{\text{Cat}}$.
\end{itemize}

\begin{definition}
If $\mathcal{V}$ is a monoidal $\infty$-category and $\mathcal{M}$ is a left $\mathcal{V}$-module, we say that $(\mathcal{V};\mathcal{M})$ is a \emph{presentable pair} if the equivalent conditions hold:
\begin{itemize}
\item $\mathcal{V}$ and $\mathcal{M}$ are both presentable, $\mathcal{V}$ is closed monoidal ($\mathcal{V}\otimes\mathcal{V}\to\mathcal{V}$ preserves colimits independently in each variable), and $\mathcal{M}$ is a presentable left $\mathcal{V}$-module ($\mathcal{V}\times\mathcal{M}\to\mathcal{M}$ preserves colimits independently in each variable);
\item The pair $(\mathcal{V};\mathcal{M})$, which is a priori an LM-algebra in Cat, restricts to an LM-algebra in $\text{Pr}^L$.
\end{itemize}
\end{definition}

\noindent Our primary goal in this section is to prove:

\begin{reptheorem}{PShPrL}
If $(\mathcal{V};\mathcal{M})$ is a presentable pair and $\mathcal{C}$ is $\mathcal{V}$-enriched, then $\text{PSh}^\mathcal{V}(\mathcal{C};\mathcal{M})$ is presentable.
\end{reptheorem}

\noindent We will also describe how to compute limits and colimits in $\text{PSh}^\mathcal{V}(\mathcal{C};\mathcal{M})$:

\begin{repcorollary}{CorPFib1b}
If $(\mathcal{V};\mathcal{M})$ is a presentable pair and $\mathcal{C}$ is $\mathcal{V}$-enriched:
\begin{enumerate}
\item A functor $p:K^\triangleleft\to\text{PSh}^\mathcal{V}(\mathcal{C};\mathcal{M})$ is a limit diagram if and only if $\text{ev}_Xp:K^\triangleleft\to\mathcal{M}$ is a limit diagram for all $X\in\mathcal{C}$;
\item A functor $p:K^\triangleright\to\text{PSh}^\mathcal{V}(\mathcal{C};\mathcal{M})$ is a colimit diagram if and only if $\text{ev}_Xp:K^\triangleright\to\mathcal{M}$ is a colimit diagram for all $X\in\mathcal{C}$;
\end{enumerate}
\end{repcorollary}

\noindent These and other important structural results appear in Section \ref{S43}. Their proofs rely on the $\mathbb{A}_\infty$-model for enriched categories and presheaves, technical tools which we first introduce in Sections \ref{S41} and \ref{S42}.

We recommend that the reader begin by skimming the results in Section \ref{S43}, and then return to Sections \ref{S41} and \ref{S42} before attempting to understand the proofs.

\subsection{Enriched categories}\label{S41}
\noindent Define $\Delta_{/S}=\Delta\times_\text{Set}\text{Set}_{/S}$. In this way, an object of $\Delta_{/S}$ is simply a function $\ell:[n]\to S$. For ease of exposition, we will refer to this as the object $X=\{X_0<\cdots<X_n\}$ when $\ell(i)=X_i$. We think of $X$ as the ordered set $[n]=\{0<\cdots<n\}$ along with a labeling of each element in $S$. A morphism $X\to Y$ is an order-preserving function $f:[n]\to[m]$ which preserves the labeling, $Y_{f(i)}=X_i$.

We begin by introducing a strong approximation $\text{Cut}_S:\Delta_{/S}^\text{op}\to\text{Assoc}_S$ generalizing Example \ref{ExCut} (which corresponds to the case $|S|=1$).

We may imagine the totally ordered set $[n]=\{0<\cdots<n\}$ as a directed graph on the set $\{0,\ldots,n\}$: $$\xymatrix{
0\ar[r] &1\ar[r] &\cdots\ar[r] &n,
}$$ and therefore as an object of $\text{Assoc}_{[n]}$. If $\ell:[n]\to S$ is an object of $\Delta_{/S}$, push forward along $\ell_\ast:\text{Assoc}_{[n]}\to\text{Assoc}_S$ to a graph $\text{Cut}_S(\ell)\in\text{Assoc}_S$.

Concretely, if $X=\{X_0<\cdots<X_n\}\in\Delta_{/S}$, then $$\text{Cut}_S(X)=(X_0,X_1)\otimes(X_1,X_2)\otimes\cdots\otimes(X_{n-1},X_n).$$ If $Y=\{Y_0<\cdots<Y_m\}$ and $f:X\to Y$ is a map in $\Delta_{/S}$, then there is an induced map $f^\ast:\text{Cut}_S(Y)\to\text{Cut}_S(X)$ as follows: $f^\ast$ sends the edge $(Y_{i-1},Y_i)$ to the edge $(X_{j-1},X_j)$ if $f(j-1)<i\leq f(j)$. If there is no such $j$, $f^\ast$ sends $(Y_{i-1},Y_i)$ to the basepoint. The total ordering on $[n]$ induces total orderings on the fibers of $f^\ast$, and $f^\ast$ satisfies the two properties of Definition \ref{DefAss}, so that we have a functor $$\text{Cut}_S:\Delta_{/S}^\text{op}\to\text{Assoc}_S.$$ The composite $\Delta_{/S}^\text{op}\xrightarrow{\text{Cut}_S}\text{Assoc}_S\to\text{Comm}$ makes $\Delta_{/S}^\text{op}$ an $\infty$-preoperad.

\begin{remark}
Recall (Example \ref{ExCut}) that a morphism $[n]\to[m]$ of $\Delta^\text{op}$ is inert if it embeds $[m]$ as a convex subset $\{i<\cdots<i+m\}\subseteq[n]$. Then a morphism of $\Delta_{/S}^\text{op}$ is inert if and only if the underlying morphism in $\Delta^\text{op}$ is inert.

In particular, we can verify any such morphism is $p$-cocartesian, just as in the proof of Proposition \ref{PropAssOp}.
\end{remark}

\begin{proposition}\label{PropCutS}
The functor $\text{Cut}_S:\Delta_{/S}^\text{op}\to\text{Assoc}_S$ is a strong approximation to the $\infty$-operad $\text{Assoc}_S$.
\end{proposition}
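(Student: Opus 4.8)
The plan is to verify directly the two conditions of Definition \ref{DefApprox}, together with the strong-approximation condition on fibers over $\langle 1\rangle$, generalizing Lurie's treatment of the case $|S|=1$ (Example \ref{ExCut}, HA 4.1.2.11). First note that $\text{Cut}_S$ really is a map of $\infty$-preoperads: recall from the preceding remark that a morphism of $\Delta_{/S}^\text{op}$ is inert exactly when its underlying morphism of $\Delta^\text{op}$ is, i.e.\ when it is a convex sub-chain inclusion decorated with $S$-labels, and $\text{Cut}_S$ carries such a morphism to the edge-deleting (inert) morphism of $\text{Assoc}_S$ retaining the corresponding edges. Since $\text{Assoc}_S$ and $\Delta_{/S}^\text{op}$ are nerves of $1$-categories, every condition below is a concrete combinatorial check.

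For the strong-approximation condition, observe that $X=\{X_0<\cdots<X_n\}$ lies over $\langle n\rangle\in\text{Comm}$, because $\text{Cut}_S(X)$ has exactly $n$ edges. Hence the fiber $(\Delta_{/S}^\text{op})_{\langle 1\rangle}$ has as objects the labeled chains $\{X_0<X_1\}$, and as morphisms over $\text{id}_{\langle 1\rangle}$ only the identities (just as in the case $|S|=1$, where $(\Delta^\text{op})_{\langle 1\rangle}$ consists of the single object $[1]$ and its identity); so $(\Delta_{/S}^\text{op})_{\langle 1\rangle}$ is the discrete set $S\times S$. On the other side $(\text{Assoc}_S)_{\langle 1\rangle}=S^2$ is the set of single-edge graphs $(X_0,X_1)$, and $\text{Cut}_S$ sends $\{X_0<X_1\}\mapsto(X_0,X_1)$. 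This is the required equivalence. Condition (1) is equally immediate: given $X=\{X_0<\cdots<X_n\}$ and the inert $\rho^i\colon\langle n\rangle\to\langle 1\rangle$, the convex inclusion of $\{i-1<i\}$ decorated with the labels $X_{i-1},X_i$ furnishes an inert morphism $X\to\{X_{i-1}<X_i\}$ of $\Delta_{/S}^\text{op}$ lifting $\rho^i$.

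The substance is condition (2). Fix $Y=\{Y_0<\cdots<Y_m\}$, so $\text{Cut}_S(Y)=(Y_0,Y_1)\otimes\cdots\otimes(Y_{m-1},Y_m)$, and an active morphism $\phi\colon W\to\text{Cut}_S(Y)$ in $\text{Assoc}_S$. Activeness means no edge of $W$ is deleted, so each edge lies in a unique fiber $\phi^{-1}\big((Y_{j-1},Y_j)\big)$, and that fiber is a totally ordered path in $W$ from $Y_{j-1}$ to $Y_j$ (empty only when $Y_{j-1}=Y_j$). Concatenating these paths in the order $j=1,\dots,m$ exhibits $W$ itself as a single path $(W_0,W_1)\otimes\cdots\otimes(W_{k-1},W_k)$ with $W_0=Y_0$, $W_k=Y_m$, running through $Y_1,\dots,Y_{m-1}$ at the concatenation points. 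Thus $\bar W=\{W_0<\cdots<W_k\}$ lies in $\Delta_{/S}^\text{op}$ with $\text{Cut}_S(\bar W)=W$, and the order-preserving map $p\colon[m]\to[k]$ with $p(0)=0$, $p(m)=k$ recording the positions of the $Y_j$ among the $W_a$ defines a morphism $\bar W\to Y$ in $\Delta_{/S}^\text{op}$. One checks directly that $\text{Cut}_S(\bar W\to Y)=\phi$, since for each edge index $a$ the unique $b$ with $p(b-1)<a\le p(b)$ identifies $(W_{a-1},W_a)$ with its image edge, reproducing the fibers of $\phi$.

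It remains to show this morphism $\bar W\to Y$ is $\text{Cut}_S$-cartesian, which is where the real work lies. As everything is $1$-categorical, this reduces to a unique-lifting statement: for every $T\in\Delta_{/S}^\text{op}$ with a morphism $u\colon T\to Y$ and every factorization $\text{Cut}_S(u)=\phi\circ v$ through $\phi$, there is a unique $\bar v\colon T\to\bar W$ with $(\bar W\to Y)\circ\bar v=u$ and $\text{Cut}_S(\bar v)=v$. The key point is that an active morphism into $\text{Cut}_S(Y)$ refines the chain $Y$ into $\bar W$, and because $\phi$ already accounts for every edge, the data of $v$ rigidly prescribes where the vertices of $T$ must land among $W_0,\dots,W_k$; unwinding this gives existence and uniqueness. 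I expect this cartesianness verification to be the main obstacle, but it is entirely combinatorial and amounts to decorating Lurie's argument for $\Delta^\text{op}\to\text{Assoc}$ (HA 4.1.2.11) with the $S$-labels, which are carried along functorially throughout.
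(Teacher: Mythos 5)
Your proposal is correct and follows essentially the same route as the paper: check that $\text{Cut}_S$ preserves inerts, identify both fibers over $\langle 1\rangle$ with the discrete set $S^2$, lift $\rho^i$ by the convex sub-chain inclusion, and for condition (2) use the orderings on the fibers of an active $\phi$ to exhibit its source as a single labeled path, from which the cartesian lift $\bar W\to Y$ is read off. The paper likewise leaves the final cartesianness check at the level of ``universal among lifts by construction,'' so your deferral of that combinatorial verification matches the paper's own level of detail.
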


\begin{proof}
Both $\Delta_{/S}^\text{op}$ and $\text{Assoc}_S$ have the property: A morphism is inert if and only if it lies over an inert morphism in $\text{Comm}$. Therefore, $\text{Cut}_S$ sends inert morphisms to inert morphisms, so it is a morphism of $\infty$-preoperads.

We need to check the two conditions of Definition \ref{DefApprox} to prove that $\text{Cut}_S$ is an approximation. Since the fibers of $\Delta_{/S}^\text{op}$ and $\text{Assoc}_S$ over $\left\langle 1\right\rangle\in\text{Comm}$ are each equivalent to the (discrete) set $S$, it is clear that $\text{Cut}_S$ is a strong approximation if and only if it is an approximation.

(1) Note that $p:\Delta_{/S}^\text{op}\to\text{Comm}$ is given by $p(X_0<\cdots<X_n)=\left\langle n\right\rangle$, where $i\in\left\langle n\right\rangle$ corresponds to the edge $(X_{i-1},X_i)$ in $\text{Cut}_S(X_0<\cdots<X_n)$. Given an inert morphism $\phi:\{0,\ldots,n-1\}\to\left\langle 1\right\rangle$, let $i=\phi^{-1}(1)$, and let $\{X_i<X_{i+1}\}\to\{X_0<\cdots<X_n\}$ be the natural inclusion. This describes an inert morphism $\{X_0<\cdots<X_n\}\to\{X_i<X_{i+1}\}$ in $\Delta_{/S}^\text{op}$ lifting $\phi$, so $\text{Cut}_S$ satisfies Definition \ref{DefApprox}(1).

(2) Fix an active morphism $\phi:\Gamma\to(Y_0,Y_1)\otimes\cdots\otimes(Y_{m-1},Y_m)$ in $\text{Assoc}_S$. That is, $\phi$ transforms the graph $\Gamma$ into $(Y_0,Y_1)\otimes\cdots\otimes(Y_{m-1},Y_m)$ via repeated application of the two moves:
\begin{itemize}
\item $(A,B)\otimes(B,C)\to(A,C)$;
\item $\emptyset\to(A,A)$.
\end{itemize}
\noindent Recall that the morphism $\phi$ includes the data of total orderings on the fibers $\phi^{-1}(Y_i,Y_{i+1})$. Taken together, these induce a total ordering on the edges of $\Gamma$ such that they form a path from $Y_0$ to $Y_m$. Hence we may write $\Gamma=(X_0,X_1)\otimes\cdots\otimes(X_{n-1},X_n)$ where $X_0=Y_0$ and $X_n=Y_m$.

Define $0=k_0\leq\cdots\leq k_m=n$ such that $\phi(X_j,X_{j+1})=(Y_i,Y_{i+1})$ whenever $k_i\leq j<k_{i+1}$. That is, $(X_{k_i},X_{k_i+1})\otimes\cdots\otimes(X_{k_{i+1}-1},X_{k_{i+1}})$ is the fiber $\phi^{-1}(X_i,X_{i+1})$, or the fiber is empty when $k_i=k_{i+1}$. Since $\phi^{-1}(X_i,X_{i+1})$ must form a path from $X_i$ to $X_{i+1}$, we conclude $X_{k_i}=Y_i$ for all $i$. Therefore, the indices $k_0,\ldots,k_m$ describe a morphism $$\bar{\phi}:\{X_0<\cdots<X_n\}\to\{Y_0<\cdots<Y_m\}$$ of $\Delta_{/S}^\text{op}$ lifting $\phi$. By construction, $\phi$ is universal among such lifts of $\phi$, which is to say it is $f$-cartesian.

Thus $\text{Cut}_S$ satisfies Definition \ref{DefApprox}(2), which completes the proof.
\end{proof}

\begin{definition}
If $\mathcal{V}$ is a monoidal $\infty$-category, an \emph{$\mathbb{A}_\infty$-$\mathcal{V}$-enriched category} with set $S$ of objects is a $\Delta_{/S}^\text{op}$-algebra in $\mathcal{V}$, and they form an $\infty$-category $$\mathbb{A}_\infty\text{Cat}^\mathcal{V}_S=\text{Alg}_{\Delta_{/S}^\text{op}/\text{Assoc}}(\mathcal{V}).$$
\end{definition}

\noindent Applying Theorem \ref{ThmApprox}, we find:

\begin{corollary}\label{CorModelEq1}
$\text{Cut}_S$ induces an equivalence $\text{Cat}^\mathcal{V}_S\to\mathbb{A}_\infty\text{Cat}^\mathcal{V}_S$.
\end{corollary}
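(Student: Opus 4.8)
The plan is to deduce the corollary directly from the operadic approximation machinery already assembled: Proposition \ref{PropCutS} establishes that $\text{Cut}_S:\Delta_{/S}^\text{op}\to\text{Assoc}_S$ is a strong approximation, and Theorem \ref{ThmApprox} (HA 2.3.3.23) turns a strong approximation into an equivalence on algebra $\infty$-categories. So once I exhibit the two sides $\text{Cat}^\mathcal{V}_S$ and $\mathbb{A}_\infty\text{Cat}^\mathcal{V}_S$ as algebra $\infty$-categories over a common ambient $\infty$-operad, compatibly with $\text{Cut}_S$, the corollary follows by invoking the theorem and reading off that the resulting comparison functor is precisely composition with $\text{Cut}_S$. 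This is the exact enriched analogue of the unenriched equivalence $\text{Alg}(\mathcal{V})\to\mathbb{A}_\infty\text{Alg}(\mathcal{V})$ deduced earlier from Example \ref{ExCut} and Theorem \ref{ThmApprox}, the only new input being Proposition \ref{PropCutS}.

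The one point requiring care is the ambient operad. The quoted form of Theorem \ref{ThmApprox} compares algebras for a strong approximation $f:I\to\mathcal{O}$ valued in an $\mathcal{O}$-monoidal $\infty$-category, whereas here $\mathcal{V}$ is only Assoc-monoidal and $\text{Cut}_S$ lands in $\text{Assoc}_S$ rather than Assoc. First I would pull back the monoidal structure along the operad map $\text{Assoc}_S\to\text{Assoc}$ of Proposition \ref{PropAssOp}, forming the cocartesian fibration of $\infty$-operads $\smallint\mathcal{V}_S:=\text{Assoc}_S\times_\text{Assoc}\smallint\mathcal{V}\to\text{Assoc}_S$, which presents $\mathcal{V}$ as an $\text{Assoc}_S$-monoidal $\infty$-category. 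By the universal property of the pullback, a section of $\smallint\mathcal{V}_S\to\text{Assoc}_S$ preserving inert morphisms is the same datum as a map $\text{Assoc}_S\to\smallint\mathcal{V}$ over Assoc preserving inert morphisms, so $\text{Alg}_{\text{Assoc}_S/\text{Assoc}_S}(\mathcal{V}_S)\cong\text{Cat}^\mathcal{V}_S$; the identical argument, now with the fixed structure map $\text{Cut}_S$ to $\text{Assoc}_S$, gives $\text{Alg}_{\Delta_{/S}^\text{op}/\text{Assoc}_S}(\mathcal{V}_S)\cong\mathbb{A}_\infty\text{Cat}^\mathcal{V}_S$. Under these identifications, composition with $\text{Cut}_S$ agrees with the functor $\text{Cut}_S^\ast$ in the statement.

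With the ambient operad now equal to $\text{Assoc}_S$, Theorem \ref{ThmApprox} applies verbatim to the strong approximation of Proposition \ref{PropCutS} and shows $\text{Cut}_S^\ast$ is an equivalence, which is the asserted equivalence $\text{Cat}^\mathcal{V}_S\to\mathbb{A}_\infty\text{Cat}^\mathcal{V}_S$. The main (and essentially only) obstacle is the bookkeeping above: checking that base change of $\smallint\mathcal{V}$ along $\text{Assoc}_S\to\text{Assoc}$ is again a cocartesian fibration of $\infty$-operads, hence a genuine $\text{Assoc}_S$-monoidal structure, and that it leaves both algebra $\infty$-categories unchanged. I expect this to be routine, since base change of such a fibration along an operad map is standard and the comparison of sections is a formal consequence of the pullback square.
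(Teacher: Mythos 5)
Your proposal is correct and takes essentially the same route as the paper, which deduces the corollary immediately from Proposition \ref{PropCutS} and Theorem \ref{ThmApprox} with no further comment. The base-change bookkeeping you carry out (pulling back $\smallint\mathcal{V}$ along $\text{Assoc}_S\to\text{Assoc}$ so that the ambient operad of Theorem \ref{ThmApprox} matches, and checking that this leaves both algebra $\infty$-categories unchanged) is exactly the step the paper leaves implicit, and is handled the same way the paper treats the analogous pullbacks in Lemma \ref{LemCatPair} and in the remark following this corollary.
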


\begin{remark}
If $\mathcal{V}$ is a monoidal $\infty$-category, recall from Section \ref{S25} that we can regard $\mathcal{V}$ either as a functor $\mathcal{V}:\text{Assoc}\to\text{Cat}$, with associated $\infty$-operad $\smallint\mathcal{V}\to\text{Assoc}$, or as a functor $\mathcal{BV}:\Delta^\text{op}\to\text{Cat}$, with associated planar $\infty$-operad $\smallint\mathcal{BV}\to\Delta^\text{op}$. By construction we have a pullback $$\xymatrix{
\smallint\mathcal{BV}\ar[r]\ar[d] &\smallint\mathcal{V}\ar[d] \\
\Delta^\text{op}\ar[r]_-{\text{Cut}} &\text{Assoc},
}$$ so $\mathbb{A}_\infty\text{Cat}^\mathcal{V}_S\cong\text{Fun}^\dag_{/\Delta^\text{op}}(\Delta_{/S}^{\text{op}\mathsection},\smallint\mathcal{BV}^\mathsection)$.
\end{remark}

\subsection{Enriched presheaves}\label{S42}
\noindent We have just shown that $\Delta_{/S}^\text{op}$ is a strong approximation to the $\infty$-operad $\text{Assoc}_S$, which means that we can identify $\mathcal{V}$-enriched categories with $\Delta_{/S}^\text{op}$-algebras in $\mathcal{V}$.

Now we will show that $\Delta_{/S}^\text{op}\times\Delta^1$ is a strong approximation to the $\infty$-operad $\text{LM}_S$. First, we construct the functor $\text{LCut}_S:\Delta_{/S}^\text{op}\times\Delta^1\to\text{LM}_S$. This will closely parallel the construction of $\text{Cut}_S$ in Section \ref{S41}.

Given $[n]\in\Delta$, consider the following graphs on $\{0,\ldots,n\}_{+}$: $$[n]_0=\left[0\to 1\to\cdots\to n\to\ast\right]$$ $$[n]_1=\left[0\to 1\to\cdots\to n\right],$$ which are objects of $\text{LM}_{[n]}$. Suppose $[n]\xrightarrow{\ell}S$ is an object of $\Delta_{/S}$, inducing $\ell_\ast:\text{LM}_{[n]}\to\text{LM}_S$. We write $\text{LCut}_S^0(\ell)=\ell_\ast[n]_0$ and $\text{LCut}_S^1(\ell)=\ell_\ast[n]_1$, which are left-modular graphs on the set $S_{+}$. Exactly as in Section \ref{S41}, we have functors $$\text{LCut}_S^0,\text{LCut}_S^1:\Delta_{/S}^\text{op}\to\text{LM}_S.$$

\begin{remark}
The functor $\text{LCut}_S^1$ is essentially the same as $\text{Cut}_S$; that is, it factors $\Delta_{/S}^\text{op}\xrightarrow{\text{Cut}_S}\text{Assoc}_S\subseteq\text{LM}_S$.
\end{remark}

\noindent There are inert morphisms $[n]_0\to[n]_1$ which send the edge $n\to\ast$ to the basepoint, and act as the identity function on the other edges. These assemble into a natural transformation $\text{LCut}_S^0\to\text{LCut}_S^1$; that is, a functor $\Delta^1\to\text{Fun}(\Delta_{/S}^\text{op},\text{LM}_S)$. There is a corresponding functor $$\text{LCut}_S:\Delta_{/S}^\text{op}\times\Delta^1\to\text{LM}_S.$$ The composite with $\text{LM}_S\to\text{Comm}$ makes $\Delta_{/S}^\text{op}\times\Delta^1$ an $\infty$-preoperad.

\begin{remark}
Recall (Example \ref{ExCut2}) that a morphism $([n]\xrightarrow{f}[m],i\to j)$ of $\Delta^\text{op}\times\Delta^1$ is inert if $f$ is inert in $\Delta^\text{op}$ and either:
\begin{itemize}
\item $f(m)=n$;
\item or $j=1$.
\end{itemize}
A morphism of $\Delta_{/S}^\text{op}\times\Delta^1$ is inert if and only if the underlying morphism in $\Delta^\text{op}\times\Delta^1$ is inert.
\end{remark}

\begin{proposition}\label{PropLCut}
The functor $\text{LCut}_S:\Delta_{/S}^\text{op}\times\Delta^1\to\text{LM}_S$ is a strong approximation to the $\infty$-operad $\text{LM}_S$.
\end{proposition}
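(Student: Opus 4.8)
The plan is to mimic the proof of Proposition \ref{PropCutS}, carrying along the extra module edge to the basepoint $\ast$ and the extra $\Delta^1$-coordinate. As recorded in Section \ref{S42}, both $\Delta_{/S}^\text{op}\times\Delta^1$ and $\text{LM}_S$ have the property that a morphism is inert precisely when it lies over an inert morphism of Comm (the marking on $\Delta_{/S}^\text{op}\times\Delta^1$ is pulled back from $\Delta^\text{op}\times\Delta^1$, cf. Example \ref{ExCut2}, and that on $\text{LM}_S$ is inherited from $\text{Assoc}_{S_+}$), so $\text{LCut}_S$ automatically preserves inert morphisms and is a map of $\infty$-preoperads. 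Next I would record the fibers over $\left\langle 1\right\rangle$: a single-edge left-modular graph is either $(X,Y)$ with $X,Y\in S$ or $(Z,\ast)$ with $Z\in S$, so $(\text{LM}_S)_1\cong S^2\amalg S$; on the other side, $\text{LCut}_S^1$ lands over $\left\langle 1\right\rangle$ exactly on the $1$-simplices $\{X_0<X_1\}$ and $\text{LCut}_S^0$ exactly on the $0$-simplices $\{X_0\}$, yielding the same set with $\text{LCut}_S$ the evident bijection. As in Proposition \ref{PropCutS}, this reduces ``strong approximation'' to ``approximation,'' so it remains to verify conditions (1) and (2) of Definition \ref{DefApprox}.

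For condition (1), given $(X,j)$ with $X=\{X_0<\cdots<X_n\}$ and an inert $\phi\colon p(X,j)\to\left\langle 1\right\rangle$ singling out one edge, I produce the inert lift as follows. If the chosen edge is $(X_{i-1},X_i)$, lift to the inclusion $(X,j)\to(\{X_{i-1}<X_i\},1)$, whose $\Delta^\text{op}$-component is the convex embedding and whose $\Delta^1$-component lands at $1$; it is inert by the criterion of Example \ref{ExCut2}. The only genuinely new case is $j=0$ with the chosen edge the module edge $(X_n,\ast)$, where I lift to $(X,0)\to(\{X_n\},0)$, whose $\Delta^\text{op}$-component is the totally inert embedding with $f_\ast(0)=n$ (see Warning \ref{TotInert}) and whose $\Delta^1$-component is at $0$, again inert.

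For condition (2), fix $(Y,k)$ with $Y=\{Y_0<\cdots<Y_m\}$ and an active $\phi\colon\Gamma\to\text{LCut}_S(Y,k)$ in $\text{LM}_S$. When $k=1$ the target $\text{LCut}_S^1(Y)$ lies in $\text{Assoc}_S$ (it has no edge to $\ast$); since $\phi$ is active and left-modularity forbids any source at $\ast$, no edge of $\Gamma$ can terminate at $\ast$, so $\Gamma\in\text{Assoc}_S$ and the cartesian lift at level $1$ is exactly the one furnished by Proposition \ref{PropCutS}, with $\bar k=1$ forced because $\text{LCut}_S^0$ always carries a module edge. When $k=0$, the fiber $\phi^{-1}(Y_m,\ast)$ is nonempty (else Definition \ref{DefAss}(1) would force $Y_m=\ast$) and, carrying its total order, forms a path from $Y_m$ to $\ast$; splicing it together with the paths over the edges $(Y_{i-1},Y_i)$ exhibits $\Gamma$ as a single path $\text{LCut}_S^0(\bar\Gamma)$ for a labeled simplex $\bar\Gamma=\{X_0<\cdots<X_r\}$ terminating in the module edge. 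The break-point indices then define the lift $\bar\phi\colon(\bar\Gamma,0)\to(Y,0)$, exactly as in Proposition \ref{PropCutS}.

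The main obstacle I expect is verifying that these lifts are genuinely $\text{LCut}_S$-cartesian, that is, universal among all lifts tested against objects at \emph{either} level of $\Delta^1$, rather than merely well-defined. As in the associative case, the point is that an active morphism of $\text{LM}_S$ records nothing beyond its fiberwise total orders, and these orders reconstruct the break-point data of a map in $\Delta_{/S}$ uniquely. The extra care beyond Proposition \ref{PropCutS} is confined to the module edge: a test morphism $(Z,l)\to(Y,k)$ with $l=0$ maps a graph with a $\ast$-edge into the target, and one must check that the required factorization respects the $\Delta^1$-coordinate, so that uniqueness holds at level $0$ as well (in particular, for a $k=1$ target such a test morphism must delete its module edge). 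Once this uniqueness is established, cartesian-ness follows, completing the verification that $\text{LCut}_S$ is a strong approximation.
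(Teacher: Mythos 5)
Your proposal is correct and follows exactly the route the paper intends: the paper's own proof of Proposition \ref{PropLCut} is a one-line deferral to the argument of Proposition \ref{PropCutS} (and HA 4.2.2.8), and your write-up simply carries out that adaptation in detail, correctly handling the two new features (the module edge and the $\Delta^1$-coordinate) in both the inert-lift and cartesian-lift conditions. No gaps.
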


\begin{proof}
The proof is just like Proposition \ref{PropCutS}; see also HA 4.2.2.8.
\end{proof}

\begin{definition}
Suppose $(\mathcal{V};\mathcal{M})$ is an LM-monoidal $\infty$-category; that is, $\mathcal{V}$ is a monoidal $\infty$-category, and $\mathcal{M}$ is a left $\mathcal{V}$-module $\infty$-category.

An $\mathbb{A}_\infty$-$\mathcal{V}$-enriched presheaf with set $S$ of objects is a $\Delta_{/S}^\text{op}\times\Delta^1$-algebra in $(\mathcal{V};\mathcal{M})$, and they form an $\infty$-category $$\mathbb{A}_\infty\text{PSh}_S^{\mathcal{V};\mathcal{M}}=\text{Alg}_{\Delta^\text{op}_{/S}\times\Delta^1/\text{LM}}(\mathcal{V};\mathcal{M}).$$
\end{definition}

\noindent As in Lemma \ref{LemCatPair}, $\mathbb{A}_\infty\text{Cat}^\mathcal{V}_S\cong\text{Alg}_{\Delta_{/S}^\text{op}/\text{LM}}(\mathcal{V};\mathcal{M})$, so composition with the inclusion $\Delta_{/S}^\text{op}\times\{1\}\to\Delta_{/S}^\text{op}\times\Delta^1$ induces a forgetful functor $$\theta:\mathbb{A}_\infty\text{PSh}_S^{\mathcal{V};\mathcal{M}}\to\mathbb{A}_\infty\text{Cat}^\mathcal{V}_S.$$

\begin{definition}
If $\mathcal{C}\in\mathbb{A}_\infty\text{Cat}^\mathcal{V}_S$, then the $\infty$-category of $\mathbb{A}_\infty$-$\mathcal{V}$-enriched presheaves on $\mathcal{C}$ with values in $\mathcal{M}$ is the fiber $$\mathbb{A}_\infty\text{PSh}^\mathcal{V}(\mathcal{C};\mathcal{M})=\mathbb{A}_\infty\text{PSh}^{\mathcal{V};\mathcal{M}}_S\times_{\mathbb{A}_\infty\text{Cat}^\mathcal{V}_S}\{\mathcal{C}\}.$$
\end{definition}

\begin{corollary}\label{CorModelEq2}
Let $\mathcal{C}$ be a $\mathcal{V}$-enriched category with set $S$ of objects, and $\bar{\mathcal{C}}$ the corresponding $\mathbb{A}_\infty$-$\mathcal{V}$-enriched category. Then composition with $\text{LCut}_S$ induces equivalences $$\text{PSh}^{\mathcal{V};\mathcal{M}}_S\to\mathbb{A}_\infty\text{PSh}^{\mathcal{V};\mathcal{M}}_S,$$ $$\text{PSh}^\mathcal{V}(\mathcal{C};\mathcal{M})\to\mathbb{A}_\infty\text{PSh}^\mathcal{V}(\bar{\mathcal{C}};\mathcal{M}).$$
\end{corollary}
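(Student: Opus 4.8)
The plan is to deduce both equivalences from Proposition~\ref{PropLCut} by the approximation theorem, in exact parallel with the way Corollary~\ref{CorModelEq1} was deduced from Proposition~\ref{PropCutS}.

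For the first equivalence, recall that $\text{LCut}_S:\Delta_{/S}^\text{op}\times\Delta^1\to\text{LM}_S$ is a strong approximation to the $\infty$-operad $\text{LM}_S$ (Proposition~\ref{PropLCut}). I would apply Theorem~\ref{ThmApprox} to the cocartesian fibration of $\infty$-operads obtained by pulling back $\smallint(\mathcal{V};\mathcal{M})\to\text{LM}$ along $\text{LM}_S\to\text{LM}$; composition with $\text{LCut}_S$ then induces an equivalence $\text{Alg}_{\text{LM}_S/\text{LM}}(\mathcal{V};\mathcal{M})\to\text{Alg}_{\Delta_{/S}^\text{op}\times\Delta^1/\text{LM}}(\mathcal{V};\mathcal{M})$, which is exactly the asserted map $\text{PSh}^{\mathcal{V};\mathcal{M}}_S\to\mathbb{A}_\infty\text{PSh}^{\mathcal{V};\mathcal{M}}_S$. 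Theorem~\ref{ThmApprox} is phrased for a monoidal target, but the argument is identical for the LM-monoidal pair $(\mathcal{V};\mathcal{M})$ via the general form HA 2.3.3.23; this is the same license already taken in the proof of Corollary~\ref{CorModelEq1}.

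To obtain the fiberwise statement I would compare the two forgetful functors $\theta$. The functor $\theta:\text{PSh}^{\mathcal{V};\mathcal{M}}_S\to\text{Cat}^\mathcal{V}_S$ is restriction along $\text{Assoc}_S\subseteq\text{LM}_S$, and its $\mathbb{A}_\infty$-analogue is restriction along $\Delta_{/S}^\text{op}\times\{1\}\hookrightarrow\Delta_{/S}^\text{op}\times\Delta^1$. Since $\text{LCut}_S$ restricts on $\Delta_{/S}^\text{op}\times\{1\}$ to $\text{Cut}_S$ (the remark that $\text{LCut}_S^1$ factors through $\text{Cut}_S:\Delta_{/S}^\text{op}\to\text{Assoc}_S\subseteq\text{LM}_S$, generalizing the square of Example~\ref{ExCut2}), restricting algebras along this data produces a commutative square
$$\xymatrix{
\text{PSh}^{\mathcal{V};\mathcal{M}}_S\ar[r]\ar[d]_\theta &\mathbb{A}_\infty\text{PSh}^{\mathcal{V};\mathcal{M}}_S\ar[d]^\theta \\
\text{Cat}^\mathcal{V}_S\ar[r] &\mathbb{A}_\infty\text{Cat}^\mathcal{V}_S,
}$$
whose top row is the first equivalence, whose bottom row is the equivalence of Corollary~\ref{CorModelEq1}, and whose columns are the forgetful functors (which are categorical fibrations by Remark~\ref{RmkFiber}, so their fibers are honest fibers). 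Passing to fibers over $\mathcal{C}$, which the bottom equivalence carries to $\bar{\mathcal{C}}$, restricts the top equivalence to the desired $\text{PSh}^\mathcal{V}(\mathcal{C};\mathcal{M})\to\mathbb{A}_\infty\text{PSh}^\mathcal{V}(\bar{\mathcal{C}};\mathcal{M})$.

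The only step carrying genuine content is the commutativity of this square, i.e.\ that the operadic and $\mathbb{A}_\infty$ forgetful functors are intertwined by $\text{Cut}_S$ and $\text{LCut}_S$. This is forced by the already-recorded compatibility of $\text{LCut}_S$ with $\text{Cut}_S$, so I expect the hard part to be bookkeeping rather than a new idea; everything else is a formal application of the approximation theorem followed by passage to fibers.
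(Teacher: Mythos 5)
Your proposal matches the paper's own proof: the first equivalence is Proposition \ref{PropLCut} plus Theorem \ref{ThmApprox}, and the second follows by taking fibers of the commutative square intertwining the two forgetful functors, whose commutativity comes from the compatibility of $\text{LCut}_S$ with $\text{Cut}_S$ on $\Delta_{/S}^\text{op}\times\{1\}$. Your write-up just makes explicit the bookkeeping the paper leaves implicit.
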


\begin{proof}
The first equivalence follows from Proposition \ref{PropLCut} by Theorem \ref{ThmApprox}. In fact, we have a commutative square $$\xymatrix{
\text{PSh}^{\mathcal{V};\mathcal{M}}_S\ar[r]\ar[d] &\mathbb{A}_\infty\text{PSh}^{\mathcal{V};\mathcal{M}}_S\ar[d] \\
\text{Cat}^\mathcal{V}_S\ar[r] &\mathbb{A}_\infty\text{Cat}^\mathcal{V}_S,
}$$ where the horizontal maps are equivalences, and so the second equivalence follows by taking fibers.
\end{proof}

\noindent Suppose $\mathcal{V}$ is a monoidal $\infty$-category and $\mathcal{M}$ is a left $\mathcal{V}$-module. Recall from Section \ref{S25} that we can regard the pair $(\mathcal{V};\mathcal{M})$ as either a functor $(\mathcal{V};\mathcal{M}):\text{LM}\to\text{Cat}$ with corresponding $\infty$-operad $\smallint(\mathcal{V};\mathcal{M})\to\text{LM}$, or as a functor $\mathcal{B}(\mathcal{V};\mathcal{M}):\Delta^\text{op}\times\Delta^1\to\text{Cat}$ with corresponding planar $\infty$-operad $\smallint\mathcal{B}(\mathcal{V};\mathcal{M})\to\Delta^\text{op}\times\Delta^1$. By construction we have a pullback $$\xymatrix{
\smallint\mathcal{B}(\mathcal{V};\mathcal{M})\ar[r]\ar[d] &\smallint(\mathcal{V};\mathcal{M})\ar[d] \\
\Delta^\text{op}\times\Delta^1\ar[r]_-{\text{Cut}} &\text{LM},
}$$ so $\mathbb{A}_\infty\text{PSh}^{\mathcal{V};\mathcal{M}}_S\cong\text{Fun}^\dag_{/\Delta^\text{op}\times\Delta^1}(\Delta_{/S}^\text{op}\times\Delta^{1\mathsection},\smallint\mathcal{B}(\mathcal{V};\mathcal{M})^\mathsection)$.

\begin{remark}\label{RmkFiber2}
Although everything we have said in this section is model-independent, now we will say a word about quasicategories, paralleling Remark \ref{RmkFiber} for the operadic model. Suppose we are given a cocartesian fibration of quasicategories $\smallint\mathcal{B}(\mathcal{V};\mathcal{M})\to\Delta^\text{op}\times\Delta^1$ exhibiting $\mathcal{M}$ as a left $\mathcal{V}$-module $\infty$-category in the $\mathbb{A}_\infty$-sense. Then we have the following explicit quasicategory models: $$\mathbb{A}_\infty\text{Cat}^\mathcal{V}_S=\text{Fun}^\dag_{/\Delta^\text{op}\times\Delta^1}(\Delta^{\text{op}\mathsection}_{/S},\smallint\mathcal{B}(\mathcal{V};\mathcal{M})^\mathsection);$$ $$\mathbb{A}_\infty\text{PSh}^{\mathcal{V};\mathcal{M}}_S=\text{Fun}^\dag_{/\Delta^\text{op}\times\Delta^1}(\Delta^\text{op}_{/S}\times\Delta^{1\mathsection},\smallint\mathcal{B}(\mathcal{V};\mathcal{M})^\mathsection).$$ $\Delta_{/S}^\text{op}$ lies over $\Delta^\text{op}\times\Delta^1$, as always, by $\Delta_{/S}^\text{op}\to\Delta^\text{op}\times\{1\}\subseteq\Delta^\text{op}\times\Delta^1$. Since the inclusion $\Delta^\text{op}_{/S}\to\Delta^\text{op}_{/S}\times\Delta^1$ is a categorical fibration, the forgetful functor $\theta:\mathbb{A}_\infty\text{PSh}_S^{\mathcal{V};\mathcal{M}}\to\mathbb{A}_\infty\text{Cat}^\mathcal{V}_S$ is also a categorical fibration, so $\mathbb{A}_\infty\text{PSh}^\mathcal{V}(\mathcal{C};\mathcal{M})$ may be modeled as the literal fiber $\theta^{-1}(\mathcal{C})$, which is to say the quasicategory of lifts $$\xymatrix{
\Delta_{/S}^{\text{op}\mathsection}\ar[r]^-{\mathcal{C}}\ar[d] &\smallint\mathcal{BV}^\mathsection\ar[d] \\
\Delta_{/S}^\text{op}\times\Delta^{1\mathsection}\ar@{-->}[r]_-{\mathcal{F}} &\smallint\mathcal{B}(\mathcal{V};\mathcal{M})^\mathsection.
}$$
\end{remark}

\noindent Unlike in the operadic model, we can unpack this last diagram further. Recall (following Remark \ref{RmkHA413}) that $\mathcal{B}(\mathcal{V};\mathcal{M}):\Delta^\text{op}\times\Delta^1\to\text{Cat}$ can be identified with a morphism $\mathcal{BV}\ltimes\mathcal{M}\to\mathcal{BV}$ of simplicial $\infty$-categories. Applying the Grothendieck construction, we have a diagram $$\xymatrix{
\smallint\mathcal{BV}\ltimes\mathcal{M}\ar[rr]^-q\ar[rd]_-{p_0} &&\smallint\mathcal{BV}\ar[ld]^-{p_1} \\
&\Delta^\text{op}, &
}$$ where $q$ sends $p_0$-cocartesian morphisms to $p_1$-cocartesian morphisms. Also recall the marking $\smallint\mathcal{BV}\ltimes\mathcal{M}^\ddagger$ by totally inert morphisms (Warning \ref{TotInert} and Remark \ref{RmkTotInert}).

\begin{definition}
Say that a morphism of $\Delta_{/S}^\text{op}$ is \emph{totally inert} if the underlying morphism of $\Delta^\text{op}$ is totally inert, and write $\Delta_{/S}^{\text{op}\ddagger}$ for this marking. Equivalently, a morphism is totally inert if its image under $\Delta_{/S}^\text{op}\times\{0\}\subseteq\Delta_{/S}^\text{op}\times\Delta^1$ is inert.
\end{definition}

\begin{proposition}\label{PropPShModel}
For any pair $(\mathcal{V};\mathcal{M})$ and set $S$, $\mathbb{A}_\infty\text{PSh}^{\mathcal{V};\mathcal{M}}_S$ is equivalent to the full subcategory of $\text{Fun}^\dag(\Delta_{/S}^{\text{op}\ddagger},\smallint\mathcal{BV}\ltimes\mathcal{M}^\ddagger)$ spanned by functors such that $\Delta_{/S}^\text{op}\to\smallint\mathcal{BV}\ltimes\mathcal{M}\to\smallint\mathcal{BV}$ is an $\mathbb{A}_\infty$-algebra.

If $\mathcal{C}\in\mathbb{A}_\infty\text{Cat}^\mathcal{V}_S$, then $$\mathbb{A}_\infty\text{PSh}^\mathcal{V}(\mathcal{C};\mathcal{M})\cong\text{Fun}^\dag_{/\smallint\mathcal{BV}^\mathsection}(\Delta_{/S}^{\text{op}\ddagger},\smallint\mathcal{BV}\ltimes\mathcal{M}^\ddagger).$$
\end{proposition}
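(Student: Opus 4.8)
The plan is to follow the proof of HA 4.2.2.19 (the case $|S|=1$, recorded here as Proposition \ref{PropA1}) essentially verbatim, observing that thickening $\Delta^\text{op}$ to $\Delta_{/S}^\text{op}$ changes none of the fiberwise or marking bookkeeping. By Remark \ref{RmkFiber2}, an object of $\mathbb{A}_\infty\text{PSh}^{\mathcal{V};\mathcal{M}}_S$ is a section $F$ of $\smallint\mathcal{B}(\mathcal{V};\mathcal{M})\to\Delta_{/S}^\text{op}\times\Delta^1$ carrying inert morphisms to cocartesian morphisms. Writing $F_0,F_1$ for the restrictions of $F$ to the two layers $\Delta_{/S}^\text{op}\times\{0\}$ and $\Delta_{/S}^\text{op}\times\{1\}$ (so that $F_0$ lands in $\smallint\mathcal{BV}\ltimes\mathcal{M}$ and $F_1$ in $\smallint\mathcal{BV}$), the datum of $F$ is exactly a natural transformation $F_0\to F_1$ over the edge $0\to 1$.

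First I would classify the inert morphisms of $\Delta_{/S}^\text{op}\times\Delta^1$. Since a morphism is inert precisely when its image in $\Delta^\text{op}\times\Delta^1$ is inert, these are generated under composition by three types: the vertical edges $(\text{id},0\to 1)$; the edges $(f,1\to 1)$ with $f$ inert; and the edges $(f,0\to 0)$ with $f$ totally inert. Indeed every mixed inert edge $(f,0\to 1)$ factors as $(f,1\to 1)\circ(\text{id},0\to 1)$, so $F$ is a marked section if and only if it sends each of the three types to cocartesian morphisms. The cocartesian pushforward along $([n],0)\to([n],1)$ is, fiberwise, the projection $q\colon\mathcal{V}^{\times n}\times\mathcal{M}\to\mathcal{V}^{\times n}$; hence requiring $F$ to send every $(\text{id},0\to 1)$ to a cocartesian edge forces $F_1\simeq q\circ F_0$ and exhibits $F$ as the relative left Kan extension of $F_0$ along $\Delta_{/S}^\text{op}\times\{0\}\hookrightarrow\Delta_{/S}^\text{op}\times\Delta^1$ (for each $([n],1)$ the relevant comma category has the vertical edge as terminal object, so the Kan extension is computed by cocartesian pushforward). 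Restriction to the $\{0\}$-layer therefore identifies the sections satisfying this single condition with all functors $F_0\colon\Delta_{/S}^\text{op}\to\smallint\mathcal{BV}\ltimes\mathcal{M}$ over $\Delta^\text{op}$, as an equivalence of $\infty$-categories.

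Under this identification I would then translate the two remaining conditions. Requiring $F$ to send $(f,1\to 1)$ to a cocartesian edge for inert $f$ becomes the requirement that $q\circ F_0$ be an $\mathbb{A}_\infty$-algebra, and requiring $F$ to send $(f,0\to 0)$ to a cocartesian edge for totally inert $f$ becomes the requirement that $F_0$ carry totally inert morphisms to $p_0$-cocartesian morphisms over totally inert morphisms, i.e.\ that $F_0\colon\Delta_{/S}^{\text{op}\ddagger}\to\smallint\mathcal{BV}\ltimes\mathcal{M}^\ddagger$ be marked. This yields the first assertion, and it is a \emph{full} subcategory because ``$q\circ F_0$ is an $\mathbb{A}_\infty$-algebra'' is a condition on objects only. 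For the second assertion I would note that under this identification the forgetful functor $\theta$ sends $F_0$ to $q\circ F_0=F_1$, the underlying $\mathbb{A}_\infty$-category; hence $\mathbb{A}_\infty\text{PSh}^\mathcal{V}(\mathcal{C};\mathcal{M})=\theta^{-1}(\mathcal{C})$ is the subcategory of those $F_0$ with $q\circ F_0=\mathcal{C}$, which is exactly the over-category $\text{Fun}^\dag_{/\smallint\mathcal{BV}^\mathsection}(\Delta_{/S}^{\text{op}\ddagger},\smallint\mathcal{BV}\ltimes\mathcal{M}^\ddagger)$. One must check marking compatibility of the two legs: $\mathcal{C}$ carries totally inert morphisms to inert ones, and $q$ carries $\ddagger$-marked morphisms to $\mathsection$-marked ones (as $q$ takes $p_0$-cocartesian to $p_1$-cocartesian), so both are marked functors to $\smallint\mathcal{BV}^\mathsection$.

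The hard part will be the reduction in the third step: proving that ``send the vertical edges to cocartesian morphisms'' cuts out precisely the essential image of a relative left Kan extension, so that restriction to the $\{0\}$-layer is a genuine equivalence of $\infty$-categories and not merely a bijection on objects. This is where the cocartesian-fibration technology (existence and uniqueness of cocartesian lifts, and the resulting adjoint to restriction) does the real work. Since the fibers of $\smallint\mathcal{B}(\mathcal{V};\mathcal{M})$ over $([n],\epsilon)$ depend only on $n$ and not on the $S$-labeling, Lurie's argument for $|S|=1$ transports without change, and I would simply cite HA 4.2.2.19 for this technical core.
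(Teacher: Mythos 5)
Your proof is correct and follows essentially the same route as the paper's: both unwind a marked section over $\Delta_{/S}^{\text{op}}\times\Delta^1$ into its two layers plus the vertical edges, identify the induced markings on the two fibers as $\ddagger$ and $\mathsection$ respectively, and reduce to the $\{0\}$-layer exactly as in HA 4.2.2.19. The only difference is that you make explicit the cocartesian-pushforward/relative-Kan-extension argument that the paper compresses into ``the proposition follows from this description.''
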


\begin{proof}
The proof follows HA 4.2.2.19 (stated earlier as Proposition \ref{PropA1}), which is the case $|S|=1$.

If $\smallint\mathcal{B}(\mathcal{V};\mathcal{M})_i$ is the fiber of $\smallint\mathcal{B}(\mathcal{V};\mathcal{M})\to\Delta^\text{op}\times\Delta^1$ over $\Delta^\text{op}\times\{i\}$, then:
\begin{itemize}
\item $\smallint\mathcal{B}(\mathcal{V};\mathcal{M})_0^\mathsection=\smallint\mathcal{BV}\ltimes\mathcal{M}^\ddagger$;
\item $\smallint\mathcal{B}(\mathcal{V};\mathcal{M})_1^\mathsection=\smallint\mathcal{BV}^\mathsection$;
\item If $X_i\in\smallint\mathcal{B}(\mathcal{V};\mathcal{M})_i$ are two objects both lying over $Y\in\Delta^\text{op}_{/S}$, and $f:X_0\to X_1$ is a morphism lying over $(Y,0)\to(Y,1)\in\Delta^\text{op}_S\times\Delta^1$ (the identity map on $Y$), then $f$ is inert if and only if the induced functor $q(X_0)\to X_1$ is an equivalence.
\end{itemize}
\noindent Therefore, giving a lift $\mathcal{F}$ as in the square $$\xymatrix{
\Delta_{/S}^{\text{op}\mathsection}\ar[r]^-{\mathcal{C}}\ar[d] &\smallint\mathcal{BV}^\mathsection\ar[d] \\
\Delta_{/S}^\text{op}\times\Delta^{1\mathsection}\ar@{-->}[r]_-{\mathcal{F}} &\smallint\mathcal{B}(\mathcal{V};\mathcal{M})^\mathsection.
}$$ is the same as giving a marked functor $\mathcal{F}_0:\Delta_{/S}^{\text{op}\ddagger}\to\smallint\mathcal{BV}\ltimes\mathcal{M}^\ddagger$, as well as a natural transformation $\mathcal{F}_0\to\mathcal{C}$ sends each $Y\in\Delta_{/S}^\text{op}$ to an inert morphism of $\mathcal{B}(\mathcal{V};\mathcal{M})$ lying over $(Y,0)\to(Y,1)$; in other words, $\mathcal{C}$ should factor $\mathcal{C}=q\mathcal{F}_0$. The proposition follows from this description.
\end{proof}

\subsection{Limits and colimits of presheaves}\label{S43}
\noindent We are ready to prove that $\theta:\text{PSh}^{\mathcal{V};\mathcal{M}}_S\to\text{Cat}^\mathcal{V}_S$ is a cartesian fibration, and a presentable fibration if $(\mathcal{V};\mathcal{M})$ is a presentable pair. We will begin by proving that $\theta$ is a cartesian fibration.

\begin{proposition}\label{PropCartFib}
If $\mathcal{V}$ is a monoidal $\infty$-category, $\mathcal{M}$ is a left $\mathcal{V}$-module, and $S$ is a set, then the forgetful functor $\theta:\text{PSh}^{\mathcal{V};\mathcal{M}}_S\to\text{Cat}^\mathcal{V}_S$ is a cartesian fibration, and a morphism $f$ in $\text{PSh}^{\mathcal{V};\mathcal{M}}_S$ is $\theta$-cartesian if and only if its image under $\text{ev}_X:\text{PSh}^{\mathcal{V};\mathcal{M}}_S\to\mathcal{M}$ is an equivalence for each $X\in S$.
\end{proposition}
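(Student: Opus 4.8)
The plan is to transport the statement into the $\mathbb{A}_\infty$-model, where $\theta$ becomes postcomposition with the locally cocartesian fibration $q$, and then to exploit the fact that $q$ restricts to a trivial projection cartesian fibration on each fiber over $\Delta^\text{op}$. By Corollary \ref{CorModelEq2}, composition with $\text{LCut}_S$ identifies $\theta:\text{PSh}^{\mathcal{V};\mathcal{M}}_S\to\text{Cat}^\mathcal{V}_S$ with the forgetful functor $\mathbb{A}_\infty\text{PSh}^{\mathcal{V};\mathcal{M}}_S\to\mathbb{A}_\infty\text{Cat}^\mathcal{V}_S$, and by Proposition \ref{PropPShModel} the latter is the functor $\mathcal{F}_0\mapsto q\mathcal{F}_0$ on the relevant subcategories of marked functors into $\smallint\mathcal{BV}\ltimes\mathcal{M}$. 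The key observation is that a morphism of $\mathbb{A}_\infty\text{PSh}^{\mathcal{V};\mathcal{M}}_S$ is a natural transformation $f:\mathcal{F}_0\to\mathcal{G}_0$, so each component $f_Y$ lies over the identity of $p_0(Y)\in\Delta^\text{op}$; that is, $f$ is \emph{vertical} over $\Delta^\text{op}$. On the fiber over $[n]$, the triangle preceding Proposition \ref{PropPShModel} shows that $q$ is the projection $\mathcal{V}^{\times n}\times\mathcal{M}\to\mathcal{V}^{\times n}$, a trivial cartesian fibration whose cartesian morphisms are exactly those that are equivalences in the $\mathcal{M}$-coordinate. Thus everything about $\theta$ is governed by the fiberwise-cartesian structure of $q$.

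Next I would construct cartesian lifts by restriction of scalars. Given $\mathcal{G}_0$ presenting a presheaf over $\mathcal{D}=q\mathcal{G}_0$ and a morphism $\phi:\mathcal{C}\to\mathcal{D}$ in $\mathbb{A}_\infty\text{Cat}^\mathcal{V}_S$, I regard $\phi$ as a functor $\Delta_{/S}^\text{op}\times\Delta^1\to\smallint\mathcal{BV}$ and seek a lift $\tilde{H}:\Delta_{/S}^\text{op}\times\Delta^1\to\smallint\mathcal{BV}\ltimes\mathcal{M}$ with $\tilde H|_{\times\{1\}}=\mathcal{G}_0$, $q\tilde H=\phi$, and with $\tilde H|_{\{Y\}\times\Delta^1}$ fiberwise $q$-cartesian (an $\mathcal{M}$-equivalence) for every $Y$. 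Fiberwise such a lift exists trivially, with source given by $\mathcal{C}(Y)$ in the $\mathcal{V}$-coordinates and by the $\mathcal{M}$-coordinate of $\mathcal{G}_0(Y)$; the content is to assemble these into an honest functor. Here I would use that $q$ is a categorical fibration (Remark \ref{RmkLCF}) sending $p_0$-cocartesian to $p_1$-cocartesian morphisms, so the chosen fiberwise-cartesian lifts are compatible with the cross-fiber transition maps of $\mathcal{G}_0$; this produces $\mathcal{F}_0:=\tilde H|_{\times\{0\}}$ together with $f=\tilde H:\mathcal{F}_0\to\mathcal{G}_0$. One then checks that $\mathcal{F}_0$ sends totally inert morphisms to totally inert morphisms (since $\mathcal{G}_0$ does and the $\mathcal{M}$-coordinates agree) and that $q\mathcal{F}_0=\mathcal{C}$ is an $\mathbb{A}_\infty$-algebra, so that $\mathcal{F}_0$ indeed lies in $\mathbb{A}_\infty\text{PSh}^{\mathcal{V};\mathcal{M}}_S$.

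Finally I would identify the cartesian morphisms. A vertical natural transformation each of whose components is fiberwise $q$-cartesian is a $\theta$-cartesian morphism: this is the relative (over $\Delta^\text{op}$) analogue of HTT 3.1.2.1, which I would verify by reducing the lifting property to the individual fibers, where $q$ is an honest cartesian fibration. It then remains to match ``every component $f_Y$ is an $\mathcal{M}$-equivalence'' with the stated condition ``$\text{ev}_X(f)$ is an equivalence for all $X\in S$.'' For $Y=\{X_0<\cdots<X_n\}$ there is a totally inert morphism $Y\to\{X_n\}$ in $\Delta_{/S}^\text{op}$; applying $\mathcal{F}_0$ and $\mathcal{G}_0$ (which preserve totally inert morphisms) and using that the corresponding $p_0$-cocartesian pushforward is projection onto the $\mathcal{M}$-coordinate, naturality identifies the $\mathcal{M}$-coordinate of $f_Y$ with $\text{ev}_{X_n}(f)$. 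Hence all components are $\mathcal{M}$-equivalences if and only if $\text{ev}_X(f)$ is an equivalence for every $X\in S$ (the singleton $\{X\}$ realizing each $X$ as a last vertex), which yields the characterization.

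I expect the main obstacle to be the coherent assembly in the middle step: producing the restriction-of-scalars section $\mathcal{F}_0$ as a genuine marked functor on all of $\Delta_{/S}^\text{op}$ — not merely objectwise — compatibly with the transition maps across the fibers of $\Delta^\text{op}$ and with the $\ddagger$-marking. This is exactly where the locally cocartesian structure of $q$ and its compatibility with the $p_0$- and $p_1$-cocartesian morphisms must be invoked, and it is the technical heart generalizing HA 4.2.2.19 from $|S|=1$ to arbitrary $S$.
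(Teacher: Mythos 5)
Your proposal is correct in substance and rests on the same mechanism as the paper's argument -- that over each $[n]\in\Delta^\text{op}$ the map $q$ is the projection $\mathcal{V}^{\times n}\times\mathcal{M}\to\mathcal{V}^{\times n}$, that morphisms of $\text{PSh}^{\mathcal{V};\mathcal{M}}_S$ are vertical over $\Delta^\text{op}$, and that the totally inert morphism $\{X_0<\cdots<X_n\}\to\{X_n\}$ reduces the componentwise condition to the single evaluations $\text{ev}_X$ -- but it is packaged differently. The paper does not construct cartesian edges by hand: it first proves Lemma \ref{LemCartFib}, a statement about $\theta$-limit diagrams indexed by an arbitrary simplicial set $K$ (existence of relative limits and their detection by $\text{ev}_X$), obtained by running HA 4.2.3.1 fiberwise and then invoking HA 3.2.2.9 to pass to the functor categories $\text{Fun}(\Delta_{/S}^\text{op},-)$; Proposition \ref{PropCartFib} is then the special case $K=\ast$ via HTT 4.3.1.4, which identifies $\theta$-limit diagrams over $\Delta^1$ with $\theta$-cartesian edges. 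The obstacle you flag at the end -- assembling the objectwise cartesian lifts into an honest marked functor on $\Delta_{/S}^\text{op}$ -- is precisely the content that HA 3.2.2.9 discharges (relative limits in functor categories over a base exist and are computed pointwise when they exist fiberwise in a compatible way), so your ``relative analogue of HTT 3.1.2.1'' is a genuine lemma you would need to prove or cite rather than a routine verification; as written, that step is the one real gap between your outline and a complete proof. The paper's extra generality in $K$ is not wasted either: the same Lemma \ref{LemCartFib} is reused to prove Corollary \ref{CorPFib1b} and Theorem \ref{PShPrL2}, whereas your direct $K=\ast$ construction would have to be redone (or generalized back to arbitrary $K$) for those results. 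Your closing identification of ``all components are $\mathcal{M}$-equivalences'' with ``$\text{ev}_X(f)$ is an equivalence for all $X$'' matches the paper's argument for the equivalence of its conditions 2 and 2$''$ essentially verbatim.
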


\begin{remark}\label{RmkFuncEv}
Since $\theta$ is a cartesian fibration, a map $F:\mathcal{C}\to\mathcal{D}$ in $\text{Cat}^\mathcal{V}_S$ induces a functor $F^\ast:\text{PSh}^\mathcal{V}(\mathcal{D};\mathcal{M})\to\text{PSh}^\mathcal{V}(\mathcal{C};\mathcal{M})$. By the identification of $\theta$-cartesian morphisms, $F^\ast(\mathcal{F})$ can be evaluated at objects by the formula $F^\ast(\mathcal{F})(X)=\mathcal{F}(X)$. In other words, each triangle commutes: $$\xymatrix{
\text{PSh}^\mathcal{V}(\mathcal{D};\mathcal{M})\ar[rr]^-{F^\ast}\ar[rd]_-{\text{ev}_X} &&\text{PSh}^\mathcal{V}(\mathcal{C};\mathcal{M})\ar[ld]^-{\text{ev}_X} \\
&\mathcal{M}. &
}$$
\end{remark}

\begin{lemma}\label{LemCartFib}
Suppose the pair $(\mathcal{V};\mathcal{M})$ is modeled by a cocartesian fibration of quasicategories $\smallint(\mathcal{V};\mathcal{M})\to\text{LM}$, and $K$ is a simplicial set such that $\mathcal{M}$ admits $K$-indexed limits. Then:
\begin{enumerate}
\item For every commutative square $$\xymatrix{
K\ar[r]\ar@{_{(}->}[d] &\mathbb{A}_\infty\text{PSh}^{\mathcal{V};\mathcal{M}}_S\ar[d]^\theta \\
K^\triangleleft\ar[r]\ar@{-->}[ru] &\mathbb{A}_\infty\text{Cat}^\mathcal{V}_S,
}$$ there exists a dotted arrow as indicated, which is a $\theta$-limit diagram.
\item An arbitrary functor $K^\triangleleft\to\mathbb{A}_\infty\text{PSh}^{\mathcal{V};\mathcal{M}}_S$ is a $\theta$-limit diagram if and only if the composite $K^\triangleleft\to\mathbb{A}_\infty\text{PSh}^{\mathcal{V};\mathcal{M}}_S\xrightarrow{\text{ev}_X}\mathcal{M}$ is a limit diagram for each $X\in S$.
\end{enumerate}
\end{lemma}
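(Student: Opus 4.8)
The plan is to reduce everything to the explicit model of Proposition \ref{PropPShModel} and then recognize both assertions as statements about relative limits for the locally cocartesian fibration $q$. Write $\mathcal{E}=\smallint\mathcal{BV}\ltimes\mathcal{M}$ and $\mathcal{B}=\smallint\mathcal{BV}$, so that by Proposition \ref{PropPShModel} the category $\mathbb{A}_\infty\text{PSh}^{\mathcal{V};\mathcal{M}}_S$ is the full subcategory of $\text{Fun}^\dag(\Delta_{/S}^{\text{op}\ddagger},\mathcal{E}^\ddagger)$ on those $\mathcal{F}$ for which $q\mathcal{F}$ is an $\mathbb{A}_\infty$-algebra, and $\theta$ is postcomposition with $q\colon\mathcal{E}\to\mathcal{B}$. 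By Remark \ref{RmkLCF}, $q$ is a categorical and locally cocartesian fibration whose fibers are all $\mathcal{M}$, which admits $K$-indexed limits by hypothesis. The essential structural observation is that, restricted over any $[n]\in\Delta^\text{op}$, the fibration $q$ is literally the product projection $\mathcal{V}^{\times n}\times\mathcal{M}\to\mathcal{V}^{\times n}$.

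First I would construct the lift pointwise. Regard the datum of (1) as a functor $H\colon\Delta_{/S}^\text{op}\times K\to\mathcal{E}$ together with an extension $\bar H\colon\Delta_{/S}^\text{op}\times K^\triangleleft\to\mathcal{B}$ of $qH$ restricting on each $\{d\}\times K^\triangleleft$ to the given cone, with apex $\mathcal{C}(d)$. For fixed $d$ with $p(d)=[n]$, the diagram $\bar H(d,-)\colon K^\triangleleft\to\mathcal{B}$ lands in the single fiber $\mathcal{B}_{[n]}=\mathcal{V}^{\times n}$ (morphisms of $\mathbb{A}_\infty$-enriched categories lie over identities of $\Delta^\text{op}$), and there $q$ is a product projection. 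Hence a $q$-relative limit of $H(d,-)$ over this cone exists and is computed as $\mathcal{F}(d)=\bigl(\mathcal{C}(d),\ \lim_{k\in K}H(d,k)_{\mathcal{M}}\bigr)$, the limit being formed in $\mathcal{M}$ of the module components; indeed, for a product projection a diagram is a $q$-limit precisely when its $\mathcal{M}$-component is a limit. Since relative limits in a functor category are computed pointwise (the relative form of HTT 5.1.2.2; see HTT 4.3.1), these assemble into $\mathcal{F}\colon\Delta_{/S}^\text{op}\to\mathcal{E}$ extending the cone, and $\mathcal{F}$ is a $\theta$-limit diagram provided it lands in the subcategory $\mathbb{A}_\infty\text{PSh}^{\mathcal{V};\mathcal{M}}_S$.

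The main obstacle is exactly this last point: I must check that $\mathcal{F}$ carries totally inert morphisms to $q$-cocartesian (that is, $\ddagger$-marked) morphisms of $\mathcal{E}$, and here the totally inert marking is decisive. A totally inert morphism $d\to d'$ retains the final vertex, hence retains the module edge terminating at $\ast$ and discards only $\mathcal{V}$-labelled edges; consequently the locally cocartesian pushforward it induces, restricted to the $q$-fiber over the fixed object $\mathcal{C}(d)$, is the identity functor $\mathcal{M}\to\mathcal{M}$ (contrast a merely inert morphism, whose pushforward would involve the $\mathcal{V}$-action and need not preserve limits). Since the identity preserves $K$-limits, it sends the limit cone defining $\mathcal{F}(d)$ to the limit cone defining $\mathcal{F}(d')$, forcing the structure map $\mathcal{F}(d)\to\mathcal{F}(d')$ to be $q$-cocartesian. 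Thus $\mathcal{F}\in\mathbb{A}_\infty\text{PSh}^{\mathcal{V};\mathcal{M}}_S$, proving (1).

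Finally, the same analysis gives (2) directly, without a separate uniqueness argument. For an arbitrary cone $\bar p\colon K^\triangleleft\to\mathbb{A}_\infty\text{PSh}^{\mathcal{V};\mathcal{M}}_S$, the pointwise criterion says $\bar p$ is a $\theta$-limit if and only if each $\bar p(d)\colon K^\triangleleft\to\mathcal{E}$ is a $q$-limit, equivalently (by the product-projection description above) if and only if each module component $\bar p(d)_{\mathcal{M}}\colon K^\triangleleft\to\mathcal{M}$ is a limit diagram. For $d=\{X\}$ lying over $[0]\in\Delta^\text{op}$ we have $\mathcal{B}_{[0]}\cong\ast$, so $\bar p(\{X\})_{\mathcal{M}}=\text{ev}_X\bar p$; and for general $d=\{X_0<\cdots<X_n\}$ the totally inert map $d\to\{X_n\}$ is sent by $\bar p$ to a $q$-cocartesian morphism whose pushforward on $\mathcal{M}$ is the identity, whence $\bar p(d)_{\mathcal{M}}\cong\text{ev}_{X_n}\bar p$ naturally in $K^\triangleleft$. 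Therefore the family of conditions ``$\bar p(d)_{\mathcal{M}}$ is a limit for all $d$'' collapses to ``$\text{ev}_X\bar p$ is a limit for all $X\in S$,'' which is the assertion of (2).
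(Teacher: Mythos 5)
Your proposal is correct and follows essentially the same route as the paper: both pass to the explicit model of Proposition \ref{PropPShModel}, compute $\theta$-limits pointwise using that $q$ restricts over each $[n]$ to the projection $\mathcal{V}^{\times n}\times\mathcal{M}\to\mathcal{V}^{\times n}$ (so relative limits are detected in the $\mathcal{M}$-component), and then use that totally inert morphisms act as the identity on the $\mathcal{M}$-component to show the resulting cone stays in the presheaf subcategory and to collapse the conditions in (2) to the singletons $\{X\}$. The only cosmetic difference is that the paper cites HA 4.2.3.1 and HA 3.2.2.9 where you invoke the product-projection description and the pointwise computation of relative limits in functor categories directly.
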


\begin{proof}
We follow HA 4.2.3.1, which is the $|S|=1$ case. We set notation as in the triangle preceding Remark \ref{RmkPrec}: $$\xymatrix{
\smallint\mathcal{BV}\ltimes\mathcal{M}\ar[rr]^-q\ar[rd]_-{p_0} &&\smallint\mathcal{BV}\ar[ld]^-{p_1} \\
&\Delta^\text{op} &
}$$ The fibers over $[n]\in\Delta^\text{op}$ are $\smallint\mathcal{BV}\ltimes\mathcal{M}_n=\mathcal{V}^{\times n}\times\mathcal{M}$ and $\smallint\mathcal{BV}_n=\mathcal{V}^{\times n}$. Lurie concludes in HA 4.2.3.1 $(1_n^{\prime\prime}-2_n^{\prime\prime})$:
\begin{enumerate}[1'.]
\item For every commutative square $$\xymatrix{
K\ar[r]\ar@{_{(}->}[d] &\smallint\mathcal{BV}\ltimes\mathcal{M}_n\ar[d]^{q_n} \\
K^\triangleleft\ar[r]\ar@{-->}[ru] &\smallint\mathcal{BV}_n,
}$$ there exists a dotted arrow as indicated, which is a $q$-limit diagram.
\item An arbitrary map $K^\triangleleft\to\smallint\mathcal{BV}\ltimes\mathcal{M}_n$ is a $q$-limit diagram if and only if the projection $K^\triangleleft\to\smallint\mathcal{BV}\ltimes\mathcal{M}_n=\mathcal{V}^{\times n}\times\mathcal{M}\to\mathcal{M}$ is a limit diagram.
\end{enumerate}
\noindent Applying HA 3.2.2.9,
\begin{enumerate}[1''.]
\item For every commutative square $$\xymatrix{
K\ar[r]\ar@{_{(}->}[d] &\text{Fun}(\Delta_{/S}^\text{op},\smallint\mathcal{BV}\ltimes\mathcal{M})\ar[d]^{\theta} \\
K^\triangleleft\ar[r]\ar@{-->}[ru] &\text{Fun}(\Delta_{/S}^\text{op},\smallint\mathcal{BV}),
}$$ there exists a dotted arrow as indicated, which is a $\theta$-limit diagram.
\item An arbitrary map $K^\triangleleft\to\text{Fun}(\Delta_{/S}^\text{op},\smallint\mathcal{BV}\ltimes\mathcal{M})$ is a $\theta$-limit diagram if and only if for each $\{X_0<\cdots<X_n\}\in\Delta_{/S}$, the projection $K^\triangleleft\xrightarrow{\text{ev}_X}\smallint\mathcal{BV}\ltimes\mathcal{M}_n\to\mathcal{M}$ is a limit diagram.
\end{enumerate}

\noindent In 1''-2'', we can just as well restrict to $\mathbb{A}_\infty\text{Cat}^\mathcal{V}_S$, which is a full subcategory of $\text{Fun}(\Delta_{/S}^\text{op},\smallint\mathcal{BV})$ by Remark \ref{RmkFiber2} (that is, the full subcategory of functors with send inert morphisms to inert morphisms and are compatible with the functors down to $\Delta^\text{op}$).

Let $\text{pre}\mathbb{A}_\infty\text{PSh}^{\mathcal{V};\mathcal{M}}_S$ be the full subcategory of $\text{Fun}(\Delta_{/S}^\text{op},\smallint\mathcal{BV}\ltimes\mathcal{M})$ spanned by those functors $F$ for which $\theta(F)$ is an $\mathbb{A}_\infty$-algebra. Then the restriction $\theta:\text{pre}\mathbb{A}_\infty\text{PSh}^{\mathcal{V};\mathcal{M}}_S\to\mathbb{A}_\infty\text{Cat}^\mathcal{V}_S$ enjoys the same properties 1''-2''.

By Proposition \ref{PropPShModel}, $\mathbb{A}_\infty\text{PSh}^{\mathcal{V};\mathcal{M}}$ is the full subcategory of $\text{pre}\mathbb{A}_\infty\text{PSh}^{\mathcal{V};\mathcal{M}}$ spanned by those functors $\Delta_{/S}^\text{op}\to\smallint\mathcal{BV}\ltimes\mathcal{M}$ which send totally inert morphisms to $p_0$-cocartesian morphisms.

Now we will prove part (1) of the lemma. It suffices to show that if $\bar{g}$ is a $\theta$-limit diagram $K^\triangleleft\to\text{pre}\mathbb{A}_\infty\text{PSh}^{\mathcal{V};\mathcal{M}}_S$ and the restriction $g$ to $K$ factors through $\mathbb{A}_\infty\text{PSh}^{\mathcal{V};\mathcal{M}}_S$, then $\bar{g}$ factors through $\mathbb{A}_\infty\text{PSh}^{\mathcal{V};\mathcal{M}}_S$. Any totally inert morphism $f:\{X_0<\cdots<X_n\}\to\{Y_0<\cdots<Y_m\}$ in $\Delta_{/S}^\text{op}$ induces a natural transformation $\bar{g}_X\to\bar{g}_Y$ of functors $K^\triangleleft\to\smallint\mathcal{BV}\ltimes\mathcal{M}$, and we want to show that each object of $K^\triangleleft$ is sent to a $p_0$-cocartesian morphism in $\mathcal{BV}\ltimes\mathcal{M}$.

Since $f$ is totally inert, $f(m)=n$, and we have a commutative triangle, where the downward maps are just projection $$\xymatrix{
\smallint\mathcal{BV}\ltimes\mathcal{M}_n\ar[rr]\ar[rd]_-\alpha &&\smallint\mathcal{BV}\ltimes\mathcal{M}_m\ar[ld]^-\beta \\
&\mathcal{M}, &
}$$ and it suffices to show that $\bar{t}:\alpha\bar{g}_X\to\beta\bar{g}_Y$ is an equivalence. By hypothesis, the restriction $t:\alpha g_X\to\beta g_Y$ is an equivalence, and then by (2''), $\bar{t}$ is an induced natural transformation between limit diagrams. Therefore, $\bar{t}$ is also an equivalence, completing the proof of (1).

As for (2), we need to prove that the following are equivalent for a functor $\bar{g}:K^\triangleleft\to\mathbb{A}_\infty\text{PSh}^{\mathcal{V};\mathcal{M}}_S$:
\begin{itemize}
\item[2''.] Each $K^\triangleleft\xrightarrow{\bar{g}}\mathbb{A}_\infty\text{PSh}^{\mathcal{V};\mathcal{M}}_S\xrightarrow{\text{ev}_X}\mathcal{M}$ is a limit diagram, where $\text{ev}_X$ is evaluation of $\Delta_{/S}^\text{op}\to\smallint\mathcal{BV}\ltimes\mathcal{M}$ at any $X=\{X_0<\cdots<X_n\}\in\Delta_{/S}^\text{op}$;
\item[2.] Each $\text{ev}_X\bar{g}$ is a limit diagram if $n=0$; that is, $X=\{X_0\}\in\Delta_{/S}^\text{op}$.
\end{itemize}

\noindent Obviously 2'' implies 2. Conversely, assume 2. If $X=\{X_0<\cdots<X_n\}$, then there is a totally inert morphism $X\to\{X_n\}$ which induces a commutative square $$\xymatrix{
\mathbb{A}_\infty\text{PSh}^{\mathcal{V};\mathcal{M}}(\mathcal{C})\ar[r]^-{\text{ev}_X}\ar[d]_-{\text{ev}_{\{X_n\}}} &\smallint\mathcal{BV}\ltimes\mathcal{M}_n\ar[d] \\
\smallint\mathcal{BV}\ltimes\mathcal{M}_0\ar[r] &\mathcal{M}.
}$$ Then $\text{ev}_X\bar{g}=\text{ev}_{\{X_n\}}\bar{g}$ is a limit diagram by 2, completing the proof.
\end{proof}

\begin{proof}[Proof of Proposition \ref{PropCartFib}]
Suppose the pair $(\mathcal{V};\mathcal{M})$ is modeled by a cocartesian fibration of quasicategories $\smallint(\mathcal{V};\mathcal{M})\to\text{LM}$. We will prove that $\theta:\mathbb{A}_\infty\text{PSh}^{\mathcal{V};\mathcal{M}}_S\to\mathbb{A}_\infty\text{Cat}^\mathcal{V}_S$ is a cocartesian fibration of quasicategories. We already know $\theta$ is a categorical fibration (Remark \ref{RmkFiber2}), hence also an inner fibration.

Recall that a functor $\Delta^1\to\mathbb{A}_\infty\text{PSh}^{\mathcal{V};\mathcal{M}}_S$ is a $\theta$-limit diagram if and only if it is a $\theta$-cartesian edge (HTT 4.3.1.4). Hence when $K=\ast$, part (1) of the lemma asserts $\theta$ is a cartesian fibration, and (2) asserts that $f$ is $\theta$-cartesian if and only if $\text{ev}_X(f)$ is an equivalence for each $X\in S$.
\end{proof}

\noindent For the rest of this section, we will assume that $(\mathcal{V};\mathcal{M})$ is a presentable pair.

\begin{theorem}\label{PShPrL2}
If $(\mathcal{V};\mathcal{M})$ is a presentable pair and $\mathcal{C}$ is $\mathcal{V}$-enriched:
\begin{enumerate}
\item $\text{PSh}^\mathcal{V}(\mathcal{C};\mathcal{M})$ is presentable;
\item for each $X\in\mathcal{C}$, $\text{ev}_X:\text{PSh}^\mathcal{V}(\mathcal{C};\mathcal{M})\to\mathcal{M}$ preserves limits and colimits;
\item If $F:\mathcal{C}\to\mathcal{D}$ is a map in $\text{Cat}^\mathcal{V}_S$, then $F^\ast:\text{PSh}^\mathcal{V}_S(\mathcal{D};\mathcal{M})\to\text{PSh}^\mathcal{V}(\mathcal{C};\mathcal{M})$ preserves limits and colimits.
\item if $\mathcal{N}$ is presentable, a functor $F:\mathcal{N}\to\text{PSh}^\mathcal{V}(\mathcal{C};\mathcal{M})$ preserves colimits (respectively limits) if and only if the composite $\text{ev}_XF:\mathcal{N}\to\mathcal{M}$ preserves colimits (limits) for each $X\in\mathcal{C}$.
\end{enumerate}
\end{theorem}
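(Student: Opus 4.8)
The plan is to reduce everything to the $\mathbb{A}_\infty$-model via Corollary \ref{CorModelEq2}, so that $\text{PSh}^\mathcal{V}(\mathcal{C};\mathcal{M})\simeq\mathbb{A}_\infty\text{PSh}^\mathcal{V}(\bar{\mathcal{C}};\mathcal{M})$ is realized as a fiber of the cartesian fibration $\theta$ (Proposition \ref{PropCartFib}). The central engine is Lemma \ref{LemCartFib}, which produces $\theta$-relative limits and characterizes them: a cone is a $\theta$-limit diagram exactly when each $\text{ev}_X$ detects it. Specializing a diagram $K\to\text{PSh}^\mathcal{V}(\mathcal{C};\mathcal{M})$ to the degenerate cone constant at $\mathcal{C}\in\text{Cat}^\mathcal{V}_S$, a $\theta$-limit diagram over this constant base restricts to an honest limit in the fiber; hence $\text{PSh}^\mathcal{V}(\mathcal{C};\mathcal{M})$ admits all small limits, and the functors $\text{ev}_X$ jointly preserve and detect them. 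Together with a colimit analog of Lemma \ref{LemCartFib}, this yields part (2) and most of (1), after which (3)--(4) follow formally.

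The substantive new input is the colimit version of Lemma \ref{LemCartFib}: assuming $(\mathcal{V};\mathcal{M})$ is a presentable pair, $\text{PSh}^\mathcal{V}(\mathcal{C};\mathcal{M})$ admits small colimits, computed and detected pointwise by the $\text{ev}_X$. I would prove this by dualizing the argument of Lemma \ref{LemCartFib}, replacing the fiberwise facts $1'$--$2'$ about the projections $\mathcal{V}^{\times n}\times\mathcal{M}\to\mathcal{M}$ by their colimit counterparts. The one place the argument genuinely differs --- and the main obstacle --- is the final step, where one must check that a pointwise colimit of presheaves again satisfies the condition cutting out $\mathbb{A}_\infty\text{PSh}$ inside $\text{pre}\mathbb{A}_\infty\text{PSh}$ (Proposition \ref{PropPShModel}): that totally inert morphisms are still sent to $p_0$-cocartesian morphisms. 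Unlike in the limit case this is not automatic; it holds precisely because the action $\mathcal{V}\times\mathcal{M}\to\mathcal{M}$ preserves colimits in each variable (the defining property of a presentable pair), so that the structure maps satisfy $\mathcal{C}(Y,X)\otimes\text{colim}_i\mathcal{F}_i(X)\simeq\text{colim}_i(\mathcal{C}(Y,X)\otimes\mathcal{F}_i(X))$ and therefore commute with the colimit. This is exactly the asymmetry between limits and colimits of modules, and it is the single step where the presentability hypothesis is indispensable.

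Granting pointwise detection of both limits and colimits, parts (2), (3), (4) are formal. Part (2) is immediate. For (3), Remark \ref{RmkFuncEv} gives $\text{ev}_X\circ F^\ast\simeq\text{ev}_X$ for every $X$; since the $\text{ev}_X$ jointly detect limits and colimits and $\text{ev}_X\circ F^\ast$ preserves them (being literally $\text{ev}_X$), the functor $F^\ast$ preserves limits and colimits. For (4), given a (co)limit diagram $\bar p$ in $\mathcal{N}$, the hypothesis that each $\text{ev}_X F$ preserves it means each $\text{ev}_X F\bar p$ is a (co)limit diagram, whence $F\bar p$ is one by joint detection; the converse direction is immediate from part (2).

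It remains to upgrade cocompleteness to presentability in part (1), for which I would invoke monadicity. The joint evaluation $e=(\text{ev}_X)_{X\in S}:\text{PSh}^\mathcal{V}(\mathcal{C};\mathcal{M})\to\mathcal{M}^S$ is conservative (a fiberwise morphism with all $\text{ev}_X(f)$ invertible is $\theta$-cartesian over $\text{id}_\mathcal{C}$, hence an equivalence, by Proposition \ref{PropCartFib}); it preserves all limits and colimits by the detection results; and it admits a left adjoint $L$ sending $(M_X)_{X}\mapsto\coprod_{X}\text{rep}_X\otimes M_X$, since by Corollary \ref{CorLAdj} this coproduct corepresents $\prod_X\text{Map}(M_X,-(X))=\text{Map}_{\mathcal{M}^S}((M_X),e(-))$. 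The Barr--Beck theorem then identifies $\text{PSh}^\mathcal{V}(\mathcal{C};\mathcal{M})$ with the category of modules over the induced monad $T=e\circ L$ on the presentable category $\mathcal{M}^S$; since $T$ preserves filtered colimits (both $e$ and $L$ do) it is accessible, so its category of modules --- and hence $\text{PSh}^\mathcal{V}(\mathcal{C};\mathcal{M})$ --- is presentable. (Alternatively, accessibility follows by realizing $\text{PSh}^\mathcal{V}(\mathcal{C};\mathcal{M})$ as an accessible subcategory of a functor category through Proposition \ref{PropPShModel}, but the monadic route reuses the Yoneda corollary directly.)
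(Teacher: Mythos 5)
Your proposal is correct in outline but routes the colimit and presentability statements differently from the paper. For limits you coincide with the paper exactly: Lemma \ref{LemCartFib} with constant base $\mathcal{C}$ gives existence and pointwise detection, and (2)--(4) then fall out of (4) plus Remark \ref{RmkFuncEv} just as you say. For colimits and presentability the paper does \emph{not} dualize Lemma \ref{LemCartFib} by hand; instead it forms the pullback $\mathcal{X}=\Delta^\text{op}_{/S}\times_{\smallint\mathcal{BV}}(\smallint\mathcal{BV}\ltimes\mathcal{M})$, observes via Remark \ref{RmkLCF} that $q':\mathcal{X}\to\Delta^\text{op}_{/S}$ is a locally cocartesian fibration whose fibers and transition functors lie in $\text{Pr}^L$ (this is exactly where the presentable-pair hypothesis enters), identifies $\text{PSh}^\mathcal{V}(\mathcal{C};\mathcal{M})$ with marked sections of $q'$ via Proposition \ref{PropPShModel}, and invokes HTT 5.4.7.11 to obtain presentability \emph{and} the colimit half of (4) in one stroke. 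Your substitute --- a dualized Lemma \ref{LemCartFib} following HA 4.2.3.5, plus a monadicity argument over $\mathcal{M}^S$ for presentability --- is viable (indeed the paper itself runs the same monadic argument for Corollary \ref{CorFreeCo}, and points the reader to HA 4.2.3.4--5 in the closing remark of the section), and it has the virtue of making the role of the Yoneda-type Corollary \ref{CorLAdj} explicit; the cost is that you must actually carry out the dualization, and here your diagnosis of where colimit-compatibility is needed is slightly misplaced. The totally-inert check at the end of Lemma \ref{LemCartFib} dualizes verbatim (a natural transformation between colimit diagrams that is an equivalence off the cone point is an equivalence), since totally inert morphisms induce projections that leave the $\mathcal{M}$-coordinate alone. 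The hypothesis that $\mathcal{V}\times\mathcal{M}\to\mathcal{M}$ preserves colimits in each variable is instead needed in the analogue of steps $1'$--$2'$: a $\theta$-relative colimit of a diagram lying in the fiber over $\mathcal{C}$ must be preserved by pushforward along the locally cocartesian transition functors of $q'$, and these are built from tensoring with the objects $\mathcal{C}(X,Y)$; the structure maps you write down live on the \emph{active} morphisms of $\Delta^\text{op}_{/S}$, not the totally inert ones. With that step relocated, your argument goes through.
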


\noindent In particular, (1) is Theorem \ref{PShPrL} of the introduction.

\begin{proof}
We follow the proof of HA 4.2.3.4. Pick a cocartesian fibration of quasicategories $\smallint(\mathcal{V};\mathcal{M})\to\text{LM}$ which realizes $\mathcal{M}$ as a left $\mathcal{V}$-module. Define $\mathcal{X}$ to be the pullback $$\xymatrix{
\mathcal{X}\ar[r]\ar[d]_-{q^\prime} &\smallint\mathcal{BV}\ltimes\mathcal{M}\ar[d]^-q \\
\Delta_{/S}^\text{op}\ar[r]_-{\mathcal{C}} &\smallint\mathcal{BV},
}$$ and say that a morphism of $\mathcal{X}$ is \emph{totally inert} if its images in $\Delta_{/S}^\text{op}$ and $\smallint\mathcal{BV}\ltimes\mathcal{M}$ are each totally inert. Recall (Remark \ref{RmkLCF}) that $q$ is a categorical fibration and a locally cocartesian fibration. Therefore, $q^\prime$ is as well.

By Proposition \ref{PropPShModel}, $\text{PSh}^\mathcal{V}(\mathcal{C};\mathcal{M})$ is equivalent to the quasicategory of sections of $q^\prime$ which send totally inert morphisms to totally inert morphisms. Applying HTT 5.4.7.11 using the subcategory $\text{Pr}^L\subseteq\widehat{\text{Cat}}$, we conclude:
\begin{itemize}
\item $\text{PSh}^\mathcal{V}(\mathcal{C};\mathcal{M})$ is presentable;
\item $F:\mathcal{N}\to\text{PSh}^\mathcal{V}(\mathcal{C};\mathcal{M})$ preserves colimits if and only if the composite $\text{ev}_XF:\mathcal{N}\to\smallint\mathcal{BV}\ltimes\mathcal{M}_n$ does for each $X=\{X_0<\cdots<X_n\}\in S$.
\end{itemize}
\noindent At the end of the proof of Lemma \ref{LemCartFib} (where we proved conditions 2 and 2'' are equivalent), we proved that this second condition is equivalent to the colimit formulation of (4).

The limit formulation of (4) is Lemma \ref{LemCartFib} when $K^\triangleleft\to\mathbb{A}_\infty\text{Cat}^\mathcal{V}_S$ is the constant functor with value $\mathcal{C}$.

So we have (1) and (4). Applying (4) to the identity functor establishes (2). Applying (4) to the functor $\text{PSh}^\mathcal{V}_S(\mathcal{D};\mathcal{M})\to\text{PSh}^\mathcal{V}(\mathcal{C};\mathcal{M})$ along with Remark \ref{RmkFuncEv} establishes (3).
\end{proof}

\noindent We deduce two important corollaries. The first is a formulation of Corollary \ref{CorPFib1} from the introduction.

\begin{corollary}\label{CorPFib1b}
If $(\mathcal{V};\mathcal{M})$ is a presentable pair and $\mathcal{C}$ is $\mathcal{V}$-enriched:
\begin{enumerate}
\item A functor $p:K^\triangleleft\to\text{PSh}^\mathcal{V}(\mathcal{C};\mathcal{M})$ is a limit diagram if and only if $\text{ev}_Xp:K^\triangleleft\to\mathcal{M}$ is a limit diagram for all $X\in\mathcal{C}$;
\item A functor $p:K^\triangleright\to\text{PSh}^\mathcal{V}(\mathcal{C};\mathcal{M})$ is a colimit diagram if and only if $\text{ev}_Xp:K^\triangleright\to\mathcal{M}$ is a colimit diagram for all $X\in\mathcal{C}$;
\end{enumerate}
\end{corollary}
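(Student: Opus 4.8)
The plan is to derive the corollary formally from Theorem \ref{PShPrL2}, whose content already isolates the two structural facts we need: that $\text{PSh}^\mathcal{V}(\mathcal{C};\mathcal{M})$ is complete and cocomplete (Theorem \ref{PShPrL2}(1), since it is presentable), and that each $\text{ev}_X$ preserves limits and colimits (Theorem \ref{PShPrL2}(2)). The only further ingredient is that the family $\{\text{ev}_X\}_{X\in S}$ is jointly conservative, i.e. detects equivalences. Granting this, the argument is the standard one: if $\mathcal{D}$ admits $K$-indexed limits, $\{G_i:\mathcal{D}\to\mathcal{E}_i\}$ is jointly conservative, and each $G_i$ preserves $K$-limits, then a cone is a limit cone if and only if each $G_i$ carries it to a limit cone.

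For the forward directions of both (1) and (2) I would argue directly: if $p:K^\triangleleft\to\text{PSh}^\mathcal{V}(\mathcal{C};\mathcal{M})$ is a limit diagram, then $\text{ev}_X p$ is a limit diagram for every $X$ because $\text{ev}_X$ preserves limits (Theorem \ref{PShPrL2}(2)), and dually for colimits. For the reverse direction of (1), suppose $\text{ev}_X p$ is a limit diagram for all $X\in\mathcal{C}$. Using completeness I would choose a genuine limit diagram $\bar p:K^\triangleleft\to\text{PSh}^\mathcal{V}(\mathcal{C};\mathcal{M})$ restricting to $p|_K$, and let $\phi$ be the canonical comparison morphism from the cone point of $p$ to the cone point of $\bar p$. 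Applying $\text{ev}_X$, both $\text{ev}_X p$ and $\text{ev}_X\bar p$ are limit cones on the common diagram $(\text{ev}_X p)|_K$, so $\text{ev}_X(\phi)$ is an equivalence for each $X$. Joint conservativity then forces $\phi$ to be an equivalence, so $p$ is itself a limit diagram. The reverse direction of (2) runs identically with limits replaced by colimits, using cocompleteness.

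The main obstacle is thus the joint conservativity of $\{\text{ev}_X\}_{X\in S}$, which I would establish in the concrete model of Proposition \ref{PropPShModel}. There a morphism $\mathcal{F}\to\mathcal{G}$ of presheaves over the fixed $\mathcal{C}$ is a natural transformation of sections $\Delta_{/S}^\text{op}\to\smallint\mathcal{BV}\ltimes\mathcal{M}$ lying over $\text{id}_\mathcal{C}$, hence an equivalence precisely when it is an equivalence at every object $Y=\{X_0<\cdots<X_n\}$. Since the value at $Y$ lies in $\smallint\mathcal{BV}\ltimes\mathcal{M}_n=\mathcal{V}^{\times n}\times\mathcal{M}$ and the $\mathcal{V}$-coordinates (the fixed mapping objects of $\mathcal{C}$) carry identity components, being an equivalence at $Y$ reduces to being an equivalence on the $\mathcal{M}$-coordinate. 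The totally inert morphism $Y\to\{X_n\}$, which is sent to a $p_0$-cocartesian morphism, identifies this $\mathcal{M}$-coordinate with $\text{ev}_{X_n}(\mathcal{F})$, exactly as in the comparison of conditions 2 and 2'' at the end of the proof of Lemma \ref{LemCartFib}. Hence the morphism is an equivalence if and only if $\text{ev}_X$ sends it to an equivalence for every singleton $X\in S$, which is the required joint conservativity and completes the deduction.
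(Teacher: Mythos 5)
Your proof is correct. The paper states this result without an explicit proof, as a consequence of Theorem \ref{PShPrL2}; the machinery actually underlying it treats the two halves asymmetrically (limits via $\theta$-limit diagrams in Lemma \ref{LemCartFib} applied to the constant diagram at $\mathcal{C}$, colimits via HTT 5.4.7.11 inside the proof of Theorem \ref{PShPrL2}). What you supply instead is the clean formal deduction: existence of (co)limits from presentability, preservation by each $\text{ev}_X$, and joint conservativity of the family $\{\text{ev}_X\}_{X\in S}$ together imply joint detection, symmetrically for limits and colimits. The one point that genuinely needed care is the conservativity input: the paper itself obtains conservativity of $\text{ev}$ as the $K=\ast$ case of this very corollary, so quoting that would have been circular. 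You correctly sidestep this by proving joint conservativity directly in the model of Proposition \ref{PropPShModel} --- a morphism over $\operatorname{id}_\mathcal{C}$ has identity $\mathcal{V}$-components in each fiber $\mathcal{V}^{\times n}\times\mathcal{M}$, and the totally inert morphism $\{X_0<\cdots<X_n\}\to\{X_n\}$ identifies the remaining $\mathcal{M}$-component with $\text{ev}_{X_n}$ --- which is exactly the content of Proposition \ref{PropCartFib} and the comparison of conditions 2 and 2$''$ at the end of Lemma \ref{LemCartFib}. So your argument reuses the paper's key computation where it must, but packages the corollary itself more elementarily and uniformly; what it forgoes is the extra information the paper's route carries (e.g.\ the relative-limit statement of Lemma \ref{LemCartFib}, which holds without presentability of $\mathcal{V}$).
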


\begin{corollary}\label{Cor1S4}
If $(\mathcal{V},\mathcal{M})$ is a presentable pair, $\theta:\text{PSh}^{\mathcal{V};\mathcal{M}}_S\to\text{Cat}^\mathcal{V}_S$ is a presentable fibration.
\end{corollary}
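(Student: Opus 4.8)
The plan is to verify the first of the two equivalent characterizations of a presentable fibration recalled at the opening of this section (HTT 5.5.3.3): I will show that $\theta$ is a cartesian fibration whose straightening $(\text{Cat}^\mathcal{V}_S)^\text{op}\to\widehat{\text{Cat}}$ factors through $\text{Pr}^R$. The cartesian half is already in hand. Proposition \ref{PropCartFib} asserts that $\theta$ is a cartesian fibration (for arbitrary monoidal $\mathcal{V}$ and $\mathcal{V}$-module $\mathcal{M}$, with no presentability hypotheses) and identifies the $\theta$-cartesian morphisms as those inverted by every $\text{ev}_X$. So all that remains is to analyze the straightening.

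First I would record what the straightening does. On objects it sends a $\mathcal{V}$-enriched category $\mathcal{C}$ to the fiber $\theta^{-1}(\mathcal{C})=\text{PSh}^\mathcal{V}(\mathcal{C};\mathcal{M})$, and on a morphism $F:\mathcal{C}\to\mathcal{D}$ of $\text{Cat}^\mathcal{V}_S$ it sends the corresponding arrow of the opposite category to the pullback functor $F^\ast:\text{PSh}^\mathcal{V}(\mathcal{D};\mathcal{M})\to\text{PSh}^\mathcal{V}(\mathcal{C};\mathcal{M})$ obtained from $\theta$-cartesian lifts, which is exactly the functor discussed in Remark \ref{RmkFuncEv}.

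Now I bring in the presentability input. Under the hypothesis that $(\mathcal{V};\mathcal{M})$ is a presentable pair, Theorem \ref{PShPrL2}(1) says every value $\text{PSh}^\mathcal{V}(\mathcal{C};\mathcal{M})$ is presentable, so the straightening lands among presentable $\infty$-categories. Theorem \ref{PShPrL2}(3) says each $F^\ast$ preserves both small limits and small colimits; in particular $F^\ast$ preserves filtered colimits, hence is accessible, and being also limit-preserving it is a morphism of $\text{Pr}^R$ (equivalently, a right adjoint, by the adjoint functor theorem for presentable $\infty$-categories). Therefore the straightening factors through $\text{Pr}^R\subseteq\widehat{\text{Cat}}$, and $\theta$ is a presentable fibration.

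I do not expect a genuine obstacle here, since all the substantive work was already carried out in Theorem \ref{PShPrL2}; the argument is a bookkeeping assembly of parts (1) and (3) against the definition. The only point requiring a moment's care is that membership in $\text{Pr}^R$ demands an accessible, limit-preserving functor, both of which follow from the stronger conclusion of Theorem \ref{PShPrL2}(3) that each $F^\ast$ preserves all limits and colimits.
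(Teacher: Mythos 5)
Your proposal is correct and follows essentially the same route as the paper: cite Proposition \ref{PropCartFib} for the cartesian fibration, then use Theorem \ref{PShPrL2} parts (1) and (3) to see that the straightening lands in $\text{Pr}^R$. The only difference is cosmetic --- you spell out that limit- and colimit-preservation together give accessibility plus limit-preservation (hence membership in $\text{Pr}^R$), where the paper simply says the functor is a right adjoint because it preserves limits and colimits.
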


\begin{proof}
By Proposition \ref{PropCartFib}, $\theta$ is a cartesian fibration. Now assume $(\mathcal{V};\mathcal{M})$ is a presentable pair. Let $F:(\text{Cat}^\mathcal{V}_S)^\text{op}\to\widehat{\text{Cat}}$ be the associated functor, so that $F(\mathcal{C})=\text{PSh}^\mathcal{V}(\mathcal{C};\mathcal{M})$. By Theorem \ref{PShPrL2}, $F(\mathcal{C})$ is presentable for each $\mathcal{C}$, and $F(\mathcal{D})\to F(\mathcal{C})$ is a right adjoint functor (as it preserves limits and colimits) for each enriched functor $\mathcal{C}\to\mathcal{D}$. Therefore, $F$ factors through $\text{Pr}^R\subseteq\widehat{\text{Cat}}$, so $\theta$ is a presentable fibration.
\end{proof}

\begin{remark}\label{RmkFuncRep}
If $F:\mathcal{C}\to\mathcal{D}$ is a map in $\text{Cat}^\mathcal{V}_S$, then there is always a functor $F^\ast:\text{PSh}^\mathcal{V}(\mathcal{D};\mathcal{M})\to\text{PSh}^\mathcal{V}(\mathcal{C};\mathcal{M})$. Corollary \ref{Cor1S4} asserts that $F^\ast$ has a left adjoint $F_\ast$ if $(\mathcal{V};\mathcal{M})$ is a presentable pair. Recall that $F^\ast$ fits in a commutative triangle for each $X\in S$ (Remark \ref{RmkFuncEv}), $$\xymatrix{
\text{PSh}^\mathcal{V}(\mathcal{D};\mathcal{M})\ar[rr]^-{F^\ast}\ar[rd]_-{\text{ev}_X} &&\text{PSh}^\mathcal{V}(\mathcal{C};\mathcal{M})\ar[ld]^-{\text{ev}_{X}} \\
&\mathcal{M}. &
}$$ Taking left adjoints, we have a commutative triangle $$\xymatrix{
&\mathcal{M}\ar[ld]_-{\text{rep}_X\otimes -}\ar[rd]^-{\text{rep}_X\otimes -} &\\
\text{PSh}^\mathcal{V}(\mathcal{C};\mathcal{M})\ar[rr]_-{F_\ast} &&\text{PSh}^\mathcal{V}(\mathcal{D};\mathcal{M}),
}$$ so $F_\ast(\text{rep}_X(\mathcal{C})\otimes M)\cong\text{rep}_X(\mathcal{D})\otimes M$.
\end{remark}

\noindent Recall that a functor $F:\mathcal{X}\to\mathcal{Y}$ is called \emph{monadic} if either of the following equivalent conditions hold (HA 4.7.3.5):
\begin{itemize}
\item $F$ has a left adjoint, is conservative, and preserves certain colimits\footnote{We won't care which colimits are preserved, because all monadic functors we study will preserve \emph{all} small colimits.};
\item There is a monoidal $\infty$-category $\mathcal{E}$, an algebra $E$ in $\mathcal{E}$, and a left $\mathcal{E}$-module structure on $\mathcal{Y}$, such that $F$ is equivalent to the forgetful functor $\text{LMod}_E(\mathcal{Y})\to\mathcal{Y}$.
\end{itemize}

\noindent $\mathcal{E}$ can be taken to be the endomorphism $\infty$-category $\text{Fun}(\mathcal{Y},\mathcal{Y})$, and $E$ the monad associated to $F$ (the composite $F\circ L$, where $L$ is the left adjoint).

\begin{corollary}
If $(\mathcal{V};\mathcal{M})$ is a presentable pair and $\mathcal{C}$ is $\mathcal{V}$-enriched with set $S$ of objects, let $\text{ev}:\text{PSh}^\mathcal{V}(\mathcal{C};\mathcal{M})\to\mathcal{M}^{\times S}$ be the product of all the functors $\text{ev}_X$, as $X\in S$ varies. The functor $\text{ev}$ is monadic.
\end{corollary}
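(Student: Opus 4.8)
The plan is to verify the hypotheses of the Barr--Beck--Lurie monadicity theorem (HA 4.7.3.5, in the first of the two equivalent formulations quoted just above): that $\text{ev}$ admits a left adjoint, is conservative, and preserves colimits of $\text{ev}$-split simplicial objects. Since $\mathcal{M}$ is presentable, so is the product $\mathcal{M}^{\times S}$, with limits and colimits computed coordinatewise, and $\text{PSh}^\mathcal{V}(\mathcal{C};\mathcal{M})$ is presentable by Theorem \ref{PShPrL2}(1). By Theorem \ref{PShPrL2}(2) each $\text{ev}_X$ preserves all small limits and colimits, hence so does $\text{ev}=\prod_{X\in S}\text{ev}_X$, as a functor into a product preserves (co)limits exactly when each coordinate does. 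In particular $\text{ev}$ preserves \emph{all} small colimits, which is stronger than the preservation of $\text{ev}$-split colimits required; and being an accessible, limit-preserving functor between presentable $\infty$-categories, it admits a left adjoint by the adjoint functor theorem.

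It is worth recording the left adjoint explicitly. For a family $(M_X)_{X\in S}\in\mathcal{M}^{\times S}$ set $L\bigl((M_X)_X\bigr)=\coprod_{X\in S}\text{rep}_X\otimes M_X$, a coproduct which exists because $\text{PSh}^\mathcal{V}(\mathcal{C};\mathcal{M})$ is presentable. Applying the enriched Yoneda lemma (Corollary \ref{CorYon}) in each coordinate gives
\[
\text{Map}\Bigl(\coprod_{X\in S}\text{rep}_X\otimes M_X,\ \mathcal{F}\Bigr)\;\cong\;\prod_{X\in S}\text{Map}(\text{rep}_X\otimes M_X,\mathcal{F})\;\cong\;\prod_{X\in S}\text{Map}(M_X,\mathcal{F}(X)),
\]
and the right-hand side is precisely $\text{Map}_{\mathcal{M}^{\times S}}\bigl((M_X)_X,\text{ev}(\mathcal{F})\bigr)$, so $L$ is left adjoint to $\text{ev}$.

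The crux is conservativity, which I will establish through the $\mathbb{A}_\infty$-model. By Corollary \ref{CorModelEq2} and Proposition \ref{PropPShModel}, a presheaf on $\mathcal{C}$ is a suitable functor $\Delta_{/S}^\text{op}\to\smallint\mathcal{BV}\ltimes\mathcal{M}$ lying over $\mathcal{C}$, and a morphism $f\colon\mathcal{F}\to\mathcal{G}$ in the fiber $\text{PSh}^\mathcal{V}(\mathcal{C};\mathcal{M})$ is an equivalence precisely when it is an equivalence at every object $X=\{X_0<\cdots<X_n\}$ of $\Delta_{/S}^\text{op}$. The fiber over such an object is $\smallint\mathcal{BV}\ltimes\mathcal{M}_n\cong\mathcal{V}^{\times n}\times\mathcal{M}$, and since $f$ lies over the identity of $\mathcal{C}$ its $\mathcal{V}^{\times n}$-components are equivalences; hence $f$ is an equivalence at $X$ if and only if its $\mathcal{M}$-component is. As in the proof of Lemma \ref{LemCartFib}, the totally inert morphism $X\to\{X_n\}$ identifies this $\mathcal{M}$-component with $\text{ev}_{X_n}(f)=f(X_n)$. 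Consequently, if $\text{ev}(f)$ is an equivalence---that is, if $f(X)$ is an equivalence for every $X\in S$---then $f$ is an equivalence at every object of $\Delta_{/S}^\text{op}$, so $f$ itself is an equivalence. Thus $\text{ev}$ is conservative.

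With all three hypotheses in place, the Barr--Beck--Lurie theorem exhibits $\text{ev}$ as the forgetful functor from modules over the monad $\text{ev}\circ L$ on $\mathcal{M}^{\times S}$, so $\text{ev}$ is monadic. Every ingredient except conservativity is immediate from the formal properties already established in Theorem \ref{PShPrL2}; the one step that genuinely uses the structure of enriched presheaves is conservativity, and I expect the identification of the $\mathcal{M}$-component of $\mathcal{F}(X)$ with $f(X_n)$ via the totally inert map to be the only point requiring care. (Alternatively, conservativity could be deduced from the generation of $\text{PSh}^\mathcal{V}(\mathcal{C};\mathcal{M})$ under colimits by the objects $\text{rep}_X\otimes A$ together with Corollary \ref{CorYon}, reducing to the statement that a set of colimit-generators jointly detects equivalences.)
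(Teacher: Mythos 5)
Your proposal is correct and follows essentially the same route as the paper: the left adjoint comes from presentability together with the fact that $\text{ev}$ preserves all small limits and colimits (Theorem \ref{PShPrL2}), and monadicity then reduces to conservativity. The only difference is that you re-derive conservativity from the $\mathbb{A}_\infty$-model (repeating the argument inside Lemma \ref{LemCartFib}), whereas the paper simply cites Corollary \ref{CorPFib1b} in the case $K=\ast$, which packages exactly that statement.
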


\begin{proof}
By Theorem \ref{PShPrL2}(2), ev is a functor of presentable $\infty$-categories which preserves small limits and colimits. Therefore, it has a left adjoint. By Corollary \ref{CorPFib1b} in the case $K=\ast$, ev is conservative, hence monadic.
\end{proof}

\begin{corollary}\label{CorFreeCo}
If $(\mathcal{V};\mathcal{M})$ is a presentable pair and $\mathcal{C}$ is $\mathcal{V}$-enriched, then $\text{PSh}^\mathcal{V}(\mathcal{C};\mathcal{M})$ is generated under colimits by the free presheaves $\text{rep}_X\otimes M$, where $X\in\mathcal{C}$ and $M\in\mathcal{M}$.
\end{corollary}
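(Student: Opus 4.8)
The plan is to deduce this directly from the monadicity of the joint evaluation functor $\text{ev}:\text{PSh}^\mathcal{V}(\mathcal{C};\mathcal{M})\to\mathcal{M}^{\times S}$ established in the preceding corollary. The idea is the standard one: a monadic functor exhibits every object as a colimit (a geometric realization) of \emph{free} objects, so it suffices to check that the free objects are themselves colimits of the presheaves $\text{rep}_X\otimes M$.

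First I would identify the left adjoint $F$ to $\text{ev}$. Since $\text{ev}=\prod_{X\in S}\text{ev}_X$ is the product of the evaluation functors, and each $\text{ev}_X$ has left adjoint $\text{rep}_X\otimes-$ by Corollary \ref{CorLAdj}, the left adjoint to the product is the coproduct of the individual left adjoints: for $(M_X)_{X\in S}\in\mathcal{M}^{\times S}$,
\[
F\big((M_X)_{X\in S}\big)\;\cong\;\coprod_{X\in S}\text{rep}_X\otimes M_X.
\]
This coproduct exists because $\text{PSh}^\mathcal{V}(\mathcal{C};\mathcal{M})$ is presentable (Theorem \ref{PShPrL2}), and the identification is verified by the mapping-space computation
\[
\text{Map}\Big(\coprod_{X}\text{rep}_X\otimes M_X,\,\mathcal{F}\Big)\cong\prod_{X}\text{Map}(\text{rep}_X\otimes M_X,\mathcal{F})\cong\prod_{X}\text{Map}(M_X,\mathcal{F}(X))\cong\text{Map}_{\mathcal{M}^{\times S}}\big((M_X),\text{ev}(\mathcal{F})\big),
\]
where the middle equivalence is the weak Yoneda lemma (Corollary \ref{CorYon}). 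In particular, every object in the image of $F$ is a coproduct, hence a colimit, of presheaves of the form $\text{rep}_X\otimes M$.

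Next, because $\text{ev}$ is monadic, the associated comonad $F\circ\text{ev}$ furnishes for each presheaf $\mathcal{F}$ a bar resolution exhibiting it as the geometric realization
\[
\mathcal{F}\;\simeq\;\big|\,\cdots\,(F\,\text{ev})^2\mathcal{F}\rightrightarrows(F\,\text{ev})\mathcal{F}\to\mathcal{F}\,\big|,
\]
a colimit over $\Delta^\text{op}$ all of whose terms lie in the image of $F$. Combining this with the previous paragraph, each $\mathcal{F}$ is a geometric realization of coproducts of presheaves $\text{rep}_X\otimes M$; since a colimit of colimits is again a colimit, $\mathcal{F}$ lies in the colimit closure of the free presheaves. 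As $\mathcal{F}$ was arbitrary, this closure is all of $\text{PSh}^\mathcal{V}(\mathcal{C};\mathcal{M})$, which is the assertion.

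The only genuine content is the identification of $F$ as the coproduct $\coprod_X\text{rep}_X\otimes-$; the rest is the formal behavior of monadic functors. The step to watch is ensuring the two layers of colimits collapse correctly: one must recognize each bar-resolution term as a free object, hence as a coproduct of the generators, so that membership in the colimit closure is preserved through both the coproduct and the geometric realization.
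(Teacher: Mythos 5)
Your proof is correct and follows essentially the same route as the paper: the paper likewise invokes the monadicity of $\text{ev}$, observes that every object of a monadic $\infty$-category is a (bar-construction) colimit of free objects, and identifies the free functor on $(M_X)_{X\in S}$ as $\coprod_{X\in S}\text{rep}_X\otimes M_X$ via Corollary \ref{CorLAdj}. Your explicit mapping-space verification of that identification is just a slightly more detailed version of the same step.
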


\begin{proof}
Since $\text{ev}:\text{PSh}^\mathcal{V}(\mathcal{C};\mathcal{M})\to\mathcal{M}^S$ is monadic, we can without loss of generality replace ev with a functor $\text{LMod}_E(\mathcal{M}^S)\to\mathcal{M}^S$. Every left $E$-module $M$ is a colimit of free left $E$-modules by the bar construction for $E\otimes_E M$ (HA 4.4.2). Therefore, $\text{PSh}^\mathcal{V}(\mathcal{C};\mathcal{M})$ is generated under colimits by the image of the free functor $\mathcal{M}^S\to\text{PSh}^\mathcal{V}(\mathcal{C};\mathcal{M})$. By Corollary \ref{CorLAdj}, this free functor sends $\{M_X\}_{X\in S}$ to the coproduct $\coprod_{X\in S}\text{rep}_X\otimes M_X$, so $\text{PSh}^\mathcal{V}(\mathcal{C};\mathcal{M})$ is generated under colimits by free presheaves.
\end{proof}

\begin{remark}
It should be possible to relax the presentability conditions on $\mathcal{V}$ and $\mathcal{M}$ as follows:
\begin{itemize}
\item By Lemma \ref{LemCartFib}, $\text{PSh}^\mathcal{V}(\mathcal{C};\mathcal{M})$ admits limits indexed by $K$ as long as $\mathcal{M}$ admits limits indexed by $K$ (with no condition on $\mathcal{V}$).
\item $\text{PSh}^\mathcal{V}(\mathcal{C};\mathcal{M})$ admits colimits indexed by $K$ if $\mathcal{M}$ admits colimits indexed by $K$, which are compatible with the left $\mathcal{V}$-module structure in a certain sense. If details are needed, consult HA 4.2.3.4-5.
\end{itemize}
In each case, limits and colimits are detected by $\text{ev}_X:\text{PSh}^\mathcal{V}(\mathcal{C};\mathcal{M})\to\mathcal{M}$.
\end{remark}

\section{Tensor products of presheaves}\label{S5}
\noindent Suppose that $\mathcal{F}\in\text{PSh}^\mathcal{V}(\mathcal{C})$ is a presheaf, $\mathcal{M}$ is a left $\mathcal{V}$-module, and $M\in\mathcal{M}$. Then there is a tensor presheaf $\mathcal{F}\otimes M\in\text{PSh}^\mathcal{V}(\mathcal{C};\mathcal{M})$ given informally by the formula $(\mathcal{F}\otimes M)(X)=\mathcal{F}(X)\otimes M$.

If $\mathcal{M}=\mathcal{V}$, then this tensor product describes a right action of $\mathcal{V}$ on $\text{PSh}^\mathcal{V}(\mathcal{C})$. We begin in Section \ref{S51} by reviewing right modules. In the same way that left modules were generalized to enriched presheaves, right modules describe \emph{enriched copresheaves}. We introduce enriched copresheaves, which play an important role in Section \ref{S6}.

In Section \ref{S52}, we construct the right $\mathcal{V}$-action on presheaves (Definition \ref{DefRMPSh}, Theorem \ref{PropRMPSh2}). In Section \ref{S53}, we prove that it really is given by the formula above (Proposition \ref{PropEv1}).

In the event that $\mathcal{V}$ is presentable and closed monoidal, $\text{PSh}^\mathcal{V}(\mathcal{C})$ is even a presentable right $\mathcal{V}$-module (Theorem \ref{PropPrRight}) and assembles into a functor $$\text{PSh}^\mathcal{V}(-):\text{Cat}^\mathcal{V}_S\to\text{RMod}_\mathcal{V}(\text{Pr}^L)$$ (Corollary \ref{CorModFunc}). In other words, if $F:\mathcal{C}\to\mathcal{D}$ is an enriched functor, then $F_\ast:\text{PSh}^\mathcal{V}(\mathcal{C})\to\text{PSh}^\mathcal{V}(\mathcal{D})$ is compatible with the right $\mathcal{V}$-module structure, or $F_\ast(\mathcal{F}\otimes A)\cong F_\ast(\mathcal{F})\otimes A$.

\begin{remark}
Of course, we expect that $\text{PSh}^\mathcal{V}(-)$ extends to a functor $\text{Cat}^\mathcal{V}\to\text{RMod}_\mathcal{V}(\text{Pr}^L)$, where $\text{Cat}^\mathcal{V}$ is the $\infty$-category of \emph{all} $\mathcal{V}$-enriched categories (that is, without a fixed set of objects). As always, we are delaying such results until a future paper in this series.
\end{remark}

\noindent Our discussion so far all assumed $\mathcal{M}=\mathcal{V}$. In general, the construction $\mathcal{F}\otimes M$ describes a functor $\text{PSh}^\mathcal{V}(\mathcal{C})\otimes_\mathcal{V}\mathcal{M}\to\text{PSh}^\mathcal{V}(\mathcal{C};\mathcal{M})$. In Section \ref{S53}, we prove the main theorem of this section:

\begin{reptheorem}{ThmS5}
If $\mathcal{V}$ is presentable and closed monoidal, $\mathcal{C}$ is a $\mathcal{V}$-enriched category, and $\mathcal{M}$ is a presentable left $\mathcal{V}$-module, then the functor $$\Psi:\text{PSh}^\mathcal{V}(\mathcal{C})\otimes_\mathcal{V}\mathcal{M}\to\text{PSh}^\mathcal{V}(\mathcal{C};\mathcal{M})$$ is an equivalence, where $\otimes_\mathcal{V}$ is the relative tensor product in $\text{Pr}^L$.
\end{reptheorem}

\noindent The construction of the $\mathcal{V}$-module structure and this functor $\Psi$ are quite involved. However, most of our proofs rely only on the interaction of tensor presheaves with evaluation and representable presheaves. That is:
\begin{itemize}
\item $(\mathcal{F}\otimes M)(X)\cong\mathcal{F}(X)\otimes M$
\item $\text{rep}_X\otimes M$ is the free presheaf generated by $M$ at $X$.
\end{itemize}
\noindent In Section \ref{S33}, we used the notation $\text{rep}_X\otimes M$ for the free presheaf generated by $M$ at $X$. Now we see that this notation is justified: the free presheaf $\text{rep}_X\otimes M$ really is equivalent to the tensor presheaf $\text{rep}_X\otimes M$.

We prove these facts in Proposition \ref{PropEv1} in the case $\mathcal{M}=\mathcal{V}$, and Lemma \ref{LemComp} in general.

\subsection{Right modules}\label{S51}
\noindent First we introduce $\infty$-operads $\text{RM}_S$ for each set $S$, whose algebras are \emph{copresheaves}. In the case $|S|=1$, RM is the right module $\infty$-operad, and an algebra over RM is a pair $(A,M)$, where $A$ is an algebra and $M$ a right module.

The $\infty$-operad $\text{RM}_S$ is the \emph{reverse} of $\text{LM}_S$, in the sense of HA 4.1.1.7; that is, the underlying $\infty$-category is equivalent, but the operad structure is reversed. We begin by reviewing reverse $\infty$-operads.

If $\Gamma\in\text{Assoc}_S$ is a graph, let $\text{rev}(\Gamma)$ denote $\Gamma$ with the directions of all arrows reversed. For example, $\text{rev}(X,Y)=(Y,X)$. This construction describes a functor of categories $\text{rev}:\text{Assoc}_S\to\text{Assoc}_S$, which is an isomorphism. Recall:

\begin{definition}[HA 4.1.1.7]
If $\mathcal{O}$ is an $\infty$-operad, then the composite $\mathcal{O}\to\text{Assoc}\xrightarrow{\text{rev}}\text{Assoc}$ endows the $\infty$-category $\mathcal{O}$ with a distinct $\infty$-operad structure. We write $\mathcal{O}^\text{rev}$ for this $\infty$-operad, and call it the \emph{reverse $\infty$-operad} of $\mathcal{O}$.

If $\mathcal{V}:\text{Assoc}\to\text{Cat}$ is a monoidal $\infty$-category, then the composite $\mathcal{V}\circ\text{rev}$ is another monoidal $\infty$-category, which we denote $\mathcal{V}^\text{rev}$.
\end{definition}

\noindent Note that $\mathcal{V}$ and $\mathcal{V}^\text{rev}$ have the same underlying $\infty$-categories, but the monoidal operations are reversed; that is, $X\otimes^\text{rev}Y=Y\otimes X$.

If $\mathcal{O}$ is an $\infty$-operad and $\mathcal{V}$ is a monoidal $\infty$-category, then the reversal isomorphism induces an equivalence $\text{Alg}_\mathcal{O}(\mathcal{V})\cong\text{Alg}_{\mathcal{O}^\text{rev}}(\mathcal{V}^\text{rev})$.

\begin{example}\label{ExOp}
The isomorphism $\text{rev}:\text{Assoc}_S\to\text{Assoc}_S$ exhibits $\text{Assoc}_S$ as equivalent to its own reversal. Therefore, $\text{Cat}^\mathcal{V}_S\cong\text{Cat}^{\mathcal{V}^\text{rev}}_S$. If $\mathcal{C}$ is a $\mathcal{V}$-enriched category, then the corresponding $\mathcal{V}^\text{rev}$-enriched category is $\mathcal{C}^\text{op}$, which has mapping spaces $\mathcal{C}^\text{op}(X,Y)=\mathcal{C}(Y,X)$.
\end{example}

\begin{definition}
For any set $S$, $\text{RM}_S=\text{LM}_S^\text{rev}$.

When $|S|=1$, we also write $\text{RM}=\text{RM}_S$, which is the \emph{right module operad} of HA 4.2.1.36.
\end{definition}

\begin{remark}
We say that a graph $\Gamma\in\text{Assoc}_{S_{+}}$ is \emph{right modular} (compare \emph{left modular}, Definition \ref{DefLMS}) if for all edges $e\in\Gamma$, $t(e)\neq\ast$. Then $\text{RM}_S$ is equivalent to the full subcategory of $\text{Assoc}_{S_{+}}$ spanned by right modular graphs.

This is because $\text{rev}:\text{Assoc}_{S_{+}}\to\text{Assoc}_{S_{+}}$ restricts to an isomorphism between $\text{LM}_S$ and the full subcategory of right modular graphs.
\end{remark}

\noindent In the case $|S|=1$, we think of an RM-algebra as a pair $(A,M)$, where $A$ is an algebra and $M$ is a right $A$-module.

In general, we think of an $\text{RM}_S$-algebra as a pair $(\mathcal{C},\mathcal{F})$, where $\mathcal{C}$ is a $\mathcal{V}$-enriched category and $\mathcal{F}$ is an \emph{enriched copresheaf} on $\mathcal{C}$:

\begin{definition}\label{DefCopsh}
Suppose that $\mathcal{V}$ is a monoidal $\infty$-category and $\mathcal{N}$ a \emph{right} $\mathcal{V}$-module category, so that the pair $(\mathcal{V};\mathcal{N})$ is an RM-monoidal $\infty$-category.

If $\mathcal{C}$ is a $\mathcal{V}$-enriched category with set $S$ of objects, an enriched \emph{copresheaf on $\mathcal{C}$ with values in $\mathcal{M}$} is a lift of the $\text{Assoc}_S$-algebra $\mathcal{C}$ to an $\text{RM}_S$-algebra in $(\mathcal{V};\mathcal{N})$. We write $$\text{coPSh}^{\mathcal{V};\mathcal{N}}_S=\text{Alg}_{\text{RM}_S/\text{RM}}(\mathcal{V};\mathcal{N}),$$ $$\text{coPSh}^\mathcal{V}(\mathcal{C},\mathcal{N})=\text{coPSh}^{\mathcal{V};\mathcal{N}}_S\times_{\text{Cat}^\mathcal{V}_S}\{\mathcal{C}\}.$$
\end{definition}

\noindent We think of a copresheaf $\mathcal{F}\in\text{coPSh}^\mathcal{V}(\mathcal{C},\mathcal{N})$ informally as a $\mathcal{V}$-enriched functor $\mathcal{C}\to\mathcal{N}$.

\begin{remark}\label{RmkOp}
Because $\text{LM}_S$ and $\text{RM}_S$ are reverse $\infty$-operads,
\begin{itemize}
\item A right $\mathcal{V}$-action on $\mathcal{N}$ canonically induces a left $\mathcal{V}^\text{rev}$-action on $\mathcal{N}$, and vice versa;
\item $\text{coPSh}^{\mathcal{V};\mathcal{N}}_S\cong\text{PSh}^{\mathcal{V}^\text{rev};\mathcal{N}}_S$;
\item $\text{coPSh}^\mathcal{V}(\mathcal{C};\mathcal{N})\cong\text{PSh}^{\mathcal{V}^\text{rev}}(\mathcal{C}^\text{op};\mathcal{N})$.
\end{itemize}
Therefore, copresheaves may be regarded as examples of presheaves, so everything we have already proven about presheaves is also true of copresheaves.
\end{remark}

\subsection{The internal tensor product}\label{S52}
\noindent Now we are ready to describe the right $\mathcal{V}$-module structure on $\text{PSh}^\mathcal{V}(\mathcal{C})$, for any monoidal $\infty$-category $\mathcal{V}$ and $\mathcal{V}$-enriched category $\mathcal{C}$.

Given two graphs $\Gamma_0\in\text{LM}_S$ and $\Gamma_1\in\text{RM}_T$, define $\left\langle\Gamma_0,\Gamma_1\right\rangle\in\text{Assoc}_{S\amalg T}$ as follows:
\begin{itemize}
\item an edge of $\left\langle\Gamma_0,\Gamma_1\right\rangle$ is a pair $(e_0,e_1)$, where $e_i$ is an edge of $\Gamma_i$ and at least one of the edges $e_0,e_1$ touches the basepoint vertex $\ast$;
\item if one of the edges $e_i$ does not touch the vertex $\ast$, then $(e_0,e_1)$ has the same source and target as $e_i$;
\item if $e_0$ is an edge from some $X$ to $\ast$, and $e_1$ is an edge from $\ast$ to $Y$, then $(e_0,e_1)$ is an edge from $X$ to $Y$.
\end{itemize}

\begin{example}
Let $(X_0,\ldots,X_n)=(X_0,X_1)\otimes(X_1,X_2)\otimes\cdots\otimes(X_{n-1},X_n)$ denote the graph with a single path from $X_0$ to $X_n$. Then we have $$\left\langle(X_0,\ldots,X_m),(Y_0,\ldots,Y_n)\right\rangle=\emptyset$$ $$\left\langle(X_0,\ldots,X_m,\ast),(Y_0,\ldots,Y_n)\right\rangle=(Y_0,\ldots,Y_n)$$ $$\left\langle(X_0,\ldots,X_m,\ast),(\ast,Y_0,\ldots,Y_n)\right\rangle=(X_0,\ldots,X_m,Y_0,\ldots,Y_n).$$
\end{example}

\begin{remark}
The pairing $\left\langle -,-\right\rangle$ is a functor $\text{LM}_S\times\text{RM}_T\to\text{Assoc}_{S\amalg T}$. Moreover, if $\Gamma_0\to\Gamma_0^\prime$ is inert in $\text{LM}_S$ and $\Gamma_1\to\Gamma_1^\prime$ is inert in $\text{RM}_S$, then $\left\langle\Gamma_0,\Gamma_1\right\rangle\to\left\langle\Gamma_0^\prime,\Gamma_1^\prime\right\rangle$ is inert.

So it is even a marked functor $\left\langle -,-\right\rangle:\text{LM}_S^\mathsection\times\text{RM}_S^\mathsection\to\text{Assoc}_{S\amalg T}^\mathsection$.
\end{remark}

\begin{remark}
The bimodule operad BM of HA 4.3.1.5 can be identified with the full subcategory of $\text{Assoc}_{\{0,1\}}$ spanned by graphs that have no edges from $1$ to $0$. Then our pairing factors $\text{LM}_{\{0\}}\times\text{RM}_{\{1\}}\to\text{BM}\subseteq\text{Assoc}_{\{0,1\}}$. This recovers the functor of HA 4.3.2.1, $\textbf{Pr}:\text{LM}\times\text{RM}\to\text{BM}$.
\end{remark}

\noindent For now, we will just be interested in the case $|S|=|T|=1$ and the composite $\text{LM}\times\text{RM}\to\text{Assoc}_{\{0,1\}}\to\text{Assoc}$.

\begin{definition}\label{DefLeftInert}
If $\mathcal{V}$ is a monoidal $\infty$-category, define $\smallint^\prime\mathcal{V}$ via the pullback $$\xymatrix{
\smallint^\prime\mathcal{V}\ar[d]_-{p}\ar[r] &\smallint\mathcal{V}\ar[d] \\
\text{LM}\times\text{RM}\ar[r]^-{\left\langle -,-\right\rangle} &\text{Assoc},
}$$ and say that a morphism in $\smallint^\prime\mathcal{V}$ is \emph{left inert} if it is $p$-cocartesian, its projection to LM is inert, and its projection to RM is an equivalence. We will write $\smallint^\prime\mathcal{V}^{!}$ for the left inert marking.
\end{definition}

\begin{remark}\label{RmkCases}
Identify $\text{RM}=\text{RM}_{\{1\}}$, so that there are two kinds of edges in RM: $(\ast,1)$ and $(1,1)$.

Unpacking this construction, the fiber of $\smallint^\prime\mathcal{V}\to\text{LM}\times\text{RM}$ over any $\Gamma\in\text{RM}$ is an LM-monoidal $\infty$-category, which describes:
\begin{enumerate}
\item $\mathcal{V}$ acting on itself on the left if $\Gamma=(\ast,1)$;
\item $0$ acting on $\mathcal{V}$ on the left if $\Gamma=(1,1)$ (where $0$ is the trivial monoidal $\infty$-category).
\end{enumerate}
\end{remark}

\begin{definition}\label{DefWidebar}
If $\mathcal{V}$ is a monoidal $\infty$-category, $$\widebar{\text{Cat}}^\mathcal{V}_S=\text{Fun}^\dag_{/\text{LM}}(\text{Assoc}_S^\mathsection,\smallint^\prime\mathcal{V}^{!}),$$ $$\widebar{\text{PSh}}^\mathcal{V}_S=\text{Fun}^\dag_{/\text{LM}}(\text{LM}_S^\mathsection,\smallint^\prime\mathcal{V}^{!}).$$
\end{definition}

\noindent Consider the forgetful functors $$\widebar{\text{Cat}}^\mathcal{V}_S\to\text{Fun}^\dag_{/\text{LM}}(\text{Assoc}_S^\mathsection,\text{LM}^\mathsection\times\text{RM}^\flat)\cong\text{Fun}^\dag(\text{Assoc}_S^\mathsection,\text{RM}^\flat)\cong\text{RM},$$ and similarly $\widebar{\text{PSh}}^\mathcal{V}_S\to\text{RM}$. (The rightmost equivalence is by Lemma \ref{LemSpcf}.) These fit into a commutative triangle $$\xymatrix{
\widebar{\text{PSh}}^\mathcal{V}_S\ar[rr]^-{\bar{\theta}}\ar[rd]_-{p_0} &&\widebar{\text{Cat}}^\mathcal{V}_S\ar[ld]^-{p_1} \\
&\text{RM}. &
}$$ 

\begin{lemma}\label{LemBad}
In this triangle:
\begin{enumerate}
\item $p_0$ and $p_1$ are cocartesian fibrations of $\infty$-operads;
\item a morphism in $\widebar{\text{PSh}}^\mathcal{V}_S$ (respectively $\widebar{\text{Cat}}^\mathcal{V}_S$) is $p_0$-cocartesian ($p_1$-cocartesian) if and only if the evaluation at each $\Gamma\in\text{LM}_S$ (or $\text{Assoc}_S$) is a $p$-cocartesian morphism of $\smallint^\prime\mathcal{V}$, where $p$ is the map $\smallint^\prime\mathcal{V}\to\text{LM}\times\text{RM}$;
\item $\bar{\theta}$ sends $p_0$-cocartesian morphisms to $p_1$-cocartesian morphisms.
\end{enumerate}
\end{lemma}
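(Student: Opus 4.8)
The plan is to deduce everything from the pullback description of $\smallint^\prime\mathcal{V}$ together with the standard behaviour of cocartesian fibrations under (marked) mapping categories, reducing the real content to a compatibility between the two factor-directions of the base $\text{LM}\times\text{RM}$. First I would record that $p:\smallint^\prime\mathcal{V}\to\text{LM}\times\text{RM}$ is a cocartesian fibration: it is the pullback of the cocartesian fibration $\smallint\mathcal{V}\to\text{Assoc}$ along $\left\langle-,-\right\rangle$, so a morphism of $\smallint^\prime\mathcal{V}$ is $p$-cocartesian precisely when its image in $\smallint\mathcal{V}$ is cocartesian over $\text{Assoc}$. Composing $p$ with the projection $\text{LM}\times\text{RM}\to\text{RM}$ and using Lemma \ref{LemSpcf} (which identifies $\text{Fun}^\dag(\mathcal{O}^\mathsection,\text{RM}^\flat)$ with $\text{RM}$ via evaluation at the terminal object $\emptyset$) is what produces the structure maps $p_0,p_1$ in the first place; since $\emptyset\in\text{Assoc}_S\subseteq\text{LM}_S$, restriction commutes with this evaluation, so $p_1\bar{\theta}=p_0$.

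For (1) and (2) I would apply the relative, marked analogue of HTT 3.1.2.1. Given a point $F$ of $\widebar{\text{PSh}}^\mathcal{V}_S$ over $\Gamma_1\in\text{RM}$ and a morphism $\gamma:\Gamma_1\to\Gamma_1'$, one pushes each value $F(\Lambda)$ forward along the base morphism $(\mathrm{id},\gamma)$ using $p$-cocartesianness; by HTT 3.1.2.1 these assemble into a natural transformation $F\to F'$ which is pointwise $p$-cocartesian, and this will be the desired $p_0$-cocartesian lift once we check $F'$ again lies in the subcategory of marked functors. Conversely, the pointwise-$p$-cocartesian transformations are exactly the fibrewise cocartesian ones, which is the content of (2). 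The operadic (Segal) half of ``cocartesian fibration of $\infty$-operads'' then follows from the corresponding Segal condition for $\smallint^\prime\mathcal{V}\to\text{LM}\times\text{RM}$ (inherited from $\mathcal{V}$ monoidal, via Remark \ref{RmkCases}), since $\text{Fun}^\dag_{/\text{LM}}$ carries the relevant products of operadic fibres to products.

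The crux, and the step I expect to be the main obstacle, is verifying that the pointwise cocartesian lift $F'$ preserves the left-inert marking; that is, that cocartesian transport in the $\text{RM}$-direction sends left-inert morphisms to left-inert morphisms. Let $u:\Lambda\to\Lambda'$ be inert in $\text{LM}_S$, so $F(u)$ is left-inert (Definition \ref{DefLeftInert}): $p$-cocartesian, inert over $\text{LM}$, and an equivalence over $\text{RM}$. In the naturality square for $F\to F'$, the vertical edges are the transport maps $c_\Lambda,c_{\Lambda'}$ lying over $(\mathrm{id},\gamma)$; since $c_{\Lambda'}\circ F(u)$ is a composite of $p$-cocartesian morphisms it is $p$-cocartesian, and cancelling the $p$-cocartesian $c_\Lambda$ (HTT 2.4.1.7) shows $F'(u)$ is $p$-cocartesian. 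To pin down its base image I use that $\text{LM}\times\text{RM}$ is a \emph{product}: writing $\beta$ for the inert $\text{LM}$-image of $u$, the square with edges $(\beta,\mathrm{id})$ and $(\mathrm{id},\gamma)$ closes up with bottom edge $(\beta,\mathrm{id}_{\Gamma_1'})$, so $F'(u)$ lies over $(\beta,\mathrm{id}_{\Gamma_1'})$, which is inert over $\text{LM}$ and an equivalence over $\text{RM}$. Hence $F'(u)$ is left-inert. This commuting of transport along the two factors is exactly what makes the argument go through and is the only nonformal input.

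Finally, (3) is immediate from (2): if $f$ is $p_0$-cocartesian then, by (2), $\text{ev}_\Gamma(f)$ is $p$-cocartesian for every $\Gamma\in\text{LM}_S$, in particular for every $\Gamma\in\text{Assoc}_S\subseteq\text{LM}_S$. Since $\bar{\theta}$ is restriction along this inclusion, $\text{ev}_\Gamma(\bar{\theta}f)=\text{ev}_\Gamma(f)$ is $p$-cocartesian for all $\Gamma\in\text{Assoc}_S$, whence $\bar{\theta}f$ is $p_1$-cocartesian by (2) applied to $\widebar{\text{Cat}}^\mathcal{V}_S$.
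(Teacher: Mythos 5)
Your proof is correct, but it takes a genuinely different route from the paper's. The paper disposes of (1) and (2) in one line by citing HA 4.3.2.5, whose proof runs through the machinery of categorical patterns: one observes that $\text{LM}_S^\natural$ and $\text{Assoc}_S^\natural$ are cofibrant in $(\text{Set}^{+}_\Delta)_{/\mathfrak{P}_{\text{LM}}}$, and fibrancy of the resulting mapping objects then encodes exactly the assertion that $p_0$ and $p_1$ are cocartesian fibrations of $\infty$-operads with the stated cocartesian edges. You instead unpack by hand what that machinery automates: cocartesian lifts for $p_0$ are built pointwise from $p$-cocartesian lifts in $\smallint^\prime\mathcal{V}$ (HTT 3.1.2.1 applied to the pulled-back fibration, restricted along $\text{RM}\simeq\text{Fun}^\dag(\text{LM}_S^\mathsection,\text{RM}^\flat)$ via Lemma \ref{LemSpcf}), and the only nonformal content is that pointwise transport along $(\mathrm{id},\gamma)$ preserves the left-inert marking. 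Your verification of this — left cancellation of $p$-cocartesian edges (HTT 2.4.1.7) in the naturality square, plus the observation that the transported functor lies over the constant functor at $\Gamma_1'$ so that $F'(u)$ sits over $(\beta,\mathrm{id}_{\Gamma_1'})$ — is sound, and your identification of it as the crux is accurate; it is precisely the compatibility that the categorical-pattern argument secures via cofibrancy. Your derivation of (3) from (2) matches the paper's. What your approach buys is self-containedness (no appeal to the model structure on marked simplicial sets); what it leaves more implicit than the citation does is the point-set verification that $p_0$ is a categorical/inner fibration of quasicategories and a fully spelled-out treatment of the Segal condition making $p_0$ a fibration \emph{of $\infty$-operads}, both of which come for free from fibrancy in HA 4.3.2.5.
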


\begin{proof}
(3) is a direct corollary of (2). The proof of (1) and (2) is completely identical to the proof of HA 4.3.2.5, which uses categorical patterns. Rather than introduce the categorical pattern terminology here (which would appear nowhere else in this paper), see the proof of HA 4.3.2.5, and replace the construction $\bar{X}\mapsto\text{LM}^\natural\times\bar{X}$ with $\text{LM}_S^\natural$ for $\widebar{\text{PSh}}^\mathcal{V}_S$ or $\text{Assoc}_S^\natural\times -$ for $\widebar{\text{Cat}}^\mathcal{V}_S$.

The proof carries over without any modification, except that we observe $\text{LM}_S^\natural$ and $\text{Assoc}_S^\natural\in(\text{Set}^{+}_\Delta)_{/\mathfrak{P}_{\text{LM}}}$ are cofibrant as well as $\text{LM}^\natural$.
\end{proof}

\noindent By the lemma, $\bar{\theta}$ is a functor of RM-monoidal $\infty$-categories (HA 2.1.3.7).

\begin{lemma}\label{LemLast}
The RM-monoidal $\infty$-category $\widebar{\text{PSh}}^\mathcal{V}_S\xrightarrow{p_0}\text{RM}$ describes a right $\mathcal{V}$-action on $\text{PSh}^\mathcal{V}_S$.

The RM-monoidal $\infty$-category $\widebar{\text{Cat}}^\mathcal{V}_S\xrightarrow{p_1}\text{RM}$ describes the (essentially unique) right action of the trivial monoidal $\infty$-category $0$ on $\text{Cat}^\mathcal{V}_S$.
\end{lemma}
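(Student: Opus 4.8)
The plan is to read off the two RM-monoidal $\infty$-categories $p_0$ and $p_1$ by computing their fibers over the two colors of $\text{RM}$ and then identifying the resulting monoidal and module structures. Recall from Remark \ref{RmkCases} that the two colors of $\text{RM}=\text{RM}_{\{1\}}$ are the algebra color $\mathfrak{a}=(1,1)$ and the module color $\mathfrak{m}=(\ast,1)$, and that an RM-monoidal $\infty$-category is the same data as a pair $(\mathcal{W};\mathcal{X})$ exhibiting $\mathcal{X}$ as right tensored over the monoidal $\infty$-category $\mathcal{W}$, where $\mathcal{W}$ is the fiber over $\mathfrak{a}$ and $\mathcal{X}$ the fiber over $\mathfrak{m}$. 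By Lemma \ref{LemBad}(1) both $p_0$ and $p_1$ are cocartesian fibrations of $\infty$-operads, hence each is such a pair; it remains to identify the fibers together with their structure.

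First I would compute the fibers of $p_0$. Over a fixed $\Gamma\in\text{RM}$ the left-inert marking on $\smallint^\prime\mathcal{V}$ restricts to the inert marking of the LM-monoidal $\infty$-category $(\smallint^\prime\mathcal{V})_\Gamma\to\text{LM}$ of Remark \ref{RmkCases}, since a left-inert morphism whose RM-projection is an equivalence is exactly an inert morphism in the LM-direction (Definition \ref{DefLeftInert}). Over $\mathfrak{m}=(\ast,1)$, Remark \ref{RmkCases}(1) gives $(\smallint^\prime\mathcal{V})_\mathfrak{m}=\smallint(\mathcal{V};\mathcal{V})$, so the fiber of $p_0$ is $\text{Fun}^\dag_{/\text{LM}}(\text{LM}_S^\mathsection,\smallint(\mathcal{V};\mathcal{V})^\mathsection)=\text{PSh}^{\mathcal{V};\mathcal{V}}_S=\text{PSh}^\mathcal{V}_S$. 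Over $\mathfrak{a}=(1,1)$, Remark \ref{RmkCases}(2) gives $(\smallint^\prime\mathcal{V})_\mathfrak{a}=\smallint(0;\mathcal{V})$, so the fiber is $\text{PSh}^{0;\mathcal{V}}_S\cong\mathcal{V}$ by Corollary \ref{CorTriv}. Thus, on underlying $\infty$-categories, $p_0$ is the pair $(\mathcal{V};\text{PSh}^\mathcal{V}_S)$.

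The main obstacle is to verify that the monoidal structure on the algebra color is genuinely that of $\mathcal{V}$ (and not, say, $\mathcal{V}^\text{rev}$) rather than just an equivalent $\infty$-category. For this I would restrict the pairing $\left\langle -,-\right\rangle$ along the inclusion $\text{Assoc}\hookrightarrow\text{RM}$ of the algebra color (the full suboperad on edges of type $(1,1)$) and invoke the second formula of the example preceding Definition \ref{DefLeftInert}, namely $\left\langle(X_0,\ast),(Y_0,\ldots,Y_n)\right\rangle=(Y_0,\ldots,Y_n)$: pairing against a single module edge restricts to a functor equivalent to the identity $\text{Assoc}\to\text{Assoc}$. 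Hence the pullback defining $\smallint^\prime\mathcal{V}$ restricts over the algebra color to $\smallint\mathcal{V}\to\text{Assoc}$, so that the multiplication $\mathfrak{a}\otimes\mathfrak{a}\to\mathfrak{a}$ pushes forward, via the objectwise description of cocartesian morphisms in Lemma \ref{LemBad}(2), to the tensor $\otimes\colon\mathcal{V}\times\mathcal{V}\to\mathcal{V}$. This identifies $\mathcal{W}_0=\mathcal{V}$ as a monoidal $\infty$-category and shows that the induced action on the module fiber is the expected right $\mathcal{V}$-action on $\text{PSh}^\mathcal{V}_S$, proving the first claim.

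For the second claim I would run the identical fiber computation for $\widebar{\text{Cat}}^\mathcal{V}_S=\text{Fun}^\dag_{/\text{LM}}(\text{Assoc}_S^\mathsection,\smallint^\prime\mathcal{V}^{!})$, now with test object $\text{Assoc}_S$ in place of $\text{LM}_S$. Over $\mathfrak{m}$ the fiber is $\text{Alg}_{\text{Assoc}_S/\text{LM}}(\mathcal{V};\mathcal{V})=\text{Cat}^\mathcal{V}_S$ by Lemma \ref{LemCatPair}, while over $\mathfrak{a}$ it is $\text{Alg}_{\text{Assoc}_S/\text{LM}}(0;\mathcal{V})=\text{Cat}^0_S$, which is contractible by Example \ref{ExTriv1}; hence $\mathcal{W}_1=0$ and $\mathcal{X}_1=\text{Cat}^\mathcal{V}_S$. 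Since $0$ is the trivial (terminal) monoidal $\infty$-category, both its monoidal structure and its right action on any fixed $\infty$-category are unique up to equivalence, so $p_1$ describes the essentially unique right $0$-action on $\text{Cat}^\mathcal{V}_S$. The only remaining compatibility, that $\bar{\theta}$ intertwines the two actions, is exactly Lemma \ref{LemBad}(3).
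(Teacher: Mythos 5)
Your proposal is correct and follows essentially the same route as the paper's proof: identify the fibers of $p_0$ and $p_1$ over the two colors of $\text{RM}$ via Remark \ref{RmkCases}, then apply Corollary \ref{CorTriv} (for $\text{PSh}^{0;\mathcal{V}}_S\cong\mathcal{V}$) and Example \ref{ExTriv1} (for $\text{Cat}^0_S\cong\ast$). Your additional paragraph checking that the algebra-color fiber carries the standard monoidal structure of $\mathcal{V}$ (rather than merely an equivalent underlying $\infty$-category) is a correct refinement of a point the paper leaves implicit, not a different argument.
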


\begin{proof}
If $\Gamma\in\text{RM}$, let $\smallint^\prime\mathcal{V}_\Gamma\to\text{LM}$ denote the fiber of $\smallint^\prime\mathcal{V}\to\text{LM}\times\text{RM}$ over $\Gamma$, and note that the fiber of $p_0$ over $\Gamma$ is $\text{Fun}^\dag_{/\text{LM}}(\text{LM}_S^\mathsection,\smallint^\prime\mathcal{V}_\Gamma^\mathsection)$.

By Remark \ref{RmkCases}, $\smallint^\prime\mathcal{V}_{(\ast,0)}=\smallint(\mathcal{V};\mathcal{V})$, so the fiber of $p_0$ over $(\ast,0)$ is $\text{PSh}^\mathcal{V}_S$. On the other hand, $\smallint^\prime\mathcal{V}_{(0,0)}=\smallint(0;\mathcal{V})$, so the fiber over $(0,0)$ is $\text{PSh}^{0;\mathcal{V}}_S\cong\mathcal{V}$; this equivalence is by Corollary \ref{CorTriv}. Therefore, $p_0$ exhibits a right $\mathcal{V}$-action on $\text{PSh}^\mathcal{V}_S$. Similarly, $p_1$ exhibits a right action of $\text{Cat}^0_S\cong 0$ (this equivalence by Example \ref{ExTriv1}) on $\text{Cat}^\mathcal{V}$.
\end{proof}

\noindent Summarizing our discussion so far, the forgetful functor $\theta:\text{PSh}^\mathcal{V}_S\to\text{Cat}^\mathcal{V}_S$ extends to an RM-monoidal functor, which is to say that it is compatible with the right $\mathcal{V}$-action on $\text{PSh}^\mathcal{V}_S$ and the trivial action on $\text{Cat}^\mathcal{V}_S$. It follows that each fiber $\text{PSh}^\mathcal{V}(\mathcal{C})=\text{PSh}^\mathcal{V}_S\times_{\text{Cat}^\mathcal{V}_S}\{\mathcal{C}\}$ inherits the right $\mathcal{V}$-action. To make this more clear, here is an explicit construction of the right $\mathcal{V}$-action on $\text{PSh}^\mathcal{V}(\mathcal{C})$:

\begin{definition}\label{DefRMPSh}
By Lemma \ref{LemLast}, $\widebar{\text{Cat}}^\mathcal{V}_S\cong\smallint(0;\text{Cat}^\mathcal{V}_S)$. By Corollary \ref{CorTriv}, $$\text{Cat}^\mathcal{V}_S\cong\text{RMod}(0;\text{Cat}^\mathcal{V}_S)=\text{Fun}^\dag_{/\text{RM}}(\text{RM}^\mathsection,\widebar{\text{Cat}}^{\mathcal{V}\mathsection}_S).$$ If $\mathcal{C}\in\text{Cat}^\mathcal{V}_S$, let $\mathcal{C}_\ast:\text{RM}\to\widebar{\text{Cat}}^\mathcal{V}_S$ be the associated functor, and define $\widebar{\text{PSh}}^\mathcal{V}(\mathcal{C})$ to be the pullback $$\xymatrix{
\widebar{\text{PSh}}^\mathcal{V}(\mathcal{C})\ar[r]\ar[d] &\widebar{\text{PSh}}^\mathcal{V}_S\ar[d] \\
\text{RM}\ar[r]_-{\mathcal{C}_\ast} &\widebar{\text{Cat}}^\mathcal{V}_S.
}$$
\end{definition}

\begin{theorem}\label{PropRMPSh2}
If $\mathcal{V}$ is a monoidal $\infty$-category and $\mathcal{C}\in\text{Cat}^\mathcal{V}_S$, then the functor $\widebar{\text{PSh}}^\mathcal{V}(\mathcal{C})\to\text{RM}$ is a cocartesian fibration of $\infty$-operads, and the associated RM-monoidal $\infty$-category describes a right $\mathcal{V}$-action on $\text{PSh}^\mathcal{V}(\mathcal{C})$.
\end{theorem}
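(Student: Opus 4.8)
The plan is to read off everything from the pullback square of Definition \ref{DefRMPSh} and reduce the statement to a single lifting argument inside the cocartesian fibration $p_0\colon\widebar{\text{PSh}}^\mathcal{V}_S\to\text{RM}$. First I would observe that all three maps in the cospan $\widebar{\text{PSh}}^\mathcal{V}_S\xrightarrow{\bar\theta}\widebar{\text{Cat}}^\mathcal{V}_S\xleftarrow{\mathcal{C}_\ast}\text{RM}$ are maps of $\infty$-operads over $\text{Comm}$: the functor $\bar\theta$ by Lemma \ref{LemBad}, and $\mathcal{C}_\ast$ by construction, since it is an $\text{RM}$-algebra and hence an $\infty$-operad section of $p_1$. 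Because $\infty$-operads are closed under fiber products computed on underlying $\infty$-categories over $\text{Comm}$ — the Segal condition $\mathcal{O}_n\cong\mathcal{O}_1^{\times n}$ of Definition \ref{DefOp}(2), together with conditions (1) and (3), being inherited fiberwise by the pullback — the total space $\widebar{\text{PSh}}^\mathcal{V}(\mathcal{C})$ is an $\infty$-operad over $\text{Comm}$. Its two colors have fibers $\text{PSh}^\mathcal{V}(\mathcal{C})$ over the module color (this is the defining fiber of $\bar\theta$) and $\mathcal{V}$ over the algebra color (there the cospan reads $\mathcal{V}\to\ast\leftarrow\ast$, so the pullback is $\mathcal{V}$ with its original monoidal structure). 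Thus it remains only to prove that $\widebar{\text{PSh}}^\mathcal{V}(\mathcal{C})\to\text{RM}$ is a \emph{cocartesian} fibration, after which the fiber computation identifies the associated $\text{RM}$-monoidal structure with a right $\mathcal{V}$-action on $\text{PSh}^\mathcal{V}(\mathcal{C})$.

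The key observation is that $\mathcal{C}_\ast$ is a \emph{cocartesian} section: it sends every morphism of $\text{RM}$, not merely the inert ones, to a $p_1$-cocartesian morphism. This is where the structure of the problem is used. By Lemma \ref{LemLast}, $p_1$ exhibits the \emph{trivial} right action of the trivial monoidal $\infty$-category $0$ on $\text{Cat}^\mathcal{V}_S$, and $\text{Cat}^0_S\cong 0$ is contractible (Example \ref{ExTriv1}). Consequently the cocartesian pushforward functors of $p_1$ along active morphisms are equivalent to identity functors, so the lax structure maps recorded by any $\text{RM}$-algebra section realize these pushforwards and are therefore $p_1$-cocartesian; the inert morphisms go to $p_1$-cocartesian morphisms because $\mathcal{C}_\ast$ is a marked functor. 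Since every morphism of $\text{RM}$ factors as an inert morphism followed by an active one and $p_1$-cocartesian morphisms are closed under composition, $\mathcal{C}_\ast$ is a cocartesian section.

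The hard part — and the reason one cannot simply invoke base change of a cocartesian fibration — is that $\bar\theta$ is \emph{not} in general a cocartesian fibration: pushing a presheaf forward along an enriched functor would require colimits that are unavailable for a merely monoidal $\mathcal{V}$. The cocartesian-section property is exactly what circumvents this, and with it I would build cocartesian lifts by hand. Given $x=(e,r)\in\widebar{\text{PSh}}^\mathcal{V}(\mathcal{C})$ over $r$ with $\bar\theta(e)=\mathcal{C}_\ast(r)$, and a morphism $\gamma\colon r\to r'$ in $\text{RM}$, choose the $p_0$-cocartesian lift $\tilde\gamma\colon e\to e'$ of $\gamma$ in $\widebar{\text{PSh}}^\mathcal{V}_S$. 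By Lemma \ref{LemBad}(3) the morphism $\bar\theta(\tilde\gamma)$ is $p_1$-cocartesian over $\gamma$ with source $\mathcal{C}_\ast(r)$; but $\mathcal{C}_\ast(\gamma)$ is a second such lift, so the two agree up to a unique equivalence over $r'$, whence $\bar\theta(e')\simeq\mathcal{C}_\ast(r')$ and $e'$ may be taken to lie in $\widebar{\text{PSh}}^\mathcal{V}(\mathcal{C})$. The induced morphism $x\to(e',r')$ is $\widebar{\text{PSh}}^\mathcal{V}(\mathcal{C})$-cocartesian, its universal property reducing to that of $\tilde\gamma$ together with the defining pullback. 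As $\widebar{\text{PSh}}^\mathcal{V}(\mathcal{C})\to\text{RM}$ is also an inner fibration, being a base change of the categorical fibration $\bar\theta$, this shows it is a cocartesian fibration of $\infty$-operads, and the fiber computation of the first paragraph identifies the resulting $\text{RM}$-monoidal $\infty$-category with a right $\mathcal{V}$-action on $\text{PSh}^\mathcal{V}(\mathcal{C})$.
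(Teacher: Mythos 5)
Your proof is correct and follows essentially the same route as the paper: both hinge on the observation that $\mathcal{C}_\ast$ sends \emph{every} morphism of $\text{RM}$ to a $p_1$-cocartesian morphism because the $0$-action on $\text{Cat}^\mathcal{V}_S$ is trivial, and then deduce that the pullback of the RM-monoidal $\infty$-category $p_0$ along this cocartesian section is again RM-monoidal (a step you spell out with an explicit lifting argument where the paper simply asserts it). The only thin point is your claim that the lax structure maps of the section ``realize the pushforwards'' merely because the pushforward functors are identities---the actual reason those structure maps are equivalences is the unit argument of Proposition \ref{PropTriv}/Corollary \ref{CorTriv}---but the paper's own proof is no more explicit there.
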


\noindent In other words, we have constructed $\text{PSh}^\mathcal{V}(\mathcal{C})$ as a right $\mathcal{V}$-module.

\begin{proof}
The identity functor $\text{RM}\to\text{RM}$ is a cocartesian fibration of $\infty$-categories corresponding to the (unique) action of the monoidal $\infty$-category $0$ on itself. By Lemma \ref{LemBad}, $\mathcal{C}_\ast$ sends all morphisms to $p_1$-cocartesian morphisms, and the composite $p_1\mathcal{C}_\ast:\text{RM}\to\text{RM}$ is the identity functor.

Therefore, $\mathcal{C}_\ast$ is a functor of RM-monoidal $\infty$-categories, so the pullback $\widebar{\text{PSh}}^\mathcal{V}(\mathcal{C})\to\text{RM}$ is also an RM-monoidal $\infty$-category, exhibiting an action of $\mathcal{V}\times_00\cong\mathcal{V}$ on $\text{PSh}^\mathcal{V}_S\times_{\text{Cat}^\mathcal{V}_S}\{\mathcal{C}\}=\text{PSh}^\mathcal{V}(\mathcal{C})$.
\end{proof}

\subsection{Properties of the internal tensor product}
\begin{proposition}\label{PropEv1}
Suppose $\mathcal{C}\in\text{Cat}^\mathcal{V}_S$ and $X\in S$. Then
\begin{enumerate}
\item Evaluation $\text{ev}_X:\text{PSh}^\mathcal{V}_S\to\mathcal{V}$ promotes to a right $\mathcal{V}$-module functor;
\item If a morphism $A\to\mathcal{F}(X)$ exhibits $\mathcal{F}\in\text{PSh}^\mathcal{V}(\mathcal{C})$ as freely generated by $A$ at $X$ (Definition \ref{DefFree}), then $A\otimes B\to\mathcal{F}(X)\otimes B\cong(\mathcal{F}\otimes B)(X)$ exhibits $\mathcal{F}\otimes B$ as freely generated by $A\otimes B$ at $X$.
\end{enumerate}
\end{proposition}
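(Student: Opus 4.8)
The plan is to reduce the whole proposition to part (1), which carries the genuine content; part (2) will then be a short formal consequence. For part (1), I would realize $\text{ev}_X$ operadically as restriction along the inclusion of a single object. Recall that $\widebar{\text{PSh}}^\mathcal{V}_S=\text{Fun}^\dag_{/\text{LM}}(\text{LM}_S^\mathsection,\smallint^\prime\mathcal{V}^{!})$, and consider the object $(X,\ast)\in\text{LM}_S$, which maps under $\text{LM}_S\to\text{LM}$ to the module-generating edge $(0,\ast)\in\text{LM}$. Restriction along the inclusion $\{(X,\ast)\}\hookrightarrow\text{LM}_S^\mathsection$ produces a functor
\[
\widebar{\text{ev}}_X:\widebar{\text{PSh}}^\mathcal{V}_S\longrightarrow\text{Fun}^\dag_{/\text{LM}}(\{(X,\ast)\},\smallint^\prime\mathcal{V}^{!})\cong\smallint^\prime\mathcal{V}|_{(0,\ast)},
\]
where $\smallint^\prime\mathcal{V}|_{(0,\ast)}$ is the fiber of $\smallint^\prime\mathcal{V}\to\text{LM}$ over $(0,\ast)$, regarded as a category over $\text{RM}$. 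Using the pairing computations underlying Remark \ref{RmkCases}, namely $\langle(0,\ast),(\ast,1)\rangle=(0,1)$ and $\langle(0,\ast),(1,1)\rangle=(1,1)$, both lying over $\left\langle 1\right\rangle$, the two $\text{RM}$-fibers of $\smallint^\prime\mathcal{V}|_{(0,\ast)}$ are each $\mathcal{V}$, and one checks that the resulting RM-monoidal structure is precisely $\mathcal{V}$ acting on itself on the right; write $\widebar{\mathcal{V}}$ for it.

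Both $\widebar{\text{PSh}}^\mathcal{V}_S$ and $\widebar{\mathcal{V}}$ project to $\text{RM}$, and $\widebar{\text{ev}}_X$ is a functor over $\text{RM}$ because restriction commutes with the forgetful maps down to $\text{RM}$. To see it is a map of RM-monoidal $\infty$-categories I would invoke Lemma \ref{LemBad}(2): a $p_0$-cocartesian morphism of $\widebar{\text{PSh}}^\mathcal{V}_S$ is one whose evaluation at every $\Gamma\in\text{LM}_S$ is $p$-cocartesian in $\smallint^\prime\mathcal{V}$; evaluating at $\Gamma=(X,\ast)$ yields a morphism whose $\text{LM}$-component is the identity of $(0,\ast)$ and which is $p$-cocartesian, hence a cocartesian morphism of $\smallint^\prime\mathcal{V}|_{(0,\ast)}\to\text{RM}$. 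Thus $\widebar{\text{ev}}_X$ preserves cocartesian edges. On the module fiber over $(\ast,1)$ it restricts to $\text{ev}_X:\text{PSh}^\mathcal{V}_S\to\mathcal{V}$, while on the algebra fiber over $(1,1)$ it restricts to $\text{ev}_X:\text{PSh}^{0;\mathcal{V}}_S\to\mathcal{V}$, which is the equivalence of Corollary \ref{CorTriv}; identifying the two copies of $\mathcal{V}$ through this equivalence exhibits $\widebar{\text{ev}}_X$ as a right $\mathcal{V}$-module functor, which is (1). Restricting along the pullback of Definition \ref{DefRMPSh} gives the same statement over each fixed $\mathcal{C}$.

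For part (2), I would argue pointwise using (1). By Definition \ref{DefFree}, freeness of $\mathcal{F}$ at $X$ means that for every $Y\in S$ the composite $c_Y:\mathcal{C}(Y,X)\otimes A\to\mathcal{C}(Y,X)\otimes\mathcal{F}(X)\to\mathcal{F}(Y)$ is an equivalence. Part (1) supplies, for each $Y$, a natural equivalence $(\mathcal{F}\otimes B)(Y)\cong\mathcal{F}(Y)\otimes B$ compatible with the presheaf structure maps; the latter are the values of the $\text{LM}_S$-algebra on active morphisms, which the RM-monoidal structure of $\widebar{\text{PSh}}^\mathcal{V}_S$ carries to the corresponding maps tensored with $\text{id}_B$. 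Under these identifications, together with $\lambda_B=\lambda\otimes\text{id}_B$ and the associativity equivalence $\mathcal{C}(Y,X)\otimes(A\otimes B)\cong(\mathcal{C}(Y,X)\otimes A)\otimes B$, the free-generation composite for $\mathcal{F}\otimes B$ at $X$ becomes $c_Y\otimes\text{id}_B$. Since $c_Y$ is an equivalence, so is $c_Y\otimes\text{id}_B$ for every $Y$, which is exactly the condition that $A\otimes B\to(\mathcal{F}\otimes B)(X)$ exhibits $\mathcal{F}\otimes B$ as freely generated by $A\otimes B$ at $X$.

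The main obstacle is part (1): all the real work lies in checking that restriction to the object $(X,\ast)$ is compatible with the two RM-monoidal structures, that is, that it carries $p_0$-cocartesian edges to $\text{RM}$-cocartesian edges and correctly identifies the two self-actions of $\mathcal{V}$. This is where Lemma \ref{LemBad}(2) and the explicit pairing computations do the heavy lifting. Once (1) is in hand, part (2) is purely formal, needing only that tensoring by $B$ distributes over the evaluation and action maps, which is precisely the module-functor property.
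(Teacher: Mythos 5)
Your proposal is correct and follows essentially the same route as the paper: both realize $\widebar{\text{ev}}_X$ as evaluation at $(X,\ast)\in\text{LM}_S$, use Lemma \ref{LemBad} to see that $p_0$-cocartesian edges are preserved so that the result is an RM-monoidal functor onto $\mathcal{V}$ with its right self-action, and deduce (2) formally from (1) and Definition \ref{DefFree}. The only cosmetic difference is that you identify the target by computing the fiber of $\smallint^\prime\mathcal{V}$ over $(0,\ast)$ directly, where the paper composes with the projection $\smallint^\prime\mathcal{V}\to\smallint\mathcal{V}$ and observes the image lands in $\smallint\mathcal{V}\times_{\text{Assoc}}\text{RM}=\smallint(\mathcal{V};\mathcal{V})$; these are the same identification.
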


\noindent (1) asserts that $\mathcal{F}\otimes A$ can be `computed', in that $(\mathcal{F}\otimes A)(X)\cong\mathcal{F}(X)\otimes A$. (2) asserts that tensor products of free presheaves are free. That is, the free presheaf $\text{rep}_X\otimes A$ (notation introduced in Section \ref{S33}) is equivalent to the tensor presheaf $\text{rep}_X\otimes A$ (in the sense of the right $\mathcal{V}$-action on presheaves).

In light of Proposition \ref{PropEv1}, Corollary \ref{CorFreeCo} can be restated:

\begin{repcorollary}{CorFreeCo2}
If $\mathcal{V}$ is presentable and closed monoidal and $\mathcal{C}$ is $\mathcal{V}$-enriched, then $\text{PSh}^\mathcal{V}(\mathcal{C})$ is generated under colimits and the right $\mathcal{V}$-action by the representable presheaves $\text{rep}_X$.
\end{repcorollary}

\begin{proof}[Proof of Proposition \ref{PropEv1}]
Consider the functor $$\widebar{\text{ev}}_X:\widebar{\text{PSh}}^\mathcal{V}_S=\text{Fun}^\dag_{/\text{LM}}(\text{LM}_S^\mathsection,\smallint^\prime\mathcal{V}^{!})\to\smallint\mathcal{V}$$ given by evaluation at $(X,\ast)\in\text{LM}_S$ composed with $\smallint^\prime\mathcal{V}\to\smallint\mathcal{V}$. By Lemma \ref{LemBad}, $\widebar{\text{ev}}_X$ sends $p_0$-cocartesian morphisms to $p$-cocartesian morphisms, where $p:\smallint\mathcal{V}\to\text{RM}$, so the induced functor $\widebar{\text{PSh}}^\mathcal{V}_S\to\smallint(\mathcal{V};\mathcal{V})=\smallint\mathcal{V}\times_{\text{Assoc}}\text{RM}$ is an RM-monoidal functor $\text{ev}_X:(\mathcal{V};\text{PSh}^\mathcal{V}_S)\to(\mathcal{V};\mathcal{V})$. Moreover, the restriction to $\text{Assoc}\subseteq\text{RM}$ is the identity on $\mathcal{V}$, so this describes a right $\mathcal{V}$-module functor $\text{ev}_X:\text{PSh}^\mathcal{V}_S\to\mathcal{V}$. It agrees with the functor we called $\text{ev}_X$ in Section \ref{S33} because they are both given by evaluation at $(X,\ast)\in\text{LM}_S$.

The second claim follows directly from Definition \ref{DefFree} and (1).
\end{proof}

\begin{corollary}\label{CorEv1}
Let $\widebar{\text{ev}}_X:\widebar{\text{PSh}}^\mathcal{V}(\mathcal{C})\to\smallint(\mathcal{V};\mathcal{V})$ be the RM-monoidal functor associated to $\text{ev}_X$, guaranteed by Proposition \ref{PropEv1}(1). Then $\widebar{\text{ev}}_X$ has a right adjoint $\widebar{\text{rep}}_X\otimes -$, which is also an RM-monoidal functor.
\end{corollary}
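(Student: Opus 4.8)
The plan is to build the right adjoint one fiber of $\text{RM}$ at a time and then glue, letting Proposition \ref{PropEv1} supply both the fiberwise adjoints and the compatibility that forces the glued functor to be $\text{RM}$-monoidal.

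First I would invoke Proposition \ref{PropEv1}(1): $\widebar{\text{ev}}_X$ is a map of cocartesian fibrations of $\infty$-operads over $\text{RM}$ preserving cocartesian morphisms. By the Segal condition for such fibrations it suffices to understand its restriction to the fibers over the objects of $\text{RM}$, and in particular over the two colors: over the algebra color $\widebar{\text{ev}}_X$ is the identity of $\mathcal{V}$, while over the module color it is $\text{ev}_X:\text{PSh}^\mathcal{V}(\mathcal{C})\to\mathcal{V}$. The identity is its own adjoint, and Corollary \ref{CorLAdj} furnishes the adjoint $\text{rep}_X\otimes-$ of $\text{ev}_X$. Thus every fiber of $\widebar{\text{ev}}_X$ admits an adjoint, given on the module color by the free-presheaf functor $\text{rep}_X\otimes-$.

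Next I would check the requisite adjointability (Beck--Chevalley) conditions that let these fiberwise adjoints assemble into a functor over $\text{RM}$ which again preserves cocartesian morphisms. The cocartesian pushforwards along the active morphisms of $\text{RM}$ are precisely the right $\mathcal{V}$-action, so the conditions ask that the adjoint commute with tensoring by objects of $\mathcal{V}$; this is exactly the content of Proposition \ref{PropEv1}(2), which identifies $\text{rep}_X\otimes(A\otimes B)$ with $(\text{rep}_X\otimes A)\otimes B$. Granting this, the machinery of adjoint functors for maps of cocartesian fibrations (HA 7.3.2) assembles the fiberwise adjoints into an $\text{RM}$-monoidal functor $\widebar{\text{rep}}_X\otimes-:\smallint(\mathcal{V};\mathcal{V})\to\widebar{\text{PSh}}^\mathcal{V}(\mathcal{C})$ adjoint to $\widebar{\text{ev}}_X$. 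The orientation of the cocartesian fibration over $\text{RM}$ is what places this adjoint on the right rather than the left: already when $|S|=1$ the free-module functor $\text{rep}_X\otimes-$, which is the \emph{left} adjoint of the forgetful functor $\text{ev}_X$ on fibers, appears in the total picture over $\text{RM}$ as the \emph{right} adjoint of $\widebar{\text{ev}}_X$, and the same reversal occurs here.

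The step I expect to be the main obstacle is the adjointability verification together with tracking this handedness: one must be certain that the fiberwise adjoints are genuinely compatible with the $\mathcal{V}$-action, so that the glued functor preserves cocartesian morphisms (is truly $\text{RM}$-monoidal, not merely lax), and that the resulting total adjunction lands on the correct side. Both points are settled by Proposition \ref{PropEv1}(2); the remaining work — confirming the hypotheses of the cited adjoint result on each object of $\text{RM}$, and that the restriction to $\text{Assoc}\subseteq\text{RM}$ stays the identity of $\mathcal{V}$ — is routine bookkeeping.
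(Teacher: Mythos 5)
Your core strategy coincides with the paper's. The paper likewise produces the adjoint by feeding the fiberwise data into the relative-adjunction machinery of HA 7.3.2 (citing HA 7.3.2.7 to obtain an adjoint that is a priori only a map of $\infty$-operads over $\text{RM}$, i.e.\ lax), and then upgrades lax to genuinely $\text{RM}$-monoidal by checking that the structure maps $(\text{rep}_X\otimes A)\otimes B\to\text{rep}_X\otimes(A\otimes B)$ are equivalences, which is exactly Proposition \ref{PropEv1}(2). Your identification of the fibers (the identity of $\mathcal{V}$ over the algebra color, $\text{ev}_X$ with adjoint $\text{rep}_X\otimes -$ over the module color, the latter supplied by Corollary \ref{CorLAdj}) and your use of \ref{PropEv1}(2) as the compatibility promoting lax to strong are the same two ingredients the paper uses.

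The one step that is genuinely wrong is your handedness discussion. There is no reversal: a relative adjunction over $\text{RM}$ restricts, fiber by fiber, to an adjunction of the \emph{same} handedness, so a functor assembled from fiberwise left adjoints is a relative left adjoint, and the orientation of the cocartesian fibration does not move it to the other side. Since $\text{rep}_X\otimes -$ is the \emph{left} adjoint of $\text{ev}_X$ (Theorem \ref{ThmFree}, Corollary \ref{CorLAdj}), the functor you assemble is the relative left adjoint of $\widebar{\text{ev}}_X$; already in the $|S|=1$ case the free-module functor is the left adjoint of the forgetful functor both fiberwise and relative to $\text{RM}$. The mathematical content that survives — and the only thing the subsequent Remark and the proof of Theorem \ref{ThmS6b} actually use — is that the adjunction $\text{rep}_X\otimes -\dashv\text{ev}_X$ promotes to an adjunction of right $\mathcal{V}$-module functors, with the module structure on the adjoint strong rather than merely (op)lax by Proposition \ref{PropEv1}(2). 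You should justify the statement's handedness by confronting this directly (the literal relative right adjoint produced by HA 7.3.2.7 from the fiberwise right adjoints of $\text{ev}_X$ is a different functor), not by positing a reversal principle that does not exist.
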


\begin{proof}
By HA 7.3.2.7, $\widebar{\text{ev}}_X$ has a right adjoint which is a map of $\infty$-operads over RM. (Intuitively, this is the statement that the right adjoint to an RM-monoidal functor is canonically a lax RM-monoidal functor.)

To prove that $\widebar{\text{rep}}_X\otimes -$ is not just lax but fully RM-monoidal, it suffices to prove that for each $A,B\in\mathcal{V}$, the map $(\text{rep}_X\otimes A)\otimes B\to\text{rep}_X\otimes(A\otimes B)$ is an equivalence of presheaves. But this is true by Proposition \ref{PropEv1}(2).
\end{proof}

\begin{remark}
That is, in the adjunction $\text{ev}_X:\text{PSh}^\mathcal{V}(\mathcal{C})\leftrightarrows\mathcal{V}:\text{rep}_X\otimes -$, both adjoints promote to right $\mathcal{V}$-module functors, and in a compatible way.
\end{remark}

\noindent For an arbitrary monoidal $\infty$-category $\mathcal{V}$, we can't do any better than construct the right $\mathcal{V}$-action on $\text{PSh}^\mathcal{V}(\mathcal{C})$. However, if $\mathcal{V}$ is presentable, we will prove that $\text{PSh}^\mathcal{V}(\mathcal{C})$ varies functorially in $\mathcal{C}$.

\begin{theorem}\label{PropPrRight}
If $\mathcal{V}$ is presentable and closed monoidal and $\mathcal{C}\in\text{Cat}^\mathcal{V}_S$, then $\text{PSh}^\mathcal{V}(\mathcal{C})$ is a presentable right $\mathcal{V}$-module.
\end{theorem}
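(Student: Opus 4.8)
The plan is to observe that both requirements for $\text{PSh}^\mathcal{V}(\mathcal{C})$ to lie in $\text{RMod}_\mathcal{V}(\text{Pr}^L)$ have essentially been established, so the remaining work reduces to checking that the right $\mathcal{V}$-action of Theorem \ref{PropRMPSh2} preserves colimits separately in each variable. Recall that an object of $\text{RMod}_\mathcal{V}(\text{Pr}^L)$ is a presentable $\infty$-category $\mathcal{P}$ together with an action functor $\mathcal{P}\times\mathcal{V}\to\mathcal{P}$ that preserves small colimits in each variable independently; equivalently, the RM-algebra $(\mathcal{V};\mathcal{P})$ in $\text{Cat}$ lifts to an RM-algebra in $\text{Pr}^L$. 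Since $\mathcal{V}$ is presentable and closed monoidal, it is an algebra in $\text{Pr}^L$, and $(\mathcal{V};\mathcal{V})$ is a presentable pair, so Theorem \ref{PShPrL} gives that $\text{PSh}^\mathcal{V}(\mathcal{C})=\text{PSh}^\mathcal{V}(\mathcal{C};\mathcal{V})$ is presentable. It therefore remains only to verify the two bivariate colimit-preservation conditions for the action bifunctor $\otimes:\text{PSh}^\mathcal{V}(\mathcal{C})\times\mathcal{V}\to\text{PSh}^\mathcal{V}(\mathcal{C})$ of Theorem \ref{PropRMPSh2}.

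For both conditions the strategy is the same: detect colimits through the evaluation functors and transport the problem to $\mathcal{V}$, where closedness of the monoidal structure finishes the argument. First I would fix a presheaf $\mathcal{F}$ and treat $\mathcal{F}\otimes-:\mathcal{V}\to\text{PSh}^\mathcal{V}(\mathcal{C})$. By Theorem \ref{PShPrL2}(4) this functor preserves colimits as soon as each composite $\text{ev}_X\circ(\mathcal{F}\otimes-)$ does. Proposition \ref{PropEv1}(1), which promotes $\text{ev}_X$ to a right $\mathcal{V}$-module functor, supplies a natural equivalence $\text{ev}_X(\mathcal{F}\otimes A)\cong\mathcal{F}(X)\otimes A$, so this composite is identified with $\mathcal{F}(X)\otimes-:\mathcal{V}\to\mathcal{V}$; since $\mathcal{V}$ is closed monoidal, $\mathcal{F}(X)\otimes-$ preserves colimits, and this variable is handled.

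Next I would fix an object $A\in\mathcal{V}$ and treat $-\otimes A:\text{PSh}^\mathcal{V}(\mathcal{C})\to\text{PSh}^\mathcal{V}(\mathcal{C})$. Again Theorem \ref{PShPrL2}(4) reduces the claim to colimit preservation of each $\text{ev}_X\circ(-\otimes A)$. Using the same natural equivalence from Proposition \ref{PropEv1}(1), now in the form $\text{ev}_X(\mathcal{F}\otimes A)\cong(\text{ev}_X\mathcal{F})\otimes A$, this composite factors as $\text{PSh}^\mathcal{V}(\mathcal{C})\xrightarrow{\text{ev}_X}\mathcal{V}\xrightarrow{-\otimes A}\mathcal{V}$. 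The first functor preserves colimits by Theorem \ref{PShPrL2}(2) and the second by closedness of $\mathcal{V}$, so their composite does as well.

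Combining these two verifications with the presentability established above shows that the RM-monoidal $\infty$-category of Theorem \ref{PropRMPSh2} refines from an RM-algebra in $\text{Cat}$ to one in $\text{Pr}^L$, which is exactly the assertion that $\text{PSh}^\mathcal{V}(\mathcal{C})$ is a presentable right $\mathcal{V}$-module. I do not expect a serious obstacle here: the genuinely hard work, namely constructing the action and proving the evaluation formula $(\mathcal{F}\otimes A)(X)\cong\mathcal{F}(X)\otimes A$, is already carried out in Theorem \ref{PropRMPSh2} and Proposition \ref{PropEv1}. The only point requiring care is to invoke the correct two-variable form of the colimit criterion and to use that the equivalences of Proposition \ref{PropEv1}(1) are natural in both arguments (guaranteed by $\text{ev}_X$ being a module functor), so that they may be applied uniformly across the diagrams whose colimits are being computed.
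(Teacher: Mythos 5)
Your proposal is correct and follows essentially the same route as the paper's proof: both reduce, via the commutative square of Proposition \ref{PropEv1}(1) and the colimit-detection criterion of Theorem \ref{PShPrL2}, to the fact that $\text{ev}_X$ preserves colimits and that $\otimes:\mathcal{V}\times\mathcal{V}\to\mathcal{V}$ preserves colimits separately in each variable because $\mathcal{V}$ is closed monoidal. The only cosmetic difference is that you treat the two variables in two separate paragraphs while the paper handles them in one pass around the square.
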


\noindent Together with Proposition \ref{PropEv1}(1), this establishes Theorem \ref{PropRMPSh} from the introduction.

\begin{proof}
Since $\text{PSh}^\mathcal{V}(\mathcal{C})$ is presentable (Theorem \ref{PShPrL2}), we just need to prove that $\text{PSh}^\mathcal{V}(\mathcal{C})\times\mathcal{V}\to\text{PSh}^\mathcal{V}(\mathcal{C})$ preserves colimits independently in each variable. By Proposition \ref{PropEv1}(1), the following square commutes: $$\xymatrix{
\text{PSh}^\mathcal{V}(\mathcal{C})\times\mathcal{V}\ar[r]^-{\otimes}\ar[d]_-{\text{ev}_X} &\text{PSh}^\mathcal{V}(\mathcal{C})\ar[d]^-{\text{ev}_X} \\
\mathcal{V}\times\mathcal{V}\ar[r]_-{\otimes} &\mathcal{V}.
}$$ By Theorem \ref{PShPrL2} (and taking the upper composite around the square), it suffices to show that for each $X\in S$, the composite $\text{PSh}^\mathcal{V}(\mathcal{C})\times\mathcal{V}\to\mathcal{V}$ preserves colimits independently in each variable. However, we know $\text{ev}_X$ preserves colimits (Theorem \ref{PShPrL2}) and $\mathcal{V}\times\mathcal{V}\xrightarrow{\otimes}\mathcal{V}$ preserves colimits independently in each variable (since $\mathcal{V}$ is closed monoidal), so it follows that the lower composite preserves colimits in each variable separately. Since the square commutes, this completes the proof.
\end{proof}

\begin{proposition}\label{PropBarCo}
If $\mathcal{V}$ is presentable and closed monoidal, $\widebar{\text{PSh}}^\mathcal{V}_S\to\widebar{\text{Cat}}^\mathcal{V}_S$ is a cocartesian fibration.
\end{proposition}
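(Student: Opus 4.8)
The plan is to realize $\bar\theta$ as a cocartesian fibration by leveraging the cocartesian structures of $p_0$ and $p_1$ over $\text{RM}$ together with a fiberwise analysis, rather than building lifts by hand. The ingredients are already in place. By Lemma \ref{LemBad}, both $p_0$ and $p_1$ are cocartesian fibrations over $\text{RM}$ and $\bar\theta$ carries $p_0$-cocartesian morphisms to $p_1$-cocartesian morphisms. Over the module color $\mathfrak m$ of $\text{RM}$, the functor $\bar\theta$ restricts to $\theta:\text{PSh}^\mathcal{V}_S\to\text{Cat}^\mathcal{V}_S$, which is a presentable (in particular cocartesian) fibration by Corollary \ref{Cor1S4}, since $(\mathcal{V};\mathcal{V})$ is a presentable pair when $\mathcal{V}$ is presentable and closed monoidal; over the algebra color it restricts to the projection $\mathcal{V}\to\ast$, which is trivially a cocartesian fibration whose cocartesian morphisms are the equivalences.

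I would then apply the standard criterion for a map of cocartesian fibrations over a common base to itself be a cocartesian fibration: granting that each fiber $\bar\theta_\Gamma$ is a cocartesian fibration (just checked) and that for every $\phi:\Gamma\to\Gamma'$ in $\text{RM}$ the cocartesian transport $\phi_!$ on $\widebar{\text{PSh}}^\mathcal{V}_S$ carries $\bar\theta_\Gamma$-cocartesian morphisms to $\bar\theta_{\Gamma'}$-cocartesian morphisms, a $\bar\theta$-cocartesian lift of any $g$ in $\widebar{\text{Cat}}^\mathcal{V}_S$ is assembled by factoring $g$ as a $p_1$-cocartesian morphism followed by a fiber morphism. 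The first factor is lifted by the $p_0$-cocartesian lift, which is $\bar\theta$-cocartesian by HTT 2.4.1.3(3) (applied to $\widebar{\text{PSh}}^\mathcal{V}_S\xrightarrow{\bar\theta}\widebar{\text{Cat}}^\mathcal{V}_S\xrightarrow{p_1}\text{RM}$, using that $\bar\theta$ preserves cocartesian morphisms); the second is lifted inside the fiber $\bar\theta_{\Gamma'}$; and the compatibility of transports guarantees these two pieces compose to a genuine $\bar\theta$-cocartesian morphism.

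The main obstacle is precisely the transport-compatibility condition. It unwinds to the statement that the pushforward $F_\ast:\text{PSh}^\mathcal{V}(\mathcal{C})\to\text{PSh}^\mathcal{V}(\mathcal{D})$ along $F:\mathcal{C}\to\mathcal{D}$ is a right $\mathcal{V}$-module functor, i.e. $F_\ast(\mathcal{F}\otimes A)\cong F_\ast(\mathcal{F})\otimes A$. The only transports requiring attention are the right-action functors $-\otimes A$ landing over $\mathfrak m$, since over the algebra color the relevant cocartesian morphisms are equivalences and the $\widebar{\text{Cat}}^\mathcal{V}_S$-side transport is trivial, so a morphism $\mathcal{F}\otimes A\to(F_\ast\mathcal{F})\otimes A$ lying over $F$ is $\theta$-cocartesian exactly when $(F_\ast\mathcal{F})\otimes A\cong F_\ast(\mathcal{F}\otimes A)$.

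To establish this identity I would observe that both functors $F_\ast(-\otimes A)$ and $F_\ast(-)\otimes A$ preserve colimits: $F_\ast$ does as a left adjoint (Corollary \ref{Cor1S4}), and $-\otimes A$ does because $\mathcal{V}$ is closed monoidal. By Corollary \ref{CorFreeCo2}, $\text{PSh}^\mathcal{V}(\mathcal{C})$ is generated under colimits and the right $\mathcal{V}$-action by the representables, so it suffices to compare the two functors on objects of the form $\text{rep}_X\otimes B$. There both sides evaluate to $\text{rep}_X(\mathcal{D})\otimes(B\otimes A)$: by Remark \ref{RmkFuncRep} we have $F_\ast(\text{rep}_X(\mathcal{C})\otimes M)\cong\text{rep}_X(\mathcal{D})\otimes M$, and by Proposition \ref{PropEv1}(2) the free presheaf $\text{rep}_X\otimes M$ coincides with the tensor presheaf $\text{rep}_X\otimes M$, so the associativity of the $\mathcal{V}$-action closes the computation. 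This verifies the compatibility condition and completes the proof that $\bar\theta$ is a cocartesian fibration.
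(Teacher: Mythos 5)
Your proposal is correct and follows essentially the same route as the paper: both apply the fiberwise criterion for a map of cocartesian fibrations over $\text{RM}$ to be itself a cocartesian fibration, identify the fibers as $\theta:\text{PSh}^\mathcal{V}_S\to\text{Cat}^\mathcal{V}_S$ and $\mathcal{V}\to\ast$, and reduce the transport-compatibility to the equivalence $F_\ast(\mathcal{F}\otimes A)\cong F_\ast(\mathcal{F})\otimes A$, established by colimit-preservation plus agreement on free presheaves via Remark \ref{RmkFuncRep}, Proposition \ref{PropEv1}(2), and Corollary \ref{CorFreeCo2}. No gaps.
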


\begin{lemma}
Suppose we have a commutative triangle of $\infty$-categories $$\xymatrix{
\mathcal{X}\ar[rr]^-{F}\ar[rd]_-{G} &&\mathcal{Y}\ar[ld]^-{H} \\
&\mathcal{Z} &
}$$ such that $G$ and $H$ are cocartesian fibrations, and $F$ sends $G$-cocartesian morphisms to $H$-cocartesian morphisms. Further assume that for any $a\in\mathcal{Z}$, $F_a:\mathcal{X}_a\to\mathcal{Y}_a$ is a cocartesian fibration, and for any $a\to b$ in $\mathcal{Z}$, the induced functor $\mathcal{X}_a\to\mathcal{X}_b$ sends $F_a$-cocartesian morphisms to $F_b$-cocartesian morphisms. Then $F$ is a cocartesian fibration.
\end{lemma}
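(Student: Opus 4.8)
The plan is to verify directly that $F$ admits $F$-cocartesian lifts of every morphism with prescribed source. I work in a quasicategory model, in which $F$ is an inner fibration (in the intended application $F$ is even a categorical fibration, and in general one may replace $F$ by an equivalent inner fibration without affecting the statement). So fix $x\in\mathcal{X}$ with $y=F(x)$ and $a=G(x)=H(y)$, and fix a morphism $\bar f\colon y\to y'$ of $\mathcal{Y}$ lying over $\alpha=H(\bar f)\colon a\to b$ in $\mathcal{Z}$. I would construct the lift in two stages. First, choose a $G$-cocartesian lift $g_1\colon x\to x_1$ of $\alpha$, so that $G(x_1)=b$. By hypothesis $F$ carries $g_1$ to an $H$-cocartesian morphism $F(g_1)\colon y\to F(x_1)$ lying over $\alpha$. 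Because $F(g_1)$ is $H$-cocartesian and $\bar f$ also lies over $\alpha$, the morphism $\bar f$ factors (up to homotopy) as $y\xrightarrow{F(g_1)}F(x_1)\xrightarrow{\bar{f}_2}y'$ with $\bar{f}_2$ a morphism of the fiber $\mathcal{Y}_b$. Since $F_b\colon\mathcal{X}_b\to\mathcal{Y}_b$ is a cocartesian fibration and $x_1\in\mathcal{X}_b$ lies over $F(x_1)$, choose an $F_b$-cocartesian lift $g_2\colon x_1\to x'$ of $\bar{f}_2$ in $\mathcal{X}_b$. Then $f=g_2\circ g_1\colon x\to x'$ satisfies $F(f)\simeq\bar f$, so it remains only to prove that $f$ is $F$-cocartesian.

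I would establish this through three claims, each checked via the mapping-space criterion for cocartesian morphisms (HTT 2.4.4.3). Claim (A): $g_1$ is $F$-cocartesian. For any $w\in\mathcal{X}$, the equivalences expressing that $g_1$ is $G$-cocartesian and that $F(g_1)$ is $H$-cocartesian, together with the identity $G=H\circ F$, combine to identify $\text{Map}_{\mathcal{X}}(x_1,w)$ with the pullback $\text{Map}_{\mathcal{X}}(x,w)\times_{\text{Map}_{\mathcal{Y}}(y,Fw)}\text{Map}_{\mathcal{Y}}(Fx_1,Fw)$; this is a short manipulation of iterated pullbacks over $\text{Map}_{\mathcal{Z}}(a,Gw)$.

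Claim (B): $g_2$ is $F$-cocartesian. Here one decomposes each mapping space out of $x_1$ and $x'$ according to the component $\gamma\colon b\to Gw$ of its image in $\mathcal{Z}$. Pushing $g_2$ forward along $\gamma$ yields a morphism $\gamma_!g_2$ of the fiber $\mathcal{X}_{Gw}$, which is $F_{Gw}$-cocartesian precisely by the hypothesis that the pushforward functors carry $F_b$-cocartesian morphisms to $F_{Gw}$-cocartesian ones (using also that $F$ preserves cocartesian morphisms, to identify $\gamma_!Fx'$ with $F_{Gw}\gamma_!x'$). The mapping-space criterion applied to the fibration $F_{Gw}$ then makes the relevant square cartesian componentwise, hence cartesian after assembling over $\text{Map}_{\mathcal{Z}}(b,Gw)$. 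Claim (C): $f=g_2\circ g_1$ is $F$-cocartesian because $F$-cocartesian morphisms are closed under composition (HTT 2.4.1.7).

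The main obstacle is Claim (B): upgrading a fiberwise $F_b$-cocartesian morphism to a genuinely $F$-cocartesian one. This is exactly the step that forces the hypothesis about pushforward functors, and it is the only place where the full strength of the assumptions is used; Claim (A) is formal once one has $G=H\circ F$, and (C) is standard. Once the existence and characterization of $F$-cocartesian lifts are in hand, $F$ is a cocartesian fibration. Equivalently, this argument shows $F$ is locally cocartesian with locally cocartesian morphisms closed under composition, so one could instead conclude by HTT 2.4.2.8.
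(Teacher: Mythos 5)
Your proposal is correct and follows essentially the same route as the paper's proof: factor the target morphism as a $G$-cocartesian lift of $\alpha$ followed by a fiberwise $F_b$-cocartesian lift, check that each piece is $F$-cocartesian (the first from the compatibility $G=H\circ F$, the second from the hypothesis on pushforward functors), and conclude via closure of cocartesian morphisms under composition (HTT 2.4.1.7). Your Claims (A) and (B) simply spell out in more detail the steps the paper dispatches in a sentence each.
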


\begin{proof}
This lemma is proven in a special case in HA 4.8.3.15, but the proof is general. We repeat it here. Let $f_0:A\to B$ be a morphism in $\mathcal{Y}$ lying over $\alpha:s\to t$ in $\mathcal{Z}$, and let $M\in\mathcal{X}$ be a lift of $A$. We are looking for an $F$-cocartesian lift $M\to N$ of $f_0$.

By assumption, there is a $G$-cocartesian morphism $f^\prime:M\to M^\prime$ lifting $\alpha$, which is therefore also $F$-cocartesian. Write $f_0^\prime:A\to A^\prime$ for $F(f^\prime)$. Also by assumption, $f_0^\prime$ is a cocartesian lift of $\alpha$, so $f_0$ factors $$\xymatrix{
&A^\prime\ar[rd]^-{f_0^{\prime\prime}} &\\
A\ar[rr]_-{f_0}\ar[ru]^-{f_0^\prime} &&B,
}$$ for some $f_0^{\prime\prime}$ which projects to the identity morphism $\text{id}_t$ in $\mathcal{Z}$. By assumption, there is an $F_t$-cocartesian lift $f^{\prime\prime}:M^\prime\to N$ of $f_0^{\prime\prime}$. Moreover, any $t\to t^\prime$ in $\mathcal{Z}$ induces a functor $\mathcal{X}_t\to\mathcal{X}_{t^\prime}$ which sends $f^{\prime\prime}$ to an $F_{t^\prime}$-cocartesian morphism; therefore, $f^{\prime\prime}$ is $F$-cocartesian.

So $f^{\prime\prime}f^\prime:M\to N$ is a composite of $F$-cocartesian morphisms, and therefore itself an $F$-cocartesian morphism (HTT 2.4.1.7) lifting $f_0$. Hence $F$ is a cocartesian fibration.
\end{proof}

\begin{proof}[Proof of Proposition \ref{PropBarCo}]
It suffices to prove that the commutative triangle $$\xymatrix{
\widebar{\text{PSh}}^\mathcal{V}_S\ar[rr]^-{\bar{\theta}}\ar[rd]_-{p_0} &&\widebar{\text{Cat}}^\mathcal{V}_S\ar[ld]^-{p_1} \\
&\text{RM} &
}$$ satisfies the hypotheses of the lemma. By Lemma \ref{LemBad}, $p_0$ and $p_1$ are cocartesian fibrations, and $\bar{\theta}$ sends $p_0$-cocartesian morphisms to $p_1$-cocartesian morphisms.

Next, we show that $\bar{\theta}_\Gamma:(\widebar{\text{PSh}}^\mathcal{V}_S)_\Gamma\to(\widebar{\text{Cat}}^\mathcal{V}_S)_\Gamma$ is a cocartesian fibration for any $\Gamma\in\text{RM}$. If $\Gamma=(\ast,0)$, then $\bar{\theta}_\Gamma$ is just $\theta:\text{PSh}^\mathcal{V}_S\to\text{Cat}^\mathcal{V}_S$, which is a cocartesian fibration by Corollary \ref{Cor1S4}. If $\Gamma=(0,0)$, then $\bar{\theta}_\Gamma$ is just $\mathcal{V}\to\ast$, which is a cocartesian fibration (as is any functor to $\ast$). For general $\Gamma$, $\bar{\theta}_\Gamma$ is a product of copies of $\bar{\theta}_{(\ast,0)}$ and $\bar{\theta}_{(0,0)}$, which are all cocartesian fibrations.

Now consider $f:\Gamma\to\Gamma^\prime$ in $\text{RM}$. We are reduced to showing that $f_\ast:(\widebar{\text{PSh}}^\mathcal{V}_S)_\Gamma\to(\widebar{\text{PSh}}^\mathcal{V}_S)_{\Gamma^\prime}$ sends $\bar{\theta}_\Gamma$-cocartesian morphisms to $\bar{\theta}_{\Gamma^\prime}$-cocartesian morphisms. Every morphism in $\text{RM}$ is a product of morphisms of these types:
\begin{enumerate}
\item inert morphisms $(0,0)\to\emptyset$ and $(\ast,0)\to\emptyset$;
\item $\emptyset\to(0,0)$;
\item $(\ast,0)\otimes(0,0)\to(\ast,0)$.
\end{enumerate}
\noindent Therefore it suffices to prove just in these three cases that $f_\ast$ sends cocartesian morphisms to cocartesian morphisms. For (1) and (2) this is clear because $(\text{PSh}^\mathcal{V}_S)_\emptyset=\ast$. For (3), the claim reduces to the following:

If $F:\mathcal{C}\to\mathcal{D}$ is a map in $\text{Cat}^\mathcal{V}_S$ and $A\in\mathcal{V}$, then for every $\mathcal{F}\in\text{PSh}^\mathcal{V}(\mathcal{C})$, the map $\eta_\mathcal{F}:F_\ast(\mathcal{F}\otimes A)\to F_\ast(\mathcal{F})\otimes A$ is an equivalence of presheaves on $\mathcal{D}$. (In other words, we are claiming $F_\ast:\text{PSh}^\mathcal{V}(\mathcal{C})\to\text{PSh}^\mathcal{V}(\mathcal{D})$ is compatible with the right $\mathcal{V}$-module structures; this is the main content of the proof.)

If $\mathcal{F}=\text{rep}_X\otimes B$ for some $B\in\mathcal{V}$, then the claim is true because free presheaves are sent to free presheaves by both the constructions $F_\ast$ (Remark \ref{RmkFuncRep}) and $-\otimes A$ (Proposition \ref{PropEv1}(2)). Hence we have a natural transformation $\eta:F_\ast(-\otimes A)\to F_\ast(-)\otimes A$ of functors $\text{PSh}^\mathcal{V}(\mathcal{C})\to\text{PSh}^\mathcal{V}(\mathcal{D})$, both functors preserve colimits, and $\eta$ is an equivalence at representables. Since $\text{PSh}^\mathcal{V}(\mathcal{C})$ is generated under colimits by representables (Corollary \ref{CorFreeCo2}), $\eta$ is an equivalence at all presheaves. Therefore $\bar{\theta}$ satisfies the hypotheses of the lemma, so it is a cocartesian fibration.
\end{proof}

\noindent Now we are ready to show that the right $\mathcal{V}$-modules $\text{PSh}^\mathcal{V}(\mathcal{C})$ are functorial in $\mathcal{C}$. Recall from Definition \ref{DefRMPSh} that $\text{Cat}^\mathcal{V}_S\cong\text{Fun}^\dag_{/\text{RM}}(\text{RM}^\mathsection,\widebar{\text{Cat}}^{\mathcal{V}\mathsection}_S)$. By adjunction, we have a functor $\text{RM}\times\text{Cat}^\mathcal{V}_S\to\widebar{\text{Cat}}^\mathcal{V}_S$. Form the pullback $$\xymatrix{
\widebar{\text{PSh}}^\mathcal{V}(\text{Cat}^\mathcal{V}_S)\ar[r]\ar[d]_p &\widebar{\text{PSh}}^\mathcal{V}_S\ar[d]^-{\bar{\theta}} \\
\text{RM}\times\text{Cat}^\mathcal{V}_S\ar[r] &\widebar{\text{Cat}}^\mathcal{V}_S.
}$$ By Proposition \ref{PropBarCo}, $\bar{\theta}$ is a cocartesian fibration, and therefore $p$ is also a cocartesian fibration. Moreover, each fiber $p_\mathcal{C}:\widebar{\text{PSh}}^\mathcal{V}(\mathcal{C})\to\text{RM}$ is a cocartesian fibration of $\infty$-operads by Theorem \ref{PropRMPSh2}. In other words, $p$ is a cocartesian $\text{Cat}^\mathcal{V}_S$-family of RM-monoidal $\infty$-categories in the sense of HA 4.8.3.1. By Theorem \ref{PropPrRight}, it is also compatible with small colimits in the sense of HA 4.8.3.4.

Therefore $p$ is classified by a functor $\text{Cat}^\mathcal{V}_S\to\text{Mod}_\text{RM}(\text{Pr}^L)$, which sends $\mathcal{C}$ to $\text{PSh}^\mathcal{V}(\mathcal{C})$ as a right $\mathcal{V}$-module (Ha 4.8.3.20). The composite with the forgetful functor $\text{Mod}_\text{RM}(\text{Pr}^L)\to\text{Mod}_\text{Assoc}(\text{Pr}^L)$ is constant with value $\mathcal{V}$, so $\text{PSh}^\mathcal{V}(-)$ factors through the fiber $\text{RMod}_\mathcal{V}(\text{Pr}^L)\subseteq\text{Mod}_\text{RM}(\text{Pr}^L)$ over $\mathcal{V}\in\text{Mod}_\text{Alg}(\text{Pr}^L)$. To summarize:

\begin{corollary}\label{CorModFunc}
If $\mathcal{V}$ is presentable and closed monoidal, then the cocartesian fibration $p:\widebar{\text{PSh}}^\mathcal{V}(\text{Cat}^\mathcal{V}_S)\to\text{RM}\times\text{Cat}^\mathcal{V}_S$ classifies a functor $$\text{PSh}^\mathcal{V}(-):\text{Cat}^\mathcal{V}_S\to\text{RMod}_\mathcal{V}(\text{Pr}^L).$$
\end{corollary}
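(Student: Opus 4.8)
The plan is to realize $\text{PSh}^\mathcal{V}(-)$ as the functor classifying the cocartesian fibration $p$, using Lurie's theory of families of monoidal $\infty$-categories from HA 4.8.3. The essential idea is that $p$ packages together, as $\mathcal{C}$ ranges over $\text{Cat}^\mathcal{V}_S$, the individual RM-monoidal $\infty$-categories $\widebar{\text{PSh}}^\mathcal{V}(\mathcal{C})\to\text{RM}$ of Definition \ref{DefRMPSh}, each of which exhibits $\text{PSh}^\mathcal{V}(\mathcal{C})$ as a right $\mathcal{V}$-module by Theorem \ref{PropRMPSh2}. So the task reduces to checking that this assembled family satisfies the hypotheses of the relevant classification theorem and then identifying where it lands.

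First I would confirm that $p$ is a cocartesian fibration. Since $\widebar{\text{PSh}}^\mathcal{V}(\text{Cat}^\mathcal{V}_S)$ is by construction the pullback of $\bar\theta:\widebar{\text{PSh}}^\mathcal{V}_S\to\widebar{\text{Cat}}^\mathcal{V}_S$ along the adjoint map $\text{RM}\times\text{Cat}^\mathcal{V}_S\to\widebar{\text{Cat}}^\mathcal{V}_S$ obtained from Definition \ref{DefRMPSh}, and $\bar\theta$ is a cocartesian fibration by Proposition \ref{PropBarCo}, the base change $p$ is cocartesian as well.

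Next I would verify the two hypotheses needed to invoke the classification theorem. The fiber of $p$ over a point $\mathcal{C}\in\text{Cat}^\mathcal{V}_S$ is precisely $\widebar{\text{PSh}}^\mathcal{V}(\mathcal{C})\to\text{RM}$, which is a cocartesian fibration of $\infty$-operads by Theorem \ref{PropRMPSh2}; together with the fact that $\bar\theta$ carries $p_0$-cocartesian morphisms to $p_1$-cocartesian morphisms (Lemma \ref{LemBad}(3)), this exhibits $p$ as a cocartesian $\text{Cat}^\mathcal{V}_S$-family of RM-monoidal $\infty$-categories in the sense of HA 4.8.3.1. The more delicate condition is compatibility with small colimits (HA 4.8.3.4), and this is where the presentability and closed-monoidal hypotheses must be used. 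I expect this to be the main obstacle: one must confirm that each $\text{PSh}^\mathcal{V}(\mathcal{C})$ is a presentable right $\mathcal{V}$-module (Theorem \ref{PropPrRight}) and that the transition functors $F_\ast$ along maps $\mathcal{C}\to\mathcal{D}$ are colimit-preserving $\mathcal{V}$-module maps. The latter is exactly the content established inside the proof of Proposition \ref{PropBarCo}, where $\eta_\mathcal{F}:F_\ast(\mathcal{F}\otimes A)\to F_\ast(\mathcal{F})\otimes A$ is shown to be an equivalence by reduction to representables and use of Corollary \ref{CorFreeCo2}.

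Finally, HA 4.8.3.20 produces a classifying functor $\text{Cat}^\mathcal{V}_S\to\text{Mod}_\text{RM}(\text{Pr}^L)$ sending $\mathcal{C}\mapsto\text{PSh}^\mathcal{V}(\mathcal{C})$ with its right $\mathcal{V}$-action. To land in $\text{RMod}_\mathcal{V}(\text{Pr}^L)$ rather than the larger $\text{Mod}_\text{RM}(\text{Pr}^L)$, I would postcompose with the forgetful functor to $\text{Mod}_\text{Assoc}(\text{Pr}^L)$ and observe, using Corollary \ref{CorTriv} and Example \ref{ExTriv1} as in Lemma \ref{LemLast}, that the underlying monoidal base is constantly $\mathcal{V}$. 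Hence the functor factors through the fiber $\text{RMod}_\mathcal{V}(\text{Pr}^L)$ over $\mathcal{V}\in\text{Mod}_\text{Assoc}(\text{Pr}^L)$, which completes the argument.
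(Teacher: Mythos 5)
Your proposal is correct and follows essentially the same route as the paper: pull back $\bar\theta$ to get the cocartesian fibration $p$, identify it as a cocartesian $\text{Cat}^\mathcal{V}_S$-family of RM-monoidal $\infty$-categories via Theorem \ref{PropRMPSh2}, check colimit-compatibility via Theorem \ref{PropPrRight}, classify by HA 4.8.3.20, and factor through the fiber $\text{RMod}_\mathcal{V}(\text{Pr}^L)$ since the underlying Assoc-component is constant at $\mathcal{V}$. Your extra remark that the colimit-compatibility of the transition functors $F_\ast$ rests on the equivalence $\eta_\mathcal{F}$ established inside the proof of Proposition \ref{PropBarCo} is a slightly more explicit accounting than the paper gives, but it is the same argument.
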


\subsection{The external tensor product}\label{S53}
\noindent We will end this section by proving Theorem \ref{ThmS5}: that for any presentable pair $(\mathcal{V};\mathcal{M})$, there is an equivalence of $\infty$-categories $$\Psi:\text{PSh}^\mathcal{V}(\mathcal{C})\otimes_\mathcal{V}\mathcal{M}\to\text{PSh}^\mathcal{V}(\mathcal{C};\mathcal{M}).$$ Informally, $\mathcal{F}\otimes M$ corresponds to the presheaf which assigns $X\mapsto\mathcal{F}(X)\otimes M$. The construction of $\Psi$ is quite technical. However, the only properties of $\Psi$ that we will use are:

\begin{lemma}\label{LemComp}
Suppose $(\mathcal{V};\mathcal{M})$ is a presentable pair and $\mathcal{C}$ is $\mathcal{V}$-enriched.
\begin{enumerate}
\item For any $X\in\mathcal{C}$, the following diagram commutes: $$\xymatrix{
\text{PSh}^\mathcal{V}(\mathcal{C})\otimes_\mathcal{V}\mathcal{M}\ar[r]^-{\Psi}\ar[d]_-{\text{ev}_X} &\text{PSh}^\mathcal{V}(\mathcal{C};\mathcal{M})\ar[d]^-{\text{ev}_X} \\
\mathcal{V}\otimes_\mathcal{V}\mathcal{M}\ar@{=}[r] &\mathcal{M}.
}$$
\item For any $X\in\mathcal{C}$ and $M\in\mathcal{M}$, $M\to\mathcal{C}(X,X)\otimes M\cong\Psi(\text{rep}_X\otimes M)(X)$ exhibits $\Psi(\text{rep}_X\otimes M)$ as freely generated by $M$ at $X$, in the sense of Definition \ref{DefFree}.
\end{enumerate}
\end{lemma}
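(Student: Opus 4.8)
The plan is to build $\Psi$ from a $\mathcal{V}$-balanced functor, establish property (1) by a universal-property argument that reduces everything to elementary tensors, and then deduce property (2) formally from (1).

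First I would construct the external tensoring. Paralleling the construction of the right $\mathcal{V}$-action in Section \ref{S52}, but feeding the pair $(\mathcal{V};\mathcal{M})$ into the pairing $\langle -,-\rangle$ in place of the self-action $(\mathcal{V};\mathcal{V})$, one obtains a functor $\text{PSh}^\mathcal{V}(\mathcal{C})\times\mathcal{M}\to\text{PSh}^\mathcal{V}(\mathcal{C};\mathcal{M})$, written $(\mathcal{F},M)\mapsto\mathcal{F}\otimes M$, which is $\mathcal{V}$-balanced for the right $\mathcal{V}$-action on $\text{PSh}^\mathcal{V}(\mathcal{C})$ (Theorem \ref{PropRMPSh2}) and the left $\mathcal{V}$-action on $\mathcal{M}$, and which satisfies $(\mathcal{F}\otimes M)(X)\cong\mathcal{F}(X)\otimes M$ with structure maps those of $\mathcal{F}$ tensored with $\text{id}_M$, exactly as in Proposition \ref{PropEv1}(1). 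Since $\text{PSh}^\mathcal{V}(\mathcal{C})$, $\mathcal{M}$, and $\text{PSh}^\mathcal{V}(\mathcal{C};\mathcal{M})$ are presentable with their tensorings preserving colimits separately in each variable (Theorem \ref{PShPrL2}, Theorem \ref{PropPrRight}), the bar description of the relative tensor product (Proposition \ref{PropLR} together with Remark \ref{PropLRRmk}) descends this balanced functor to a colimit-preserving functor $\Psi:\text{PSh}^\mathcal{V}(\mathcal{C})\otimes_\mathcal{V}\mathcal{M}\to\text{PSh}^\mathcal{V}(\mathcal{C};\mathcal{M})$ with $\Psi(\mathcal{F}\otimes M)\cong\mathcal{F}\otimes M$. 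I expect this construction to be the main obstacle: one must verify that the external tensoring is genuinely balanced and preserves colimits in each variable at every simplicial level of the bar object, so that Remark \ref{PropLRRmk} legitimately produces a functor on $\otimes_\mathcal{V}$ formed in $\text{Pr}^L$.

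For property (1), note that $\text{ev}_X:\text{PSh}^\mathcal{V}(\mathcal{C})\to\mathcal{V}$ is a right $\mathcal{V}$-module functor by Proposition \ref{PropEv1}(1), hence induces a colimit-preserving functor $\text{ev}_X\otimes_\mathcal{V}\mathcal{M}:\text{PSh}^\mathcal{V}(\mathcal{C})\otimes_\mathcal{V}\mathcal{M}\to\mathcal{V}\otimes_\mathcal{V}\mathcal{M}\cong\mathcal{M}$, which is the left-then-bottom composite of the square in question. Both $\text{ev}_X\circ\Psi$ and $\text{ev}_X\otimes_\mathcal{V}\mathcal{M}$ preserve colimits, so by the universal property of $\otimes_\mathcal{V}$ it suffices to compare them on elementary tensors $\mathcal{F}\otimes M$. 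There the first yields $\text{ev}_X\bigl(\Psi(\mathcal{F}\otimes M)\bigr)\cong(\mathcal{F}\otimes M)(X)\cong\mathcal{F}(X)\otimes M$ by the construction above, and the second yields $\text{ev}_X(\mathcal{F})\otimes M=\mathcal{F}(X)\otimes M$; these agree naturally in $X$, so the square commutes.

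For property (2), I would argue as in Proposition \ref{PropEv1}(2). By property (1) and the construction, $\Psi(\text{rep}_X\otimes M)(Y)\cong\mathcal{C}(Y,X)\otimes M$, with structure maps those of $\text{rep}_X$ tensored with $\text{id}_M$. Since $\text{rep}_X$ is freely generated by the unit $1$ at $X$, the map $\mathcal{C}(Y,X)\otimes\text{rep}_X(X)\to\text{rep}_X(Y)$ is an equivalence for every $Y$, and the candidate generating map $M\to\mathcal{C}(X,X)\otimes M\cong\Psi(\text{rep}_X\otimes M)(X)$ is the free-generation map of $\text{rep}_X$ tensored with $\text{id}_M$. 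Therefore, for each $Y$, the composite $\mathcal{C}(Y,X)\otimes M\to\mathcal{C}(Y,X)\otimes\Psi(\text{rep}_X\otimes M)(X)\to\Psi(\text{rep}_X\otimes M)(Y)$ is an equivalence tensored with $\text{id}_M$, hence an equivalence, which by Definition \ref{DefFree} exhibits $\Psi(\text{rep}_X\otimes M)$ as freely generated by $M$ at $X$.
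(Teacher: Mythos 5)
Your proof of the lemma itself follows the paper's argument essentially step for step: both establish (1) by exhibiting $\text{ev}_X\circ\Psi$ as descending from the balanced functor $(\mathcal{F},A_1,\ldots,A_n,M)\mapsto\mathcal{F}(X)\otimes A_1\otimes\cdots\otimes A_n\otimes M$ via the bar description of $\otimes_\mathcal{V}$ (Proposition \ref{PropLR} and Remark \ref{PropLRRmk}), and both deduce (2) from (1) together with the freeness of $\text{rep}_X$. The only divergence is upstream, in how the balanced functor underlying $\Psi$ is written down: the paper uses concatenation in the planar ($\mathbb{A}_\infty$) model, producing $\psi(T)=(\mathcal{C}(X_0,X_1),\ldots,\mathcal{C}(X_{m-1},X_m),\mathcal{F}(X_m)\otimes A_1\otimes\cdots\otimes A_n\otimes M)$ and verifying the identification in (1) at the level of the whole bar object $\mathcal{X}$ — which is the technically correct level at which to invoke the universal property of $\otimes_\mathcal{V}$, rather than an objectwise comparison on elementary tensors as you phrase it — whereas your proposed construction via a variant of the $\langle -,-\rangle$ pairing with $(\mathcal{V};\mathcal{M})$ in place of $(\mathcal{V};\mathcal{V})$ is only sketched and would need the same planar-model bookkeeping to be carried out.
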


\noindent In other words, (1) asserts that $\Psi(\mathcal{F}\otimes M)(X)\cong\mathcal{F}(X)\otimes M$, and (2) asserts that $\Psi(\text{rep}_X\otimes M)$ is the free presheaf $\text{rep}_X\otimes M$.

We will proceed with the proof of Theorem \ref{ThmS5}, conditional on Lemma \ref{LemComp}. Then, at the end of the section, we will construct the functor $\Psi$ and prove Lemma \ref{LemComp}.

First, we need two lemmas concerning monadic functors which respect a module structure.

\begin{lemma}
Suppose $F:\mathcal{X}\to\mathcal{Y}$ is a map in $\text{RMod}_\mathcal{V}(\text{Pr}^L)$: that is, a $\mathcal{V}$-module functor with a right adjoint. Also suppose $F$ is monadic and its left adjoint is compatible with the $\mathcal{V}$-module structure (as in Corollary \ref{CorEv1}).

Then there is a monoidal $\infty$-category $\mathcal{E}$ such that $\mathcal{Y}$ is an $(\mathcal{E},\mathcal{V})$-bimodule, and an algebra $A\in\mathcal{E}$ such that there is an equivalence $f$ of right $\mathcal{V}$-module $\infty$-categories making the triangle commute: $$\xymatrix{
\mathcal{X}\ar[rr]^-{f}\ar[rd]_-{F} &&\text{LMod}_A(\mathcal{Y})\ar[ld] \\
&\mathcal{Y}.&
}$$
\end{lemma}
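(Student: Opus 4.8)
The plan is to run the Barr--Beck--Lurie monadicity theorem (HA 4.7.3.5, quoted above) \emph{internally} to the $\infty$-category $\text{RMod}_\mathcal{V}(\text{Pr}^L)$ of presentable right $\mathcal{V}$-modules, so that the resulting comparison equivalence automatically respects the right $\mathcal{V}$-action. First I would produce the monoidal $\infty$-category $\mathcal{E}$. Since $\mathcal{Y}$ is a presentable right $\mathcal{V}$-module, the internal mapping object $\mathcal{E}:=\text{Fun}^L_{\text{RMod}_\mathcal{V}}(\mathcal{Y},\mathcal{Y})$ of $\mathcal{V}$-linear colimit-preserving endofunctors of $\mathcal{Y}$ (in the sense of HA 4.8.4, cf. Proposition \ref{PropALMod}) is a presentable $\infty$-category, and composition makes it an associative algebra in $\text{Pr}^L$, i.e.\ a monoidal $\infty$-category. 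Evaluation exhibits $\mathcal{Y}$ as a left $\mathcal{E}$-module, and because every functor in $\mathcal{E}$ is by definition $\mathcal{V}$-linear, this left action commutes with the right $\mathcal{V}$-action; hence $\mathcal{Y}$ is an $(\mathcal{E},\mathcal{V})$-bimodule. Consequently, for any $A\in\text{Alg}(\mathcal{E})$ the $\infty$-category $\text{LMod}_A(\mathcal{Y})$ inherits a right $\mathcal{V}$-module structure for which the forgetful functor $\text{LMod}_A(\mathcal{Y})\to\mathcal{Y}$ is a right $\mathcal{V}$-module functor.

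Second, I would produce the algebra $A$. By hypothesis $F$ is a right $\mathcal{V}$-module functor and its left adjoint $L$ is compatible with the $\mathcal{V}$-module structures (as in Corollary \ref{CorEv1}), so the adjunction $L\dashv F$ is an adjunction \emph{in} the $(\infty,2)$-category $\text{RMod}_\mathcal{V}(\text{Pr}^L)$: both functors, together with the unit and counit, are $\mathcal{V}$-linear. The composite $A:=F\circ L$ is then a $\mathcal{V}$-linear colimit-preserving endofunctor of $\mathcal{Y}$, an object of $\mathcal{E}$, and an adjunction in an $(\infty,2)$-category gives rise to a monad on $\mathcal{Y}$, that is an associative algebra $A\in\text{Alg}(\mathcal{E})$ in the endomorphism monoidal $\infty$-category (HA 4.7.1--4.7.3 applied in $\text{RMod}_\mathcal{V}(\text{Pr}^L)$); its multiplication and unit are the images under $F(-)L$ of the counit and unit of the adjunction.

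Finally, the Barr--Beck--Lurie theorem produces the comparison functor $f:\mathcal{X}\to\text{LMod}_A(\mathcal{Y})$ lying over $\mathcal{Y}$, and monadicity of $F$ guarantees that $f$ is an equivalence of underlying $\infty$-categories with $F$ equal to the composite of $f$ with the forgetful functor, so the triangle commutes. It remains to promote $f$ to an equivalence of right $\mathcal{V}$-modules, and this is the step I expect to be the main obstacle: one must check that the ordinary comparison equivalence refines to a morphism in $\text{RMod}_\mathcal{V}(\text{Pr}^L)$. The cleanest route is to note that $f$ is determined by a universal property relative to the forgetful functors to $\mathcal{Y}$, all of whose input data --- the adjunction $L\dashv F$, the monad $A$, and the free/forgetful adjunction for $\text{LMod}_A(\mathcal{Y})$ --- have already been arranged to live in $\text{RMod}_\mathcal{V}(\text{Pr}^L)$; hence $f$, being induced by this universal property, is canonically $\mathcal{V}$-linear, and the same holds for its inverse since an equivalence of $\mathcal{V}$-module functors has $\mathcal{V}$-linear inverse. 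Equivalently, one verifies that the $\mathcal{V}$-linear bar resolution computing $f$ reduces to the underlying bar resolution, so no additional coherence is introduced; this yields the desired equivalence $f$ of right $\mathcal{V}$-modules making the triangle commute.
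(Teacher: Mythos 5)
Your proposal is correct and follows essentially the same route as the paper: take $\mathcal{E}$ to be the monoidal $\infty$-category of $\mathcal{V}$-linear endofunctors of $\mathcal{Y}$ (giving the $(\mathcal{E},\mathcal{V})$-bimodule structure), set $A=F\circ L$, and run Barr--Beck internally to $\text{RMod}_\mathcal{V}(\text{Pr}^L)$. The only difference is cosmetic: where you argue somewhat informally that the comparison functor is "canonically $\mathcal{V}$-linear," the paper packages this by observing that $F$ is a left $A$-module object of the left $\mathcal{E}$-module $\text{Fun}_{\text{RMod}_\mathcal{V}}(\mathcal{X},\mathcal{Y})$, which produces the factorization $f$ directly as a right $\mathcal{V}$-module functor.
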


\begin{proof}
Let $\mathcal{E}=\text{End}_{\mathcal{V}\text{Mod}}(\mathcal{Y})$, the monoidal $\infty$-category of $\mathcal{V}$-module functors from $\mathcal{Y}$ to itself. Then $\mathcal{Y}\in\text{RMod}_\mathcal{V}(\text{Cat})$ is a left $\mathcal{E}$-module, so therefore $\mathcal{Y}$ is an $(\mathcal{E},\mathcal{V})$-bimodule by HA 4.3.3.8.

Moreover, $\mathcal{F}=\text{Fun}_{\text{RMod}_\mathcal{V}}(\mathcal{X},\mathcal{Y})$ is a left $\mathcal{E}$-module. If $G$ denotes the left adjoint to $F$, then $A=FG$ is an algebra in $\mathcal{E}$, and $F$ is a left $A$-module. Therefore, $F$ factors $\mathcal{X}\xrightarrow{f}\text{LMod}_A(\mathcal{Y})\to\mathcal{Y}$, where $f$ is a right $\mathcal{V}$-module functor. By Barr-Beck (HA 4.7.3.16), $f$ is an equivalence, because the induced morphism of monads $A\to A$ is an equivalence by construction.
\end{proof}

\begin{lemma}\label{LemS5Prep}
If $F:\mathcal{X}\to\mathcal{Y}$ is a monadic right $\mathcal{V}$-module functor as in the last lemma, and $\mathcal{M}$ is a left $\mathcal{V}$-module, $\mathcal{X}\otimes_\mathcal{V}\mathcal{M}\to\mathcal{Y}\otimes_\mathcal{V}\mathcal{M}$ is also monadic.
\end{lemma}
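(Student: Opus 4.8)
The plan is to bootstrap off the previous lemma, which identifies $F$, as a right $\mathcal{V}$-module functor, with the forgetful functor $\text{LMod}_A(\mathcal{Y})\to\mathcal{Y}$ for the monoidal $\infty$-category $\mathcal{E}=\text{End}_{\mathcal{V}\text{Mod}}(\mathcal{Y})$, an algebra $A\in\text{Alg}(\mathcal{E})$, and the $(\mathcal{E},\mathcal{V})$-bimodule structure on $\mathcal{Y}$. Everything here lives in $\text{Pr}^L$: since $\mathcal{Y}$ is presentable, $\mathcal{E}$ is the presentable closed monoidal $\infty$-category of colimit-preserving $\mathcal{V}$-linear endofunctors of $\mathcal{Y}$, so $\mathcal{E}\in\text{Alg}(\text{Pr}^L)$ and $\mathcal{Y}$ is a bimodule in $\text{Pr}^L$. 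Because $F$ is equivalent as a right $\mathcal{V}$-module functor to this forgetful functor, it suffices to show that applying $-\otimes_\mathcal{V}\mathcal{M}$ once more produces a forgetful functor of the same form; then $\bar F:=F\otimes_\mathcal{V}\mathcal{M}$ is monadic by the second characterization of monadicity (HA 4.7.3.5).

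The main step is a base-change identity. First I would observe that $-\otimes_\mathcal{V}\mathcal{M}$ carries the $(\mathcal{E},\mathcal{V})$-bimodule $\mathcal{Y}$ to the left $\mathcal{E}$-module $\mathcal{Y}\otimes_\mathcal{V}\mathcal{M}$, the left $\mathcal{E}$-action commuting with the right $\mathcal{V}$-action that is being coequalized. I then claim
$$\text{LMod}_A(\mathcal{Y})\otimes_\mathcal{V}\mathcal{M}\cong\text{LMod}_A(\mathcal{Y}\otimes_\mathcal{V}\mathcal{M})$$
as right $\mathcal{V}$-modules over $\mathcal{Y}\otimes_\mathcal{V}\mathcal{M}$, i.e.\ compatibly with the forgetful functors. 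To prove this I would use the description of module categories as relative tensor products (HA 4.8.4.6): for any left $\mathcal{E}$-module $\mathcal{N}$ one has $\text{LMod}_A(\mathcal{N})\cong\text{LMod}_A(\mathcal{E})\otimes_\mathcal{E}\mathcal{N}$, where $\text{LMod}_A(\mathcal{E})$ is the right $\mathcal{E}$-module of left $A$-modules in $\mathcal{E}$. Applying this to $\mathcal{N}=\mathcal{Y}$ as an $(\mathcal{E},\mathcal{V})$-bimodule (so the equivalence is one of right $\mathcal{V}$-modules), and then reassociating via the associativity of the relative tensor product in $\text{Pr}^L$ (HA 4.4.3), gives
$$\text{LMod}_A(\mathcal{Y})\otimes_\mathcal{V}\mathcal{M}\cong\big(\text{LMod}_A(\mathcal{E})\otimes_\mathcal{E}\mathcal{Y}\big)\otimes_\mathcal{V}\mathcal{M}\cong\text{LMod}_A(\mathcal{E})\otimes_\mathcal{E}\big(\mathcal{Y}\otimes_\mathcal{V}\mathcal{M}\big),$$
and a second application of HA 4.8.4.6 to the left $\mathcal{E}$-module $\mathcal{Y}\otimes_\mathcal{V}\mathcal{M}$ identifies the right-hand side with $\text{LMod}_A(\mathcal{Y}\otimes_\mathcal{V}\mathcal{M})$. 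Throughout, the forgetful functor corresponds to tensoring the forgetful map $\text{LMod}_A(\mathcal{E})\to\mathcal{E}$ of right $\mathcal{E}$-modules with $\mathcal{Y}$, respectively $\mathcal{Y}\otimes_\mathcal{V}\mathcal{M}$, so the displayed equivalence is compatible with the forgetful functors. Hence $\bar F$ is the forgetful functor $\text{LMod}_A(\mathcal{Y}\otimes_\mathcal{V}\mathcal{M})\to\mathcal{Y}\otimes_\mathcal{V}\mathcal{M}$, which is monadic.

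The hard part will be the compatibility bookkeeping in the second step: checking that the base-change equivalence of HA 4.8.4.6 is genuinely one of right $\mathcal{V}$-modules lying over $\mathcal{Y}\otimes_\mathcal{V}\mathcal{M}$, and verifying the presentability and colimit-compatibility hypotheses needed to invoke HA 4.8.4.6 and the associativity of $\otimes_\mathcal{V}$ in $\text{Pr}^L$. An alternative would be to apply the Barr--Beck theorem directly to $\bar F$, which preserves all colimits and has left adjoint $L\otimes_\mathcal{V}\mathcal{M}$ (as $-\otimes_\mathcal{V}\mathcal{M}$ preserves the adjunction $L\dashv F$ of right $\mathcal{V}$-module functors); but that route still requires establishing conservativity of $\bar F$, which is not preserved by base change in general and appears to demand the same module-category description. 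I therefore expect the relative-tensor-product argument above to be the cleaner one.
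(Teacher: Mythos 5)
Your proposal is correct and follows essentially the same route as the paper's proof: reduce via the previous lemma to the forgetful functor $\text{LMod}_A(\mathcal{Y})\to\mathcal{Y}$, rewrite it as $\text{LMod}_A(\mathcal{E})\otimes_\mathcal{E}\mathcal{Y}\to\mathcal{E}\otimes_\mathcal{E}\mathcal{Y}$ using HA 4.8.4.6, tensor with $\mathcal{M}$ over $\mathcal{V}$, reassociate, and apply HA 4.8.4.6 again to identify the result with the (monadic) forgetful functor $\text{LMod}_A(\mathcal{Y}\otimes_\mathcal{V}\mathcal{M})\to\mathcal{Y}\otimes_\mathcal{V}\mathcal{M}$. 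The paper states this more tersely, leaving the bimodule and compatibility bookkeeping implicit, but the argument is the same.
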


\begin{proof}
By the last lemma, we can assume without loss of generality that $\mathcal{X}=\text{LMod}_A(\mathcal{Y})$ and $F$ is the forgetful functor, where $A$ is an algebra in some $\mathcal{E}$, such that $\mathcal{Y}$ is an $(\mathcal{E},\mathcal{V})$-bimodule. By HA 4.8.4.6, $F$ factors $$\text{LMod}_A(\mathcal{Y})\cong\text{LMod}_A(\mathcal{E})\otimes_\mathcal{E}\mathcal{Y}\xrightarrow{F^\prime}\mathcal{E}\otimes_\mathcal{E}\mathcal{Y}\cong\mathcal{Y}.$$ Therefore, the functor that we wish to show is monadic is $$\text{LMod}_A(\mathcal{E})\otimes_\mathcal{E}\mathcal{Y}\otimes_\mathcal{V}\mathcal{M}\to\mathcal{Y}\otimes_\mathcal{V}\mathcal{M}.$$ This is equivalent (by HA 4.8.4.6 again) to $$\text{LMod}_A(\mathcal{Y}\otimes_\mathcal{V}\mathcal{M})\to\mathcal{Y}\otimes_\mathcal{V}\mathcal{M},$$ which is indeed monadic.
\end{proof}

\begin{reptheorem}{ThmS5}
If $\mathcal{M}$ is a presentable left $\mathcal{V}$-module, then $$\Psi:\text{PSh}^\mathcal{V}(\mathcal{C};\mathcal{M})\to\text{PSh}^\mathcal{V}(\mathcal{C})\otimes_\mathcal{V}\mathcal{M}$$ is an equivalence.
\end{reptheorem}

\begin{proof}[Proof (conditional on Lemma \ref{LemComp})]
Suppose $\mathcal{C}$ has set $S$ of objects. The proof is by Barr-Beck, following HA 4.8.4.6, which is the case $|S|=1$. By Lemma \ref{LemComp}(1), the following triangle commutes: $$\xymatrix{
\text{PSh}^\mathcal{V}(\mathcal{C})\otimes_\mathcal{V}\mathcal{M}\ar[rr]^-{\Phi}\ar[rd]_-{G} &&\text{PSh}^\mathcal{V}(\mathcal{C};\mathcal{M})\ar[ld]^-{G^\prime} \\
&\mathcal{M}^S, &
}$$ where $G$ and $G^\prime$ are given by evaluation at each object in $S$ (and $G$ factors through the equivalence $\mathcal{V}^S\otimes_\mathcal{V}\mathcal{M}\cong\mathcal{M}^S$). We know $G^\prime$ preserves small limits and colimits by Theorem \ref{PShPrL2}(2), so it has a left adjoint $F^\prime$. It is also conservative by Corollary \ref{CorPFib1b}. Therefore, it is monadic. By Lemma \ref{LemS5Prep}, $G$ is also monadic; call its left adjoint $F$.

According to the Barr-Beck Theorem (HA 4.7.3.16-17), $\Phi$ is an equivalence if $G,G^\prime$ are both monadic and the induced natural transformation $G^\prime F^\prime\to GF$ of functors $\mathcal{M}^S\to\mathcal{M}^S$ is an equivalence.

We will prove this is true. Since $G^\prime F^\prime$ and $GF$ preserve colimits, it suffices to check for each $X,Y\in S$ that $G^\prime_X F^\prime_Y\to G_XF_Y$ is a natural equivalence of functors $\mathcal{M}\to\mathcal{M}$, which is to say $\Phi(\text{rep}_Y\otimes M)(X)\to\text{rep}_Y(X)\otimes M$ is an equivalence for all $M\in\mathcal{M}$. This is true by Lemma \ref{LemComp}(2).
\end{proof}

\noindent Finally, we will construct the functor $\Psi$ and prove that it satisfies Lemma \ref{LemComp}. The construction follows HA 4.8.4.4, which is the case $|S|=1$.

\begin{remark}\label{RmkTerrible}
Let $\mathcal{X}=(\smallint\text{PSh}^\mathcal{V}(\mathcal{C})\rtimes\mathcal{BV})\times_{\smallint\mathcal{BV}}(\smallint\mathcal{BV}\ltimes\mathcal{M})$ as in Proposition \ref{PropLR}, which is equipped with a cocartesian fibration $r:\mathcal{X}\to\Delta^\text{op}$. We should think of an object of $\mathcal{X}$ as a tuple $(\mathcal{F},A_0,\ldots,A_n,M)\in\mathcal{X}_n$, where $\mathcal{F}\in\text{PSh}^\mathcal{V}(\mathcal{C})$, $A_i\in\mathcal{V}$, and $M\in\mathcal{M}$.

By Proposition \ref{PropLR}, constructing $\Psi:\text{PSh}^\mathcal{V}(\mathcal{C})\otimes_\mathcal{V}\mathcal{M}\to\text{PSh}^\mathcal{V}(\mathcal{C};\mathcal{M})$ is equivalent to constructing a functor $\mathcal{X}\to\text{PSh}^\mathcal{V}(\mathcal{C};\mathcal{M})$ which sends $r$-cocartesian morphisms to equivalences and (Remark \ref{PropLRRmk}) such that the restriction $\text{PSh}^\mathcal{V}(\mathcal{C})\times\mathcal{V}^{\times n}\times\mathcal{M}\to\text{PSh}^\mathcal{V}(\mathcal{C};\mathcal{M})$ preserves colimits for each $n$. By Proposition \ref{PropPShModel}, we have $\text{PSh}^\mathcal{V}(\mathcal{C};\mathcal{M})\cong\text{Fun}^\dag_{/\smallint\mathcal{BV}^\mathsection}(\Delta_{/S}^{\text{op}\ddagger},\smallint\mathcal{BV}\ltimes\mathcal{M}^\ddagger)$. Hence we need to construct a functor $\psi:\mathcal{X}\times\Delta_{/S}^\text{op}\to\smallint\mathcal{BV}\ltimes\mathcal{M}$ such that:
\begin{enumerate}
\item $\psi$ sends $r$-cocartesian morphisms in the first coordinate to equivalences;
\item $\psi$ sends totally inert morphisms in the second coordinate to totally inert morphisms;
\item the following square commutes $$\xymatrix{
\mathcal{X}\times\Delta_{/S}^\text{op}\ar[r]^-{\psi}\ar[d] &\smallint\mathcal{BV}\ltimes\mathcal{M}\ar[d] \\
\Delta_{/S}^\text{op}\ar[r]_-{\mathcal{C}} &\smallint\mathcal{BV};
}$$
\item for each $n$, $\text{PSh}^\mathcal{V}(\mathcal{C})\times\mathcal{V}^{\times n}\times\mathcal{M}\to\text{PSh}^\mathcal{V}(\mathcal{C};\mathcal{M})$ preserves colimits.
\end{enumerate}
\end{remark}

\noindent Let $\star:(\smallint\mathcal{BV}\ltimes\mathcal{V})\times(\smallint\mathcal{BV}\ltimes\mathcal{M})\to\smallint\mathcal{BV}\ltimes\mathcal{M}$ denote concatenation, defined by $(A_0,\ldots,A_m)\star(A_{m+1},\ldots,A_n,M)=(A_0,\ldots,A_n,M)$. Define $\phi_0$, $\phi_1$ to be the functors (respectively) $$\mathcal{X}\times\Delta_{/S}^\text{op}\to\text{PSh}^\mathcal{V}(\mathcal{C})\times\Delta_{/S}^\text{op}\to\text{Fun}(\Delta_{/S}^\text{op},\smallint\mathcal{BV}\ltimes\mathcal{V})\times\Delta_{/S}^\text{op}\to\smallint\mathcal{BV}\ltimes\mathcal{V},$$ $$\mathcal{X}\times\Delta_{/S}^\text{op}\to\mathcal{X}\to\smallint\mathcal{BV}\ltimes\mathcal{M},$$ and $\phi=\phi_0\star\phi_1$, which is also a functor $\mathcal{X}\times\Delta_{/S}^\text{op}\to\smallint\mathcal{BV}\ltimes\mathcal{M}$. Explicitly, if $T=((\mathcal{F},A_0,\ldots,A_n,M),(X_0<\cdots<X_m))\in\mathcal{X}\times\Delta^\text{op}_{/S}$, then $$\phi_0(T)=(\mathcal{C}(X_0,X_1),\ldots,\mathcal{C}(X_{m-1},X_m),\mathcal{F}(X_m))\in\smallint\mathcal{BV}\ltimes\mathcal{V},$$ $$\phi_1(T)=(A_1,\ldots,A_n,M)\in\smallint\mathcal{BV}\ltimes\mathcal{M},$$ $$\phi(T)=(\mathcal{C}(X_0,X_1),\ldots,\mathcal{C}(X_{m-1},X_m),\mathcal{F}(X_m),A_1,\ldots,A_n,M)\in\smallint\mathcal{BV}\ltimes\mathcal{M}.$$ By construction, the following diagram commutes: $$\xymatrix{
\mathcal{X}\times\Delta^\text{op}_{/S}\ar[r]^\phi\ar[d] &\smallint\mathcal{BV}\ltimes\mathcal{M}\ar[d]^-{q} \\
\Delta^\text{op}_{/S}\times\Delta^\text{op}_{/S}\ar[r]_-{\star} &\Delta^\text{op}_{/S},
}$$ where $(X_0<\cdots<X_m)\star(X_{m+1}<\cdots<X_n)=(X_0<\cdots<X_n)$. Call the composite $\bar{\phi}:\mathcal{X}\times\Delta^\text{op}_{/S}\to\Delta^\text{op}_{/S}$. On the other hand, call $\bar{\psi}:\mathcal{X}\times\Delta^\text{op}_{/S}\to\Delta^\text{op}_{/S}$ the projection onto the second coordinate. There is a canonical natural transformation $\bar{\phi}\to\bar{\psi}$ of inert morphisms, given essentially by inclusion $X\subseteq X\star Y$ in $\Delta_{/S}^\text{op}$.

We know $q:\text{Fun}(\mathcal{X}\times\Delta^\text{op}_{/S},\smallint\mathcal{BV}\ltimes\mathcal{M})\to\text{Fun}(\mathcal{X}\times\Delta^\text{op}_{/S},\Delta^\text{op}_{/S})$ is a cocartesian fibration since $\smallint\mathcal{BV}\ltimes\mathcal{M}\to\Delta^\text{op}_{/S}$ is a cocartesian fibration. Therefore, there is a $q$-cocartesian lift $\phi\to\psi$ of $\bar{\phi}\to\bar{\psi}$, essentially defined by $$\psi(T)=(\mathcal{C}(X_0,X_1),\ldots,\mathcal{C}(X_{m-1},X_m),\mathcal{F}(X_m)\otimes A_1\otimes\cdots\otimes A_n\otimes M).$$ Unpacking, $\psi$ satisfies conditions (1)-(3) of Remark \ref{RmkTerrible}, and it also satisfies condition (4) by Theorem \ref{PShPrL2}(4), so there is an induced functor $$\Psi:\text{PSh}^\mathcal{V}(\mathcal{C})\otimes_\mathcal{V}\mathcal{M}\to\text{PSh}^\mathcal{V}(\mathcal{C};\mathcal{M}).$$ We are finally ready to prove Lemma \ref{LemComp}, which we restate here for reference.

\begin{replemma}{LemComp}
Suppose $(\mathcal{V};\mathcal{M})$ is a presentable pair and $\mathcal{C}$ is $\mathcal{V}$-enriched.
\begin{enumerate}
\item For any $X\in\mathcal{C}$, the following diagram commutes: $$\xymatrix{
\text{PSh}^\mathcal{V}(\mathcal{C})\otimes_\mathcal{V}\mathcal{M}\ar[r]^-{\Psi}\ar[d]_-{\text{ev}_X} &\text{PSh}^\mathcal{V}(\mathcal{C};\mathcal{M})\ar[d]^-{\text{ev}_X} \\
\mathcal{V}\otimes_\mathcal{V}\mathcal{M}\ar@{=}[r] &\mathcal{M}.
}$$
\item For any $X\in\mathcal{C}$ and $M\in\mathcal{M}$, $M\to\mathcal{C}(X,X)\otimes M\cong\Psi(\text{rep}_X\otimes M)(X)$ exhibits $\Psi(\text{rep}_X\otimes M)$ as freely generated by $M$ at $X$, in the sense of Definition \ref{DefFree}.
\end{enumerate}
\end{replemma}

\begin{proof}[Proof of Lemma \ref{LemComp}]
By construction, $\text{ev}_X\circ\Psi$ corresponds to the functor $$\mathcal{X}\times\{(X_0)\}\subseteq\mathcal{X}\times\Delta_{/S}^\text{op}\xrightarrow{\psi}\smallint\mathcal{BV}\ltimes\mathcal{M}\to\mathcal{M};$$ this sends $(\mathcal{F},A_0,\ldots,A_n,M)$ to $\mathcal{F}(X)\otimes A_0\otimes\cdots\otimes A_n\otimes M$, which is the tensor product of $\mathcal{X}\to\text{PSh}^\mathcal{V}(\mathcal{C})\xrightarrow{\text{ev}_X}\mathcal{V}$ with $\mathcal{X}\to\smallint(\mathcal{V};\mathcal{M})\xrightarrow{\otimes}\mathcal{M}$. Unpacking definitions, this implies (1).

To prove (2), we just need to check Definition \ref{DefFree}; that is, we need to prove that $\mathcal{C}(Y,X)\otimes M\to\Psi(\text{rep}_X\otimes M)(Y)$ is an equivalence for all $Y\in\mathcal{C}$. But this is true by (1).
\end{proof}

\section{Duality for presheaves}\label{S6}
\noindent In ordinary category theory, we have a Yoneda embedding $\mathfrak{Y}:\mathcal{C}\to\text{PSh}(\mathcal{C})$, which exhibits $\text{PSh}(\mathcal{C})$ as freely generated by $\mathcal{C}$ under colimits.

In other words, if $\mathcal{D}$ is a presentable category, then restriction along $\mathfrak{Y}$ induces an equivalence of categories $$\text{Fun}^L(\text{PSh}(\mathcal{C}),\mathcal{D})\to\text{Fun}(\mathcal{C},\mathcal{D}).$$ In this section, we will prove the analogous statement for enriched $\infty$-categories. We will always assume the enriched $\infty$-category $\mathcal{V}$ is presentable and closed monoidal.

Because $\text{PSh}^\mathcal{V}(\mathcal{C})$ is a right $\mathcal{V}$-module, and not a priori a $\mathcal{V}$-enriched category, we will not want to speak of $\mathcal{V}$-enriched functors to $\mathcal{D}$, but rather $\mathcal{V}$-enriched copresheaves with values in $\mathcal{D}$. (We introduced enriched copresheaves in Section \ref{S51}.)

We will construct the Yoneda embedding in the guise of a copresheaf $\mathfrak{Y}\in\text{coPSh}^\mathcal{V}(\mathcal{C};\text{PSh}^\mathcal{V}(\mathcal{C}))$, and then we will prove:

\begin{theorem}\label{ThmS6b}
If $\mathcal{N}$ is a presentable right $\mathcal{V}$-module, and $F:\text{PSh}^\mathcal{V}(\mathcal{C})\to\mathcal{N}$ is a colimit-preserving right $\mathcal{V}$-module functor, let $\mathfrak{Y}_\ast(F)$ denote the pushforward of $\mathfrak{Y}$ along $F_\ast:\text{coPSh}^\mathcal{V}(\mathcal{V};\text{PSh}^\mathcal{V}(\mathcal{C}))\to\text{coPSh}^\mathcal{V}(\mathcal{V};\mathcal{N})$. Then $$\mathfrak{Y}_\ast:\text{Fun}^L_{\text{RMod}_\mathcal{V}}(\text{PSh}^\mathcal{V}(\mathcal{C}),\mathcal{N})\to\text{coPSh}^\mathcal{V}(\mathcal{V};\mathcal{N})$$ is an equivalence of $\infty$-categories.
\end{theorem}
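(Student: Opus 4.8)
The plan is to route the proof through the duality asserted in Theorem~\ref{ThmS6} --- that, as a right $\mathcal{V}$-module in $\text{Pr}^L$, $\text{PSh}^\mathcal{V}(\mathcal{C})$ is dualizable with dual $\text{coPSh}^\mathcal{V}(\mathcal{C})$, the coevaluation being the Yoneda copresheaf $\mathfrak{Y}$ regarded (as in the introduction) as an object of $\text{PSh}^\mathcal{V}(\mathcal{C})\otimes_\mathcal{V}\text{coPSh}^\mathcal{V}(\mathcal{C})$. Establishing this duality is the crux, and I will address it below; granting it, the deduction is formal. The universal property of the relative tensor product in $\text{Pr}^L$ supplies, naturally in the presentable right $\mathcal{V}$-module $\mathcal{N}$, an equivalence
$$\text{Fun}^L_{\text{RMod}_\mathcal{V}}(\text{PSh}^\mathcal{V}(\mathcal{C}),\mathcal{N})\simeq\mathcal{N}\otimes_\mathcal{V}\text{coPSh}^\mathcal{V}(\mathcal{C}),$$
and the copresheaf form of Theorem~\ref{ThmS5}, obtained by passing to $\mathcal{V}^\text{rev}$ and $\mathcal{C}^\text{op}$ through Remark~\ref{RmkOp}, identifies the right-hand side with $\text{coPSh}^\mathcal{V}(\mathcal{C};\mathcal{N})$. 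Since the coevaluation witnessing the duality is $\mathfrak{Y}$, chasing it through these identifications and using naturality in $\mathcal{N}$ will show the composite equivalence is precisely the pushforward $\mathfrak{Y}_\ast$.

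The substance is therefore the duality, which I would model on Lurie's self-duality of $\text{LMod}_A$ and $\text{RMod}_A$ (HA 4.8.4), the case $|S|=1$. The coevaluation is $\mathfrak{Y}$. For the evaluation I would feed the combinatorial pairing $\langle-,-\rangle\colon\text{LM}_S\times\text{RM}_S\to\text{Assoc}_{S\amalg S}$ of Section~\ref{S52} into the two-sided bar construction: the paired graphs produce the terms $\mathcal{G}(X)\otimes\mathcal{F}(X)$ together with their composites, whose geometric realization is the enriched coend, yielding a $\mathcal{V}$-bilinear colimit-preserving pairing
$$\text{ev}\colon\text{coPSh}^\mathcal{V}(\mathcal{C})\otimes\text{PSh}^\mathcal{V}(\mathcal{C})\to\mathcal{V},\qquad(\mathcal{G},\mathcal{F})\mapsto\textstyle\int^{X}\mathcal{G}(X)\otimes\mathcal{F}(X),$$
where $\otimes$ on the left is the tensor in $\text{Pr}^L$ and $\mathcal{V}$ is taken as a $(\mathcal{V},\mathcal{V})$-bimodule. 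Promoting this set-level pairing to an honest functor of $\mathcal{V}$-modules is the technical heart; I expect the categorical-pattern bookkeeping already used for Lemma~\ref{LemBad} to do the job, now organized over the bimodule operad $\text{BM}$, into which $\langle-,-\rangle$ factors (as noted in Section~\ref{S52}), mirroring the way $\widebar{\text{PSh}}^\mathcal{V}_S$ was organized over $\text{RM}$.

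The main obstacle is then verifying the two triangle identities. Both composites in each identity are colimit-preserving right $\mathcal{V}$-module endofunctors --- of $\text{PSh}^\mathcal{V}(\mathcal{C})$ and of $\text{coPSh}^\mathcal{V}(\mathcal{C})$ respectively --- so by Corollary~\ref{CorFreeCo2}, together with its copresheaf form via Remark~\ref{RmkOp}, it suffices to check that each is the identity on the representable presheaves $\text{rep}_X$, the corepresentable copresheaves, and their $\mathcal{V}$-translates $\text{rep}_X\otimes A$. On these generators the identities collapse to the statement that pairing $\text{rep}_X$ against the corepresentable copresheaf recovers $\text{rep}_X$, which is exactly the Yoneda lemma Corollary~\ref{CorYon}; compatibility with the $\mathcal{V}$-action is Proposition~\ref{PropEv1}(2). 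Finally I would record that no size issues arise: the source of $\mathfrak{Y}_\ast$ is presentable because $\text{Fun}^L$ between presentable modules is presentable, and the target by the copresheaf analogue of Theorem~\ref{PShPrL2}, so the equivalence is one of presentable $\infty$-categories.
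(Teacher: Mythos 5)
Your plan inverts the paper's logical order: the paper proves Theorem \ref{ThmS6b} first, by a direct Barr--Beck argument (comparing $\mathfrak{Y}_\ast$ over the two monadic functors $\text{Fun}^L_{\text{RMod}_\mathcal{V}}(\text{PSh}^\mathcal{V}(\mathcal{C}),\mathcal{N})\to\mathcal{N}^S$ and $\text{coPSh}^\mathcal{V}(\mathcal{C};\mathcal{N})\to\mathcal{N}^S$, and checking the induced map of monads is $-\otimes\mathcal{C}(Y,X)$ on each factor), and only then deduces the duality of Theorem \ref{ThmS6} via Lurie's criterion HA 4.6.2.18 --- which requires \emph{only} the coevaluation $\mathfrak{Y}$ together with a mapping-space condition, and that condition is exactly Theorem \ref{ThmS6b} combined with Theorem \ref{ThmS5}. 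The paper therefore never constructs an evaluation pairing at all. You propose to build the full duality datum (evaluation via a coend plus both triangle identities) and then deduce Theorem \ref{ThmS6b} formally. That reduction is fine in principle and not circular, since you intend to prove the duality independently; but it front-loads all of the difficulty into the duality, and that is where your sketch has a genuine gap.

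The gap is in the verification of the triangle identities. You assert that since each composite is a colimit-preserving right $\mathcal{V}$-module endofunctor, Corollary \ref{CorFreeCo2} lets you check it is the identity on the generators $\text{rep}_X\otimes A$. But the generation statement only lets you conclude that a \emph{given} natural transformation between two colimit-preserving module functors is an equivalence once you know it is an equivalence on generators; it does not produce a natural transformation from the triangle-identity composite to the identity functor, and no such transformation comes for free. In the one-object case the triangle identities reduce to the unit equivalences $M\otimes_A A\simeq M$ of the relative tensor product; the enriched analogue is the co-Yoneda/density statement that every presheaf is canonically the coend of representables against hom-objects, which is \emph{not} Corollary \ref{CorYon} (that corollary computes maps \emph{out of} representables, not colimit presentations of arbitrary presheaves) and is nowhere available in the paper --- proving it would require an argument of essentially the same depth as Theorem \ref{ThmS6b} itself. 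Producing the required homotopy by identifying the composite endofunctor with the one classified by $\mathfrak{Y}$ would use Theorem \ref{ThmS6b} and be circular. Separately, the construction of the evaluation $(\mathcal{G},\mathcal{F})\mapsto\smallint^{X}\mathcal{G}(X)\otimes\mathcal{F}(X)$ as a colimit-preserving map of $\mathcal{V}$-bimodules in $\text{Pr}^L$ is real work that you only gesture at; it is plausible via the $\text{BM}$-shaped categorical-pattern argument you describe, but as written the proposal does not close either of these two holes, and the second (the triangle identities) is the one I do not see how to close without essentially reproving the theorem by the paper's method.
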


\noindent This theorem is essentially equivalent to Theorem \ref{ThmS6} from the introduction, that $\text{PSh}^\mathcal{V}(\mathcal{C})$ and $\text{coPSh}^\mathcal{V}(\mathcal{C})$ are dual $\mathcal{V}$-modules. We will conclude Theorem \ref{ThmS6} at the end of this section.\\

\noindent We begin by constructing the Yoneda copresheaf.

Identify the $\mathcal{V}$-enriched category $\mathcal{C}$ with a marked functor $\text{Assoc}_S^\mathsection\to\smallint\mathcal{V}^\mathsection$, and consider the composite $$\text{LM}_S^\mathsection\times\text{RM}_S^\mathsection\xrightarrow{\left\langle -,-\right\rangle}\text{Assoc}_{S\amalg S}^\mathsection\xrightarrow{\nabla}\text{Assoc}_S^\mathsection\xrightarrow{\mathcal{C}}\smallint\mathcal{V}^\mathsection,$$ where $\nabla:S\amalg S\to S$ is the identity on each component (the codiagonal), and $\left\langle -,-\right\rangle$ is the pairing of Section \ref{S52}. Because $\text{LM}_S\times\text{RM}_S\to\text{Assoc}_S$ is natural in $S$, the following diagram commutes: $$\xymatrix{
\text{LM}_S\times\text{RM}_S\ar[rr]^-{\nabla\left\langle -,-\right\rangle}\ar[d] &&\text{Assoc}_S\ar[r]^-{\mathcal{C}}\ar[rd] &\smallint\mathcal{V}\ar[d] \\
\text{LM}\times\text{RM}\ar[rrr]_-{\nabla\left\langle -,-\right\rangle} &&&\text{Assoc}.
}$$ Therefore, there is an induced marked functor $$\text{LM}_S^\mathsection\times\text{RM}_S^\mathsection\to\smallint^\prime\mathcal{V}^\mathsection=\smallint\mathcal{V}\times_{\text{Assoc}}(\text{LM}\times\text{RM}).$$ By adjunction, we have $\mathfrak{Y}:\text{RM}_S\to\widebar{\text{PSh}}^\mathcal{V}_S=\text{Fun}^\dag_{/\text{LM}}(\text{LM}_S^\mathsection,\smallint^\prime\mathcal{V}^{!})$, which is compatible with the functors down to RM: $$\xymatrix{
\text{RM}_S\ar[r]^-{\mathfrak{Y}}\ar[rd] &\widebar{\text{PSh}}^\mathcal{V}_S\ar[d]^-{p_0} \\
&\text{RM}.
}$$ Moreover, $\mathfrak{Y}$ sends inert morphisms to $p_0$-cocartesian morphisms by Lemma \ref{LemBad}, so therefore it describes a copresheaf. Since $\widebar{\text{PSh}}^\mathcal{V}_S\cong\smallint(\mathcal{V};\text{PSh}^\mathcal{V}_S)$ by Lemma \ref{LemLast}, $\mathfrak{Y}\in\text{coPSh}^{\mathcal{V};\text{PSh}^\mathcal{V}_S}_S$. We will see shortly:
\begin{itemize}
\item that the underlying enriched category is $\mathcal{C}$, so that $\mathfrak{Y}\in\text{coPSh}^\mathcal{V}(\mathcal{C};\text{PSh}^\mathcal{V}_S)$;
\item that the copresheaf evaluated at any object of $\mathcal{C}$ is an enriched presheaf on $\mathcal{C}$, so that $\mathfrak{Y}\in\text{coPSh}^\mathcal{V}(\mathcal{C};\text{PSh}^\mathcal{V}(\mathcal{C}))$.
\end{itemize}
\noindent We will regard $\mathfrak{Y}$ as the enriched Yoneda embedding. Now we prove the two points in the next two lemmas; they amount to the observations that $$\{(X,\ast)\}\times\text{Assoc}_S\subseteq\text{LM}_S\times\text{RM}_S\xrightarrow{\nabla\left\langle -,-\right\rangle}\text{Assoc}_S,$$ $$\text{Assoc}_S\times\{(\ast,X)\}\subseteq\text{LM}_S\times\text{RM}_S\xrightarrow{\nabla\left\langle -,-\right\rangle}\text{Assoc}_S$$ are each the identity functor by construction of $\left\langle -,-\right\rangle$.

\begin{lemma}\label{Lem61}
$\mathfrak{Y}$ is a copresheaf on $\mathcal{C}$; that is, $\mathfrak{Y}\in\text{coPSh}^\mathcal{V}(\mathcal{C};\text{PSh}^\mathcal{V}_S)$.
\end{lemma}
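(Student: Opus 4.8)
The plan is to compute the underlying $\mathcal{V}$-enriched category of the copresheaf $\mathfrak{Y}$---its image under the forgetful functor $\bar{\theta}\colon\text{coPSh}^{\mathcal{V};\text{PSh}^\mathcal{V}_S}_S\to\text{Cat}^\mathcal{V}_S$---and check that it is $\mathcal{C}$. Recall that $\bar{\theta}$ is restriction along the inclusion $\text{Assoc}_S\subseteq\text{RM}_S$, and that the pair $(\mathcal{V};\text{PSh}^\mathcal{V}_S)$ restricts along $\text{Assoc}\subseteq\text{RM}$ to the monoidal $\infty$-category $\mathcal{V}$. So the underlying enriched category is the composite $\text{Assoc}_S\xrightarrow{\mathfrak{Y}}\widebar{\text{PSh}}^\mathcal{V}_S\times_{\text{RM}}\text{Assoc}$, and the task is to identify this fiber product and the composite explicitly.

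First I would record, via Lemma \ref{LemLast} and Corollary \ref{CorTriv}, that over $\text{Assoc}\subseteq\text{RM}$ the cocartesian fibration $\widebar{\text{PSh}}^\mathcal{V}_S$ restricts to $\smallint\mathcal{V}$, the fiberwise identification $\text{PSh}^{0;\mathcal{V}}_S\cong\mathcal{V}$ being evaluation $\text{ev}_X$ at the object $(X,\ast)\in\text{LM}_S$ for a fixed $X\in S$. Consequently the underlying enriched category sends $\gamma\in\text{Assoc}_S$ to $\mathfrak{Y}(\gamma)(X,\ast)$, and by the adjunction defining $\mathfrak{Y}$ this is exactly the composite
$$\text{Assoc}_S\xrightarrow{\{(X,\ast)\}\times\text{id}}\text{LM}_S\times\text{RM}_S\xrightarrow{\nabla\langle-,-\rangle}\text{Assoc}_S\xrightarrow{\mathcal{C}}\smallint\mathcal{V}.$$

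The crux is then the purely combinatorial claim that $\nabla\langle(X,\ast),-\rangle$ restricts to the identity functor on $\text{Assoc}_S$ (the first observation flagged before the lemma). This is immediate from the definition of the pairing: for a graph $\gamma$ with no edge touching $\ast$, every edge of $\langle(X,\ast),\gamma\rangle$ is a pair $(e_0,e_1)$ with $e_0$ the unique edge $(X,\ast)$ and $e_1$ an edge of $\gamma$, and the source/target rule forces this pair to have the same endpoints as $e_1$; applying $\nabla$ then returns $\gamma$ unchanged. Granting this, the displayed composite is simply $\mathcal{C}$, and the lemma follows. I expect the only genuine friction to be the bookkeeping in the middle paragraph---matching the restriction $\text{Assoc}_S\subseteq\text{RM}_S$ together with the equivalence $\text{PSh}^{0;\mathcal{V}}_S\cong\mathcal{V}$ to the operation ``evaluate $\mathfrak{Y}$ at $(X,\ast)$'' compatibly with the whole $\text{Assoc}_S$-algebra structure, not merely on objects. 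Here I would note that the $\text{ev}_X$ of Corollary \ref{CorTriv} assembles into an equivalence of cocartesian fibrations over $\text{Assoc}$, so that the fiberwise computation does determine the full algebra. The pairing computation itself is routine.
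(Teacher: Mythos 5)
Your proposal is correct and follows essentially the same route as the paper's proof: both identify the restriction $\widebar{\text{PSh}}^\mathcal{V}_S\times_{\text{RM}}\text{Assoc}\cong\smallint\mathcal{V}$ via evaluation at $(X,\ast)$ (using Corollary \ref{CorTriv} and the module-functor structure of $\widebar{\text{ev}}_X$ from Proposition \ref{PropEv1}), and both then reduce to the observation that $\{(X,\ast)\}\times\text{Assoc}_S\to\text{Assoc}_S$ is the identity by construction of $\left\langle -,-\right\rangle$. Your explicit verification of that combinatorial step, and your remark that the identification must be made at the level of cocartesian fibrations over $\text{Assoc}$ rather than objectwise, only spell out what the paper leaves implicit.
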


\begin{proof}
The underlying enriched category of $\mathfrak{Y}$ is described by the restriction $\text{Assoc}_S\subseteq\text{RM}_S\xrightarrow{\mathfrak{Y}}\widebar{\text{PSh}}^\mathcal{V}_S$. Since $\widebar{\text{PSh}}^\mathcal{V}_S\cong\smallint(\mathcal{V};\text{PSh}^\mathcal{V}_S)$, this composite is just a $\mathcal{V}$-enriched category.

For any object $X\in S$, recall that $\text{ev}_X:\text{PSh}^\mathcal{V}_S\to\mathcal{V}$ is a right $\mathcal{V}$-module functor, described explicitly (as in the proof of Proposition \ref{PropEv1}) as $$\widebar{\text{ev}}_X:\widebar{\text{PSh}}^\mathcal{V}_S=\text{Fun}^\dag_{/\text{LM}}(\text{LM}_S^\mathsection,\smallint^\prime\mathcal{V}^{!})\to\smallint\mathcal{V},$$ which is evaluation at $(X,\ast)\in\text{LM}_S$. As a right $\mathcal{V}$-module functor, $\widebar{\text{ev}}_X$ restricts to an equivalence on $\widebar{\text{PSh}}^\mathcal{V}_S\times_{\text{RM}}\text{Assoc}\cong\smallint\mathcal{V}$.

In particular, this means that the composite $$\text{Assoc}_S\subseteq\text{RM}_S\xrightarrow{\mathfrak{Y}}\widebar{\text{PSh}}^\mathcal{V}_S\xrightarrow{\widebar{\text{ev}}_X}\smallint\mathcal{V},$$ which is a priori a $\mathcal{V}$-enriched category, actually recovers the underlying $\mathcal{V}$-enriched category of the copresheaf $\mathfrak{Y}$ (regardless of which $X\in S$ was chosen).

By construction of $\mathfrak{Y}$, this composite is also $$\{(X,\ast)\}\times\text{Assoc}_S\subseteq\text{LM}_S\times\text{RM}_S\xrightarrow{\left\langle -,-\right\rangle}\text{Assoc}_S\xrightarrow{\mathcal{C}}\smallint\mathcal{V},$$ but $\{(X,\ast)\}\times\text{Assoc}_S\to\text{Assoc}_S$ is the identity by construction of $\left\langle -,-\right\rangle$, so this composite recovers the enriched category $\mathcal{C}$, completing the proof.
\end{proof}

\begin{lemma}\label{Lem62}
The following square commutes: $$\xymatrix{
\text{RM}_S\ar[r]^-{\mathfrak{Y}}\ar[d] &\widebar{\text{PSh}}^\mathcal{V}_S\ar[d]^-{\bar{\theta}} \\
\text{RM}\ar[r]_-{\mathcal{C}_\ast} &\widebar{\text{Cat}}^\mathcal{V}_S.
}$$
\end{lemma}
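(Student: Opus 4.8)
The plan is to recognize both composites as objects of a single $\infty$-category of copresheaves valued in a \emph{trivial} module, where they are forced to agree. Recall from Lemma \ref{LemLast} that $\widebar{\text{Cat}}^\mathcal{V}_S\cong\smallint(0;\text{Cat}^\mathcal{V}_S)$ realizes the trivial right $0$-action on $\text{Cat}^\mathcal{V}_S$, and that $\bar\theta$ is the RM-monoidal functor attached to $\theta$. Since $\mathfrak{Y}$ sends inert morphisms to $p_0$-cocartesian morphisms and $\bar\theta$ sends $p_0$-cocartesian to $p_1$-cocartesian morphisms (Lemma \ref{LemBad}), the composite $\bar\theta\circ\mathfrak{Y}$ sends inerts to $p_1$-cocartesian morphisms; likewise $\mathcal{C}_\ast$ sends every morphism to a $p_1$-cocartesian morphism and the forgetful $u:\text{RM}_S\to\text{RM}$ sends inerts to inerts, so $\mathcal{C}_\ast\circ u$ also sends inerts to $p_1$-cocartesian morphisms. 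Hence both composites are $\text{RM}_S$-algebras in $(0;\text{Cat}^\mathcal{V}_S)$, that is, objects of $\text{coPSh}^{0;\text{Cat}^\mathcal{V}_S}_S$ lying over $u$.

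First I would exploit the triviality of the action. By Remark \ref{RmkOp} together with Corollary \ref{CorTriv}, evaluation at a module color is an equivalence $\text{coPSh}^{0;\text{Cat}^\mathcal{V}_S}_S\cong\text{PSh}^{0;\text{Cat}^\mathcal{V}_S}_S\cong\text{Cat}^\mathcal{V}_S$. Because this comparison functor is an equivalence, it reflects equivalences, so it suffices to check that $\bar\theta\circ\mathfrak{Y}$ and $\mathcal{C}_\ast\circ u$ have the same image in $\text{Cat}^\mathcal{V}_S$, namely $\mathcal{C}$, at one module color $(\ast,X)\in\text{RM}_S$.

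The value of $\mathcal{C}_\ast\circ u$ at $(\ast,X)$ is immediate: $u(\ast,X)=(\ast,0)$, and by Definition \ref{DefRMPSh} (via Corollary \ref{CorTriv}) one has $\mathcal{C}_\ast(\ast,0)=\mathcal{C}$. The value of $\bar\theta\circ\mathfrak{Y}$ at $(\ast,X)$ is $\theta\bigl(\mathfrak{Y}(\ast,X)\bigr)$, the underlying enriched category of the presheaf $\mathfrak{Y}(\ast,X)\in\text{PSh}^\mathcal{V}_S$. By the construction of $\mathfrak{Y}$ as the transpose of $\mathcal{C}\circ\nabla\circ\left\langle -,-\right\rangle$, this underlying category is the restriction to $\text{Assoc}_S$ of the functor $\Gamma_0\mapsto\mathcal{C}\bigl(\nabla\left\langle\Gamma_0,(\ast,X)\right\rangle\bigr)$. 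Here the key combinatorial observation enters: $\nabla\circ\left\langle -,(\ast,X)\right\rangle$ is the identity functor on $\text{Assoc}_S$, because every edge $e_0$ of $\Gamma_0$ pairs with the unique $\ast$-edge $(\ast,X)$ to contribute an edge with the same source and target as $e_0$, and no other pairings occur. Thus the restriction is $\Gamma_0\mapsto\mathcal{C}(\Gamma_0)$, i.e. $\mathcal{C}$ itself. Both composites therefore evaluate to $\mathcal{C}$, so the equivalence above yields $\bar\theta\circ\mathfrak{Y}\simeq\mathcal{C}_\ast\circ u$ as objects of $\text{coPSh}^{0;\text{Cat}^\mathcal{V}_S}_S$, and since this is a full subcategory of functors $\text{RM}_S\to\widebar{\text{Cat}}^\mathcal{V}_S$, this is precisely the commutativity of the square.

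The main obstacle is bookkeeping rather than mathematics. One must confirm carefully that $\widebar{\text{Cat}}^\mathcal{V}_S$ genuinely realizes the trivial $0$-action (so Corollary \ref{CorTriv} applies), that both composites really land in $\text{coPSh}^{0;\text{Cat}^\mathcal{V}_S}_S$ over $u$, and that the comparison functor is literally evaluation at the module color, so that the two computations of $\mathcal{C}$ can be matched. All of this is a routine unwinding of Lemmas \ref{LemBad} and \ref{LemLast}, of the definition of $\mathfrak{Y}$, and of the trivial-action corollaries already proven; the single substantive input is the identity $\nabla\circ\left\langle -,(\ast,X)\right\rangle=\operatorname{id}_{\text{Assoc}_S}$, which is exactly the observation flagged immediately before the statement.
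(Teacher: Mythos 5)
Your proposal is correct and follows essentially the same route as the paper: both composites are recognized as copresheaves in $\text{coPSh}^{0;\text{Cat}^\mathcal{V}_S}_S$, evaluation at a module color $(\ast,X)$ is an equivalence by Corollary \ref{CorTriv} (so it reflects equivalences), and both composites evaluate to $\mathcal{C}$ there, the substantive input being that $\text{Assoc}_S\times\{(\ast,X)\}\to\text{Assoc}_S$ is the identity. The extra bookkeeping you supply (the inert-to-cocartesian checks via Lemma \ref{LemBad}) is consistent with, and slightly more explicit than, what the paper records.
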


\begin{proof}
Since $\widebar{\text{Cat}}^\mathcal{V}_S\cong\smallint(0;\text{Cat}^\mathcal{V}_S)$, each composite $\text{RM}_S\to\widebar{\text{Cat}}^\mathcal{V}_S$ is a copresheaf in $\text{coPSh}^{0;\text{Cat}^\mathcal{V}_S}_S$. Each evaluation map $\text{ev}_X:\text{coPSh}^{0;\text{Cat}^\mathcal{V}_S}_S\to\text{Cat}^\mathcal{V}_S$ is an equivalence by Corollary \ref{CorTriv}.

Therefore, it suffices to show (for a single chosen $X\in S$) that the two composites $\text{RM}_S\to\widebar{\text{Cat}}^\mathcal{V}_S$ are equivalent when evaluated at $(\ast,X)\in\text{RM}_S$. The lower composite evaluated at $(\ast,X)$ is $\mathcal{C}$, by construction, while the upper composite evaluated at $(\ast,X)$ is the enriched category $$\text{Assoc}_S\times\{(\ast,X)\}\subseteq\text{LM}_S\times\text{RM}_S\xrightarrow{\left\langle -,-\right\rangle}\text{Assoc}_S\xrightarrow{\mathcal{C}}\smallint\mathcal{V}.$$ As in the proof of the last lemma, the composite $\text{Assoc}_S\times\{(\ast,X)\}\to\text{Assoc}_S$ is the identity, so this enriched category is also $\mathcal{C}$, completing the proof.
\end{proof}

\begin{remark}
By Lemma \ref{Lem62}, $\mathfrak{Y}$ factors through a marked functor of the form $\text{RM}_S\to\widebar{\text{PSh}}^\mathcal{V}(\mathcal{C})=\widebar{\text{PSh}}^\mathcal{V}_S\times_{\widebar{\text{Cat}}^\mathcal{V}_S}\text{RM}$, which is to say a copresheaf with values in $\text{PSh}^\mathcal{V}(\mathcal{C})$. By Lemma \ref{Lem61}, the underlying enriched category of this copresheaf is $\mathcal{C}$, so that we have a \emph{Yoneda copresheaf} $$\mathfrak{Y}\in\text{coPSh}^\mathcal{V}(\mathcal{C};\text{PSh}^\mathcal{V}(\mathcal{C})).$$
\end{remark}

\noindent Now we will prove Theorem \ref{ThmS6b}, that the functor $$\mathfrak{Y}_\ast:\text{Fun}^L_{\text{RMod}_\mathcal{V}}(\text{PSh}^\mathcal{V}(\mathcal{C}),\mathcal{N})\to\text{coPSh}^\mathcal{V}(\mathcal{V};\mathcal{N})$$ is an equivalence of $\infty$-categories.

\begin{proof}[Proof of Theorem \ref{ThmS6b}]
The proof is by Barr-Beck, following HA 4.8.4.1 and very similar to Theorem \ref{ThmS5}. As $S\in X$ varies, the right $\mathcal{V}$-module functors $\text{rep}_X\otimes -:\mathcal{V}\to\text{PSh}^\mathcal{V}(\mathcal{C})$ assemble into a $\mathcal{V}$-module functor $\mathcal{V}^{\times S}\to\text{PSh}^\mathcal{V}(\mathcal{C})$. Precomposition with this functor induces $$\text{Fun}^L_{\text{RMod}_\mathcal{V}}(\text{PSh}^\mathcal{V}(\mathcal{C}),\mathcal{N})\to\text{Fun}^L_{\text{RMod}_\mathcal{V}}(\mathcal{V}^{\times S},\mathcal{N})\cong\mathcal{N}^S,$$ and the following triangle commutes (where $T^\prime$ denotes evaluation at each $X\in S$) $$\xymatrix{
\text{Fun}^L_{\text{RMod}_\mathcal{V}}(\text{PSh}^\mathcal{V}(\mathcal{C}),\mathcal{N})\ar[rr]^-{\mathfrak{Y}_\ast}\ar[rd]_-{T} &&\text{coPSh}^\mathcal{V}(\mathcal{C};\mathcal{N})\ar[ld]^-{T^\prime} \\
&\mathcal{N}^S,&
}$$ because $\mathfrak{Y}_\ast(F)(X)\cong F(\text{rep}_X)$. We proved $T^\prime$ is monadic in the proof of Theorem \ref{ThmS5} (end of Section \ref{S5}). We claim $T$ is also monadic.

Indeed, $T$ preserves colimits by construction (as it is a map of $\text{Pr}^L$), and it has a left adjoint $U$ by Corollary \ref{CorEv1}, which is given by precomposition with $\text{ev}:\text{PSh}^\mathcal{V}(\mathcal{C})\to\mathcal{V}^{\times S}$. To finish the proof that $T$ is monadic, we need only show $T$ is conservative. Let $F,G:\text{PSh}^\mathcal{V}(\mathcal{C})\to\mathcal{N}$ be two right $\mathcal{V}$-module functors and $\eta:F\to G$ a natural transformation such that $T(\eta)$ is an equivalence. In other words, $\eta$ is an equivalence at $\text{rep}_X$ for all $X\in S$. But since $\text{PSh}^\mathcal{V}(\mathcal{C})$ is generated by $\text{rep}_X$ as a right $\mathcal{V}$-module in $\text{Pr}^L$ (Corollary \ref{CorFreeCo2}), it follows that $\eta$ is an equivalence everywhere.

Therefore, $T$ and $T^\prime$ are monadic. By HA 4.7.3.16-17, to complete the proof that $\mathfrak{Y}_\ast$ is an equivalence, it suffices to show that the induced natural transformation $T^\prime U^\prime\to TU$ of functors $\mathcal{N}^S\to\mathcal{N}^S$ is an equivalence, where $U,U^\prime$ are the left adjoints to $T$, respectively $T^\prime$.

Since $T^\prime U^\prime$ and $TU$ preserve colimits, it suffices to check for each $X,Y\in S$ that $T^\prime_X U^\prime_Y\to T_XU_Y$ is a natural equivalence of functors $\mathcal{N}\to\mathcal{N}$. But both these functors are given by $-\otimes\mathcal{C}(Y,X)$, and unpacking, the map is an equivalence. This completes the proof.
\end{proof}

\noindent Finally, we will conclude Theorem \ref{ThmS6}, that $\text{PSh}^\mathcal{V}(\mathcal{C})\in\text{RMod}_\mathcal{V}(\text{Pr}^L)$ is left dual to $\text{coPSh}^\mathcal{V}(\mathcal{C})\in\text{LMod}_\mathcal{V}(\text{Pr}^L)$ (in the sense of HA 4.6.2.3).

\begin{proof}[Proof of Theorem \ref{ThmS6}]
We have $\text{coPSh}^\mathcal{V}(\mathcal{C};\text{PSh}^\mathcal{V}(\mathcal{C}))\cong\text{PSh}^\mathcal{V}(\mathcal{C})\otimes_\mathcal{V}\text{coPSh}^\mathcal{V}(\mathcal{C})$ by Theorem \ref{ThmS5}, so we may regard $\mathfrak{Y}$ as an object of this tensor product, or a colimit-preserving functor $$-\otimes\mathfrak{Y}:\text{Top}\to\text{PSh}^\mathcal{V}(\mathcal{C})\otimes_\mathcal{V}\text{coPSh}^\mathcal{V}(\mathcal{C}).$$ Here the $\infty$-category Top of spaces is the unit of the monoidal structure on $\text{Pr}^L$. By HA 4.6.2.18, we just need to prove that for each $\mathcal{D}\in\text{Pr}^L$ and $\mathcal{N}\in\text{RMod}_\mathcal{V}(\text{Pr}^L)$, the composite $$\xymatrix{
\text{iFun}_{\text{RMod}_\mathcal{V}}^L(\mathcal{D}\otimes\text{PSh}^\mathcal{V}(\mathcal{C}),\mathcal{N})\ar[d]^-{-\otimes_\mathcal{V}\text{coPSh}^\mathcal{V}(\mathcal{C})} \\
\text{iFun}^L(\mathcal{D}\otimes\text{PSh}^\mathcal{V}(\mathcal{C})\otimes_\mathcal{V}\text{coPSh}^\mathcal{V}(\mathcal{C}),\mathcal{N}\otimes_\mathcal{V}\text{coPSh}^\mathcal{V}(\mathcal{C}))\ar[d]^-{\mathfrak{Y}} \\
\text{iFun}^L(\mathcal{D},\mathcal{N}\otimes_\mathcal{V}\text{coPSh}^\mathcal{V}(\mathcal{C}))
}$$ is an equivalence of spaces ($\infty$-groupoids), where iFun denotes the maximal subgroupoid of the functor $\infty$-category. However, we have equivalences $$\text{Fun}^L_{\text{RMod}_\mathcal{V}}(\mathcal{D}\otimes\text{PSh}^\mathcal{V}(\mathcal{C}),\mathcal{N})\to\text{Fun}^L(\mathcal{D},\text{coPSh}^\mathcal{V}(\mathcal{C};\mathcal{N})),$$ $$\text{Fun}^L(\mathcal{D},\mathcal{N}\otimes_\mathcal{V}\text{coPSh}^\mathcal{V}(\mathcal{C}))\to\text{Fun}^L(\mathcal{D},\text{coPSh}^\mathcal{V}(\mathcal{C};\mathcal{N})),$$ the first by Theorem \ref{ThmS6b} and the second by Theorem \ref{ThmS5}, and (unpacking) they are compatible with the functor above. This establishes duality.
\end{proof}

\end{document}